\let\old@setaddresses\@setaddresses
\def\@setaddresses{\bigskip{\parindent 0pt\let\scshape\relax\let\ttfamily\relax\old@setaddresses}}
\newtheorem{theorem}{Theorem}
\newtheorem{lemma}[theorem]{Lemma}
\newtheorem{conjecture}[theorem]{Conjecture}
\theoremstyle{remark}
\title{Star transposition Gray codes for multiset permutations}
\author{Petr Gregor}
\address[Petr Gregor]{Department of Theoretical Computer Science and Mathematical Logic, Charles University, Prague, Czech Republic}
\email{gregor@ktiml.mff.cuni.cz}
\author{Arturo Merino}
\address[Arturo Merino]{Department of Mathematics, TU Berlin, Germany}
\email{merino@math.tu-berlin.de}
\author{Torsten M\"utze}
\address[Torsten M\"utze]{Department of Computer Science, University of Warwick, United Kingdom \& Department of Theoretical Computer Science and Mathematical Logic, Charles University, Prague, Czech Republic}
\email{torsten.mutze@warwick.ac.uk}
\thanks{An extended abstract of this paper was accepted for presentation at the 39th International Symposium on Theoretical Aspects of Computer Science (STACS 2022).}
\thanks{This work was supported by Czech Science Foundation grant GA~19-08554S, and by German Science Foundation grant~413902284.}
\begin{document}

\begin{abstract}
Given integers $k\geq 2$ and $a_1,\ldots,a_k\geq 1$, let $\ba:=(a_1,\ldots,a_k)$ and $n:=a_1+\cdots+a_k$.
An $\ba$-multiset permutation is a string of length~$n$ that contains exactly $a_i$ symbols~$i$ for each $i=1,\ldots,k$.
In this work we consider the problem of exhaustively generating all $\ba$-multiset permutations by star transpositions, i.e., in each step, the first entry of the string is transposed with any other entry distinct from the first one.
This is a far-ranging generalization of several known results.
For example, it is known that permutations ($a_1=\cdots=a_k=1$) can be generated by star transpositions, while combinations ($k=2$) can be generated by these operations if and only if they are balanced ($a_1=a_2$), with the positive case following from the middle levels theorem.
To understand the problem in general, we introduce a parameter~$\Delta(\ba):=n-2\max\{a_1,\ldots,a_k\}$ that allows us to distinguish three different regimes for this problem.
We show that if $\Delta(\ba)<0$, then a star transposition Gray code for $\ba$-multiset permutations does not exist.
We also construct such Gray codes for the case $\Delta(\ba)>0$, assuming that they exist for the case~$\Delta(\ba)=0$.
For the case~$\Delta(\ba)=0$ we present some partial positive results.
Our proofs establish Hamilton-connectedness or Hamilton-laceability of the underlying flip graphs, and they answer several cases of a recent conjecture of Shen and Williams.
In particular, we prove that the middle levels graph is Hamilton-laceable.
\end{abstract}

\keywords{Gray code, permutation, combination, transposition, Hamilton cycle}

\maketitle

\section{Introduction}

Permutations and combinations are two of the most fundamental classes of combinatorial objects.
Specifically, \emph{$k$-permutations} are all linear orderings of $[k]:=\{1,\ldots,k\}$, and their number is~$k!$.
Moreover, \emph{$(\alpha,\beta)$-combinations} are all $\beta$-element subsets of~$[n]$ where $n:=\alpha+\beta$, and their number is $\binom{n}{\alpha}=\binom{n}{\beta}$.
Permutations and combinations are generalized by so-called multiset permutations, and in this paper we consider the task of listing them such that any two consecutive objects in the list differ by particular transpositions, i.e., by swapping two elements.
Such a listing of objects subject to a `small change' operation is often referred to as \emph{Gray code}~\cite{MR1491049,MR3523863}.
One of the standard references for algorithms that efficiently generate various combinatorial objects, including permutations and combinations, is Knuth's book~\cite{MR3444818} (see also~\cite{MR0396274}).

\subsection{Permutation generation}
\label{sec:perm}

There is a vast number of Gray codes for permutation generation, most prominently the Steinhaus-Johnson-Trotter algorithm~\cite{DBLP:journals/cacm/Trotter62,MR0159764}, which generates all $k$-permutations by adjacent transpositions, i.e., swaps of two neighboring entries of the permutation; see Figure~\ref{fig:perm}~(a).
In this work, we focus on \emph{star transpositions}, i.e., swaps of the first entry of the permutation with any later entry.
An efficient algorithm for generating permutations by star transpositions was found by Ehrlich, and it is described as Algorithm~E in Knuth's book~\cite[Section~7.2.1.2]{MR3444818}; see Figure~\ref{fig:perm}~(b).
For any permutation generation algorithm based on transpositions, we can define the \emph{transposition graph} as the graph with vertex set~$[k]$, and an edge between~$i$ and~$j$ if the algorithm uses transpositions between the $i$th and $j$th entry of the permutation.
Clearly, the transposition graph for adjacent transpositions is a path, whereas the transposition graph for star transpositions is a star (hence the name `star transposition').
In fact, Kompel'maher and Liskovec~\cite{MR0498276}, and independently Slater~\cite{MR504868}, showed that all $k$-permutations can be generated for any transposition tree on~$[k]$.
Transposition Gray codes for permutations with additional restrictions were studied by Compton and Williamson~\cite{MR1308693} and by Shen and Williams~\cite{DBLP:journals/endm/ShenW13}.

\begin{wrapfigure}{r}{0.5\textwidth}
\includegraphics{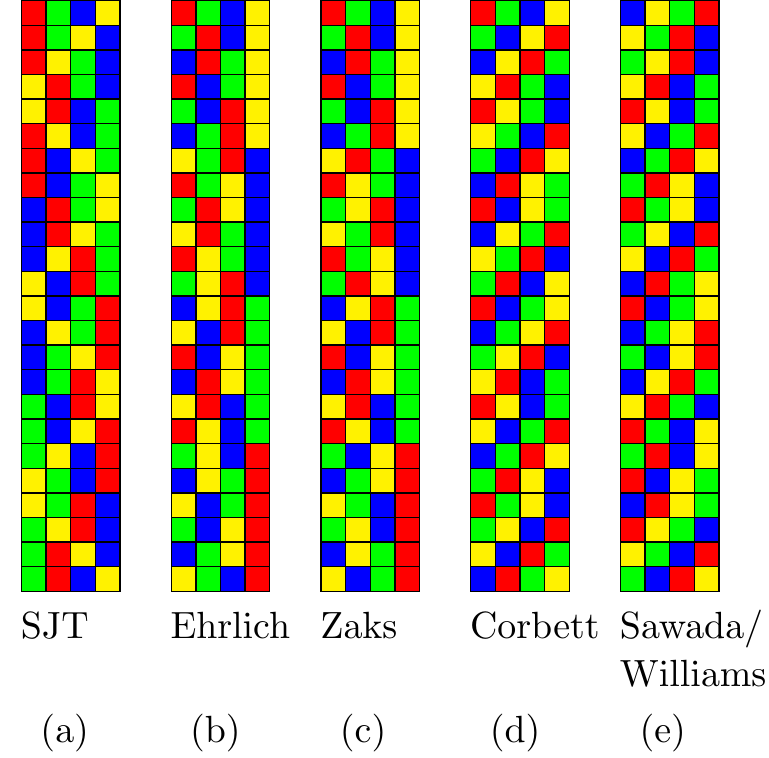}
\caption{Gray codes for 4-permutations (SJT=Steinhaus-Johnson-Trotter).}
\vspace{-3mm}
\label{fig:perm}
\end{wrapfigure}
Several known algorithms for permutation generation use operations other than transpositions.
Specifically, Zaks~\cite{MR753548} presented an algorithm for generating permutations by prefix reversals; see Figure~\ref{fig:perm}~(c).
Moreover, Corbett~\cite{corbett_1992} showed that all $k$-permutations can be generated by cyclic left shifts of any prefix of the permutation by one position; see Figure~\ref{fig:perm}~(d).
Another notable result is Sawada and Williams' recent solution~\cite{MR4060409} of the Sigma-Tau problem, proving that all $k$-permutations can be generated by cyclic left shifts of the entire permutation by one position or transpositions of the first two elements; see Figure~\ref{fig:perm}~(e).

All of the aforementioned results can be seen as explicit constructions of Hamilton paths in the Cayley graph of the symmetric group, generated by different sets of generators (transpositions, reversals, or shifts).
It is an open problem whether the Cayley graph of the symmetric group has a Hamilton path for any set of generators~\cite{MR1201997}.
This is a special case of the well-known open problem whether any connected Cayley graph has a Hamilton path, or even more generally, whether this is the case for any vertex-transitive graph~\cite{MR0263646}.

\subsection{Combination generation and the middle levels conjecture}
\label{sec:comb}

In a computer, $(\alpha,\beta)$-combinations can be conveniently represented by bitstrings of length~$n:=\alpha+\beta$, where the $i$th bit is~1 if the element~$i$ is in the set and~0 otherwise.
For example, the $(5,3)$-combination $\{1,6,7\}$ is represented by the string~$10000110$.

In the 1980s, Buck and Wiedemann~\cite{MR737262} conjectured that all $(\alpha,\alpha)$-combinations can be generated by star transpositions for every $\alpha\geq 1$, i.e., in every step we swap the first bit of the bitstring representation with a later bit.
Figure~\ref{fig:mperm}~(a) shows such a star transposition Gray code for $(4,4)$-combinations.
Buck and Wiedemann's conjecture was raised independently by Havel~\cite{MR737021}, as a question about the existence of a Hamilton cycle through the middle two levels of the $(2\alpha-1)$-dimensional hypercube.
This conjecture became known as \emph{middle levels conjecture}, and it attracted considerable attention in the literature and made its way into popular books~\cite{MR2034896,MR2858033}, until it was answered affirmatively by M\"utze~\cite{MR3483129}; see also~\cite{gregor-muetze-nummenpalo:18}.

Similarly to permutations, there are also many known methods for generating general $(\alpha,\beta)$-combinations that use operations other than star transpositions, see~\cite{MR0349274,MR782221,MR995888,MR1352777,MR2548545}.
In particular, $(\alpha,\beta)$-combinations can be generated by adjacent transpositions if and only if $\alpha=1$, $\beta=1$, or $\alpha$ and~$\beta$ are both odd~\cite{MR737262,MR821383,MR936104}.

\subsection{Multiset permutations}

\begin{figure}
\centerline{
\begin{tabular}{cc}
\includegraphics{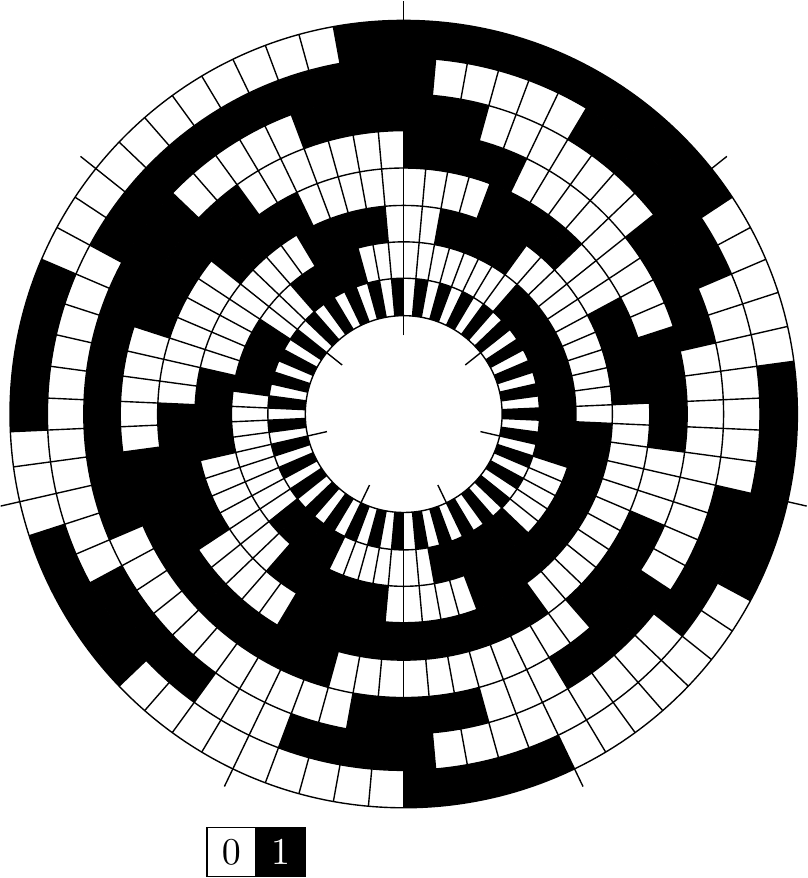} &
\includegraphics{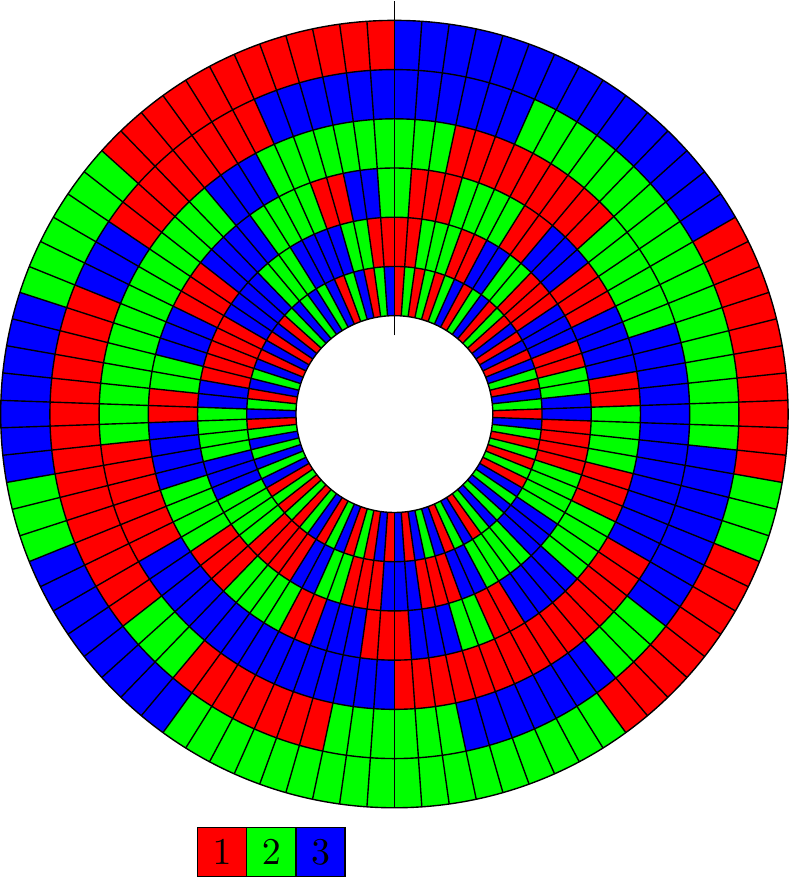} \\
(a) $(a_1,a_2)=(4,4)$ & (b) $(a_1,a_2,a_3)=(2,2,2)$
\end{tabular}
}
\caption{Star transposition Gray codes for (a) $(4,4)$- and (b) $(2,2,2)$-multiset permutations.
The strings are arranged in clockwise order, starting at 12 o'clock, with the first entry on the inner track, and the last entry on the outer track.
As every star transposition changes the first entry, the color on the inner track changes in every step.}
\label{fig:mperm}
\end{figure}

Shen and Williams~\cite{shen_williams_2021} proposed a far-ranging generalization of the middle levels conjecture that connects permutations and combinations.
Their conjecture is about multiset permutations.
For integers $k\geq 2$ and $a_1,\ldots,a_k\geq 1$, an \emph{$(a_1,\ldots,a_k)$-multiset permutation} is a string over the alphabet~$\{1,\ldots,k\}$ that contains exactly~$a_i$ occurrences of the symbol~$i$.
We refer to the sequence $\ba:=(a_1,\ldots,a_k)$ as the \emph{frequency vector}, as it specifies the frequency of each symbol.
The length of a multiset permutation is $n_\ba:=a_1+\cdots+a_k$, and if the context is clear we omit the index and simply write $n=n_\ba$.
If all symbols appear equally often, i.e., $a_1=\cdots=a_k=\alpha$, we use the abbreviation $\alpha^k:=(a_1,\ldots,a_k)$.
For example $123433153$ is a $(2,1,4,1)$-multiset permutation, and $331232142144$ is a $3^4$-multiset permutation.

Clearly, multiset permutations are a generalization of permutations and combinations.
Specifically, $k$-permutations are $1^k$-multiset permutations, and $(\alpha,\beta)$-combinations are $(\alpha,\beta)$-multiset permutations (up to shifting the symbol names $1,2\mapsto 0,1$).
Stachowiak~\cite{MR1157583} showed that $(a_1,\ldots,a_k)$-multiset permutations can be generated by adjacent transpositions if and only if at least two of the $a_i$ are odd.

Shen and Williams~\cite{shen_williams_2021} conjectured that all $\alpha^k$-multiset permutations can be generated by star transpositions, for any $\alpha\geq 1$ and~$k\geq 2$.
We state their conjecture in terms of Hamilton cycles in a suitably defined graph, as follows.
We write $\Pi(\ba)=\Pi(a_1,\ldots,a_k)$ for the set of all $(a_1,\ldots,a_k)$-multiset permutations.
Moreover, we let $G(\ba)=G(a_1,\ldots,a_k)$ denote the graph on the vertex set~$\Pi(\ba)=\Pi(a_1,\ldots,a_k)$ with an edge between any two multiset permutations that differ in a star transposition, i.e., in swapping the first entry of the multiset permutation with any entry at positions~$2,\ldots,n$ that is distinct from the first one.
Figure~\ref{fig:Ga} shows various examples of the graph~$G(\ba)$.
When denoting specific multiset permutations we sometimes omit commas and brackets for brevity, for example $1312214\in\Pi(3,2,1,1)$.

\begin{conjecture}[\cite{shen_williams_2021}]
\label{conj:SW}
For any $\alpha\geq 1$ and~$k\geq 2$, the graph $G(\alpha^k)$ has a Hamilton cycle.
\end{conjecture}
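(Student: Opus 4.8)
\medskip
\noindent\textbf{Proof proposal.}\quad
The plan is to organize the argument around the parameter $\Delta(\ba)$: for $\ba=\alpha^k$ one has $\Delta(\alpha^k)=\alpha k-2\alpha=\alpha(k-2)$, so the conjecture splits into exactly two regimes, the balanced case $k=2$ with $\Delta(\alpha^2)=0$, and the surplus case $k\geq 3$ with $\Delta(\alpha^k)>0$. In both regimes I would actually aim to prove something stronger than the existence of a Hamilton cycle, namely Hamilton-laceability when the flip graph is bipartite and Hamilton-connectedness otherwise, since this is what an inductive argument for the surplus case will need to consume.

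For $k=2$, deleting the first entry gives a bijection between $\Pi(\alpha,\alpha)$ and the middle two levels of the $(2\alpha-1)$-dimensional hypercube: writing a string as $x_1w$ with $|w|=2\alpha-1$ and recoding $w$ as a bitstring by replacing each $2$ with $0$, the value $x_1=1$ forces $w$ to have weight $\alpha-1$ and $x_1=2$ forces weight $\alpha$, while a star transposition of $x_1w$ corresponds exactly to a single bit flip of $w$. Hence $G(\alpha,\alpha)$ is isomorphic to the middle levels graph, which has a Hamilton cycle by the middle levels theorem~\cite{MR3483129}. To serve as the base of the recursion below I would instead reprove this in the sharpened form that the middle levels graph is Hamilton-laceable; this is not available off the shelf and requires re-entering the construction behind the middle levels theorem and controlling the endpoints of the Hamilton path produced.

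For $k\geq 3$, hence $\Delta(\alpha^k)>0$, the plan is a recursion that reduces any instance $G(\ba)$ with $\Delta(\ba)>0$ to instances of strictly smaller length. Fixing the value of the last entry, the strings with $x_n=i$ are in bijection with $\Pi(a_1,\dots,a_i-1,\dots,a_k)$, the star transpositions not touching position~$n$ act within this block precisely as star transpositions of the shortened string, so each block is a copy of the flip graph of the decremented frequency vector (which still has $\Delta\geq 0$), and the remaining star transpositions, those swapping positions~$1$ and~$n$, furnish the edges between blocks. Inside each block I would invoke the inductive hypothesis (Hamilton-connectedness or -laceability) to extract a Hamilton path with prescribed endpoints, and then splice these block-paths together along suitably chosen inter-block edges so as to close into a single Hamilton cycle of $G(\ba)$; carrying the endpoint information through the recursion is what upgrades the conclusion from a Hamilton cycle to the stronger connectivity statement.

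The two places I expect genuine difficulty are the following. First, the base of the recursion is not just the middle levels graph: the decomposition forces one through frequency vectors with $\Delta=0$ and three or more symbols, and proving Hamilton-connectedness or -laceability for all of these appears to be the crux of the entire problem --- this is the step where I would only expect partial progress, so that the conjecture is reduced to, rather than closed by, the argument. Second, many of the flip graphs involved are bipartite (the middle levels graph already is), which makes the splicing a delicate parity computation: one must know which side of each bipartition the endpoints of every block-path lie on, ensure the chosen inter-block edges connect opposite sides, and verify that the cyclic concatenation has the correct total parity --- making all of this consistent uniformly in $\alpha$ and $k$ is the real obstacle to pushing the induction through.
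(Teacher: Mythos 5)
Your outline matches the paper's strategy closely: partition by the sign of $\Delta$, recall that $G(\alpha,\alpha)$ is the middle levels graph for $k=2$, strengthen to Hamilton-laceability there (this is Theorem~\ref{thm:M}), reduce $\Delta(\ba)>0$ to $\Delta(\bb)=0$ by fixing a symbol and splicing block-paths (Theorem~\ref{thm:P}), and concede that the $\Delta=0$ base cases with $k\geq 3$ are the genuine bottleneck and are left as Conjecture~\ref{conj:0}. This is precisely the shape of the paper's argument, and like the paper it reduces rather than resolves the conjecture (the unconditional results are only for $a_1\leq 4$, Theorem~\ref{thm:234}).

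One dichotomy in your plan would break if you tried to carry it out. You propose to prove Hamilton-laceability when the flip graph is bipartite and Hamilton-connectedness otherwise, and to feed whichever property holds into the splicing step. But by Lemma~\ref{lem:bipart} the graphs $G(\bb)$ with $\Delta(\bb)=0$ and $k\geq 3$ symbols are \emph{not} bipartite, and by Lemma~\ref{lem:D0} they are \emph{not} Hamilton-connected either: any Hamilton path in such a block is forced to alternate in and out of the class $\Pi(\bb)^{1,1}$ (which holds exactly half the vertices), so one endpoint must lie in $\Pi(\bb)^{1,1}$ and the other outside it. Neither of your two target properties applies. The paper's fix is the bespoke notion of Hamilton-1-laceability ($\cL_1$), demanding a Hamilton path from any vertex with first symbol~1 to any vertex with first symbol $\neq 1$; this is what Conjecture~\ref{conj:0} asserts and what the block-splicing of Lemma~\ref{lem:H1} consumes. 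And even that is not quite enough: the family $\ba=(\alpha,\alpha,1)$ has $\Delta=1$ but both of its covers have $\Delta=0$, so the induction for it cannot reach a Hamilton-connected subgraph and the paper introduces a further ad hoc property $\cL_{12}$ just for these. Two smaller technical points: the position you fix must be chosen adaptively (a coordinate where the prescribed endpoints $x,y$ differ), not always the last one, or the routing of $x$ and $y$ into and out of the block chain breaks; and the ``parity bookkeeping'' you flag is real but is the alternation constraint of Lemma~\ref{lem:D0} rather than literal bipartiteness.
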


In this and the following statements, the single edge~$G(1,1)$ is also considered a cycle, as it gives a cyclic Gray code.
Note that $G(a_1,\ldots,a_k)$ is vertex-transitive if and only if $a_1=\cdots=a_k=:\alpha$.
In this case, Conjecture~\ref{conj:SW} is an interesting instance of the aforementioned conjecture of Lov{\'a}sz~\cite{MR0263646} on Hamilton paths in vertex-transitive graphs.

Evidence for Conjecture~\ref{conj:SW} comes from the results mentioned in Sections~\ref{sec:perm} and~\ref{sec:comb} on generating permutations by star transpositions and the solution of the middle levels conjecture, respectively, formulated in terms of the graph~$G(\ba)$ below.
These known results settle the boundary cases $\alpha=1$ and $k\geq 2$, and $\alpha\geq 1$ and $k=2$, respectively, of Conjecture~\ref{conj:SW}.

\begin{figure}
\includegraphics[page=1]{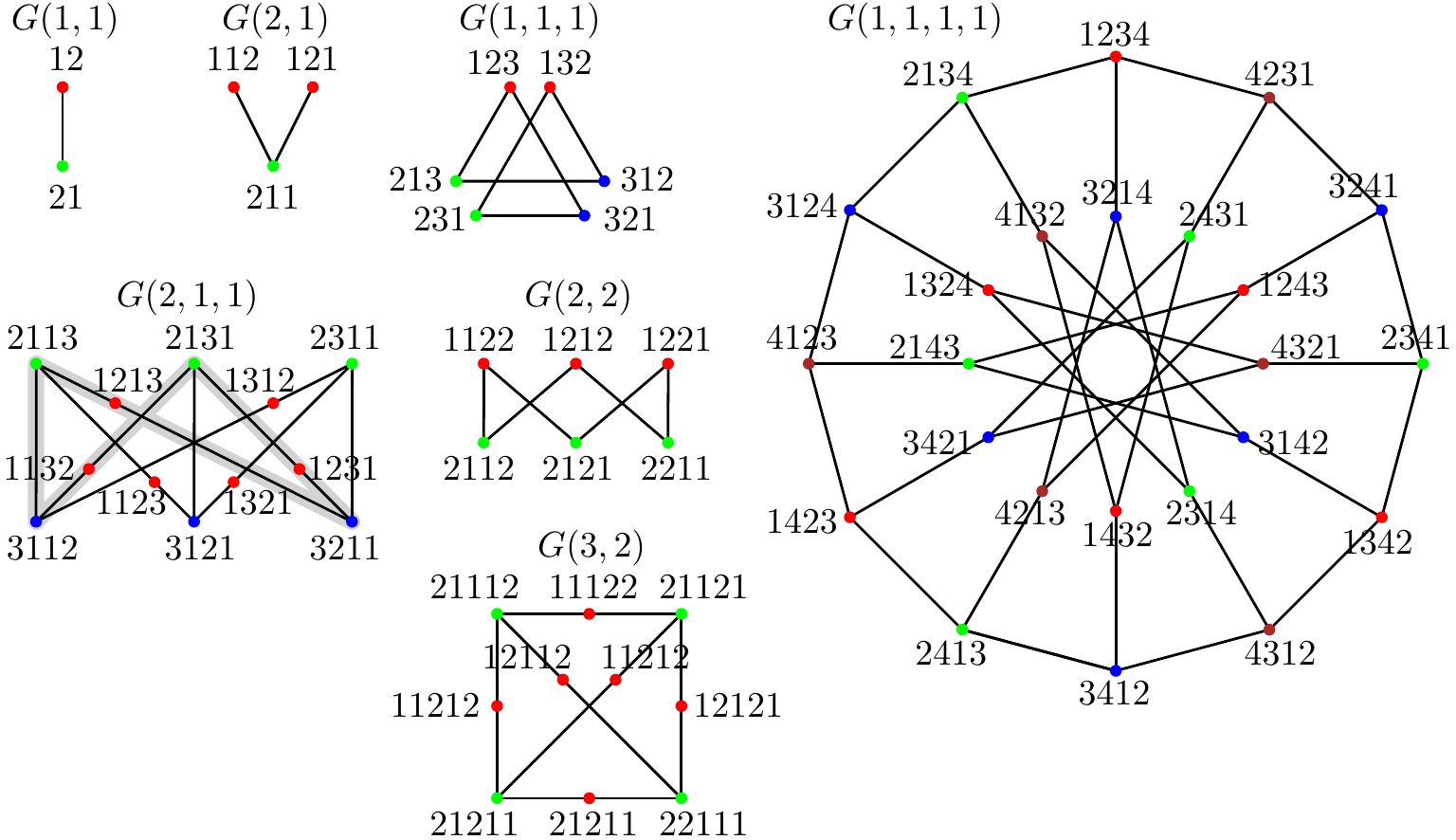}
\caption{Star transposition graphs~$G(\ba)$ for several small multiset permutations~$\ba$.
Vertices are colored according to the first entry of the multiset permutations, and these color classes form independent sets.
In $G(2,1,1)$, an odd cycle is highlighted.
}
\label{fig:Ga}
\end{figure}

\begin{theorem}[Ehrlich; \cite{MR0498276}; \cite{MR504868}]
\label{thm:ehrlich}
For any $k\geq 2$, the graph~$G(1^k)$ has a Hamilton cycle.
\end{theorem}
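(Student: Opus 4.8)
The plan is to establish the stronger property that $G(1^k)$ is \emph{Hamilton-laceable}: for any two $k$-permutations $u\neq v$ of opposite parity there is a Hamilton path from $u$ to $v$. This yields the theorem, since picking such a path whose endpoints span an edge and closing it up produces a Hamilton cycle (for $k=2$ the single edge $G(1,1)$ is itself the cycle). Note first that $G(1^k)$ is bipartite, with the even and odd permutations as its two parts, because every star transposition flips the parity; so laceability is the appropriate notion. I would proceed by induction on $k$, verifying the base cases $k\le 3$ by hand (for instance $G(1^3)$ is a $6$-cycle, which is laceable), and assuming $k\ge 4$ with $G(1^{k-1})$ Hamilton-laceable.

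For the inductive step, partition the vertices of $G(1^k)$ according to the last entry, setting $C_j:=\{\pi:\pi_k=j\}$ for $j\in[k]$. Since a star transposition changes the last entry exactly when it swaps positions $1$ and $k$, the subgraph induced by each $C_j$ is isomorphic to $G(1^{k-1})$ (after relabeling the remaining $k-1$ symbols), and this isomorphism respects the even/odd bipartition, so by induction each $C_j$ has a Hamilton path between any two of its vertices of opposite parity. Moreover, the edges running between two clouds $C_i$ and $C_j$ form a perfect matching joining the $(k-2)!$ vertices of $C_j$ of first entry $i$ to the $(k-2)!$ vertices of $C_i$ of first entry $j$, obtained by swapping positions $1$ and $k$. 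The key building block is a chain construction: given an ordering $C_{m_1},\dots,C_{m_k}$ of the clouds, a start vertex $u\in C_{m_1}$, and a target vertex $v\in C_{m_k}$ of parity opposite to $u$, set $a_1:=u$ and, for $t=1,\dots,k-1$, pick a vertex $b_t\in C_{m_t}$ of first entry $m_{t+1}$ and parity opposite to $a_t$ (possible because the $(k-2)!\ge 2$ candidates include both parities once $k\ge 4$), route a Hamilton path of $C_{m_t}$ from $a_t$ to $b_t$, cross the matching edge to the partner $a_{t+1}\in C_{m_{t+1}}$ of $b_t$, and finally route a Hamilton path of $C_{m_k}$ from $a_k$ to $v$. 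As crossing a matching edge flips the parity, every $a_t$ has the parity of $u$ and every $b_t$ the parity of $v$, so the last Hamilton path exists precisely because $u$ and $v$ have opposite parity; concatenating everything gives a Hamilton path of $G(1^k)$ from $u$ to $v$.

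If $u$ and $v$ lie in different clouds we are done by the chain construction, with their clouds first and last and the others in any order. If $u,v\in C_j$, I would first take a Hamilton path $P$ of $C_j$ from $u$ to $v$, choose any edge $xy$ of $P$ with $x$ on the $u$-side, and let $x',y'$ be obtained from $x,y$ by swapping positions $1$ and $k$; since $x$ and $y$ are adjacent they have opposite parity, and since their first entries differ the vertices $x',y'$ have opposite parity and lie in two distinct clouds, both different from $C_j$. Crucially, $G(1^k)-C_j$ decomposes exactly like $G(1^k)$ into the remaining $k-1$ clouds, each still inducing a copy of $G(1^{k-1})$ (only matching edges incident to $C_j$ were removed) and still joined pairwise by matchings, so the chain construction supplies a Hamilton path $Q$ of $G(1^k)-C_j$ from $x'$ to $y'$; replacing the edge $xy$ in $P$ by the detour $x,x',\dots,y',y$ running along $Q$ yields a Hamilton path of $G(1^k)$ from $u$ to $v$.

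I expect the main obstacle to be finding a hypothesis robust enough to survive the reduction: plain Hamiltonicity collapses, whereas Hamilton-laceability goes through — but only once one notices that the same-cloud case needs the extra maneuver of splicing a detour through $G(1^k)-C_j$ into an internal Hamilton path, together with the cost-free observation that $G(1^k)-C_j$ retains the clouds-plus-matchings structure. Everything else is parity bookkeeping and checking the small base cases. An entirely different route is to bypass the induction and invoke Ehrlich's explicit algorithm (Algorithm~E in~\cite[Section~7.2.1.2]{MR3444818}), which constructs such a Gray code directly.
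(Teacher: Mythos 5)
Your proof is correct, and the overall plan---strengthen to Hamilton-laceability of $G(1^k)$ and argue by induction on $k$, decomposing $G(1^k)$ into $k$ blocks isomorphic to $G(1^{k-1})$ by fixing one coordinate, then chaining Hamilton paths through the blocks via the matchings between them---is precisely how the paper handles this. The paper does not prove Theorem~\ref{thm:ehrlich} directly (it cites the original sources), but it does reproduce Tchuente's proof of the stronger Theorem~\ref{thm:T} in Section~\ref{sec:ideasP}, and Theorem~\ref{thm:ehrlich} follows from it just as you argue.

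The one genuine difference is which coordinate gets fixed. You always split by the last position $k$; since $u$ and $v$ may agree there, you need the extra ``same-cloud'' maneuver of routing a Hamilton path of $C_j$ from $u$ to $v$, opening an edge $xy$ of that path, and splicing in a Hamilton path of $G(1^k)-C_j$ between $x'$ and $y'$ obtained by swapping positions $1$ and $k$. The paper's version avoids this entirely: since $x\neq y$ as permutations, they must differ at some position $\ihat>1$, and Tchuente fixes exactly such a position $\ihat$, which forces the two endpoints into distinct blocks from the start. That adaptive choice buys a uniform chain construction with no case split, at the mild cost of re-deriving the bijection $\Pi(1^k)^{\ihat,\pi_j}\simeq\Pi(1^{k-1})$ for an arbitrary $\ihat$ rather than a fixed one. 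Your detour trick is a perfectly sound alternative (and in fact generalizes nicely to settings where one cannot choose the fixed coordinate adaptively), but for this theorem the adaptive choice is the cleaner route, and it is the one the paper uses.

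One small point worth making explicit in your write-up: in the chain construction you must check that each $a_t$ is distinct from the chosen $b_t$ (otherwise the induced Hamilton path is degenerate), which follows automatically since they have opposite parity; you implicitly rely on this but never say it. Everything else---the parity bookkeeping, the $(k-2)!\ge 2$ count ensuring both parities appear among candidates for $b_t$, and the observation that $G(1^k)-C_j$ keeps the blocks-plus-matchings structure---checks out.
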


\begin{theorem}[\cite{MR3483129,gregor-muetze-nummenpalo:18}]
\label{thm:mlc}
For any $\alpha\geq 1$, the graph $G(\alpha,\alpha)$ has a Hamilton cycle.
\end{theorem}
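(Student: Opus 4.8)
This statement is a reformulation of the middle levels theorem, so the plan is to exhibit an explicit isomorphism between $G(\alpha,\alpha)$ and the middle two levels of a hypercube and then quote \cite{MR3483129,gregor-muetze-nummenpalo:18}. First I would identify each $(\alpha,\alpha)$-multiset permutation with a binary string $s=s_1\cdots s_n$ of length $n=2\alpha$ having exactly $\alpha$ entries equal to~$1$ (renaming the symbols $1,2$ as $0,1$, as in Section~\ref{sec:comb}), and consider the map $\varphi$ that deletes the first entry, i.e., $\varphi(s):=s_2\cdots s_n$. If $s_1=0$ then $\varphi(s)$ has exactly $\alpha$ ones, and if $s_1=1$ then $\varphi(s)$ has exactly $\alpha-1$ ones; conversely, every binary string of length $2\alpha-1$ with $\alpha$ or $\alpha-1$ ones arises from a unique $s$, obtained by prepending the single bit that restores balanced weight. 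Hence $\varphi$ is a bijection from $\Pi(\alpha,\alpha)$ onto the vertex set of the \emph{middle levels graph}~$M_{2\alpha-1}$, i.e., the subgraph of the $(2\alpha-1)$-dimensional hypercube induced by the two levels $\alpha-1$ and~$\alpha$.

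Next I would verify that $\varphi$ maps edges to edges in both directions. A star transposition applied to~$s$ swaps $s_1$ with some later entry $s_j\neq s_1$; this leaves all entries of the suffix unchanged except the one in position~$j$, which is flipped, so $\varphi(s)$ and $\varphi(s')$ differ in a single coordinate and lie in consecutive levels, hence are adjacent in~$M_{2\alpha-1}$. Conversely, if $\varphi(s)$ lies in level~$\alpha$, so $s_1=0$, its neighbours in $M_{2\alpha-1}$ are exactly the strings obtained by flipping one of its~$\alpha$ ones to a zero, and each such flip is realized by the star transposition swapping the leading~$0$ of~$s$ with the corresponding~$1$; the case $s_1=1$ is symmetric. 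Therefore $\varphi$ is a graph isomorphism $G(\alpha,\alpha)\cong M_{2\alpha-1}$, and by the middle levels theorem \cite{MR3483129,gregor-muetze-nummenpalo:18} the graph $M_{2\alpha-1}$ has a Hamilton cycle, which $\varphi^{-1}$ pulls back to a Hamilton cycle of $G(\alpha,\alpha)$. For $\alpha=1$ the graph $G(1,1)$ is a single edge, which counts as a cycle under the convention fixed above.

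Since the reduction is just a relabelling, there is no real obstacle in it; the entire difficulty of the theorem is inherited from the middle levels theorem itself, whose proof is the genuinely hard input. The only points needing a little care are making the bijection and the edge correspondence precise — in particular checking that, because~$s$ has balanced weight, every admissible single-coordinate flip of the suffix is actually achieved by some star transposition — and not forgetting the degenerate case $\alpha=1$.
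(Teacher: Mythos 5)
Your argument is correct and follows the same route as the paper, which also treats Theorem~\ref{thm:mlc} as a direct consequence of the middle levels theorem via the standard isomorphism between $G(\alpha,\alpha)$ and the middle two levels of the $(2\alpha-1)$-dimensional hypercube (see the remark before Theorem~\ref{thm:M} and the reparametrization in Section~\ref{sec:proof-MM'} where the first entry is suppressed). You simply spell out the bijection and the edge correspondence that the paper leaves implicit, and you correctly handle the degenerate case $\alpha=1$ using the paper's convention.
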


In their paper, Shen and Williams also provided an ad-hoc solution for the first case of their conjecture that is not covered by Theorems~\ref{thm:ehrlich} and~\ref{thm:mlc}, namely a Hamilton cycle in $G(2,2,2)$, which is displayed in Figure~\ref{fig:mperm}~(b).

We approach Conjecture~\ref{conj:SW} by tackling the following even more general question:
For which frequency vectors $\ba=(a_1,\ldots,a_k)$ does the graph~$G(\ba)$ have a Hamilton cycle?
By renaming symbols, we may assume w.l.o.g.\ that the entries of the vector~$\ba$ are non-increasing, i.e.,
\begin{equation}
\label{eq:ai-decreasing}
a_1\geq a_2\geq \cdots\geq a_k.
\end{equation}
We can thus think of the vector~$\ba$ as an integer partition of~$n$.

\subsection{Our results}
\label{sec:results}

For any~$i\in[n]$ and $c\in[k]$, we write $\Pi(\ba)^{i,c}$ for the set of all multiset permutations from~$\Pi(\ba)$ whose $i$th symbol equals~$c$.
Note that every star transposition changes the first entry; see the inner track of each of the two wheels in Figure~\ref{fig:mperm}.
As a consequence, $G(\ba)$ is a $k$-partite graph with partition classes $\Pi(\ba)^{1,1},\ldots,\Pi(\ba)^{1,k}$; see Figure~\ref{fig:Ga}.
Moreover, the partition class~$\Pi(\ba)^{1,1}$ is a largest one because of~\eqref{eq:ai-decreasing}.
This $k$-partition of the graph~$G(\ba)$ is a potential obstacle for the existence of Hamilton cycles and paths.
Specifically, if one partition class is larger than all others combined, then there cannot be a Hamilton cycle, and if the size difference is more than~1, then there cannot be a Hamilton path.

We capture this by defining a parameter~$\Delta(\ba)$ for any integer partition~$\ba=(a_1,\ldots,a_k)$ as
\begin{equation}
\label{eq:Delta}
\Delta(\ba):=n-2a_1=\Big(\sum_{i=2}^k a_i\Big)-a_1.
\end{equation}
We will see that if $\Delta(\ba)<0$, then the partition class~$\Pi(\ba)^{1,1}$ of the graph~$G(\ba)$ is larger than all others combined, excluding the existence of Hamilton cycles.
On the other hand, if $\Delta(\ba)\geq 0$, then every partition class of the graph~$G(\ba)$ is at most as large as all others combined (equality holds if $\Delta(\ba)=0$), which does not exclude the existence of a Hamilton cycle.
The cases with $\Delta(\ba)=0$ lie on the boundary between the two regimes, and they are the hardest in terms of proving Hamiltonicity.
These cases can be seen as generalizations of the middle levels conjecture, namely the case $\ba=(\alpha,\alpha)$ captured by Theorem~\ref{thm:mlc}, which also satisfies~$\Delta(\ba)=0$.

\begin{theorem}
\label{thm:N}
For any integer partition~$\ba=(a_1,\ldots,a_k)$ with $\Delta(\ba)<0$ the graph~$G(\ba)$ does not have a Hamilton cycle, and it does not have a Hamilton path unless $\ba=(2,1)$.
\end{theorem}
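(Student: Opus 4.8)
The plan is to exploit the $k$-partition of $G(\ba)$ into the classes $\Pi(\ba)^{1,c}$, $c\in[k]$, noted above: since every star transposition changes the first entry, each $\Pi(\ba)^{1,c}$ is an independent set, and by~\eqref{eq:ai-decreasing} the class $I:=\Pi(\ba)^{1,1}$ of multiset permutations starting with~$1$ is the largest one. I will show that when $\Delta(\ba)<0$ this class is so large that it blocks Hamiltonicity, using the elementary fact recalled above that an independent set larger than its complement forbids a Hamilton cycle, while an independent set larger than its complement by more than~$1$ forbids a Hamilton path.

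First I would count. Write $M:=|\Pi(\ba)|=\binom{n}{a_1,\ldots,a_k}$. A multiset permutation in $I$ is obtained by fixing the first symbol to be~$1$ and permuting the remaining multiset freely, so $|I|=\binom{n-1}{a_1-1,a_2,\ldots,a_k}=\frac{a_1}{n}M$, and hence $|\Pi(\ba)\setminus I|=\frac{n-a_1}{n}M$. Consequently
\[
  |I|-|\Pi(\ba)\setminus I| \;=\; \frac{2a_1-n}{n}\,M \;=\; \frac{-\Delta(\ba)}{n}\,M ,
\]
which, being a difference of integers, is an integer, and for $\Delta(\ba)<0$ it is manifestly positive. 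This already gives the first assertion: $I$ is strictly larger than its complement, so $G(\ba)$ has no Hamilton cycle.

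For the Hamilton path statement I would use that the left-hand side above is an integer, so it suffices to rule out a Hamilton path whenever it is at least~$2$, i.e.\ whenever $-\Delta(\ba)\cdot M\ge 2n$; since $-\Delta(\ba)\ge1$, it is enough to prove $M\ge 2n$ for all $\ba\ne(2,1)$ with $\Delta(\ba)<0$. I would split on the value of $n-a_1=a_2+\cdots+a_k$. If $n-a_1=1$, then necessarily $k=2$ and $\ba=(a_1,1)$ with $a_1\ge2$; here $M=n=a_1+1$ and $-\Delta(\ba)=a_1-1$, so $-\Delta(\ba)\cdot M=a_1^2-1\ge2n$ exactly when $a_1\ge3$, which isolates the single exceptional vector $\ba=(2,1)$ (and one checks at once that $G(2,1)$ is a path on the three vertices $112,211,121$, so it genuinely is an exception). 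If $n-a_1\ge2$, then $M\ge\binom{n}{a_1}=\binom{n}{n-a_1}\ge\binom{n}{2}$ because $2\le n-a_1<n/2$, while $a_1>n-a_1$ forces $n\ge 2(n-a_1)+1\ge5$; hence $M\ge\binom{n}{2}=\tfrac{n(n-1)}{2}\ge2n$, as required.

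I do not expect a genuine obstacle: the whole argument is a counting argument built on the already-established $k$-partite structure. The only place demanding attention is the Hamilton path bound, where one must pin down that $\ba=(2,1)$ is the unique frequency vector with $\Delta(\ba)<0$ for which the size gap fails to exceed~$1$, and verify separately that this lone vector really does admit a Hamilton path.
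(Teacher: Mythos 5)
Your argument is correct and follows essentially the same route as the paper's: both use the $k$-partition into the independent sets $\Pi(\ba)^{1,c}$, the identity $s_1-\sum_{i\geq 2}s_i=\frac{2a_1-n}{n}M$, and the bound $M\geq\binom{n}{a_1}$; the only difference is that the paper finishes by expanding $\binom{n}{a_1}$ as a product of fractions each $\geq 1$, while you finish by splitting on whether $n-a_1=1$ or $n-a_1\geq 2$. One small wording slip: you write that ``it is enough to prove $M\geq 2n$'', but your own first case ($n-a_1=1$) has $M=n<2n$ and you instead bound $-\Delta(\ba)\cdot M$ directly there — so the accurate statement is that you prove $-\Delta(\ba)\cdot M\geq 2n$ by cases, using $-\Delta(\ba)\geq 1$ only in the second case; the underlying math is fine.
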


For $k=2$ symbols, the condition~$\Delta(\ba)<0$ is equivalent to $a_1>a_2$, i.e., there is no star transposition Gray code for `unbalanced' combinations.

We now discuss the cases~$\Delta(\ba)\geq 0$.
Our first main goal is to reduce all cases with~$\Delta(\ba)>0$ to cases with~$\Delta(\ba)=0$.
For doing so, it is helpful to consider stronger notions of Hamiltonicity.
Specifically, we consider Hamilton-connectedness and Hamilton-laceability, which have been heavily studied (see~\cite{MR527987,MR641233,MR945221,MR1796239,MR1848324,MR2252748}).
A graph is called \emph{Hamilton-connected} if there is a Hamilton path between any two distinct vertices.
A bipartite graph is called \emph{Hamilton-laceable} if there is a Hamilton path between any pair of vertices from the two partition classes.
In general, the graphs~$G(\ba)$ are not bipartite, so we say that~$G(\ba)$ is \emph{1-laceable} if there is a Hamilton path between any vertex in~$\Pi(\ba)^{1,1}$ and any vertex not in~$\Pi(\ba)^{1,1}$, i.e., between any vertex with first symbol~1 and any vertex with first symbol distinct from~1.

This approach is inspired by the following result of Tchuente~\cite{MR683982}, who strengthened Theorem~\ref{thm:ehrlich} considerably.

\begin{theorem}[\cite{MR683982}]
\label{thm:T}
For any $k\geq 4$, the graph~$G(1^k)$ is Hamilton-laceable.
\end{theorem}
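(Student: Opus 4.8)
The plan is to induct on $k\ge 4$, decomposing $G(1^k)$ by the last entry of the permutation. For $c\in[k]$ let $V_c:=\Pi(1^k)^{k,c}$ be the $k$-permutations with last entry~$c$, and put $H_c:=G(1^k)[V_c]$. A star transposition stays inside~$V_c$ exactly when it avoids the last position, so $H_c\cong G(1^{k-1})$ (the permutations of $[k]\setminus\{c\}$ placed in positions $1,\dots,k-1$ under star transpositions among those positions). The edges of $G(1^k)$ between $V_c$ and $V_{c'}$ are the transpositions of positions~$1$ and~$k$; they form a matching between $\{\pi\in V_c:\pi_1=c'\}$ and $\{\pi\in V_{c'}:\pi_1=c\}$, each of size $(k-2)!$. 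Since every star transposition is an odd permutation, $G(1^k)$ is bipartite with classes the even and the odd permutations, each of size $k!/2$, and the induced bipartition of every $H_c$ is again its even/odd split, of size $(k-1)!/2$ per side; a Hamilton path of a balanced bipartite graph joins its two sides, so the induction hypothesis gives, for each~$c$, a Hamilton path of $H_c$ between any even and any odd permutation in~$V_c$. The base case is $k=4$: it must be treated on its own, since $G(1^3)\cong C_6$ is \emph{not} Hamilton-laceable, so there is no free starting point at $k=3$, and one has to check that the recursion never descends below~$k=4$.

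For the inductive step let $k\ge 5$ and fix an even permutation~$s$ and an odd permutation~$t$ of~$[k]$; we build a Hamilton $(s,t)$-path. Assume first $s_k\ne t_k$. Order the symbols as $c_1:=s_k$, then $c_2,\dots,c_{k-1}$ an arbitrary ordering of $[k]\setminus\{s_k,t_k\}$, then $c_k:=t_k$, and construct the path as a concatenation $Q_1\cup\cdots\cup Q_k$, where $Q_i$ is a Hamilton path of $H_{c_i}$ from $u_i$ to $w_i$, with $u_1:=s$ and $w_k:=t$ (legitimate since $s\in V_{c_1}$, $t\in V_{c_k}$) and with $w_i$ joined to $u_{i+1}$ by a connecting edge. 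Each connecting edge is a transposition, so parities alternate along $u_1,w_1,u_2,w_2,\dots$; as~$s$ is even this forces all $w_i$ odd and all $u_i$ even, consistent with $w_k=t$ being odd. It therefore suffices, for $i<k$, to take $w_i$ to be any odd permutation of~$V_{c_i}$ with first entry~$c_{i+1}$ — there are $(k-2)!/2\ge 1$ of these — whereupon $u_{i+1}$, obtained from~$w_i$ by swapping positions~$1$ and~$k$, is even and lies in~$V_{c_{i+1}}$; the endpoints $u_i\ne w_i$ then have opposite parity, so each $Q_i$ exists by the induction hypothesis and the concatenation is the desired Hamilton path. (The same alternating-chain construction works for any opposite-parity pair of endpoints, not just an even/odd one.)

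Now assume $s_k=t_k=:c$, so $s,t\in V_c$. The induction hypothesis yields a Hamilton $(s,t)$-path~$Q$ of~$H_c$; as $Q$ has $(k-1)!-1$ edges of which at most two meet~$\{s,t\}$, it has an edge $e=\{x,x^+\}$ with $x,x^+\notin\{s,t\}$, say with $x$ before $x^+$ along~$Q$. Deleting~$e$ splits~$Q$ into an $(s,x)$-subpath~$Q'$ and an $(x^+,t)$-subpath~$Q''$ covering~$V_c$. Since $\{x,x^+\}$ is a star-transposition edge, $c':=x_1$ and $c'':=(x^+)_1$ are distinct and lie in $[k]\setminus\{c\}$; let $u,u'$ arise from $x,x^+$ by swapping positions~$1$ and~$k$, so $u\in V_{c'}$ and $u'\in V_{c''}$ (each with first entry~$c$) and $u,u'$ have opposite parity. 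The subgraph on $\bigcup_{d\ne c}V_d$ consists of $k-1\ge 4$ copies of $G(1^{k-1})$ joined by matchings, and $u,u'$ lie in distinct copies with opposite parity, so the chain construction of the previous paragraph produces a Hamilton path~$R$ of $\bigcup_{d\ne c}V_d$ from~$u$ to~$u'$. Then
\[
s\ \xrightarrow{\ Q'\ }\ x\ \longrightarrow\ u\ \xrightarrow{\ R\ }\ u'\ \longrightarrow\ x^+\ \xrightarrow{\ Q''\ }\ t
\]
is a Hamilton $(s,t)$-path of $G(1^k)$, completing the induction.

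The main obstacle is the base case~$k=4$: because $G(1^3)\cong C_6$ is not Hamilton-laceable there is no free base case at $k=3$, so $G(1^4)$ — a $3$-regular bipartite graph on $24$ vertices — must be verified directly, e.g.\ by a computer search or by exploiting its large automorphism group to reduce to a handful of pairs $(s,t)$. Everything else is bookkeeping: tracking parities along the alternating chain, and checking that every appeal to the induction hypothesis is at level $k-1\ge 4$ and between opposite-parity endpoints. The counting margins are comfortable for all $k\ge 5$, with at least $(k-2)!/2\ge 1$ admissible transit vertices of each parity and at least $(k-1)!-3\ge 21$ edges of~$Q$ avoiding~$\{s,t\}$.
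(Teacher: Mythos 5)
Your proof is correct, and its core — decomposing $G(1^k)$ into $k$ copies of $G(1^{k-1})$ by fixing a symbol at one position, then chaining Hamilton paths of the copies together with parity tracking so that the induction hypothesis applies — is exactly the approach of the paper (sketched in Section~1.5.2). The one real divergence is which position gets fixed. The paper fixes a position $\hat i>1$ at which the two given endpoints $x,y$ \emph{differ}; such a position always exists since $x\neq y$, so the endpoints automatically land in distinct copies and a single chain argument handles every pair. You always fix the last position, so you are forced to treat $s_k=t_k$ separately, and you do so via an extra path-splitting device: break a Hamilton $(s,t)$-path of $H_c$ at an interior edge $\{x,x^+\}$, step across to $u,u'$ in two distinct outside copies, run the chain argument on $\bigcup_{d\neq c}V_d$, and reattach. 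That workaround is sound — you correctly check $c'\neq c''$, $u\neq u'$, opposite parities, and the $(k-1)!-3\geq 1$ margin for a spare interior edge — but it is extra machinery that the paper's smarter choice of $\hat i$ makes unnecessary. Your parity bookkeeping via the sign of the whole permutation is equivalent to the paper's parity-of-inversions-after-removing-the-fixed-symbol condition, since fixing both a position and the symbol placed there shifts the parity by a constant.
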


%This result holds even more generally for arbitrary transposition trees.
The key insight is that proving a stronger property makes the proof easier and shorter, because the inductive statement is more powerful and flexible; see Section~\ref{sec:ideasP} below.
Encouraged by this, we raise the following conjecture about graphs~$G(\ba)$ with $\Delta(\ba)=0$.
It is another natural and far-ranging generalization of the middle levels conjecture, which we support by extensive computer experiments and by proving some special cases.

\begin{conjecture}
\label{conj:0}
For any integer partition~$\ba=(a_1,\ldots,a_k)$ with $\Delta(\ba)=0$ the graph $G(\ba)$ is Hamilton-1-laceable, unless $\ba=(2,2)$.
\end{conjecture}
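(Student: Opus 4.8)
The plan is to prove the stronger $1$-laceability statement by induction on $n=n_\ba$, exploiting the extra flexibility of the inductive hypothesis in the gluing arguments, in the same spirit in which Theorem~\ref{thm:T} strengthens (and simplifies the proof of) Theorem~\ref{thm:ehrlich}. Throughout we may assume $\ba$ is sorted as in~\eqref{eq:ai-decreasing}, so that $\Delta(\ba)=0$ means $a_1=\tfrac n2=a_2+\cdots+a_k$, and the partition class $\Pi(\ba)^{1,1}$ has exactly half the vertices.

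\textbf{Base case.} The smallest instances are $k=2$, i.e.\ $\ba=(\alpha,\alpha)$, where $G(\alpha,\alpha)$ is bipartite and $1$-laceability is exactly Hamilton-laceability. For $\alpha=1$ this is the single edge $G(1,1)$, and $\alpha=2$ gives the $6$-cycle $G(2,2)$, which is the genuine exception (a $C_6$ has no Hamilton path between its two antipodal vertices). For $\alpha\geq 3$ we propose to upgrade the proof of the middle levels theorem (Theorem~\ref{thm:mlc}): the known constructions assemble the Hamilton cycle by gluing the cycles of a canonical cycle factor, and there is enough freedom in the choice of gluings to realize a Hamilton path between any prescribed pair of vertices of opposite colour. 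Carrying this out -- i.e.\ showing that the middle levels graph is Hamilton-laceable -- is itself a substantial step.

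\textbf{Inductive step.} For $k\geq 3$ split $G(\ba)$ according to the symbol in the last position. The set $\mathcal A:=\Pi(\ba)^{n,1}$ of strings ending in $1$ induces a copy of $G(\ba')$ with $\ba'=(a_1-1,a_2,\ldots,a_k)$ (delete the last entry); here $\Delta(\ba')=\Delta(\ba)+1=1>0$ and $n_{\ba'}<n$, so the constructions for the regime $\Delta>0$ -- which only invoke $\Delta=0$ instances of strictly smaller size, available by the induction, so there is no circularity -- endow $\mathcal A$ with a highly flexible (Hamilton-connected-type) Gray code structure. Since $a_1=\tfrac n2$, the set $\mathcal A$ is exactly half of $\Pi(\ba)$, and the complement $\mathcal B:=\Pi(\ba)\setminus\mathcal A$ of strings ending in some $v\geq 2$ is the disjoint union, over $v=2,\ldots,k$, of copies of $G(a_1,\ldots,a_v-1,\ldots,a_k)$, joined by the ``fold'' edges that swap positions $1$ and $n$. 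The idea is to build the required Hamilton path as a backbone in $\mathcal A$ together with excursions into $\mathcal B$ across the fold edges, arranged so that the excursions cover $\mathcal B$ completely and their re-entry points into $\mathcal A$ are compatible with a single spanning system of subpaths there; the first/last excursion is adjusted to place the two prescribed endpoints.

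\textbf{Main obstacle.} The difficulty lies entirely in $\mathcal B$. Each of its blocks $G(a_1,\ldots,a_v-1,\ldots,a_k)$ has $\Delta=\Delta(\ba)-1=-1$, so -- except for the tiny case $\ba=(2,1,1)$, where a block degenerates to $G(2,1)$ -- Theorem~\ref{thm:N} forbids a Hamilton path in any block, and the parity/partition-size argument behind it shows that covering $\mathcal B$ by vertex-disjoint paths requires on the order of $|\Pi(\ba)|/n$ of them. One must therefore decompose $\mathcal B$ into many short subpaths with controlled endpoints and weave them, via the fold edges, into the flexible backbone living in $\mathcal A$, all while tracking parities. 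There are enough fold edges (about $|\Pi(\ba)|/4$ of them) to support this in principle, but orchestrating the weaving globally is exactly the hard part; it is the same kind of obstruction that made the case $k=2$ (the middle levels conjecture) difficult. This is why, despite the computer evidence, we can push the argument through only for restricted families -- small $\ba$, and $\ba=\alpha^k$ for small $\alpha$ or small $k$ -- and state the general statement only as Conjecture~\ref{conj:0}.
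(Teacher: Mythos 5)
The statement is Conjecture~\ref{conj:0}, and the paper does not prove it; your proposal is honest about this, since it ends by explaining why the argument only goes through for restricted families. Your base-case plan --- showing the middle levels graph is Hamilton-laceable via a cycle factor and flexible gluings --- is exactly what the paper carries out as Theorem~\ref{thm:M}. However, the inductive scheme you sketch for $k\geq 3$ (split by the last symbol into a block $\mathcal A$ with $\Delta=1$ and blocks in $\mathcal B$ each with $\Delta=-1$, then weave excursions from $\mathcal A$ into $\mathcal B$) is not the route the paper takes for its one positive result at $k=3$. For Theorem~\ref{thm:M'} ($\ba=(\alpha,\alpha-1,1)$) the paper instead builds a cycle factor $\cC_n(\ba)$ directly on the whole graph $G_n(\ba)$, generalizing the middle-levels factor to vertex-labeled plane trees, and then glues its cycles with 6- or 12-cycle gadgets. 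This global cycle-factor approach sidesteps the problem you identify --- that each block with $\Delta=-1$ admits no Hamilton path and must be chopped into $\Theta(|\Pi(\ba)|/n)$ subpaths --- by never attempting a long spanning path inside any single block; the short subpaths you want are supplied automatically as arcs of the factor cycles. Your diagnosis of the main obstacle is correct, and the analogous difficulty is what stops the paper too: once the plane trees are labeled, a single gluing cycle can merge $2$, $3$, $5$, or $6$ factor cycles at once, and controlling those overlaps (see Figure~\ref{fig:check}) is exactly where the paper's argument for general $\ba$ with $\Delta(\ba)=0$ breaks down, leaving the statement a conjecture supported by the computer checks in Table~\ref{tab:base}.
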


The exceptional graph~$G(2,2)$ mentioned in this conjecture is a 6-cycle; see Figure~\ref{fig:Ga}.
Assuming the validity of this conjecture, we settle all cases~$G(\ba)$ with $\Delta(\ba)>0$ in the strongest possible sense.
While being a conditional result, the main purpose of this theorem is to reduce all cases~$\Delta(\ba)\geq 0$ to the boundary cases~$\Delta(\ba)=0$.

\begin{theorem}
\label{thm:P}
Conditional on Conjecture~\ref{conj:0}, for any integer partition~$\ba=(a_1,\ldots,a_k)$ with $\Delta(\ba)>0$ the graph~$G(\ba)$ is Hamilton-connected, unless $\ba=(1,1,1)$ or $\ba=1^k$ for $k\geq 4$, and possibly unless $\ba=(\alpha,\alpha,1)$ for $\alpha\geq 3$.
\end{theorem}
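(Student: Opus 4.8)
\medskip
\noindent\emph{Proof strategy (sketch).}
The plan is to argue by strong induction on~$n$, proving that $G(\ba)$ is Hamilton\nobreakdash-connected for every non-exceptional partition~$\ba$ with $\Delta(\ba)>0$, invoking Conjecture~\ref{conj:0} whenever the recursion descends to a partition with $\Delta=0$, and invoking Theorem~\ref{thm:T} whenever it produces a graph~$G(1^k)$. Because the recursion will inevitably create graphs for which only weaker Hamiltonicity is available (Hamilton\nobreakdash-$1$\nobreakdash-laceability when $\Delta=0$, Hamilton\nobreakdash-laceability for~$G(1^k)$), it is necessary to strengthen the inductive statement — asking in addition for suitable \emph{$2$-path systems} with prescribed endpoints, i.e.\ two vertex\nobreakdash-disjoint paths covering all of~$G(\ba)$ and joining two prescribed endpoint pairs. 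This is the phenomenon highlighted in Section~\ref{sec:ideasP}: a stronger hypothesis is more flexible and hence easier to propagate.

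The engine is a decomposition of~$G(\ba)$ obtained by conditioning on the last symbol. Since by the symmetry of star transpositions all positions $2,\dots,n$ are interchangeable, writing $B_c$ for the set of multiset permutations whose last entry is~$c$, deleting the last position identifies $B_c$ with~$\Pi(\ba-e_c)$, where $\ba-e_c$ denotes~$\ba$ with the $c$th entry decreased by one (and that symbol dropped if it becomes zero), and the star transpositions internal to~$B_c$ are exactly those of~$G(\ba-e_c)$. The edges leaving~$B_c$ are the transpositions of position~$1$ with position~$n$, and for each pair $c\neq c'$ they form a perfect matching between the partition class $\Pi(\ba-e_c)^{1,c'}$ inside~$B_c$ and the partition class $\Pi(\ba-e_{c'})^{1,c}$ inside~$B_{c'}$. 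Thus $G(\ba)$ is a ``necklace'' of the $k$ graphs~$G(\ba-e_c)$ strung along these matchings. The decisive arithmetic is that $\Delta(\ba-e_c)=\Delta(\ba)-1$ for every~$c$, except that $\Delta(\ba-e_1)=\Delta(\ba)+1$ when $a_1>a_2$; hence when $\Delta(\ba)\ge 1$ every block has $\Delta\ge 0$, and when $a_1>a_2$ at least one block has $\Delta>0$ and is therefore Hamilton\nobreakdash-connected by induction.

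To build a Hamilton path between prescribed vertices $s\in B_c$ and $t\in B_{c'}$, I would fix a linear order on the blocks beginning with~$B_c$ and ending with~$B_{c'}$, traverse each block by a Hamilton path whose endpoints lie in the partition classes dictated by the incoming and outgoing matchings (with $s$ and~$t$ themselves as the free endpoints of the first and last blocks), and concatenate. Blocks with $\Delta>0$ accept arbitrary endpoints; for a block with $\Delta=0$ the two required endpoints are forced to straddle the class~$\Pi(\ba-e_c)^{1,1}$ (Hamilton\nobreakdash-$1$\nobreakdash-laceability, Conjecture~\ref{conj:0}), and for a block isomorphic to~$G(1^k)$ they must straddle the even/odd bipartition (Hamilton\nobreakdash-laceability, Theorem~\ref{thm:T}); one checks that the freedom in choosing which matched vertices to use lets one route around these constraints. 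When $s$ and $t$ lie in the same block the path must visit that block in two separate segments, which is exactly where the strengthened statement about $2$-path systems is applied. Finally, a handful of small partitions admitting no usable reduction — for instance $(2,2,1)$ and $(2,2,2)$, whose blocks include the forbidden graph~$G(2,2)$ and offer no block with $\Delta>0$ — are settled directly by explicit constructions or a computer search, while $G(1,1,1)$ is a $6$-cycle and $G(1^k)$ for $k\ge4$ is bipartite, so these are correctly excluded.

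The main obstacle is precisely the endpoint/parity bookkeeping in this last step. It closes comfortably whenever some block has $\Delta>0$, since such a block can absorb an awkwardly placed prescribed endpoint; this is guaranteed when $a_1>a_2$. The difficulty concentrates on the case $a_1=a_2$, where every block has $\Delta=0$: if in addition one of the graphs~$G(\ba-e_c)$ is bipartite — which in this regime happens precisely when $k=3$ and $a_3=1$, so that $G(\ba-e_3)=G(a_1,a_2)$ is a two\nobreakdash-symbol graph — then the combination of that bipartite block with the mere Hamilton\nobreakdash-$1$\nobreakdash-laceability available for the other blocks is too rigid to accommodate a prescribed endpoint with the ``wrong'' first symbol, even allowing $2$-path systems. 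This is exactly the family $\ba=(\alpha,\alpha,1)$; since $\alpha\le2$ is small enough to verify by hand or computer and $(1,1,1)$ is the $6$-cycle, the exception is stated only for $\alpha\ge3$. The remaining work is to pin down the strengthened inductive statement so that it is preserved by the necklace construction, to verify that outside the listed exceptions an admissible block order together with a consistent assignment of endpoint classes always exists, and to dispatch the finitely many small base cases.
\medskip
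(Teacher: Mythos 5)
Your proposal fixes the same decomposition as the paper: condition on one of the positions $2,\dots,n$, split $G(\ba)$ into $k$ blocks isomorphic to $G(\bb)$ for the partitions $\bb\precdot\ba$, and thread Hamilton paths through the blocks in a ``necklace'' while watching how $\Delta$ changes under $\bb\precdot\ba$. That much matches the paper's Section~\ref{sec:ideasP} and the core Lemmas~\ref{lem:H2} and~\ref{lem:H1}.

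However, your choice of \emph{strengthened inductive statement} diverges from the paper's, and this is where a genuine gap opens. You propose 2-path systems, mainly to handle the case where the two prescribed endpoints $s$ and $t$ land in the same block; but that situation never arises in the paper, because the position $\ihat$ is always chosen with $s_\ihat\neq t_\ihat$ (possible whenever $s\neq t$), which automatically places $s$ and $t$ in different blocks. The actual bottleneck is elsewhere. Since $G(\alpha,\alpha,1)$ has \emph{only} $\Delta=0$ blocks, Hamilton-connectedness is unavailable for it; the paper's substitute is the bespoke property $\cL_{12}$ (Hamilton paths between specific pairs of first symbols, subject to a prescribed pattern at one extra position $\ihat$), and this property is then propagated through Lemmas~\ref{lem:L12}, \ref{lem:H2'}, \ref{lem:H2''}, \ref{lem:H1'} and~\ref{lem:H1''} to settle the entire halo of partitions $(\alpha,\alpha,2)$, $(\alpha,\alpha,1,1)$, $(\alpha,\alpha-1,2)$, $(\alpha,\alpha-1,1,1)$ that have $(\alpha,\alpha,1)$ (or $(\alpha-1,\alpha-1,1)$) as a block. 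Your sketch flags $(\alpha,\alpha,1)$ itself, but does not recognize that this one defect contaminates several neighbouring families, each of which needs its own gluing argument. It also does not specify how ``the freedom in choosing which matched vertices to use'' is to be exploited: in the paper this is a nontrivial system-of-distinct-representatives argument via Hall's theorem (Lemma~\ref{lem:matching}), needed because each $\Delta=0$ block forces one endpoint to have first symbol~$1$ and the other not, and these parity constraints interact across the necklace. Whether a ``2-path system'' strengthening could replace $\cL_{12}$ is not obviously true or false, but as stated it neither resolves the $(\alpha,\alpha,1)$ obstruction nor relieves the endpoint bookkeeping; asserting that it would close is speculation.

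Finally, the diagnosis of the $(\alpha,\alpha,1)$ exception is slightly off. Bipartiteness of $G(\alpha,\alpha)$ is incidental; the operative fact is simply that for $\Delta(\ba)=1$, the condition $a_1=a_2$ forces $\sum_{i\ge 3}a_i=1$, hence $\ba=(\alpha,\alpha,1)$, and this is exactly when \emph{every} $\bb\precdot\ba$ has $\Delta(\bb)=0$, so no block can absorb an awkward endpoint. Your first condition already characterises the exceptional family; invoking bipartiteness of one block as a second cause muddies the analysis and does not generalise (e.g.\ it does not explain why $(\alpha,\alpha,2)$ and $(\alpha,\alpha,1,1)$ need special handling even though they have no bipartite block and do have $\Delta>0$ blocks).
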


The dependence of Theorem~\ref{thm:P} on Conjecture~\ref{conj:0} can be captured more precisely.
Specifically, $G(\ba)$ with $\Delta(\ba)>0$ is shown to be Hamilton-connected, assuming that $G(\bb)$ with $\Delta(\bb)=0$ is Hamilton-1-laceable for all integer partitions~$\bb$ that are majorized componentwise by~$\ba$.

The three exceptions mentioned in Theorem~\ref{thm:P} are well understood:
Specifically, $G(1,1,1)$ is a 6-cycle; see Figure~\ref{fig:Ga}.
Furthermore, $G(1^k)$ for $k\geq 4$ is Hamilton-laceable by Theorem~\ref{thm:T}.
Lastly, we will show that $G(\alpha,\alpha,1)$ for $\alpha\geq 3$ satisfies a variant of Hamilton-laceability, which also guarantees a Hamilton cycle.
In fact, we believe that $G(\alpha,\alpha,1)$ is Hamilton-connected, but we cannot prove it.

We provide the following evidence for Conjecture~\ref{conj:0}.
First of all, with computer help we verified that $G(\ba)$ is indeed Hamilton-1-laceable for all integer partitions~$\ba\neq (2,2)$ with $\Delta(\ba)=0$ that satisfy $n\leq 8$, i.e., for $\ba\in\{(1,1),(2,1,1),(3,3),(3,2,1),(3,1,1,1),(4,4),(4,3,1),(4,2,2),\allowbreak (4,2,1,1),(4,1,1,1,1)\}$.
Furthermore, we prove the case of $k=2$ symbols unconditionally.
Note that for $k=2$, Hamilton-1-laceability is the same as Hamilton-laceability.
Recall that $G(\alpha,\alpha)$ is isomorphic to the subgraph of the $(2\alpha-1)$-dimensional hypercube induced by the middle two levels, so the following result is a considerable strengthening of Theorem~\ref{thm:mlc}, the middle levels theorem.

\begin{theorem}
\label{thm:M}
For any $\alpha\geq 3$, the graph~$G(\alpha,\alpha)$ is Hamilton-laceable.
\end{theorem}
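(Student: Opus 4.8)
The plan is to prove Hamilton-laceability of $G(\alpha,\alpha)$ by induction on~$\alpha$, using the recursive structure of the middle levels graph. Recall that $G(\alpha,\alpha)$ has as vertices the bitstrings of length $n=2\alpha$ with exactly $\alpha$ ones, and two vertices are adjacent iff they differ by a star transposition, i.e.\ by swapping the first bit with a later bit of opposite value. The two partition classes are $\Pi(\alpha,\alpha)^{1,1}$ (strings starting with~$1$) and $\Pi(\alpha,\alpha)^{1,2}$ (strings starting with~$0$). A natural decomposition is to split $G(\alpha,\alpha)$ according to the \emph{second} bit: strings of the form $x1z$ and strings of the form $x0z$, where $x\in\{0,1\}$ is the first bit and $z$ is the suffix of length $n-2$. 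Inside the block with second bit fixed, moving the first bit around restricted to positions $3,\dots,n$ gives a copy of $G$ on the suffix, i.e.\ of $G(\alpha-1,\alpha)$ or $G(\alpha,\alpha-1)$; these satisfy $\Delta=-1<0$ and are not Hamilton-laceable, so one cannot simply glue two smaller laceable pieces. Instead I would exploit that the two blocks are joined by a perfect matching of ``swap the first two bits'' edges, and design the path to alternate between the two blocks using a sufficiently rich supply of such crossing edges.

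Concretely, the cleanest route is probably the approach already used to prove the middle levels theorem and its strengthenings (cf.~\cite{MR3483129,gregor-muetze-nummenpalo:18}): view $G(\alpha,\alpha)$ as built from a family of disjoint cycles coming from a known symmetric-chain-like or ``rotation'' structure, and then connect these cycles into a single Hamilton path with prescribed endpoints by performing local exchanges. Here I would (i) fix the desired endpoints $u\in\Pi(\alpha,\alpha)^{1,1}$ and $v\in\Pi(\alpha,\alpha)^{1,2}$; (ii) reduce to a canonical pair of endpoints using the automorphisms of $G(\alpha,\alpha)$ — note that cyclic rotations of positions $2,\dots,n$ act as automorphisms, as does complementation combined with a suitable relabeling, which gives enough symmetry to assume $v$ has a convenient form such as $v = 0 1^{\alpha}0^{\alpha-1}$ or the reverse; (iii) take the known Hamilton \emph{cycle} in $G(\alpha,\alpha)$ from Theorem~\ref{thm:mlc} (or rather the more flexible family of Hamilton cycles produced by the constructions in~\cite{gregor-muetze-nummenpalo:18}) and surgically modify it near $u$ and $v$ to open it into a path with exactly those endpoints; (iv) for the finitely many small values of~$\alpha$ not reachable by the induction base, invoke the computer verification already mentioned in the paper (which covers $n\le 8$, i.e.\ $\alpha\le 4$). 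Step~(iii) is where the real content lies, and I would carry it out by an inductive claim slightly stronger than laceability — for instance, laceability together with the ability to prescribe that the Hamilton path uses, or avoids, a specified crossing edge between the two second-bit blocks — so that the induction has enough room to connect the two halves.

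The induction step itself would go as follows. Write every vertex as $b_1 b_2 w$ with $b_1,b_2\in\{0,1\}$ and $w$ a bitstring of length $n-2=2(\alpha-1)$. Partition the vertex set into $V_{00},V_{01},V_{10},V_{11}$ according to $(b_1,b_2)$. The star-transposition edges split into: edges swapping $b_1$ with position $\ge 3$, which live inside $V_{0\ast}\cup V_{1\ast}$ with $b_2$ fixed and induce, on $V_{01}\cup V_{11}$, a copy of $G(\alpha-1,\alpha)$ and on $V_{00}\cup V_{10}$ a copy of $G(\alpha,\alpha-1)$; and the single crossing edge-type swapping $b_1$ with $b_2$, which is a matching between $V_{01}\leftrightarrow V_{10}$. (There are no edges between $V_{00}$ and $V_{11}$, nor within a single $V_{b_1b_2}$, consistent with bipartiteness.) So $G(\alpha,\alpha)$ is two copies of a $\Delta=-1$ graph glued along a perfect matching. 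The strategy is to choose, for each copy, a collection of vertex-disjoint paths covering all its vertices (a \emph{path cover}), with endpoints lying in the matched classes $V_{01},V_{10}$, and then to stitch these path segments together across the matching into one Hamilton path from $u$ to $v$. This reduces laceability of $G(\alpha,\alpha)$ to a statement about compatible path covers of $G(\alpha\pm 1,\alpha\mp 1)$, which in turn one controls using the structure of those graphs (each is itself a union of ``necklace'' cycles indexed by the periodic rotation classes of~$w$, as in the standard middle-levels machinery) and, recursively, using the laceability of the genuinely balanced graph $G(\alpha-1,\alpha-1)$ sitting inside.

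The main obstacle, as usual in middle-levels-type arguments, is step~(iii)/the stitching: guaranteeing that the two path covers can be merged across the matching into a \emph{single} path rather than several disjoint ones, while simultaneously pinning down the two endpoints $u$ and~$v$ and not losing any vertex. This parity/connectivity bookkeeping is delicate because the natural inductive object ($G(\alpha-1,\alpha)$) is \emph{not} laceable — its own partition is unbalanced — so one must phrase the inductive hypothesis at the level of path systems with prescribed endpoints in the matched classes, and verify a matching/flow condition ensuring the merge closes up. Handling the endpoint~$v$ when it lies in the ``short'' block (second bit forced) versus the generic block, and handling the degenerate periodic suffixes $w$ (which produce short necklace cycles and thus fragile local structure), are the places I expect to need the most care; the base cases $\alpha\le 4$ are dispatched by the stated computer check, and everything else should follow from the recursive path-cover framework together with Theorem~\ref{thm:mlc}.
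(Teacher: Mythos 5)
Your proposal identifies the right high-level ingredients---a rotation/necklace cycle factor, automorphism reduction of endpoint pairs, and local surgery to join cycles---but it does not carry any of these to a proof, and the concrete plan you develop in most detail (induction via the second-bit decomposition) is a dead end that you yourself flag without resolving.

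Concretely, the decomposition into $V_{00}\cup V_{01}\cup V_{10}\cup V_{11}$ glues two copies of the $\Delta=-1$ graph $G(\alpha,\alpha-1)$ along a perfect matching, and the ``stitching'' step---arranging path covers of two unbalanced, non-Hamiltonian pieces so they merge across the matching into a \emph{single} Hamilton path with \emph{prescribed} endpoints---is exactly where the whole difficulty lives. That is not bookkeeping; it is the theorem. Your proposal ends by saying this is where you ``expect to need the most care,'' which is accurate, but it leaves the step entirely open, so there is a genuine gap.

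The paper's proof does not induct on $\alpha$ at all; it proves laceability for each $\alpha\ge 5$ directly and uses the computer check only for $\alpha\in\{3,4\}$. The automorphism reduction is also different from what you propose: rather than cyclic rotation and complementation to normalize one endpoint, Lemma~\ref{lem:auto} shows that any two pairs $(x,y)$ with the same Hamming distance $d(x,y)$ are equivalent under an automorphism of $G_n$, which reduces laceability to $n$ canonical cases, one per odd Hamming distance. For each such case the proof builds one explicit $\cC_n$-alternating path $P$ (Lemma~\ref{lem:alt-path}), takes the symmetric difference $P\bigtriangleup\cC_n$ with the plane-tree cycle factor $\cC_n$ of Lemma~\ref{lem:factor}, and then joins all remaining cycles by adding gluing $6$-cycles from Lemmas~\ref{lem:6cycles} and~\ref{lem:sptree}, carried across by the cycle-factor automorphism of Lemma~\ref{lem:auto-Cn} so that they are edge-disjoint from $P$. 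This is global surgery on a cycle factor parametrized by $d$, not surgery ``near $u$ and $v$'' on a pre-existing Hamilton cycle. If you want to push a proof through, the cycle-factor-plus-alternating-path framework you gesture at in step (iii) is the one that works; the second-bit/path-cover induction as you describe it does not, precisely because the inductive objects are not laceable and no compatible path-cover statement is formulated, let alone proved.
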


We also have the following (unconditional) result for $k=3$ symbols.

\begin{theorem}
\label{thm:M'}
For any $\alpha\geq 2$, the graph~$G(\alpha,\alpha-1,1)$ has a Hamilton cycle.
\end{theorem}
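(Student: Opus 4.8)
The plan is to reduce the statement about $G(\alpha,\alpha-1,1)$ to the middle levels theorem (Theorem~\ref{thm:mlc}), or rather to its laceable strengthening (Theorem~\ref{thm:M}), by exploiting the structure imposed by the unique symbol~$3$. Observe that $\Delta(\alpha,\alpha-1,1)=\alpha+(\alpha-1)+1-2\alpha=0$, so we are squarely in the boundary regime and a Hamilton cycle is the best one can hope for. The single occurrence of the symbol~$3$ acts as a ``marker'': every vertex of $G(\alpha,\alpha-1,1)$ has the symbol~$3$ in exactly one of the positions $1,\ldots,n$, where $n=2\alpha$. I would partition $V(G(\alpha,\alpha-1,1))$ into classes $V_1,\ldots,V_n$, where $V_i$ collects the multiset permutations whose $i$th entry is~$3$. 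Deleting the~$3$ and reading off the remaining $n-1$ entries gives a bijection between $V_i$ and the set of $(\alpha,\alpha-1)$- or $(\alpha-1,\alpha)$-combinations of length $2\alpha-1$ (the multiset permutations of $\{1^\alpha,2^{\alpha-1}\}$), which is exactly the vertex set appearing in Theorem~\ref{thm:M} with the two levels of the $(2\alpha-1)$-cube.

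The key structural point is how star transpositions interact with this partition. A star transposition swaps position~$1$ with some position~$j\in\{2,\ldots,n\}$. If neither position holds the~$3$, the move stays within one class $V_i$ and, under the bijection above, corresponds exactly to a star transposition on the underlying combination of length $2\alpha-1$; hence the subgraph of $G(\alpha,\alpha-1,1)$ induced on each $V_i$ with $i\ge 2$ is isomorphic to $G(\alpha,\alpha-1)$ or $G(\alpha-1,\alpha)$ restricted to the positions other than the first — more precisely, to the ``middle levels graph'' $G(\alpha,\alpha-1)$ or its mirror, which by Theorem~\ref{thm:M} (for $\alpha\ge 3$) is Hamilton-laceable, and for $\alpha=2$ is handled by inspection (the graph $G(2,1,1)$ is small, cf.\ Figure~\ref{fig:Ga}). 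On the other hand, the moves that cross between classes are exactly those where position~$1$ or position~$j$ holds the~$3$: swapping the first entry with a position holding the~$3$ sends $V_j\to V_1$, and swapping a~$3$ in position~$1$ with a later position $j$ sends $V_1\to V_j$. So $V_1$ is a ``hub'' connected to every other class, and for $i,j\ge 2$ there is no direct edge between $V_i$ and $V_j$. I would therefore build the Hamilton cycle as follows: traverse $V_1$ along a Hamilton path, and whenever convenient exit into some $V_j$ via a ``move the~$3$'' edge, run a Hamilton path through $V_j$ (guaranteed by Theorem~\ref{thm:M} with prescribed endpoints, using laceability to match up parities and the endpoints dictated by the entry/exit edges), and return to $V_1$. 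The bipartite-compatibility bookkeeping — checking that the entry and exit vertices in each $V_j$ lie in opposite partition classes of the middle levels graph so that a Hamilton path between them exists — is routine but must be done carefully.

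The main obstacle is the combinatorial scheduling: $V_1$ itself is not a copy of a middle levels graph (it is the set of strings with~$3$ in front, i.e.\ it looks like $3$ followed by an $(\alpha,\alpha-1)$-combination of length $2\alpha-1$, but edges inside $V_1$ require the first entry to stay~$3$, which is impossible — every star transposition from a vertex of $V_1$ leaves $V_1$!). So in fact $V_1$ is an independent set, and each vertex of $V_1$ has all its neighbors spread across $V_2,\ldots,V_n$. This means the cycle must alternate: $V_1$, some $V_j$, $V_1$, some $V_{j'}$, and so on, visiting each of $V_2,\ldots,V_n$ exactly once in between, with $|V_1|=\binom{2\alpha-1}{\alpha-1}$ forcing a perfect matching-like structure between consecutive $V_1$-visits and the $V_j$-blocks. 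The real work is to show these local Hamilton-path requests on the $V_j$'s can be satisfied simultaneously with consistent endpoints; I expect to set this up as an auxiliary ``quotient'' multigraph on the classes $\{V_1,\ldots,V_n\}$ and find an Eulerian-type structure there, then lift it using Theorem~\ref{thm:M}. Getting the parity/endpoint constraints to line up across all blocks — and separately dispatching the small base case $\alpha=2$ by hand — is where the care is needed; everything else is bijective translation.
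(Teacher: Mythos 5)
Your proposed decomposition hits a fatal obstacle at the very first step, before any of the scheduling or parity bookkeeping comes into play.

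You partition the vertex set of $G(\alpha,\alpha-1,1)$ by the position of the unique symbol~$3$, and you claim that for $j\ge 2$ the induced subgraph on $V_j$ is (isomorphic to) the middle levels graph, so that Theorem~\ref{thm:M} supplies Hamilton paths with prescribed endpoints. This identification is wrong. Fixing symbol~$3$ at position~$j\ge 2$ and deleting that position yields strings of length~$2\alpha-1$ with $\alpha$ ones and $\alpha-1$ twos, i.e.\ the induced subgraph is $G(\ba)^{j,3}\simeq G(\alpha,\alpha-1)$. This is \emph{not} the middle levels graph: the middle levels graph of the $(2\alpha-1)$-cube is $G(\alpha,\alpha)$, whose vertices (after removing the first symbol) lie in two adjacent Hamming levels of size $\binom{2\alpha-1}{\alpha-1}+\binom{2\alpha-1}{\alpha}$, whereas the vertex set of~$V_j$ has size~$\binom{2\alpha-1}{\alpha}$ and sits in a single Hamming level. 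The parameter $\Delta(\alpha,\alpha-1)=-1<0$, and by Theorem~\ref{thm:N} the graph $G(\alpha,\alpha-1)$ does \emph{not} have a Hamilton path for $\alpha\ge 3$ (the partition class $\Pi(\alpha,\alpha-1)^{1,1}$ exceeds the other one by $\binom{2\alpha-2}{\alpha-1}-\binom{2\alpha-2}{\alpha}\ge 2$). So the building blocks you want to thread Hamilton paths through are exactly the ``unbalanced'' graphs that the paper shows are untraceable, and Theorem~\ref{thm:M} (about the genuinely balanced graph $G(\alpha,\alpha)$) does not apply to them. This is the general phenomenon that makes $\Delta(\ba)=0$ the hard regime: fixing any symbol other than~$1$ at a non-first position lands you in $\Delta<0$.

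There is also a secondary inconsistency in the scheduling: since $V_1$ is an independent set, a Hamilton cycle must alternate between a vertex of~$V_1$ and a maximal run of vertices outside~$V_1$, so there are exactly $|V_1|=\binom{2\alpha-1}{\alpha}$ such runs, not $n-1=2\alpha-1$ of them. Hence ``visiting each of $V_2,\ldots,V_n$ exactly once in between'' cannot be right even if each $V_j$ were traceable; each $V_j$ must be entered and left many times. You gesture at this with the quotient/Eulerian idea, but that cannot rescue the argument once the blocks themselves have no Hamilton paths. The paper sidesteps all of this by not decomposing by a fixed position at all: it constructs a cycle factor $\cC_n(\ba)$ whose cycles are indexed by vertex-labeled (marked) plane trees, and then joins these cycles to a single Hamilton cycle using gluing 12- and 6-cycles, inducting on the tree potential (Sections~\ref{sec:factor-labeled}, \ref{sec:gluing-labeled} and Lemma~\ref{lem:sptree-pot}). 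That is a genuinely different route, extending the machinery behind the middle levels theorem rather than invoking the middle levels theorem as a black box on subgraphs.
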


Lastly, we consider integer partitions $\ba=(a_1,\ldots,a_k)$, $k\geq 3$, with $\Delta(\ba)\geq 0$ and an upper bound on the part size, i.e., $a_1\leq \alpha$ for some constant~$\alpha$.
By the remarks after Theorem~\ref{thm:P}, the inductive proof of the theorem for such integer partitions only relies on Conjecture~\ref{conj:0} being satisfied for integer partitions with the same upper bound on the part size.
For any fixed bound~$\alpha$, there are only finitely many such partitions with~$\Delta(\ba)=0$ that can be checked by computer.
For example, for $\alpha=4$ these are $\ba\in\{(2,1,1),(3,2,1),(3,1,1,1),(4,3,1),(4,2,2),(4,2,1,1),(4,1,1,1,1)\}$.
This yields the following (unconditional) result.

\begin{theorem}
\label{thm:234}
For $\alpha\in\{2,3,4\}$ and any integer partition~$\ba=(a_1,\ldots,a_k)$ with $\Delta(\ba)>0$ and $a_1=\alpha$, the graph~$G(\ba)$ is Hamilton-connected.
\end{theorem}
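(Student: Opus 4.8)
The plan is to derive the theorem from the conditional result Theorem~\ref{thm:P} by discharging its hypothesis — the validity of Conjecture~\ref{conj:0} — only for the finitely many base cases that actually arise, all of which are among the partitions already checked by computer in Section~\ref{sec:results}. I would begin with a few structural reductions. Fix $\alpha\in\{2,3,4\}$ and an integer partition $\ba=(a_1,\ldots,a_k)$ with $a_1=\alpha$ and $\Delta(\ba)>0$. Since $\Delta(\ba)=\big(\sum_{i\ge 2}a_i\big)-a_1$ and $a_i\le a_1$ for all~$i$ by~\eqref{eq:ai-decreasing}, the condition $\Delta(\ba)>0$ forces $k\ge 3$, and it also forces $\ba\ne 1^k$ and $\ba\ne(1,1,1)$ because $a_1=\alpha\ge 2$. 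Hence, of the exceptional families listed in Theorem~\ref{thm:P}, only ``$\ba=(\alpha',\alpha',1)$ with $\alpha'\ge 3$'' can meet our range, and since $a_1=\alpha\le 4$ this narrows the exceptions down to the two partitions $(3,3,1)$ and $(4,4,1)$.

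For every admissible $\ba$ other than these two, I would invoke the sharpened form of Theorem~\ref{thm:P} stated immediately after it: $G(\ba)$ is Hamilton-connected provided $G(\bb)$ is Hamilton-1-laceable for every integer partition $\bb$ with $\Delta(\bb)=0$ that is majorized componentwise by~$\ba$. Any such~$\bb$ satisfies $b_1\le a_1=\alpha\le 4$, and $\Delta(\bb)=0$ gives $n_\bb=2b_1\le 8$, so $\bb$ belongs to the list $\{(1,1),(2,1,1),(3,3),(3,2,1),(3,1,1,1),(4,4),(4,3,1),(4,2,2),(4,2,1,1),(4,1,1,1,1)\}$ of partitions for which Hamilton-1-laceability has been verified by computer (those of the form $(\alpha',\alpha')$ being additionally covered by Theorem~\ref{thm:M}). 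The only $\Delta=0$ partition with largest part at most~$4$ missing from this list is the exceptional partition $(2,2)$ of Conjecture~\ref{conj:0}; I would check (and expect) that the reduction underlying Theorem~\ref{thm:P} never descends to $G(2,2)=C_6$ — the partitions $\bb\succeq(2,2)$ with $\Delta(\bb)>0$ being resolved through other, directly verified small cases — so this exclusion causes no harm. This discharges the hypothesis and gives that $G(\ba)$ is Hamilton-connected.

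It then remains to dispose of $\ba\in\{(3,3,1),(4,4,1)\}$. Here $G(3,3,1)$ has $\binom{7}{3,3,1}=140$ vertices and $G(4,4,1)$ has $\binom{9}{4,4,1}=630$ vertices, so I would simply run a computer search confirming a Hamilton path between every ordered pair of vertices; alternatively, one can try to upgrade the ``variant of Hamilton-laceability'' that the paper establishes for $G(\alpha',\alpha',1)$ to full Hamilton-connectedness in these two small instances.

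The main obstacle, and the reason the statement is restricted to $\alpha\le 4$, is exactly the finite bookkeeping in the second paragraph: one has to be certain that the set of $\Delta=0$ partitions that the induction behind Theorem~\ref{thm:P} can actually reach from a given~$\ba$ with largest part at most~$\alpha$ is genuinely contained in the computer-verified list, and in particular that the forbidden partition $(2,2)$ and the untreated family $(\alpha',\alpha',1)$ cause no trouble. For $\alpha\le 4$ this list of base cases is short enough to be settled by computer; for larger~$\alpha$ it grows beyond what can be exhausted this way, which is precisely why the unconditional result stops at $\alpha=4$.
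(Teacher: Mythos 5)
Your proposal matches the paper's proof in essence: discharge Conjecture~\ref{conj:0} for the finitely many $\Delta=0$ base cases with largest part at most~$4$ (the ten partitions verified by computer in Table~\ref{tab:base}), observe that the induction underlying Theorem~\ref{thm:P} never leaves the region $a_1\le 4$ because $\bb\prec\ba$ preserves this bound, note that the exceptions $(1,1,1)$ and $1^k$ cannot occur when $a_1=\alpha\ge 2$, and handle the residual exceptions $(3,3,1)$ and $(4,4,1)$ by direct computer verification of $\cH$ (also recorded in Table~\ref{tab:base}). Your worry about whether $G(2,2)$ is ever invoked is the one place your argument is not yet fully closed, but the paper's proof of statements (1d), (1e), (1f), (2e) explicitly rules this out (e.g.\ in (1f) by showing $n_\bb\ge 8$, and in (1d)/(1e) by bottoming out at $(2,2,1)$ and $(2,1,1,1)$ rather than $(2,2)$); so once that bookkeeping is carried out, your proof is complete and coincides with the paper's.
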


In words, Theorem~\ref{thm:234} settles all integer partitions~$\ba$ whose largest part~$a_1$ is at most~4.
In particular, this settles the cases $\alpha\in\{2,3,4\}$ and $k\geq 2$ of Shen and Williams' Conjecture~\ref{conj:SW} in a rather strong sense.

\subsection{Proof ideas}

In this section, we give a high-level overview of the main ideas and techniques used in our proofs.

\subsubsection{The case $\Delta(\ba)<0$}
\label{sec:ideasN}

The main idea for proving Theorem~\ref{thm:N} is that if $\Delta(\ba)<0$, then the partition class~$\Pi(\ba)^{1,1}$ of the graph~$G(\ba)$ is larger than all others combined, which excludes the existence of a Hamilton cycle.
To exclude the existence of a Hamilton path, we show that the size difference is strictly more than~1, unless $\ba=(2,1)$.
Note that the graph~$G(2,1)$ is the path on three vertices, so in this case the size difference is precisely~1.
These arguments are based on straightforward algebraic manipulations involving multinomial coefficients.

\subsubsection{The case $\Delta(\ba)>0$}
\label{sec:ideasP}

To prove Theorem~\ref{thm:P}, it is convenient to think of an integer partition~$\ba=(a_1,\ldots,a_k)$ as an infinite non-increasing sequence $(a_1,a_2,\ldots)$, with only $k$ nonzero entries at the beginning.
Given two such integer partitions $\ba=(a_1,a_2,\ldots)$ and $\bb=(b_1,b_2,\ldots)$, we write $\bb\prec\ba$ if $b_i\leq a_i$ for all $i\geq 1$.
Integer partitions with the partial order~$\prec$ form a lattice, which is the sublattice of the infinite lattice $\mathbb{N}^\mathbb{N}$ cut out by the hyperplanes defined by~\eqref{eq:ai-decreasing}; see Figure~\ref{fig:lattice}.
The cover relations in this lattice are given by decrementing any of the~$a_i$ for which $a_i>a_{i+1}$.
We write $\bb\precdot\ba$ for partitions~$\ba\neq \bb$ if $\bb\prec\ba$ and there is no $\bc\notin\{\ba,\bb\}$ with $\bb\prec\bc\prec\ba$.

\begin{figure}
\includegraphics{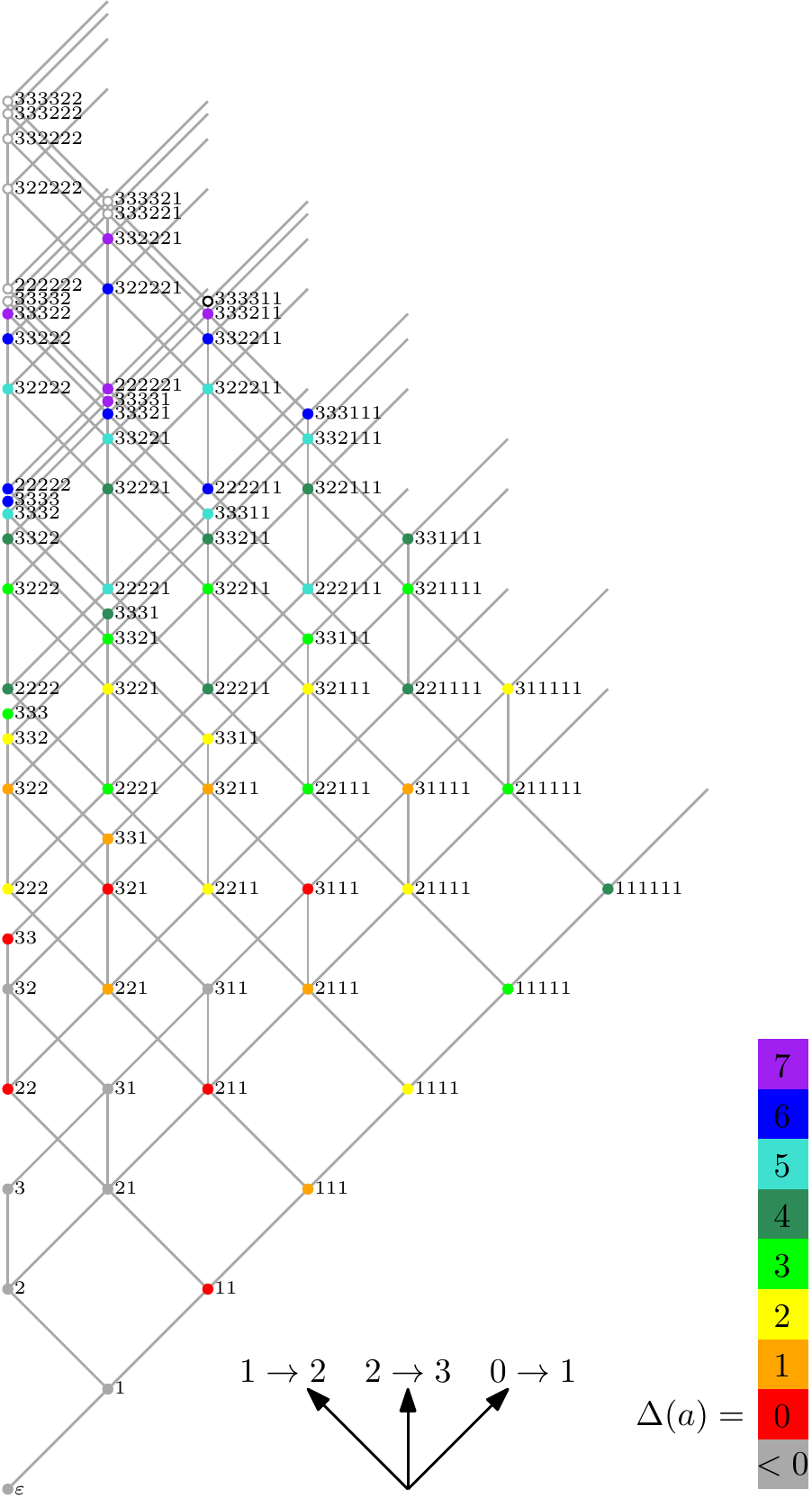}
\caption{The lattice of integer partitions~$\ba=(a_1,\ldots,a_k)$ with largest part~$a_1\leq 3$.
The coordinates are projected into three dimensions depending on which value is increased.}
\label{fig:lattice}
\end{figure}

In this lattice of integer partitions, the hyperplane defined by $\Delta(\ba)=0$ separates the cases where Hamiltonicity is impossible, which lie on the side of the hyperplane where $\Delta(\ba)<0$ (Theorem~\ref{thm:N}), from the cases where Hamiltonicity can be established more easily, which lie on the side of the hyperplane where $\Delta(\ba)>0$ (Theorem~\ref{thm:P}).
The cases $\Delta(\ba)=0$ on the hyperplane are the hardest ones (Conjecture~\ref{conj:0}).

Our proof of Theorem~\ref{thm:P} proceeds by induction in this partition lattice and establishes the Hamiltonicity of~$G(\ba)$ by using the Hamiltonicity of~$G(\bb)$ for all integer partitions~$\bb\precdot\ba$, where Conjecture~\ref{conj:0} serves as the base case of the induction.
This is based on the observation that fixing one of the symbols at positions~$2,\ldots,n$ in~$G(\ba)$ yields subgraphs that are isomorphic to~$G(\bb)$ for~$\bb\precdot\ba$.

Specifically, for any $\ba=(a_1,\ldots,a_k)$, $i=2,\ldots,n$, and $c\in[k]$, the subgraph of $G(\ba)$ induced by the vertex set~$\Pi(\ba)^{i,c}$ is isomorphic to~$G(\bb)$ where $\bb$ is the partition obtained from~$\ba$ by decreasing $a_i$ by~1 (and possibly sorting the resulting numbers non-increasingly); see Figure~\ref{fig:part}.

\begin{wrapfigure}{r}{0.45\textwidth}
\includegraphics[page=2]{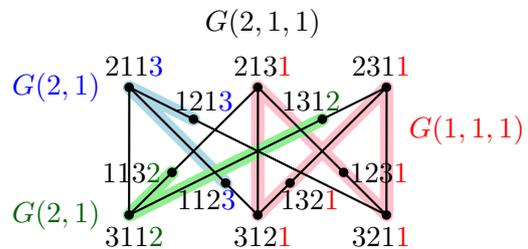}
\caption{Decomposing $G(2,1,1)$ into three subgraphs by fixing the last symbol.
}
\label{fig:part}
\end{wrapfigure}
Moreover, for any $\bb\precdot\ba$ we have $\Delta(\bb)=\Delta(\ba)-1$ or $\Delta(\bb)=\Delta(\ba)+1$.
In particular, if $\Delta(\ba)>0$, then we have $\Delta(\bb)\geq 0$.
For example, the vertex set of~$G(\ba)$ for $\ba=(3,2,2)$ ($\Delta(\ba)=1$) can be partitioned into one copy of~$G(\bb)$ for $\bb=(2,2,2)$ (if the fixed symbol is~$c=1$; $\Delta(\bb)=2$) and two copies of~$G(\bb')$ for $\bb'=(3,2,1)$ (if the fixed symbol is~$c=2$ or~$c=3$; $\Delta(\bb')=0$).
Therefore, we may construct a Hamilton path in~$G(3,2,2)$ by gluing together paths in each of these three subgraphs which exist by induction.

While conceptually simple, implementing this idea incurs considerable technical obstacles, in particular for some of the graphs~$G(\ba)$ with~$\Delta(\ba)=1$, i.e., instances that are very close to the hyperplane $\Delta(\ba)=0$.
The proof is split into several interdependent lemmas, and it is the technically most demanding part of our paper.

Theorem~\ref{thm:234} follows immediately from the inductive proof of Theorem~\ref{thm:P} and by settling finitely many cases with computer help.

To further illustrate the ideas outlined before, we close this section by reproducing Tchuente's proof of Theorem~\ref{thm:T}.
To prove that $G(1^k)$ is Hamilton-laceable, we proceed by induction on~$k$.
The induction basis $k=4$ can be checked by straightforward case analysis.
For the induction step, we assume that $G(1^{k-1})$, $k\geq 5$, is Hamilton-laceable, and we prove that $G(1^k)$ is also Hamilton-laceable.
Note that $1^{k-1}\precdot 1^k$ and $\Delta(1^k)=k-2=\Delta(1^{k-1})+1$.
The following arguments are illustrated in Figure~\ref{fig:T}.
The partition classes of the graph~$G(1^k)$ are given by the parity of the permutations, i.e., by the number of inversions.
Therefore, we consider two distinct permutations~$x$ and~$y$ of~$[k]$ with opposite parity, and we need to show how to connect them by a Hamilton path in~$G(1^k)$.
As $x\neq y$, there is a position~$\ihat>1$ in which $x$ and~$y$ differ, i.e., $x_\ihat\neq y_\ihat$, and this is the position that we will fix to different symbols.
Specifically, we choose a permutation~$\pi$ of~$[k]$ such that $\pi_1=x_\ihat$ and $\pi_k=y_\ihat$.
The permutation~$\pi$ captures the order in which we will fix symbols at position~$\ihat$.
We then choose permutations $u^j,v^j$ of~$[k]$, $j=1,\ldots,k$, satisfying $u^1=x$, $v^k=y$, and such that $u^j$ is obtained from~$v^{j-1}$ by a star transposition of the symbol~$\pi_j$ at position~1 with the symbol~$\pi_{j-1}$ at position~$\ihat$, for all $j=2,\ldots,k$.
Moreover, we choose $u^j$ and~$v^j$ such that the parity of the number of inversions after removing the symbol~$\pi_j$ is opposite.
Specifically, for $u^j$ this parity is the same as for~$u^1=x$ if and only if $\pi_j$ has the same parity as $\pi_1$, and for $v^j$ this parity is the same as for~$v^k=y$ if and only if~$\pi_j$ has the same parity as~$\pi_k$.
For each $j=1,\ldots,k$, we now consider the permutations whose $\ihat$th entry equals~$\pi_j$ (formally, this is the set $\Pi(1^k)^{\ihat,\pi_j}$).
Clearly, the subgraph of~$G(1^k)$ induced by these permutations is isomorphic to~$G(1^{k-1})$.
In other words, by removing the $\ihat$th entry and renaming entries to~$1,\ldots,k-1$, we obtain permutations of~$[k-1]$.
Consequently, by induction there is a path in~$G(1^{k})$ that visits all permutations whose $\ihat$th entry equals~$\pi_j$ and that connects $u^j$ to~$v^j$.
The concatenation of those $k$ paths obtained by induction is the desired Hamilton path in~$G(1^k)$ from~$x$ to~$y$, which completes the induction step.

\begin{figure}
\includegraphics[page=3]{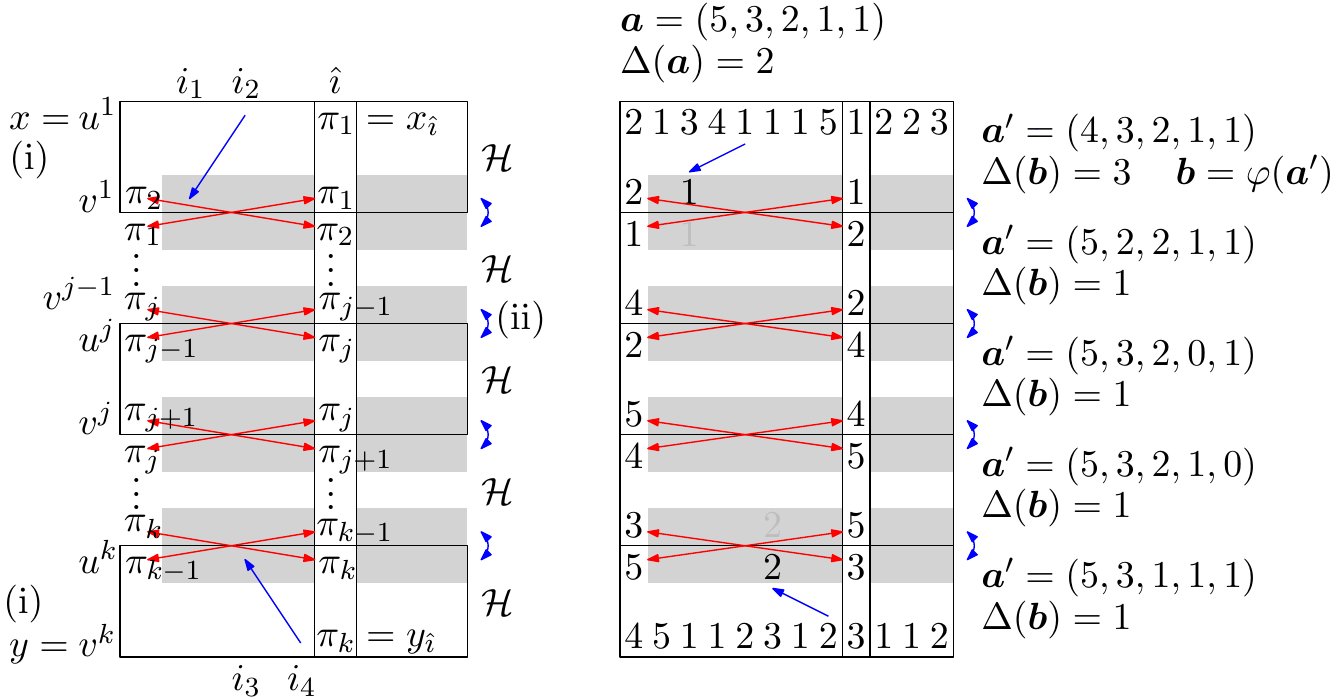}
\caption{Illustration of the proof of Theorem~\ref{thm:T}.
The left hand side shows the general schematic partitioning of the graph~$G(1^k)$ into blocks, each of which is a copy of~$G(1^{k-1})$, by fixing symbols at position~$\ihat$.
The right hand side shows a concrete example for $k=7$.
}
\label{fig:T}
\end{figure}

Note that the constraints imposed on the permutations~$\pi$ and $u^j,v^j$, $j=1,\ldots,k$, in this proof are very mild, and leave a lot of room for modifications to construct many different Hamilton paths, possibly so as to satisfy some additional conditions.

\subsubsection{The case $\Delta(\ba)=0$}
\label{sec:ideas0}

Our proofs of Theorems~\ref{thm:M} and~\ref{thm:M'} build on ideas introduced in the papers~\cite{gregor-muetze-nummenpalo:18,DBLP:conf/soda/MerinoMM21}.

Specifically, the first step in proving Theorem~\ref{thm:M} is to build a cycle factor in the graph~$G(\alpha,\alpha)$, i.e., a collection of disjoint cycles in the graph that together visit all vertices.
We then choose vertices~$x$ and~$y$ from the two partition classes of the graph that we want to connect by a Hamilton path.
In this we can take into account automorphisms of~$G(\alpha,\alpha)$, i.e., for proving laceability only certain pairs of vertices~$x$ and~$y$ in the two partition classes have to be considered.
In the next step, we join a small subset of cycles from the factor, including the ones containing~$x$ and~$y$, to a short path between~$x$ and~$y$.
This is achieved by taking the symmetric difference of the edge set of the cycle factor with a carefully chosen path~$P$ from~$x$ to~$y$ that alternately uses edges on one of the cycles from the factor and edges that go between two such cycles; see Figure~\ref{fig:strategy}~(a)+(b).
In the last step, we join the remaining cycles of the factor to the path between~$x$ and~$y$, until we end with a Hamilton path from~$x$ to~$y$.
Each such joining is achieved by taking the symmetric difference of the cycle factor with a suitably chosen 6-cycle; see  Figure~\ref{fig:strategy}~(b)+(c).

\begin{figure}[h!]
\includegraphics[page=1]{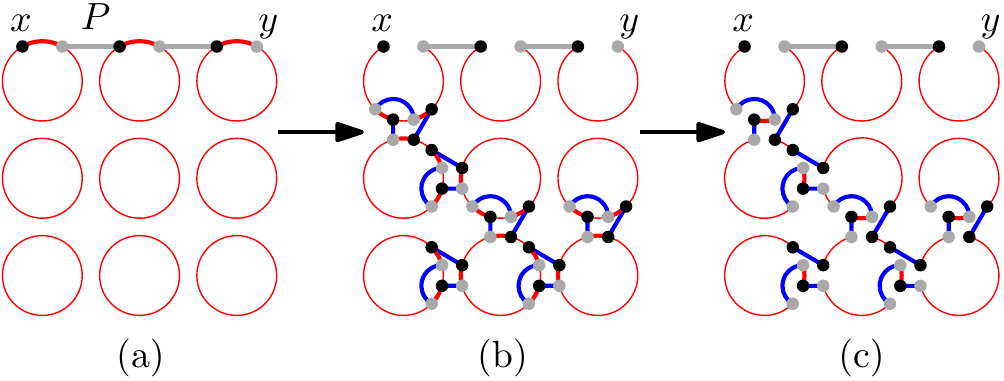}
\caption{Strategy of the proof of Theorem~\ref{thm:M}}
\label{fig:strategy}
\end{figure}

It was shown in~\cite{gregor-muetze-nummenpalo:18} that the cycles of the aforementioned cycle factor in~$G(\alpha,\alpha)$ are bijectively equivalent to plane trees with $\alpha$ vertices, and the joining operations via 6-cycles can be interpreted combinatorially as local change operations between two such plane trees.
To prove Theorem~\ref{thm:M'}, we first generalize the construction of this cycle factor in the graph~$G(\alpha,\alpha)$ to a cycle factor in any graph~$G(\ba)$, $\ba=(a_1,\ldots,a_k)$, with $\Delta(\ba)=0$.
It turns out that the cycles of this generalized factor can be interpreted combinatorially as \emph{vertex-labeled} plane trees, where exactly $a_i$ vertices have the label~$i$ for $i=2,\ldots,k$.
If $\ba=(\alpha,\alpha)$, then all vertex labels are the same, so we can consider the trees as unlabeled.
Moreover, the joining 6-cycles in~$G(\alpha,\alpha)$ generalize nicely to joining 12- or 6-cycles in~$G(\ba)$ with $\Delta(\ba)=0$, and they correspond to local change operations involving sets~$T$ of labeled plane trees with $|T|\in\{2,3,5,6\}$, depending on the location of vertex labels.
For proving Theorem~\ref{thm:M'}, we consider the special case~$\ba=(\alpha,\alpha-1,1)$, i.e., exactly one vertex in the plane trees is labeled differently from all other vertices, and we show that there is a choice of joining cycles so that the symmetric difference with the cycle factor yields a Hamilton cycle in the graph~$G(\ba)$.

\subsection{Outline of this paper}

In Section~\ref{sec:prelim} we collect definitions and observations that will be used throughout this paper.
The proof of Theorem~\ref{thm:N} is presented in Section~\ref{sec:proof-N}.
In Section~\ref{sec:proof-P}, we prove Theorems~\ref{thm:P} and~\ref{thm:234}.
The proofs of Theorems~\ref{thm:M} and~\ref{thm:M'} are presented in Section~\ref{sec:proof-MM'}.
This section can be read independently from the previous sections, but it relies on results from the previous papers~\cite{gregor-muetze-nummenpalo:18,DBLP:conf/soda/MerinoMM21}.
We conclude in Section~\ref{sec:open} with some open questions and possible directions for future work.

\section{Preliminaries}
\label{sec:prelim}

The following definitions and lemmas will be used repeatedly in this paper.

\subsection{Fixing symbols}

For any sequence of non-negative integers~$\ba=(a_1,a_2,\ldots,a_k)$, we let $\varphi(a)$ be the partition obtained by sorting the sequence~$\ba$ non-increasingly.
For instance, we have $\varphi((2,3,1,1,4))=(4,3,2,1,1)$.

As mentioned before, $\Pi(\ba)^{i,c}$ is the set of all multiset permutations from~$\Pi(\ba)$ whose $i$th symbol equals~$c$.
We write $G(\ba)^{i,c}:=G(\ba)[\Pi(\ba)^{i,c}]$ for the subgraph of~$G(\ba)$ induced by the vertex set~$\Pi(\ba)^{i,c}$.
We observed before that~$\Pi(a)^{1,c}$ is an independent set for all~$c\in[k]$, and that $G(\ba)^{i,c}$, $i=2,\ldots,n$, is isomorphic to $G(\bb)$ for $\bb:=\varphi(\ba')\precdot \ba$, $\ba':=(a_1,\ldots,a_{c-1},a_c-1,a_{c+1},\ldots,a_k)$.
We also allow repeating this operation and define $G(\ba)^{(i,j),(c,d)}:=(G(\ba)^{i,c})^{j,d}$ for $i,j\in[n]$ with $i\neq j$ and $c,d\in[k]$ with $c\neq d$ or $a_c=a_d\geq 2$.

Our first lemma follows directly from the definition~\eqref{eq:Delta}.

\begin{lemma}
\label{lem:delta-prec}
If $\bb\precdot\ba$, then we have $\Delta(\bb)=\Delta(\ba)-1$, except if $\ba$ and~$\bb$ differ in the largest part, in which case $\Delta(\bb)=\Delta(\ba)+1$.
\end{lemma}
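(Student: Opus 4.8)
The plan is to prove Lemma~\ref{lem:delta-prec} by a direct computation with the definition~\eqref{eq:Delta}, tracking which part of the partition is decremented. Recall that $\bb\precdot\ba$ means $\bb$ is obtained from $\ba=(a_1,\ldots,a_k)$ by decreasing exactly one part $a_j$ (with $a_j>a_{j+1}$, so that the result is still a non-increasing partition) by~$1$, and then re-sorting does nothing since the cover relation was chosen precisely to keep the sequence non-increasing. So I would write $\bb=(a_1,\ldots,a_{j-1},a_j-1,a_{j+1},\ldots,a_k)$ for some $j\in[k]$, and observe that $n_\bb=n_\ba-1$.

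The key step is then to split into two cases according to whether $j=1$ or $j\geq 2$. Using $\Delta(\ba)=n_\ba-2a_1$ from~\eqref{eq:Delta}: if $j\geq 2$, the largest part is unchanged, so $a_1^\bb=a_1$ and $\Delta(\bb)=n_\bb-2a_1^\bb=(n_\ba-1)-2a_1=\Delta(\ba)-1$. If $j=1$, then $a_1^\bb=a_1-1$ and $\Delta(\bb)=n_\bb-2a_1^\bb=(n_\ba-1)-2(a_1-1)=n_\ba-2a_1+1=\Delta(\ba)+1$. The only subtlety is making sure that "$\ba$ and $\bb$ differ in the largest part" is the same as "$j=1$": this needs a word because if $a_1=a_2$ then the cover relation does not allow decrementing $a_1$ (since we require $a_j>a_{j+1}$), so in any valid cover move with $j=1$ we automatically have $a_1>a_2$, and after decrementing, $\bb$'s largest part is $a_1-1$, strictly smaller than $\ba$'s. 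Conversely if $j\geq 2$ the largest part is literally unchanged. Hence the dichotomy in the statement matches the dichotomy on $j$ exactly.

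There is essentially no obstacle here; the only thing worth being careful about is the interaction with the sorting operator $\varphi$ and the precise definition of the cover relation $\precdot$ as given in the excerpt (decrement any $a_i$ with $a_i>a_{i+1}$). Because the cover relation is defined so that the decremented sequence is already non-increasing, $\varphi$ acts trivially and no re-indexing of the remaining parts occurs, which is what makes the computation of $a_1^\bb$ immediate in both cases. I would state this observation explicitly at the start of the proof and then the two-line case computation finishes it.
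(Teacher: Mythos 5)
Your proof is correct and takes the same route the paper intends: the paper gives no explicit proof, remarking only that the lemma "follows directly from the definition" of $\Delta$, and your argument is precisely the direct computation from~\eqref{eq:Delta} split on whether the decremented index is $j=1$ or $j\geq 2$. Your extra care in noting that the cover relation forbids decrementing $a_1$ when $a_1=a_2$ (so that the case "$j=1$" really does coincide with "the largest part changes") is exactly the right point to make explicit.
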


\subsection{Partition lemmas}

The next lemma quantifies the size of the $k$ partition classes of the graph~$G(\ba)$ discussed in Section~\ref{sec:results}.

\begin{lemma}
\label{lem:kpart}
For any integer partition $\ba=(a_1,\ldots,a_k)$, the graph $G(a_1,\ldots,a_k)$ is $k$-partite, with the partition classes~$\Pi(\ba)^{1,1},\ldots,\Pi(\ba)^{1,k}$ given by fixing the first symbol, and the size of the $i$th partition class is
\begin{equation}
\label{eq:si}
s_i:=|\Pi(\ba)^{1,i}|=\binom{n-1}{a_1,\ldots,a_{i-1},a_i-1,a_{i+1},\ldots,a_k}=\frac{(n-1)!}{a_1!\cdots a_{i-1}!(a_i-1)!a_{i+1}!\cdots a_k!}.
\end{equation}
Moreover, we have $s_1\geq \cdots \geq s_k$, in particular, the first partition class is a largest one.
Lastly, if $\Delta(\ba)<0$, then the first partition class is larger than all others combined, if $\Delta(\ba)=0$ then the first partition class has the same size as all others combined, and if $\Delta(\ba)>0$ then every partition class is smaller than all others combined.
\end{lemma}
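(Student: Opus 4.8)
The plan is to prove the four assertions in turn, the common thread being the identity $s_i=\tfrac{a_i}{n}N$, where $N:=|\Pi(\ba)|=\binom{n}{a_1,\ldots,a_k}$ denotes the total number of multiset permutations. First I would establish the $k$-partite structure: every star transposition swaps the first entry of a multiset permutation with a later entry that is \emph{distinct} from the first one, hence it necessarily changes the first symbol. Therefore no edge of $G(\ba)$ joins two vertices lying in the same set $\Pi(\ba)^{1,c}$, so each $\Pi(\ba)^{1,c}$ is an independent set; since these sets partition $\Pi(\ba)$ according to the value of the first symbol, they form a $k$-partition of $G(\ba)$.

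Next, counting. A multiset permutation in $\Pi(\ba)^{1,i}$ is obtained by placing a symbol~$i$ in position~$1$ and filling positions $2,\ldots,n$ with an arbitrary arrangement of the remaining multiset, which has $a_j$ copies of each $j\neq i$ and $a_i-1$ copies of~$i$; this gives the stated multinomial coefficient for $s_i$. A one-line cancellation of factorials then rewrites it as $s_i=\tfrac{a_i}{n}N$.

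From $s_i=\tfrac{a_i}{n}N$ together with the normalization~\eqref{eq:ai-decreasing} the monotonicity $s_1\geq\cdots\geq s_k$ follows at once (equivalently $s_i/s_{i+1}=a_i/a_{i+1}\geq1$), so the first class is a largest one. For the trichotomy I would compute, for each $j\in[k]$,
\[
\sum_{i\neq j}s_i-s_j=\frac{N}{n}\Big(\sum_{i\neq j}a_i-a_j\Big)=\frac{N}{n}\,(n-2a_j).
\]
With $j=1$ and the definition~\eqref{eq:Delta} this reads $\sum_{i=2}^{k}s_i-s_1=\tfrac{N}{n}\Delta(\ba)$, which yields the three claims about the first partition class according to the sign of $\Delta(\ba)$. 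For the last claim, $a_j\leq a_1$ gives $n-2a_j\geq n-2a_1=\Delta(\ba)$ for every~$j$, so when $\Delta(\ba)>0$ the displayed quantity is strictly positive for all~$j$, i.e.\ every partition class is smaller than all others combined.

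There is no genuine obstacle in this lemma; the only point requiring care is the factorial bookkeeping behind the identity $s_i=\tfrac{a_i}{n}N$, after which every assertion is immediate. (One may additionally remark that $\Delta(\ba)>0$ forces $k\geq 3$, since for $k=2$ the normalization gives $\Delta(\ba)=a_2-a_1\leq 0$, but this is not needed for the argument above.)
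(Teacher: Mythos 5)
Your proof is correct and follows essentially the same route as the paper: the $k$-partite structure and the multinomial count are identical, and the trichotomy is obtained by rearranging $s_1$ versus $\sum_{i\geq 2}s_i$ into a condition on $2a_1-n=-\Delta(\ba)$. Your explicit identity $s_i=\tfrac{a_i}{n}N$, which makes the final step a clean computation and handles all $j$ uniformly for the $\Delta(\ba)>0$ claim, is a nice touch; the paper records this same identity as equation~\eqref{eq:s1}, but only later, in the proof of Theorem~\ref{thm:N}.
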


\begin{proof}
As every star transposition changes the first symbol, the sets $\Pi(\ba)^{1,1},\ldots,\Pi(\ba)^{1,k}$ are independent sets in the graph~$G(\ba)$.
The multinomial coefficient in~\eqref{eq:si} describes the number of ways of arranging $n-1$ symbols at positions $2,\ldots,n$, with only $a_i-1$ occurrences of the symbol~$i$ remaining (one symbol~$i$ is fixed at the first position).
This proves the first part of the lemma.
The monotonicity $s_1\geq \cdots \geq s_k$ follows immediately from~\eqref{eq:ai-decreasing} and~\eqref{eq:si}.
Using the definition~\eqref{eq:si}, the equation $s_1>\sum_{i=2}^k s_i$ can be rearranged to~$2a_1>n$, which by the definition~\eqref{eq:Delta} is equivalent to~$\Delta(\ba)<0$.
Reversing the direction of inequalities or replacing them by equality completes the proof of the second part of the lemma.
\end{proof}

The following result complements Lemma~\ref{lem:kpart} by showing that some of the graphs~$G(\ba)$ are not only $k$-partite, but also bipartite even for $k\geq 3$ distinct symbols.
An \emph{inversion} in a permutation is a pair of entries where the first entry is larger than the second.

\begin{lemma}
\label{lem:bipart}
For any integer partition $\ba=(a_1,\ldots,a_k)$ with $k\geq 3$ parts, the graph~$G(\ba)$ is not bipartite unless $a_1=\cdots=a_k=1$.
If $a_1=\cdots=a_k=1$, then the two partition classes are given by the parity of the number of inversions of the permutations, and they have the same size.
\end{lemma}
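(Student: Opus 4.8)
The plan is to split into the two cases dictated by the statement, according to the largest part: either $a_1=1$, which by~\eqref{eq:ai-decreasing} means $\ba=1^k$, or $a_1\geq 2$. The case $\ba=1^k$ is straightforward. The vertices of $G(1^k)$ are the permutations of~$[k]$, and two of them are adjacent exactly when one arises from the other by transposing the entries in positions~$1$ and~$i$ for some $i\in\{2,\dots,k\}$. Transposing two entries of a permutation flips the parity of its number of inversions (it changes the sign of the permutation), so the set of permutations with an even number of inversions and the set of those with an odd number of inversions form a proper $2$-colouring of $G(1^k)$; hence $G(1^k)$ is bipartite with exactly these two partition classes. Finally, swapping the entries in positions~$1$ and~$2$ is a parity-reversing involution on the permutations of~$[k]$, so the two classes have the same size $k!/2$. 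This settles the exceptional case in both of its assertions.

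For the main case I will show that $G(\ba)$ is not bipartite by exhibiting an odd cycle, whenever $k\geq 3$ and $a_1\geq 2$. Since then $a_2,a_3\geq 1$ as well, we have $n\geq 4$, and we may fix any word $\mathbf{r}$ of length $n-4$ over the multiset consisting of $a_1-2$ copies of~$1$, $a_2-1$ copies of~$2$, $a_3-1$ copies of~$3$, and $a_i$ copies of~$i$ for $4\leq i\leq k$. Then the seven $\ba$-multiset permutations
\[
1132\,\mathbf{r},\quad 3112\,\mathbf{r},\quad 1312\,\mathbf{r},\quad 2311\,\mathbf{r},\quad 1321\,\mathbf{r},\quad 3121\,\mathbf{r},\quad 2131\,\mathbf{r}
\]
are pairwise distinct (they already differ in the first four positions), and consecutive ones — cyclically, so also the last with the first — differ by a star transposition of position~$1$ with one of positions~$2,3,4$, which one checks directly. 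Hence $G(\ba)$ contains a $7$-cycle and is not bipartite. The structural content behind this is that fixing the symbols in positions $5,\dots,n$ to~$\mathbf{r}$ cuts out an induced subgraph of $G(\ba)$ isomorphic to $G(2,1,1)$, and the displayed cycle is an odd cycle of $G(2,1,1)$ (such a cycle is highlighted in Figure~\ref{fig:Ga}). This is also exactly where the hypothesis $k\geq 3$ enters, since the cycle genuinely involves three distinct symbols; for $k=2$ the graph $G(\ba)$ is $2$-partite by Lemma~\ref{lem:kpart} and hence bipartite, so no such statement holds there.

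I do not expect a serious obstacle here. The one point that requires a little care is recognizing that one must descend all the way to $G(2,1,1)$ to find an odd cycle and that the shortest odd cycle there has length~$7$ — one readily verifies that $G(2,1,1)$ contains neither a triangle nor a $5$-cycle — so it is futile to search for a shorter non-bipartite witness. Once the explicit $7$-cycle above is in hand, the remaining steps (distinctness of the seven strings, validity of each of the seven star transpositions, and the elementary parity and counting argument for $\ba=1^k$) are all routine.
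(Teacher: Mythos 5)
Your proof is correct and takes essentially the same route as the paper: both treat the exceptional case $\ba=1^k$ via the parity of inversions and the parity-reversing swap of the first two entries, and both handle $a_1\geq 2$ by observing that $G(2,1,1)$ embeds as an induced subgraph (obtained by fixing a suffix) and exhibiting a $7$-cycle there. The only difference is cosmetic — the paper references the $7$-cycle highlighted in Figure~\ref{fig:Ga}, while you write out the seven vertices and verify each star transposition explicitly, which is a perfectly good substitute.
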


\begin{proof}
Note that if $\bb\prec\ba$, then $G(\bb)$ is a subgraph of~$G(\ba)$.
Specifically, if $b_i<a_i$, then we may fix $a_i-b_i$ occurrences of~$i$ on one of the positions~$2,\ldots,n_\ba$ in the graph~$G(\ba)$.
Consequently, unless $a_1=\cdots=a_k=1$, the graph $G(2,1,1)$ is a subgraph of~$G(\ba)$.
As $G(2,1,1)$ contains a 7-cycle (highlighted in Figure~\ref{fig:Ga}), a graph containing it as a subgraph is not bipartite.

We proceed to show that $G(1^k)$ is bipartite.
This follows directly from the observation that every transposition of two entries of a permutation changes the parity of the number of inversions.
Moreover, permuting the first two entries is a bijection between permutations with an even or odd number of inversions, so both partition classes have the same size.
\end{proof}

The next lemma shows that Hamilton paths in~$G(\ba)$ with $\Delta(\ba)=0$ have a very rigid structure.

\begin{lemma}
\label{lem:D0}
Let $\ba=(a_1,\ldots,a_k)$ be an integer partition with $\Delta(\ba)=0$.
Then on every Hamilton path in~$G(\ba)$ whose end vertices differ in the first entry, every second vertex of the path is from~$\Pi(\ba)^{1,1}$, and every other vertex on the path is not from~$\Pi(\ba)^{1,1}$.
\end{lemma}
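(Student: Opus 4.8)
The plan is to reduce the statement to an elementary fact about maximum-size independent sets in a path. Write $S_i := \Pi(\ba)^{1,i}$ for $i \in [k]$, so that $S_1, \ldots, S_k$ are the partition classes of the $k$-partite graph $G(\ba)$ by Lemma~\ref{lem:kpart}, and $S_1$ is in particular an independent set in $G(\ba)$. Since $\Delta(\ba) = 0$, the same lemma gives $|S_1| = \sum_{i=2}^k |S_i|$, hence $N := |V(G(\ba))| = 2|S_1|$. Now consider a Hamilton path $v_1 v_2 \cdots v_N$ in $G(\ba)$ whose end vertices $v_1$ and $v_N$ differ in the first entry, and let $I := \{\, j \in [N] : v_j \in S_1 \,\}$. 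Then $|I| = |S_1| = N/2$, and because $S_1$ is independent in $G(\ba)$, no two consecutive integers lie in $I$.

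The heart of the argument is then the following elementary claim: if $N$ is even, $I \subseteq [N]$, $|I| = N/2$, and $I$ contains no two consecutive integers, then $I = \{1, 3, \ldots, N-1\}$, or $I = \{2, 4, \ldots, N\}$, or $\{1, N\} \subseteq I$. To see this, write $I = \{i_1 < i_2 < \cdots < i_{N/2}\}$; the no-two-consecutive condition gives $i_{t+1} \ge i_t + 2$ for all $t$, which telescopes to $i_{N/2} \ge i_1 + (N-2)$. Together with $i_1 \ge 1$ and $i_{N/2} \le N$ this forces $i_1 \in \{1, 2\}$ and $i_{N/2} \in \{N-1, N\}$; a short case analysis according to whether the telescoped inequality is tight yields exactly the three listed possibilities (if $i_1 = 1$ and $i_{N/2} = N-1$, or $i_1 = 2$ and $i_{N/2} = N$, all the gaps are forced to equal $2$, giving the odd, resp.\ even, set; the remaining case is $\{1, N\} \subseteq I$).

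Finally, the possibility $\{1, N\} \subseteq I$ is ruled out by the hypothesis: it would mean $v_1, v_N \in S_1$, so both end vertices would have first entry $1$, contradicting the assumption that they differ in the first entry. Hence $I = \{1, 3, \ldots, N-1\}$ or $I = \{2, 4, \ldots, N\}$, i.e., every second vertex of the Hamilton path lies in $S_1 = \Pi(\ba)^{1,1}$ and every other vertex does not, as claimed.

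I do not anticipate a genuine obstacle; the only step needing a little care is the boundary-case bookkeeping in the elementary claim, which is routine. It is worth recording where the hypotheses enter: $\Delta(\ba) = 0$ is precisely what makes $|S_1| = N/2$, so that $I$ is forced to be a maximum independent set of the path on $N$ vertices, and the condition on the end vertices is precisely what eliminates the non-alternating maximum independent sets $I$ with $\{1, N\} \subseteq I$.
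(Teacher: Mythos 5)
Your proof is correct and follows essentially the same approach as the paper: both rest on the observation (via Lemma~\ref{lem:kpart}) that $\Pi(\ba)^{1,1}$ is an independent set containing exactly half the vertices, so the indices of its occurrences along a Hamilton path form a maximum independent set of the path graph on $[N]$, which must be the odds, the evens, or contain both endpoints. The paper states the conclusion tersely in two sentences while you spell out the elementary bookkeeping about maximum independent sets of a path; the argument is the same, just made fully explicit.
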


In terms of star transpositions, this means that the symbol~1 is swapped in every step, either in or out of the first position.

\begin{proof}
By Lemma~\ref{lem:kpart}, the partition class~$\Pi(\ba)^{1,1}$ of the graph~$G(\ba)$ has the same size as all others combined.
Consequently, one of the end vertices of any Hamilton path must be in this largest class.
By the assumption that the other end vertex is not in~$\Pi(\ba)^{1,1}$, the Hamilton path is forced to visit~$\Pi(\ba)^{1,1}$ in every second step.
\end{proof}

\subsection{Hamiltonicity notions}
\label{sec:notions}

In our proofs we will use Hamilton-connectedness, Hamilton-laceability, and Hamilton-1-laceability heavily, and also other strengthenings of the notion of containing a Hamilton cycle.
It will be convenient to introduce shorthand notations for graphs satisfying these properties.

To this end, we let $\cG$ denote the family of all graphs $G(\ba)$ for any integer partition~$\ba=(a_1,\ldots,a_k)$, where $k\geq 2$ and $a_1\geq \cdots\geq a_k\geq 1$.
We also define the following subsets of~$\cG$:
\begin{itemize}[leftmargin=8mm, labelsep=4mm,noitemsep, topsep=3pt plus 3pt]
\item[\bfseries $\cP$] All graphs in~$\cG$ that have a Hamilton path, i.e., a path that visits every vertex exactly once.
\item[\bfseries $\cC$] All graphs in~$\cG$ that have a Hamilton cycle, i.e., a cycle that visits every vertex exactly once.
\item[\bfseries $\cE$] All graphs in~$\cG$ such that for every edge of the graph, there is a Hamilton cycle containing this edge.
\footnote{This property is called `positively Hamiltonian' in~\cite{MR307952}.}
\item[\bfseries $\cL$] All graphs in~$\cG$ that are Hamilton-laceable, i.e., bipartite graphs such that for any two vertices from the two partition classes, there is a Hamilton path between them.
\item[\bfseries $\cL_1$] All graphs in~$\cG$ that are Hamilton-1-laceable, i.e., for which there is a Hamilton path between any vertex in~$\Pi(\ba)^{1,1}$ and any vertex not in~$\Pi(\ba)^{1,1}$.
Recall that $\Pi(\ba)^{1,1}$ are the multiset permutations whose first symbol is~1.
\item[\bfseries $\cH$] All graphs in~$\cG$ that are Hamilton-connected, i.e., for which there is a Hamilton path between any two distinct vertices.
\end{itemize}

\begin{wrapfigure}{r}{0.45\textwidth}
\vspace{-6mm}
\includegraphics{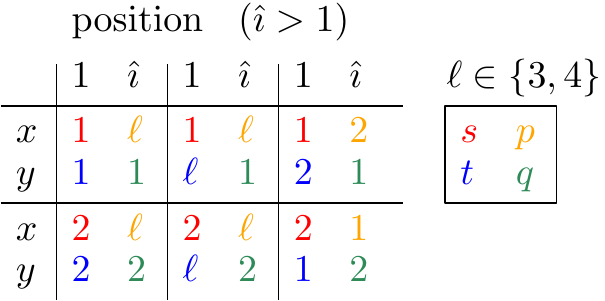}
\vspace{-2mm}
\caption{Definition of~$\cL_{12}$.}
\vspace{-2mm}
\label{fig:pq}
\end{wrapfigure}
To handle the graphs~$G(\ba)$ with $\ba=(\alpha,\alpha,1)$ in our proofs, we need yet another Hamiltonicity notion.
While we conjecture that these graphs~$G(\ba)$ are Hamilton-connected, we are unable to prove this based on Conjecture~\ref{conj:0}.
This is because all integer partitions~$\bb$ with $\bb\precdot\ba$, namely $\bb=(\alpha,\alpha-1,1)$ and $\bb=(\alpha,\alpha)$ satisfy~$\Delta(\ba)=0$, which makes these graphs~$G(\ba)$ particularly difficult to deal with.
We will content ourselves by showing that~$G(\ba)$ satisfies the following variant of Hamilton-laceability, which we denote by~$\cL_{12}$.

Roughly speaking, the set~$\cL_{12}$ contains the graphs from~$\cG$ that admit Hamilton paths between pairs of vertices~$x$ and~$y$ with particular combinations of first symbols~$s$ and~$t$, subject to the constraint that another position~$\ihat>1$ contains a particular combination of values~$p$ and~$q$.
The required combinations of symbols are summarized in Figure~\ref{fig:pq}, and they are formally defined as follows.
For $\ell\in\{3,4\}$ and any $s\in\{1,2\}$ and $t\in\{1,2,\ell\}$ we define
\begin{equation}
\label{eq:pq}
p_\ell(s,t):=\begin{cases}
\ell & \text{ if } t\in\{s,\ell\}, \\
t & \text{ otherwise},
\end{cases}
\quad
\text{ and }
\quad
q_\ell(s,t):=s.
\end{equation}
The set~$\cL_{12}$ contains all graphs in~$\cG$ for which there is a Hamilton path between any vertex~$x$ with first symbol~$s\in\{1,2\}$ and any vertex~$y$ distinct from~$x$ with first symbol~$t\in\{1,2,\ell\}$, $\ell\in\{3,4\}$, for which there is a position~$\ihat>1$ such that $(x_\ihat,y_\ihat)=(p_\ell(s,t),q_\ell(s,t))$.
% 1|3  1|3  1|2
% 1|1  3|1  2|1
%
% 2|3  2|3  2|1
% 2|2  3|2  1|2
%
% 3|- this is the one missing case we cannot do
% 3|-

\begin{wrapfigure}{r}{0.3\textwidth}
\includegraphics{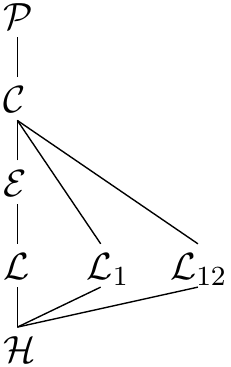}
\caption{Hamiltonicity notions ordered by inclusion.}
\vspace{-5mm}
\label{fig:ham}
\end{wrapfigure}
Even though the vertices of~$G(\alpha,\alpha,1)$ have no symbol~4, we still allow $\ell=4$ in this definition, as this graph may arise for example from~$G(\alpha,\alpha,1,1)$ by fixing the symbol~3, yielding a graph with symbols~$\{1,2,4\}$ that is isomorphic to~$G(\alpha,\alpha,1)$.

The obvious containment relations that follow from these definitions are shown in Figure~\ref{fig:ham}.
It is not clear whether $\cL_1\seq \cE$, because for $k\geq 3$ a graph in~$\cL_1$ has edges between the partition classes where the first symbol is distinct from~1.
It is also not clear whether $\cL\seq \cL_1$, because of the graphs~$G(1^k)$, $k\ge 3$, which by Lemma~\ref{lem:bipart} are bipartite but these partition classes are \emph{not} given by fixing the first symbol.
Recall that we consider a single edge, i.e., the graph~$G(1,1)$, as a cycle (otherwise $\cL\seq \cE$ and $\cL_1\seq\cC$ would be violated).

\section{Proof of Theorem~\ref{thm:N}}
\label{sec:proof-N}

In this section we present the proof of Theorem~\ref{thm:N}, implementing the idea outlined in Section~\ref{sec:ideasN}.

\begin{proof}[Proof of Theorem~\ref{thm:N}]
If $\Delta(\ba)<0$, then by Lemma~\ref{lem:kpart}, the partition class~$\Pi(\ba)^{1,1}$ of the graph~$G(\ba)$ is larger than all others combined, so there cannot be a Hamilton cycle.
This proves the first part of the theorem.

To prove the second part, we first show that if $\Delta(\ba)<0$, then the sizes of the partition classes defined in~\eqref{eq:si} satisfy $s_1-\sum_{i=2}^k s_i>1$, unless $\ba=(2,1)$.
Clearly, if one partition class of~$G(\ba)$ is by strictly more than~1 larger than all others combined, then there cannot be a Hamilton path.
Note also that the graph~$G(2,1)$ is the path on three vertices, which indeed has a Hamilton path.
We have seen that $\Delta(\ba)<0$ implies $s_1-\sum_{i=2}^k s_i>0$, and we now argue that this difference equals~1 if and only if~$\ba=(2,1)$.
We clearly have
\begin{equation}
\label{eq:binom}
\binom{n}{a_1,\ldots,a_k}\geq \binom{n}{a_1}.
\end{equation}
Moreover, from~\eqref{eq:si} we see that
\begin{equation}
\label{eq:s1}
s_1=\frac{a_1}{n}\binom{n}{a_1,\ldots,a_k}.
\end{equation}
Combining these observations we get
\begin{align*}
1 &= s_1 - \sum_{i=2}^k s_i
  = 2s_1 - \sum_{i=1}^k s_i
  \eqBy{eq:si} 2s_1 - \binom{n}{a_1,\ldots,a_k}
	\eqBy{eq:s1} \frac{2a_1-n}{n}\binom{n}{a_1,\ldots,a_k} \\
	&\geBy{eq:binom} \frac{2a_1-n}{n}\binom{n}{a_1}
	= (2a_1-n)\cdot\frac{n-1}{a_1}\cdot\frac{n-2}{a_1-1}\cdots \frac{n-a_1+1}{2}.
\end{align*}
The resulting product of fractions is non-decreasing, i.e., we have $\frac{n-1}{a_1}\leq \frac{n-2}{a_1-1}\leq \cdots\leq \frac{n-a_1+1}{2}$ because $n\geq a_1+1$.
We thus obtain the inequality
\begin{equation}
\label{eq:one-ineq}
1\geq (2a_1-n)\Big(\frac{n-1}{a_1}\Big)^{a_1-1}
\end{equation}
As $2a_1-n=-\Delta(\ba)$ and $\Delta(\ba)<0$ we have $2a_1-n>0$, and as this is an integer we have $2a_1-n\geq 1$.
As $n\geq a_1+1$ we also have $\frac{n-1}{a_1}\geq 1$.
From~\eqref{eq:one-ineq} we conclude that $2a_1-n=1$ and $\frac{n-1}{a_1}=1$, which implies $a_1=2$ and $n=3$, i.e., $\ba=(2,1)$.

This completes the proof.
\end{proof}

\section{Proofs of Theorems~\ref{thm:P} and~\ref{thm:234}}
\label{sec:proof-P}

In this section we prove Theorems~\ref{thm:P} and~\ref{thm:234}.
The proofs are split into several auxiliary lemmas, which we present first.
We then describe the computer experiments we did for settling the base cases of our induction proofs.
We then present the proofs of Theorems~\ref{thm:P} and~\ref{thm:234}, using the base cases and the lemmas.
Finally, we present the proofs of all auxiliary lemmas.

\subsection{Auxiliary lemmas}

We will use the following auxiliary lemmas; see Figure~\ref{fig:ind}.
These lemmas establish the Hamiltonicity of~$G(\ba)$ for all integer partition~$\ba$ with~$\Delta(\ba)\geq 1$, conditional on the Hamiltonicity of~$G(\bb)$ for various integer partitions~$\bb\prec\ba$ that satisfy $\Delta(\bb)\geq 0$.
The main technical achievement here is to partition all possible cases of frequency vectors~$\ba$ with $\Delta(\ba)\geq 1$ into disjoint cases, such that the induction proofs of Theorems~\ref{thm:P} and~\ref{thm:234} which apply these lemmas only rely on previously established cases.

Lemma~\ref{lem:H2} covers most graphs~$G(\ba)$ with~$\Delta(\ba)\geq 2$, while three special cases of these graphs are covered by Lemmas~\ref{lem:H2'}, \ref{lem:H2''}, and~\ref{lem:T'}.
Similarly, Lemma~\ref{lem:H1} covers most graphs~$G(\ba)$ with~$\Delta(\ba)=1$, while three special cases of these are covered by Lemmas~\ref{lem:H1'}, \ref{lem:H1''} and~\ref{lem:L12}.

\begin{figure}
\centerline{
\includegraphics{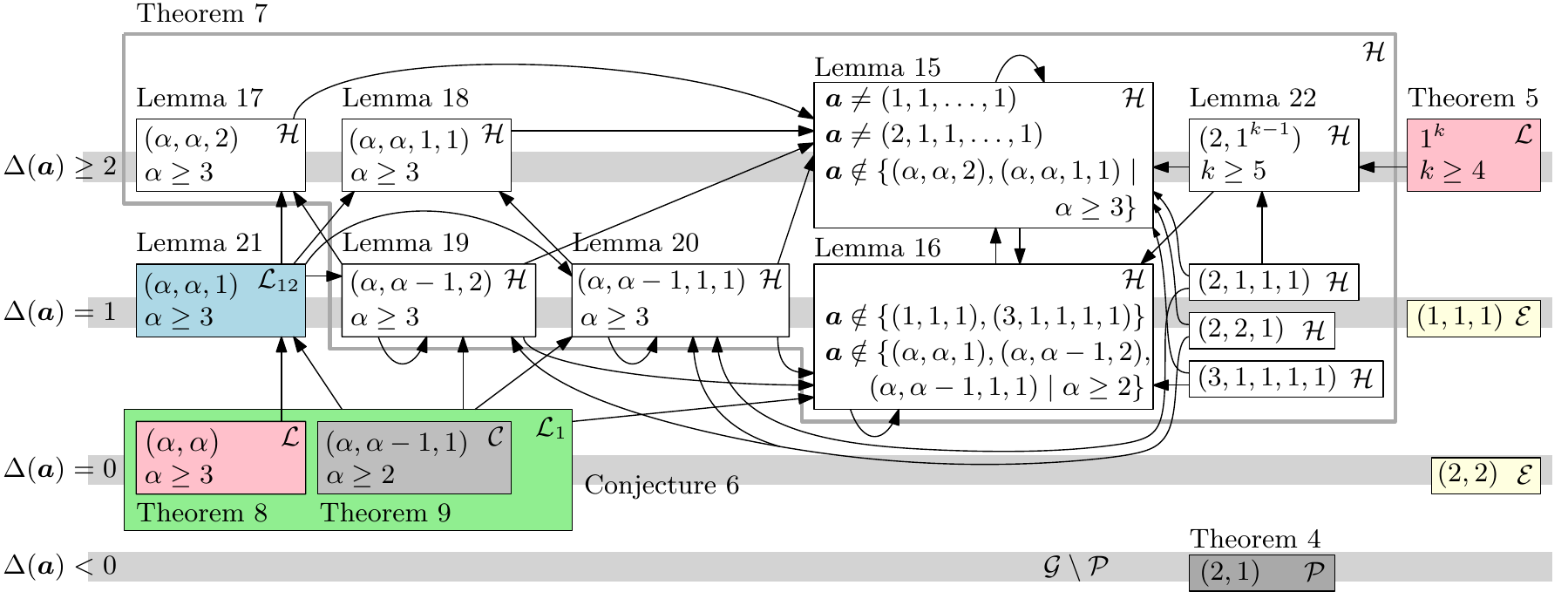}
}
\caption{Relations between our results and auxiliary lemmas, and the different Hamiltonicity notions we establish for them.
Arrows indicate dependencies in the proof of~Theorem~\ref{thm:P}.
}
\label{fig:ind}
\end{figure}

\todo{Make sure theorem/lemma/conjecture numbers are up-to-date.}

\begin{lemma}
\label{lem:H2}
Let $\ba=(a_1,\dots, a_k)$ be an integer partition with $\Delta(\ba)\geq 2$ with $\ba\neq (1,1,\ldots,1)$, $\ba\neq (2,1,1,\ldots,1)$ and~$\ba\notin\{(\alpha,\alpha,2),(\alpha,\alpha,1,1)\mid \alpha\geq 3\}$.
If for all $\bb\precdot\ba$ we have $G(\bb)\in\cH$, then we have $G(\ba)\in\cH$.
\end{lemma}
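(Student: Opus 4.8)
\textbf{Proof proposal for Lemma~\ref{lem:H2}.}

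The plan is to use the block decomposition idea from Tchuente's proof of Theorem~\ref{thm:T} (reproduced in Section~\ref{sec:ideasP}), but in the more flexible setting where the target property is Hamilton-connectedness rather than laceability. Fix an integer partition $\ba=(a_1,\ldots,a_k)$ with $\Delta(\ba)\geq 2$ satisfying the stated exclusions, and let $x,y\in\Pi(\ba)$ be two distinct vertices that we must join by a Hamilton path in $G(\ba)$. We want to pick a position $\ihat\in\{2,\ldots,n\}$ and an ordering $c_1,\ldots,c_m$ (with repetition) of the symbols to be placed at position $\ihat$, such that the subgraphs $G(\ba)^{\ihat,c_j}$ partition $\Pi(\ba)$ into $m$ blocks, each isomorphic to some $G(\bb)$ with $\bb\precdot\ba$ (so $\Delta(\bb)\geq\Delta(\ba)-1\geq 1$ by Lemma~\ref{lem:delta-prec}, and in particular $\bb\neq(2,2)$ and $G(\bb)\in\cH$ by hypothesis). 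We then choose ``portal'' pairs of vertices $u^j,v^j$ in each block so that $u^1=x$, $v^m=y$, consecutive blocks are linked by a star transposition $v^{j-1}\to u^j$ that moves $c_{j-1}$ out of position $\ihat$ and $c_j$ into it, and inside block $j$ we invoke $G(\ba)^{\ihat,c_j}\in\cH$ to get a Hamilton path of that block from $u^j$ to $v^j$. Concatenating these $m$ Hamilton paths yields the desired Hamilton path of $G(\ba)$ from $x$ to $y$.

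The key steps, in order, are: (i) choose $\ihat$ — the natural choice is a position where $x$ and $y$ differ when $x\neq y$ at some position $>1$; the case where $x$ and $y$ agree at every position $>1$ (so they differ only in the first symbol) needs separate handling, e.g.\ routing through an auxiliary block or using an extra star transposition to create a discrepancy at $\ihat$. (ii) Choose the symbol ordering $c_1,\ldots,c_m$ at position $\ihat$: here $c_1=x_\ihat$, $c_m=y_\ihat$, and the multiplicities match $\ba$; the freedom in ordering the remaining symbols is where the exclusion hypotheses get used, since for the excluded tiny partitions ($\ba=1^k$, $(2,1^{k-1})$, $(\alpha,\alpha,2)$, $(\alpha,\alpha,1,1)$) either a block is forced to be a $(2,2)$ or a single edge, or there is not enough room to realize all required portal constraints. (iii) Choose the portal vertices $u^j,v^j$: we need $v^{j-1}$ and $u^j$ to be adjacent via the designated star transposition, and we need $u^j\neq v^j$ inside each block so that $G(\bb)\in\cH$ applies — and crucially when $\Delta(\bb)=1$ the block $G(\bb)$ is Hamilton-connected so there is no parity obstruction, whereas if some $\bb$ had $\Delta(\bb)=0$ we would be in trouble; this is why $\Delta(\ba)\geq 2$ is exactly the right threshold. (iv) Verify adjacency, distinctness, and coverage, and conclude by concatenation.

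I expect the main obstacle to be step (ii)–(iii) in the boundary subcases: when $x$ and $y$ lie in the same block (i.e.\ $x_\ihat=y_\ihat$ for every admissible choice of $\ihat$, forcing $c_1=c_m$), the naive concatenation scheme breaks, and one must instead split the path through that repeated symbol — fix the symbol $c_1$ at $\ihat$ for the first and last blocks, and arrange a Hamilton path of $G(\bb)$ from $x$ to a portal and another from a portal back to $y$, using that $G(\bb)\in\cH$ allows \emph{arbitrary} endpoints. Making sure the portals in the first and last blocks can be chosen adjacent (across the intermediate blocks) to the rest of the construction, simultaneously for \emph{all} pairs $x,y$ and without ever creating a forbidden block, is the delicate bookkeeping; the stated exclusions are precisely the finitely many places where this bookkeeping provably cannot be completed, and I would expect the write-up to dispatch each exclusion with a short dedicated argument (or defer it to the companion lemmas Lemmas~\ref{lem:H2'}, \ref{lem:H2''}, \ref{lem:T'}) and then handle all remaining $\ba$ uniformly. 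A secondary obstacle is the degenerate case $n$ small relative to $k$, where ``$\ihat$ where $x,y$ differ'' may still leave too few symbols to order freely; here one falls back on choosing $\ihat$ to be \emph{any} position and absorbing the endpoint mismatch using Hamilton-connectedness of the two affected blocks.
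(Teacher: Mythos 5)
Your overall strategy matches the paper's proof: partition $\Pi(\ba)$ by the symbol at a single position $\ihat$, obtain one block $G(\ba)^{\ihat,c}\simeq G(\bb)$ per symbol $c\in[k]$ with $\bb\precdot\ba$, join consecutive blocks by a single star transposition, and concatenate Hamilton paths supplied by the hypothesis $G(\bb)\in\cH$. You also correctly identify why $\Delta(\ba)\geq 2$ is the right threshold (every block then has $\Delta(\bb)\geq 1$), and correctly locate the role of the exclusions (they are precisely where some $\bb\precdot\ba$ fails or is not known to be Hamilton-connected, and those cases are offloaded to Lemmas~\ref{lem:H2'}, \ref{lem:H2''}, \ref{lem:T'}).

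However, two of the cases you devote attention to cannot occur, and the case that actually requires care you gloss over. First, for two distinct multiset permutations $x\neq y$ of the same multiset, it is impossible that they ``differ only in the first symbol'': if $x_1\neq y_1$ and $x_i=y_i$ for all $i>1$, then $x$ and $y$ would have different symbol multisets. So a position $\ihat>1$ with $x_\ihat\neq y_\ihat$ always exists, and consequently $c_1=x_\ihat\neq y_\ihat=c_k$ always holds; there is never a ``same block'' or ``$c_1=c_m$'' case, and your proposed split-path construction for it is dead code. Relatedly, the ordering of symbols at position $\ihat$ is a permutation of $[k]$ (each block used exactly once), not a sequence with repetition. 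Second, the genuine delicacy, which you do not address, is making the two \emph{extreme} endpoint pairs nondegenerate: for $1<j<k$ one has $u^j_1=\pi_{j-1}\neq\pi_{j+1}=v^j_1$ automatically, but for $j=1$ and $j=k$ the endpoints $u^1=x$ and $v^k=y$ are fixed, and $v^1_1=\pi_2$ might coincide with $x_1$ (similarly for $u^k$). The paper resolves this by choosing positions $i_1,i_2\in[n]\setminus\{1,\ihat\}$ with $x_{i_1}\neq x_{i_2}$ and imposing $v^1_{i_1}=x_{i_2}$ (and symmetrically for $y$). The existence of such positions is exactly what the hypotheses $a_1\geq a_2\geq 2$ and $\sum_{i\geq 3}a_i\geq 2$ (extracted from the exclusions together with $\Delta(\ba)\geq 2$) are used to guarantee. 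Your sketch would need this mechanism spelled out; as written, it would not produce a complete argument because the effort is concentrated on vacuous cases while the actual nondegeneracy step is missing.
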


\begin{lemma}
\label{lem:H1}
Let $\ba=(a_1,\ldots,a_k)$ be an integer partition with $\Delta(\ba)=1$ with $\ba\notin\{(1,1,1),(3,1,1,1,1)\}$ and $\ba\notin\{(\alpha,\alpha,1),(\alpha,\alpha-1,2),(\alpha,\alpha-1,1,1)\mid\alpha\geq 2\}$.
If for all $\bb\precdot\ba$ with $\Delta(\bb)=0$ we have $G(\bb)\in\cL_1$, and for the unique $\bb\precdot\ba$ with $\Delta(\bb)=2$ we have that all $\bc\precdot\bb$ satisfy $G(\bc)\in\cH$, then we have $G(\ba)\in\cH$.
\end{lemma}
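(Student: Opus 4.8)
The plan is to mimic the structure of Tchuente's proof of Theorem~\ref{thm:T} reproduced above, but with the stronger target of Hamilton-connectedness and a more careful choice of the position~$\ihat$ and of the interface vertices between blocks. Let $\ba=(a_1,\ldots,a_k)$ with $\Delta(\ba)=1$ be as in the statement, and fix two distinct vertices $x,y\in\Pi(\ba)$ that we must join by a Hamilton path in $G(\ba)$. Since $\Delta(\ba)=1$, Lemma~\ref{lem:kpart} tells us $s_1=\big(\sum_{i\ge 2}s_i\big)-1$, so $\Pi(\ba)^{1,1}$ is almost-half of all vertices; this parity/balance bookkeeping will dictate which first-symbol combinations of $x$ and $y$ are feasible and will guide the argument, but Hamilton-connectedness must be proved for \emph{all} pairs, so some pairs will need an extra trick (e.g.\ routing one short initial edge before applying the block decomposition).

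First I would pick a position $\ihat\in\{2,\ldots,n\}$ at which to fix symbols and decompose $G(\ba)$ into blocks $G(\ba)^{\ihat,c}$, $c\in[k]$, each isomorphic to $G(\varphi(\ba^{(c)}))$ where $\ba^{(c)}$ decrements $a_c$. By Lemma~\ref{lem:delta-prec}, the block with $c=1$ has $\Delta=2$ and all other blocks have $\Delta=0$. We want $x$ and $y$ to lie in \emph{different} blocks, so ideally $x_\ihat\neq y_\ihat$; if no such position exists (i.e.\ $x$ and $y$ agree outside position~1, hence $x_1\neq y_1$ and they differ by a single star transposition), that degenerate case is handled separately — take one edge out of $x$ into a neighbor $x'$ with $x'_\ihat\neq y_\ihat$ and Hamilton-connect $x'$ to $y$ in $G(\ba)\setminus\{x\}$, or rather build the Hamilton path inside the blocks so that $x$ is an endpoint of its block's subpath. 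Having fixed $\ihat$, choose an ordering $\pi_1,\ldots,\pi_k$ of $[k]$ in which we traverse the blocks, with $\pi_1=x_\ihat$ and $\pi_k=y_\ihat$, and — crucially — arrange that the $\Delta=2$ block (symbol~$1$) is visited at some interior position $\pi_j=1$ with $j\notin\{1,k\}$ whenever $x_\ihat\neq 1\neq y_\ihat$; when $x_\ihat=1$ or $y_\ihat=1$ we instead put that block first or last. Then select interface vertices $u^j,v^j$ with $u^1=x$, $v^k=y$, and $u^j$ obtained from $v^{j-1}$ by the star transposition swapping $\pi_j$ (at position~$1$) with $\pi_{j-1}$ (at position~$\ihat$), exactly as in Tchuente's proof.

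The inductive input is then applied block by block. For each $\Delta=0$ block ($\pi_j\neq 1$), the block is isomorphic to some $G(\bb)$ with $\bb\precdot\ba$, $\Delta(\bb)=0$, and by hypothesis $G(\bb)\in\cL_1$; by Lemma~\ref{lem:D0} a Hamilton path in such a block forces the symbol~$1$ to be swapped every step, and since $G(\bb)\in\cL_1$ we get a Hamilton path between $u^j$ and $v^j$ provided \emph{exactly one} of $u^j,v^j$ has first symbol~$1$ (or, after renaming, lies in the appropriate largest class). The combinatorial heart of the argument is to verify that the interface vertices $u^j,v^j$ can be chosen to meet this $\cL_1$-parity constraint simultaneously for every $\Delta=0$ block — this is where the ordering $\pi$ and the placement of the $\Delta=2$ block matter, and it is the analogue of the inversion-parity bookkeeping in Tchuente's proof, now complicated by the fact that the forced-class in $\cL_1$ is "first symbol $=1$'' rather than a parity. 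For the single $\Delta=2$ block we do \emph{not} have $\cL_1$; instead the hypothesis gives that every $\bc\precdot\bb$ (where $\bb$ is that $\Delta=2$ partition) satisfies $G(\bc)\in\cH$, which is precisely the hypothesis of Lemma~\ref{lem:H2} (noting our excluded list for $\ba$ is chosen exactly so that $\bb$ avoids the exceptions of Lemma~\ref{lem:H2}); hence $G(\bb)\in\cH$, so we can connect $u^j$ to $v^j$ inside that block for \emph{any} pair, absorbing whatever residual constraint the parity bookkeeping could not satisfy. Concatenating the $k$ block-paths along the interface edges $v^{j-1}u^j$ yields the Hamilton path from $x$ to $y$ in $G(\ba)$.

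The main obstacle I expect is the case analysis forced by $\cL_1$ being a \emph{directed} (asymmetric) laceability notion: in a $\Delta=0$ block, a Hamilton path exists between $u^j$ and $v^j$ only when precisely one of them sits in the block's copy of $\Pi(\cdot)^{1,1}$, and after fixing symbol $\pi_j$ at position $\ihat$ the relabeling may turn the "first symbol $1$'' class into something else — so one must track, for each block in the chain, whether its largest class corresponds to symbol~$1$ of $\ba$ or to the symbol that got shifted down, and ensure a globally consistent alternation. Handling $x_\ihat=1$ or $y_\ihat=1$, the possibility that no usable $\ihat$ with $x_\ihat\neq y_\ihat$ exists, and small-$k$ degeneracies (where some blocks are empty or are the exceptional $G(2,2)$, $G(1,1,1)$, etc.) will require separate short arguments, appealing either to the $\Delta=2$ block's Hamilton-connectedness to reroute, or to the explicitly listed computer-verified base cases. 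Once the interface-parity lemma is isolated and checked, the rest is the routine concatenation argument already templated by Tchuente.
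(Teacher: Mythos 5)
The central step in your plan does not work: a single-position decomposition of $G(\ba)$ into blocks $G(\ba)^{\ihat,\pi_1},\ldots,G(\ba)^{\ihat,\pi_k}$ cannot satisfy the $\cL_1$ endpoint constraints of the $\Delta=0$ blocks simultaneously once $k$ is large (and already for $k=3$ for general $x,y$). Concretely, with Tchuente-style interface vertices, block $j$ (for $1<j<k$) is entered at a vertex $u^j$ with $u^j_1=\pi_{j-1}$ and exited at a vertex $v^j$ with $v^j_1=\pi_{j+1}$. Since $\ba$ has a unique largest part by the exclusion conditions, each $\Delta=0$ block $G(\ba)^{\ihat,\pi_j}$ (with $\pi_j\neq 1$) is isomorphic to a $G(\bb)$ whose dominant partition class is still ``first symbol $1$'', so $\cL_1$ demands that \emph{exactly one} of $\pi_{j-1},\pi_{j+1}$ equals~$1$. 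But $\pi$ has exactly one entry equal to $1$, so at most the two blocks adjacent to the $\Delta=2$ block can meet this; every other interior $\Delta=0$ block sees two non-$1$ interface symbols and has no admissible endpoints. Inserting the single Hamilton-connected $\Delta=2$ block as a ``wildcard'' cannot repair this, because the violation is distributed over many blocks, not concentrated at one interface. You flagged the ``interface-parity lemma'' as the main obstacle but assumed it could be isolated and proved; in fact it is false for this decomposition, so the plan fails at its core.

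The paper's proof works around exactly this obstruction by decomposing at \emph{two} positions. It chooses a position $\icheck>1$ with $x_\icheck=1$ and groups the $\Delta=0$ blocks (those with the $\icheck$-entry fixed to $\pi_{2j},\pi_{2j+1}\neq 1$) into consecutive \emph{pairs}: the Hamilton path through a pair runs from a first-symbol-$1$ vertex, through the first $\Delta=0$ block ending at a non-$1$ vertex, jumps to the second $\Delta=0$ block entering at a non-$1$ vertex, and ends back at a first-symbol-$1$ vertex. The unique $\Delta=2$ block (the $\icheck$-entry fixed to~$1$) is then further split along a second position $\ihat$ into $\Delta\geq 1$ subgraphs $G(\bc)$, $\bc\precdot\bb$, which are Hamilton-connected by the hypothesis; these pieces $P_1,\ldots,P_k$ are interleaved between the $\Delta=0$ pairs and absorb the remaining endpoint constraints. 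A matching lemma (Lemma~\ref{lem:matching}) then verifies that suitable permutations $\pi,\rho$ controlling the two fixing positions exist, and the argument branches on the parity of $k$ and on which of $x,y$ has first symbol~$1$ or $\icheck$-entry~$1$. If you want to pursue this lemma, that two-level decomposition, with the $\Delta=0$ blocks consumed in pairs bracketed by first-symbol-$1$ gateways, is the missing idea.
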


\begin{lemma}
\label{lem:H2'}
Consider the integer partition $\ba=(\alpha,\alpha,2)$ for $\alpha\geq 3$, which satisfies $\Delta(\ba)=2$.
If $\bb=(\alpha,\alpha,1)\precdot\ba$ satisfies $G(\bb)\in\cL_{12}$ and $\bb=(\alpha,\alpha-1,2)\precdot\ba$ satisfies $G(\bb)\in\cH$, then we have $G(\ba)\in\cH$.
\end{lemma}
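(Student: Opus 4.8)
The plan is to prove $G(\alpha,\alpha,2)\in\cH$ for $\alpha\geq 3$ by decomposing $G(\ba)$ along a fixed position $i\in\{2,\dots,n\}$, exactly as in the Tchuente-style argument reproduced in Section~\ref{sec:ideasP}, and then gluing together Hamilton paths obtained from the hypotheses. Recall that fixing the last (or any) symbol at position $i$ to value $c$ gives $G(\ba)^{i,c}\cong G(\varphi(\ba'))$ where $\ba'$ decrements $a_c$: fixing $c=1$ yields a copy of $G(\alpha-1,\alpha,2)=G(\alpha,\alpha-1,2)$ (with $\Delta=1$), fixing $c=2$ yields another copy of $G(\alpha,\alpha-1,2)$, and fixing $c=3$ yields a copy of $G(\alpha,\alpha,1)$ (with $\Delta=0$). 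By hypothesis the two ``type-$(\alpha,\alpha-1,2)$'' blocks are Hamilton-connected, and the single ``type-$(\alpha,\alpha,1)$'' block lies in $\cL_{12}$, so inside each block we can route a Hamilton path between prescribed endpoints, provided those endpoints meet the constraints of the respective notion.

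The core of the argument is to set up, given two arbitrary distinct vertices $x,y\in\Pi(\ba)$, a position $\ihat>1$ and an ordering in which its three possible symbols $1,2,3$ are fixed, together with ``transition'' star transpositions that move from the last vertex of one block to the first vertex of the next. First I would handle the case $x_\ihat\neq y_\ihat$: pick $\ihat$ to be a position where $x$ and $y$ differ, and choose the fixing order $(\pi_1,\pi_2,\pi_3)$ of $\{1,2,3\}$ so that $\pi_1=x_\ihat$ and $\pi_3=y_\ihat$. Then $x$ lives in the first block and $y$ in the last, and we need one intermediate vertex-pair in the middle block; the three sub-paths connect $x\to v^1$, $u^2\to v^2$, $u^3\to y$, with $u^{j}$ obtained from $v^{j-1}$ by the star transposition that swaps the symbol currently at position~$1$ with $\pi_{j-1}$ sitting at position~$\ihat$. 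The delicate point is that when the $(\alpha,\alpha,1)$-block appears (i.e.\ when $\pi_j=3$), the endpoints of the path inside it are not free: they must realize one of the admissible $(s,t)$-patterns of Figure~\ref{fig:pq} with the extra coordinate $\ihat'>1$ (a position other than $\ihat$) carrying the forced pair $(p_\ell(s,t),q_\ell(s,t))$; here $\ell=3$. So the choice of the transition vertices $u^j,v^j$ must be made compatibly with this constraint, which is where most of the casework will sit — in particular one must argue that since $\alpha\geq 3$ there is always enough ``room'' (enough copies of the symbol $1$, and a spare position) to place such a forced pair while still reaching the desired global endpoints $x,y$. The case $x_\ihat=y_\ihat$ for every position is impossible unless $x=y$ as long as there exist two positions $>1$; one routes within a single decomposition along a different position, or, if $x$ and $y$ differ only at position~$1$, one instead fixes some position $i>1$ where the surrounding structure allows a valid split — since $\Delta(\ba)=2\geq 1$ there is no parity obstruction to a Hamilton path between vertices in the same or different first-symbol classes, so this degenerate sub-case is routine bookkeeping.

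The main obstacle, I expect, is precisely the interface with the $\cL_{12}$ block: unlike Hamilton-connectedness, laceability-of-type-$12$ only guarantees a path between a limited set of endpoint pairs and only when an auxiliary position carries a dictated value, so the freedom we had in Tchuente's proof to pick $u^j,v^j$ almost arbitrarily is substantially curtailed for $j$ with $\pi_j=3$. Concretely, I will need to verify that for each admissible pair $(s,t)$ of first symbols at the two ends of the $(\alpha,\alpha,1)$-block there is a realization in which (i) the forced coordinate $\ihat'\neq\ihat$ with value $(p_3(s,t),q_3(s,t))$ is available, and (ii) the two star transpositions linking this block to its neighbouring $(\alpha,\alpha-1,2)$-blocks are legal, i.e.\ the symbol being swapped in from position~$\ihat$ actually differs from the current first symbol. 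Because the neighbouring blocks are Hamilton-connected, their endpoints impose no constraint beyond being reachable by these transitions, so the whole proof hinges on a finite, $\alpha$-independent verification of compatibility of the $\cL_{12}$ patterns with the transition scheme; I would organize this as a short table over the possible values of $(x_\ihat,y_\ihat)$ and the induced $(s,t)$, checking each entry, and then invoke $\alpha\geq 3$ only to guarantee non-emptiness of the relevant endpoint sets.
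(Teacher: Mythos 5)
Your plan matches the paper's proof: decompose $G(\alpha,\alpha,2)$ by fixing a position $\ihat$ where $x$ and $y$ differ into two $G(\alpha,\alpha-1,2)$ blocks (which are $\cH$, so endpoints are unconstrained) and one $G(\alpha,\alpha,1)$ block (which is only $\cL_{12}$), and you correctly identify the sole difficulty, namely arranging the transition vertices so that the $\cL_{12}$-block endpoints carry the prescribed $(p_\ell,q_\ell)$-pair at some auxiliary coordinate. The paper executes exactly the finite verification you outline, using the $1\leftrightarrow 2$ symmetry to reduce to the two cases $(x_\ihat,y_\ihat)\in\{(1,2),(1,3)\}$ (placing the $(\alpha,\alpha,1)$ block in the middle resp.\ at the end of the concatenation) with explicit choices of $u^j,v^j$ and auxiliary indices, and $\alpha\geq 3$ is used precisely as you anticipate, to guarantee the needed spare positions. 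One minor correction: the degenerate sub-case you raise, where $x$ and $y$ differ only at position~$1$, cannot occur for two distinct $\ba$-multiset permutations (agreement on positions $2,\ldots,n$ forces agreement on the suppressed first symbol), so no separate handling is required there.
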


\begin{lemma}
\label{lem:H2''}
Consider the integer partition $\ba=(\alpha,\alpha,1,1)$ for $\alpha\geq 3$, which satisfies $\Delta(\ba)=2$.
If $\bb=(\alpha,\alpha,1)\precdot\ba$ satisfies $G(\bb)\in\cL_{12}$ and $\bb=(\alpha,\alpha-1,1,1)\precdot\ba$ satisfies $G(\bb)\in\cH$, then we have $G(\ba)\in\cH$.
\end{lemma}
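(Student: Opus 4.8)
The plan is to mimic the inductive scheme used for Lemma~\ref{lem:H2}, decomposing $G(\ba)$ for $\ba=(\alpha,\alpha,1,1)$ into copies of $G(\bb)$ for $\bb\precdot\ba$ by fixing a symbol at a position $\ihat>1$, and then gluing the Hamilton paths obtained in the blocks into a single Hamilton path between an arbitrary pair of distinct vertices $x,y$. The cover relations below $\ba=(\alpha,\alpha,1,1)$ are exactly $\bb=(\alpha,\alpha,1)$ (with $\Delta(\bb)=0$, obtained by deleting a symbol~3 or a symbol~4) and $\bb=(\alpha,\alpha-1,1,1)$ (with $\Delta(\bb)=1$, obtained by deleting a symbol~1 or a symbol~2, using~\eqref{eq:ai-decreasing}); note $\Delta(\ba)=2$, consistent with Lemma~\ref{lem:delta-prec}. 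We only have $G(\alpha,\alpha-1,1,1)\in\cH$ and the weaker $G(\alpha,\alpha,1)\in\cL_{12}$ at our disposal, so the key work is to organize the fixing order so that wherever a block is a copy of $G(\alpha,\alpha,1)$ we only ever need a Hamilton path between a pair of vertices and at a position whose values match one of the admissible patterns $(p_\ell(s,t),q_\ell(s,t))$ from~\eqref{eq:pq} — precisely the situation $\cL_{12}$ was designed for, with $\ell\in\{3,4\}$ since there are two singleton symbols.

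First I would set up the block structure exactly as in Tchuente's argument reproduced in the excerpt: pick a position $\ihat>1$, choose an ordering $\pi$ of the symbols placed at $\ihat$ (here the multiset of symbols available at a fixed position is $\{1^{\alpha},2^{\alpha},3,4\}$ after one symbol~1 is used at position~1, etc.), and choose connector vertices $u^j,v^j$ so that consecutive blocks are joined by a single star transposition swapping position~1 with position~$\ihat$. The crucial choice is to handle the two ``light'' blocks (those isomorphic to $G(\alpha,\alpha,1)$, arising when we fix symbol~3 or symbol~4 at $\ihat$) either at the two ends of the block sequence or adjacent to each other, so that the connector vertices $u^j,v^j$ in those blocks can be taken with prescribed first symbols $s,t\in\{1,2,\ell\}$ and with a further free position carrying the values $(p_\ell(s,t),q_\ell(s,t))$; in the heavy blocks (copies of $G(\alpha,\alpha-1,1,1)$) we are free to use full Hamilton-connectedness, so there the connectors can be chosen to match whatever the light blocks force. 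The endpoints $x$ and $y$ are absorbed into the first and last blocks respectively; if $x$ or $y$ already has its $\ihat$th entry equal to a convenient symbol we fix $\ihat$ there directly, otherwise we first route within $x$'s block to reach a vertex with the desired $\ihat$th symbol — this is the standard ``prepare the endpoint'' maneuver and uses only Hamilton-connectedness of the blocks, all of which is available.

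The main obstacle I anticipate is the bookkeeping needed to guarantee, simultaneously, that (i) the connector vertices in the $G(\alpha,\alpha,1)$-blocks have first symbols lying in $\{1,2,\ell\}$ (rather than the ``forbidden'' third singleton value), (ii) those same vertices admit an auxiliary position $\ihat'$ realizing one of the patterns in Figure~\ref{fig:pq}, and (iii) these constraints are compatible with the star-transposition glue between consecutive blocks and with the fixed endpoints $x,y$ whose first symbols are arbitrary. Because both $\alpha$-fold symbols are present and the two singletons are symmetric, there is enough room: when the endpoint's first symbol is the ``other'' singleton, a single preliminary star transposition moves it into position before $\ihat$ is fixed. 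A secondary nuisance is the small-$\alpha$ boundary ($\alpha=3$), where $\alpha-1$ could coincide with another part; one checks $(\alpha,\alpha-1,1,1)=(3,2,1,1)$ is still a genuine partition with $\Delta=1$ and the lemma's hypothesis applies verbatim, so no separate base case is needed beyond what the induction in Theorem~\ref{thm:P} already supplies. I would then verify the parity/size constraint: since $\Delta(\ba)=2>0$, by Lemma~\ref{lem:kpart} no partition class is larger than all others combined, so there is no parity obstruction to Hamilton-connectedness, and the concatenation of the block paths — each matched in endpoints by construction — yields the desired Hamilton path from $x$ to $y$, proving $G(\ba)\in\cH$.
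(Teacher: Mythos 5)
Your proposal correctly identifies the paper's strategy: decompose $G(\alpha,\alpha,1,1)$ by fixing a symbol at a position $\ihat$ where $x$ and $y$ differ, obtaining two copies of $G(\alpha,\alpha-1,1,1)$ (fixing symbol~1 or~2; these are $\cH$) and two copies of $G(\alpha,\alpha,1)$ (fixing symbol~3 or~4; these are only $\cL_{12}$), then glue Hamilton paths in the four blocks via star transpositions between positions~1 and~$\ihat$. You also correctly isolate the crux: in the two light blocks, the connector vertices and their auxiliary position must realize a pattern $(p_\ell(s,t),q_\ell(s,t))$ admissible for $\cL_{12}$.

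However, having named that crux as ``the main obstacle I anticipate'' and listing the three constraints (i)--(iii), you do not resolve it, and the resolution is the entire content of the lemma. The paper does this by a concrete three-case analysis on the reduced pair $(x_\ihat,y_\ihat)\in\{(1,2),(1,3),(3,4)\}$ (obtained by the symmetry swapping $1\leftrightarrow 2$ and $3\leftrightarrow 4$), and in each case it hand-picks the order in which symbols are fixed at $\ihat$ together with auxiliary positions $i_1,i_3$ and explicit entries $(u^j_{i_1},v^j_{i_1})$, $(u^j_{i_3},v^j_{i_3})$ so that the forced first symbols of the light-block connectors (e.g.\ $u^2_1=1$, $v^2_1=4$ in the $(1,2)$ case, or $u^1_1=t'=x_1$ with $t'$ possibly equal to~$4$ in the $(3,4)$ case) combine with a suitable $\ell\in\{3,4\}$ to match one of the patterns in~\eqref{eq:pq}. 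Moreover, your claim that the light blocks should be placed ``at the two ends of the block sequence or adjacent to each other'' is not what the paper does and is not verified to be achievable: in the $(1,2)$ case the light blocks are interior (order $1,3,4,2$), and in the $(1,3)$ case they are at positions~$2$ and~$4$ (order $1,4,2,3$), neither both at the ends nor adjacent. The ordering is dictated by making the $\cL_{12}$-constraints satisfiable, not by a fixed positional scheme. Finally, the ``prepare the endpoint'' detour is unnecessary and potentially dangerous: if $x_\ihat\in\{3,4\}$ then $x$ lies in a light block where only $\cL_{12}$ is available, so you cannot freely route from $x$ to a more convenient vertex using Hamilton-connectedness; the paper instead absorbs this into the case $(x_\ihat,y_\ihat)=(3,4)$ by choosing $t':=x_1$ and $p':=p_4(1,t')$ directly.
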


\begin{lemma}
\label{lem:H1'}
Consider the integer partition $\ba=(\alpha,\alpha-1,2)$ for $\alpha\geq 3$, which satisfies $\Delta(\ba)=1$.
If $\bb=\varphi((\alpha,\alpha-2,2))\precdot\ba$ and $\bb=(\alpha,\alpha-1,1)\precdot\ba$ satisfy $G(\bb)\in\cL_1$, and if $\bc=(\alpha-1,\alpha-1,1)\prec\ba$ satisfies $G(\bc)\in\cL_{12}$ and $\bc=\varphi((\alpha-1,\alpha-2,2))\prec\ba$ satisfies $G(\bc)\in\cH$, then we have $G(\ba)\in\cH$.
\end{lemma}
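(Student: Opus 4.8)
The plan is to prove Lemma~\ref{lem:H1'} by the same fibre-decomposition strategy used for Theorem~\ref{thm:T} and encoded in Lemmas~\ref{lem:H2} and~\ref{lem:H1}, but carefully adapted to the delicate partition $\ba=(\alpha,\alpha-1,2)$ with $\Delta(\ba)=1$. We must show $G(\ba)\in\cH$, i.e.\ that any two distinct vertices $x,y$ are joined by a Hamilton path. The starting point is to pick a position $\ihat>1$ in which $x$ and $y$ behave favourably and to fix the symbol at $\ihat$, running through the three symbols $\{1,2,3\}$ in some order $\pi=(\pi_1,\pi_2,\pi_3)$; each fibre $\Pi(\ba)^{\ihat,c}$ induces a copy of $G(\bb)$ for the corresponding $\bb\precdot\ba$. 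The three cover-down partitions are $\bb=(\alpha,\alpha-1,1)$ (fixing symbol $3$, with $\Delta=0$), $\bb=\varphi((\alpha,\alpha-2,2))$ (fixing symbol $2$, with $\Delta=2$), and $\bb=(\alpha-1,\alpha-1,2)$ (fixing symbol $1$, with $\Delta=0$); Lemma~\ref{lem:delta-prec} gives exactly these three $\Delta$-values. Note that $(\alpha-1,\alpha-1,2)=\varphi(\ba')$ where $\ba'$ decrements the largest part, so its $\Delta$ goes $up$ by one, and indeed $\Delta=\Delta(\ba)-1=0$ only for the other two.

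First I would set up the block concatenation: choose the fixing order $\pi$ and intermediate `bridge' vertices $u^j,v^j$ ($j=1,2,3$) with $u^1=x$, $v^3=y$, each $u^j$ obtained from $v^{j-1}$ by a star transposition moving $\pi_j$ into position $1$ and $\pi_{j-1}$ out to position $\ihat$, exactly as in the proof of Theorem~\ref{thm:T}. Inside the fibre fixing $\pi_1=x_\ihat$ and $\pi_3=y_\ihat$ we will need Hamilton paths of $G(\bb)$ for $\bb\in\{(\alpha,\alpha-1,1),(\alpha-1,\alpha-1,2)\}$; here the hypotheses only give $G(\bb)\in\cL_1$, i.e.\ Hamilton-$1$-laceability, so the endpoints $u^j,v^j$ in those two fibres must be chosen so that one lies in the $\Pi(\bb)^{1,1}$ class and the other does not. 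This is the reason the lemma is phrased with so much care, and it is what forces the constraint on the pair $(x_\ihat,y_\ihat)$ implicit in a correct proof: we want the first symbols of $x,y$, together with the value at $\ihat$, to be compatible with landing correctly on both sides of the $\cL_1$-partition of those two $\Delta=0$ blocks. In the middle fibre (fixing $\pi_2=2$), $\bb=\varphi((\alpha,\alpha-2,2))$ has $\Delta=2$ and Lemma~\ref{lem:H2} (or its variants, but this $\bb$ is a `generic' $\Delta\ge 2$ partition for $\alpha\ge 3$ — it is not of the forbidden shapes $1^k$, $2\cdot1^{k-1}$, $(\alpha,\alpha,2)$, $(\alpha,\alpha,1,1)$) yields $G(\bb)\in\cH$, provided all $\bc\precdot\bb$ satisfy $G(\bc)\in\cH$; the lemma's hypothesis supplies $G((\alpha-1,\alpha-1,2))\prec\ba$ wait — rather it supplies $G(\varphi((\alpha-1,\alpha-2,2)))\in\cH$, which is one of the required $\bc\precdot\bb$, and the other cover-downs of $\bb$ have strictly larger $\Delta$ or are handled by prior cases in the global induction of Theorem~\ref{thm:P}. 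So the middle block is the `free' one where Hamilton-connectedness lets us connect the bridge vertices with no parity constraint.

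The only subtle point is the case analysis forced by which of $x,y$ have first symbol $1$. The global target property is $\cH$, so $x,y$ range over all first-symbol pairs $(s,t)\in\{1,2,3\}^2$ (including $s=t$). When exactly one endpoint has first symbol $1$, the ordering of the fibres and the placement of $u^j,v^j$ must be chosen so that the $\cL_1$ block containing that endpoint sees it in its big class $\Pi(\bb)^{1,1}$ while the mate bridge vertex sits outside; when neither or both have first symbol $1$, one extra star transposition at the very start or end lets us reduce to the previous case, or we route both endpoints into a single $\cH$ fibre. I expect the main obstacle to be precisely this bookkeeping: verifying that for $every$ pair $(x,y)$ one can simultaneously (i) find a position $\ihat$ with the right pair of values, (ii) choose $\pi$ so that the two $\cL_1$ fibres are the first and last ones, and (iii) choose all the $u^j,v^j$ consistently with the $\cL_1$-parity constraint and with the star-transposition bridge relations — while keeping $u^j\ne v^j$ so each fibre path is nondegenerate. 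This is routine in spirit but intricate; the role of $G((\alpha-1,\alpha-1,1))\in\cL_{12}$ in the hypothesis is to cover the handful of edge cases where $\alpha$ is small or where the bridge vertices are forced close to the boundary and an ordinary $\cL_1$ path is unavailable, exactly as $\cL_{12}$ is used for the $(\alpha,\alpha,1)$ fibres elsewhere. Once the case split is laid out, each branch is closed by concatenating the three fibre paths along the bridge edges, giving the desired Hamilton path of $G(\ba)$ and hence $G(\ba)\in\cH$.
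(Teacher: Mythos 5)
Your plan is built on the wrong decomposition, and it stems from a sign error in the $\Delta$-bookkeeping that propagates into the whole strategy. For $\ba=(\alpha,\alpha-1,2)$, decrementing the \emph{largest} part gives $(\alpha-1,\alpha-1,2)$, and by Lemma~\ref{lem:delta-prec} this is the \emph{only} cover-down with $\Delta=2$; the other two, $\varphi((\alpha,\alpha-2,2))$ and $(\alpha,\alpha-1,1)$, both have $\Delta=0$. You state the opposite in your first list (putting $\Delta=2$ on $\varphi((\alpha,\alpha-2,2))$ and $\Delta=0$ on $(\alpha-1,\alpha-1,2)$), partially catch yourself in a parenthetical, and then in the third paragraph revert to treating $\varphi((\alpha,\alpha-2,2))$ as the ``$\Delta=2$ middle fibre'' to which Lemma~\ref{lem:H2} applies. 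That is false: $\varphi((\alpha,\alpha-2,2))$ is a $\Delta=0$ partition, it is only assumed to be $\cL_1$, and Lemma~\ref{lem:H2} does not touch it. You also claim the hypotheses give $G((\alpha-1,\alpha-1,2))\in\cL_1$; they do not — the lemma's hypotheses say nothing directly about $(\alpha-1,\alpha-1,2)$ at all, only about two of \emph{its} cover-downs, namely $G((\alpha-1,\alpha-1,1))\in\cL_{12}$ and $G(\varphi((\alpha-1,\alpha-2,2)))\in\cH$.

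That last observation is exactly the signal that a single fixed position $\ihat$ cannot work. With one fixed position, two of your three blocks are $\Delta=0$ blocks only known to be $\cL_1$, and an $\cL_1$ path is forced (by Lemma~\ref{lem:D0}) to run between the class $\Pi^{1,1}$ and its complement; chaining two such blocks with a bridge transposition over-determines the first symbols $\pi_1,\pi_2,\pi_3$ and $x_1,y_1$, and for generic $x,y$ the required first-symbol pattern cannot be realized. The paper's proof (and the proof of Lemma~\ref{lem:H1} it mirrors) resolves this by fixing \emph{two} positions: one index $\icheck$ to split $G(\ba)$ into the three symbol fibres, and a second index $\ihat$ to further split the unique $\Delta=2$ fibre $G^{\icheck,1}(\ba)\simeq G(\alpha-1,\alpha-1,2)$ into blocks $G^{(\icheck,\ihat),(1,\rho_j)}(\ba)\simeq G(\bc)$ with $\bc\precdot(\alpha-1,\alpha-1,2)$. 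Those sub-blocks are precisely where $G((\alpha-1,\alpha-1,1))\in\cL_{12}$ and $G(\varphi((\alpha-1,\alpha-2,2)))\in\cH$ are consumed — not as an afterthought for ``edge cases where $\alpha$ is small'' but as the main workhorse. The two $\Delta=0$ fibres $G^{\icheck,2}$ and $G^{\icheck,3}$ are then traversed as a \emph{pair} $\Phat_1\Pcheck_1$ sandwiched between sub-blocks of the $\Delta=2$ fibre, so that the pair enters and exits with first symbol $1$ and the $\cL_1$-parity constraints are automatically compatible. Without this two-level decomposition and the pairing mechanism, the concatenation does not close up.
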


\begin{lemma}
\label{lem:H1''}
Consider the integer partition $\ba=(\alpha,\alpha-1,1,1)$ for $\alpha\geq 3$, which satisfies $\Delta(\ba)=1$.
If $\bb=(\alpha,\alpha-2,1,1)\precdot\ba$ and $\bb=(\alpha,\alpha-1,1)\precdot\ba$ satisfy $G(\bb)\in\cL_1$, and if $\bc=(\alpha-1,\alpha-1,1)\prec\ba$ satisfies $G(\bc)\in\cL_{12}$ and $\bc=(\alpha-1,\alpha-2,1,1)\prec\ba$ satisfies $G(\bc)\in\cH$, then we have $G(\ba)\in\cH$.
\end{lemma}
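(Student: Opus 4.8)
\textbf{Proof strategy for Lemma~\ref{lem:H1''}.}
The plan is to adapt the block-decomposition technique used for Lemma~\ref{lem:H1} to the exceptional partition $\ba=(\alpha,\alpha-1,1,1)$, where the naive induction fails because both cover-partitions below $\ba$ with $\Delta=0$, together with the one with $\Delta=2$, force us into graphs that are only known to be in $\cL_1$ or $\cL_{12}$ rather than $\cH$. Concretely, the cover relations are $(\alpha,\alpha-2,1,1)\precdot\ba$ and $(\alpha-1,\alpha-1,1,1)\precdot\ba$ (both with $\Delta=1$), and $(\alpha,\alpha-1,1)\precdot\ba$ (with $\Delta=0$, obtained by removing a symbol $3$ or $4$). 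We want to show $G(\ba)\in\cH$, so we fix two arbitrary distinct vertices $x,y\in\Pi(\ba)$ and must build a Hamilton path between them. The first step is a case distinction on the first symbols $x_1,y_1$ and on whether $x$ and $y$ agree on the positions carrying the unique symbols $3$ and $4$; in most cases we will find a position $\ihat>1$ with $x_\ihat\neq y_\ihat$ where we can fix symbols, decomposing $G(\ba)$ into blocks $G(\ba)^{\ihat,c}$ for $c$ ranging over the symbols we cycle through at position $\ihat$.

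The second step is to choose the cyclic order $\pi$ of symbols fixed at position $\ihat$ so that the blocks encountered are exactly: one copy isomorphic to $G(\alpha,\alpha-1,1)$ (when we fix the symbol $c$ with $a_c=1$, there being potentially two such, i.e. symbols $3$ and $4$, giving up to two such blocks), $a_1$-many copies of $G(\alpha-1,\alpha-1,1,1)$-type... wait, more carefully: fixing symbol $1$ at position $\ihat$ gives $\varphi((\alpha-1,\alpha-1,1,1))=(\alpha-1,\alpha-1,1,1)$ with $\Delta=1$; fixing symbol $2$ gives $(\alpha,\alpha-2,1,1)$ with $\Delta=1$; fixing symbol $3$ (or $4$) gives $\varphi((\alpha,\alpha-1,1))=(\alpha,\alpha-1,1)$ with $\Delta=0$. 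Since the blocks with $\Delta=1$ are themselves among the exceptional partitions, I will \emph{not} try to recurse into them as whole blocks; instead, following the pattern of Lemmas~\ref{lem:H1'}--\ref{lem:H1''}, I will further decompose those blocks by fixing a \emph{second} coordinate, reducing to the sub-partitions $\bc=(\alpha-1,\alpha-1,1)\in\cL_{12}$ and $\bc=(\alpha-1,\alpha-2,1,1)\in\cH$ listed in the hypothesis. The $\cL_{12}$ hypothesis is precisely engineered (via the $p_\ell,q_\ell$ values of~\eqref{eq:pq}) so that the two endpoints at which we must enter and leave the $G(\alpha-1,\alpha-1,1)$ sub-block are forced into a position-$\ihat$ pattern that $\cL_{12}$ can realize; this is where the somewhat mysterious definition of $\cL_{12}$ earns its keep.

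The third step is the gluing: having fixed $\pi=(\pi_1,\dots,\pi_m)$ with $\pi_1=x_\ihat$ and $\pi_m=y_\ihat$, choose connector vertices $u^j,v^j$ in block $j$ so that $u^1=x$, $v^m=y$, and $u^{j+1}$ differs from $v^j$ by a single star transposition swapping $\pi_j$ out of position~$1$ and $\pi_{j+1}$ in (equivalently, moving the symbol at position~$\ihat$). Within each block we invoke the appropriate hypothesis ($\cH$, $\cL_1$, or $\cL_{12}$ after a secondary split) to get a Hamilton path of that block from $u^j$ to $v^j$; concatenating the $m$ paths yields the Hamilton path of $G(\ba)$ from $x$ to $y$. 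The delicate point throughout is parity/independence-class bookkeeping: $\Delta=1$ means the partition class $\Pi(\ba)^{1,1}$ is not dominant, but inside the $\Delta=0$ sub-blocks $G(\alpha,\alpha-1,1)$ and $G(\alpha-1,\alpha-1,1)$ the class with first symbol~$1$ has exactly half the vertices, so by Lemma~\ref{lem:D0} any Hamilton path there must have one endpoint with first symbol~$1$ and one without — we must choose $u^j,v^j$ to respect this, and verify enough freedom exists.

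\textbf{Main obstacle.} The hard part will be the corner cases where $x$ and $y$ agree on \emph{both} positions carrying the unique symbols $3$ and $4$, so there is no obvious $\ihat$ with a convenient $(x_\ihat,y_\ihat)$ pair, and simultaneously $x_1=y_1$ or $\{x_1,y_1\}$ is awkward. In those situations one must first take a short detour — a few star transpositions moving $x$ (or $y$) to a nearby vertex $x'$ for which a good $\ihat$ exists, then splice — and argue that the detour edges can be absorbed without double-covering vertices; making the endpoint-class constraints of Lemma~\ref{lem:D0} consistent across all these sub-cases, especially at the seams between a $G(\alpha-1,\alpha-1,1)\in\cL_{12}$ sub-block and its neighbors, is the technically heaviest bookkeeping. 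I expect this to mirror closely, but not identically, the argument for Lemma~\ref{lem:H1'}, with the extra symbol $4$ (versus the extra symbol-multiplicity in the $(\alpha,\alpha-1,2)$ case) changing which blocks appear and how many copies of each.
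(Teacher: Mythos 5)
Your proposal captures the right ingredients — a two-level split, $\cL_1$ for the $\Delta=0$ covers, $\cL_{12}$ for the $(\alpha-1,\alpha-1,1)$ sub-blocks — but the $\Delta$-calculus is wrong in a way that would derail the construction, and the key structural idea is missing. For $\ba=(\alpha,\alpha-1,1,1)$ with $\Delta(\ba)=1$, Lemma~\ref{lem:delta-prec} gives $\Delta(\alpha-1,\alpha-1,1,1)=2$, $\Delta(\alpha,\alpha-2,1,1)=0$, and $\Delta(\alpha,\alpha-1,1)=0$. You assign $\Delta=1$ to the first two and then aim to ``further decompose'' the $\Delta=1$ blocks — but there are no $\Delta=1$ blocks at the cover level: the $\Delta=0$ covers (already supplied with $\cL_1$ by the hypothesis) are used whole, and the single block that must be split again is the $\Delta=2$ one, $(\alpha-1,\alpha-1,1,1)$. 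Your plan would recurse into a block that does not need it and miss the one that does.

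More importantly, the flat decomposition you describe (fix one position $\ihat$, traverse $G(\ba)^{\ihat,\pi_1},\ldots,G(\ba)^{\ihat,\pi_4}$, and recursively decompose the bad one) runs into a rigidity you acknowledge but do not resolve. By Lemma~\ref{lem:D0} every $\Delta=0$ block forces a Hamilton path to go from a first-symbol-$1$ endpoint to a first-symbol-$\neq 1$ endpoint, and at each seam the exit vertex's first symbol is forced to equal the $\ihat$-value of the \emph{next} block. Chaining the three $\Delta=0$ blocks contiguously therefore over-determines the first symbol of $y$, and the construction does not close for general $x,y$ (for example $x_1\neq 1$, $y_1\neq 1$ and $x_\ihat,y_\ihat\neq 1$). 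The paper instead mirrors Lemma~\ref{lem:H1} at $k=4$: it fixes a position $\icheck$ with $x_\icheck=1$ and cycles $\pi_1,\dots,\pi_4$ there; within the $\icheck=1$ portion it simultaneously cycles a second position $\ihat$ through $\rho_1,\dots,\rho_4$, giving sub-blocks $P_j\subseteq G^{(\icheck,\ihat),(1,\rho_j)}(\ba)$; and the concatenation $(P_1\Phat_1\Pcheck_1)(P_2\Phat_2\Pcheck_2)\cdots$ brackets each $\Delta=0$ piece by $\icheck=1$ pieces so that every $\Phat_j,\Pcheck_j$ is entered and left with first symbol $1$ exactly where $\cL_1$ requires. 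The $\cL_{12}$ sub-blocks are then reached via extra constraints on the connectors (the (iv') conditions), and the case split is on $y_\icheck$ and $y_1$ (cases (eb2), (eb1), (ea), plus a subcase (ea2) for $(x_\ihat,y_\ihat)=(3,4)$). There is no ``detour'' step, and the obstacle you foresee about where symbols $3,4$ sit does not arise.
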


\begin{lemma}
\label{lem:L12}
Consider the integer partition $\ba=(\alpha,\alpha,1)$ for $\alpha\geq 3$, which satisfies $\Delta(\ba)=1$.
If $\bb=(\alpha,\alpha)\precdot\ba$ satisfies $G(\bb)\in\cL$ and $\bb=(\alpha,\alpha-1,1)\precdot\ba$ satisfies $G(\bb)\in\cL_1$, then we have $G(\ba)\in\cL_{12}$.
\end{lemma}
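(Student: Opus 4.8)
The plan is to build a Hamilton path in $G(\ba)$ for $\ba=(\alpha,\alpha,1)$ between any admissible pair $x,y$ (with first symbols $s\in\{1,2\}$, $t\in\{1,2,\ell\}$, $\ell\in\{3,4\}$, and a coordinate $\ihat>1$ with $(x_\ihat,y_\ihat)=(p_\ell(s,t),q_\ell(s,t))$) by splitting $\Pi(\ba)$ according to the symbol sitting in position $\ihat$. Fixing position $\ihat$ to the symbol $3$ yields a single block isomorphic to $G(\alpha,\alpha)$ (there is only one symbol $3$), while fixing it to $1$ or to $2$ yields blocks isomorphic to $G(\alpha-1,\alpha,1)=G(\alpha,\alpha-1,1)$ and $G(\alpha,\alpha-1,1)$ respectively. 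So $G(\ba)$ decomposes into one $\cL$-block and two $\cL_1$-blocks, exactly the hypotheses of the lemma. The three blocks are connected to each other by star transpositions that move the symbol in position $\ihat$ in or out of position $1$: moving symbol $3$ from position $\ihat$ to the front (and back) links the $G(\alpha,\alpha)$-block to each of the two $G(\alpha,\alpha-1,1)$-blocks, and moving symbol $1\leftrightarrow 2$ at position $\ihat$ via the front links the two $G(\alpha,\alpha-1,1)$-blocks. The strategy is the standard "fix a coordinate, glue block-paths along a connecting walk", as in Tchuente's proof of Theorem~\ref{thm:T} reproduced in Section~\ref{sec:ideasP}, but with heterogeneous blocks and the extra bookkeeping of the $\cL_1$/$\cL$/$\cL_{12}$ roles.

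The key steps, in order. \textbf{(1)} Reduce to a small number of representative $(s,t)$ cases using automorphisms of $G(\ba)$: transposing the names of symbols $1$ and $2$ is an automorphism, as is permuting positions $2,\ldots,n$, so one only needs to treat a handful of $(s,t)\in\{1,2\}\times\{1,2,\ell\}$ essentially-distinct pairs, and within each, reduce the value $p_\ell(s,t)$ of $x_\ihat$ to a canonical position $\ihat$. \textbf{(2)} For each representative case, lay out the order in which the three blocks are visited, choosing a permutation $\pi$ of the three symbols $\{1,2,3\}$ governing the block order at position $\ihat$ (with $\pi_1=x_\ihat=p_\ell(s,t)$ and $\pi_3=y_\ihat=q_\ell(s,t)=s$); the middle block is whichever of $\{1,2,3\}$ is left over. \textbf{(3)} Choose the interface vertices: within each block pick the pair of endpoints to be joined, verifying that each falls in the partition classes for which that block's hypothesis ($\cL$ for the $G(\alpha,\alpha)$-block, $\cL_1$ for the two others, i.e.\ one endpoint with first symbol $1$ and the other with first symbol $\neq 1$) guarantees a Hamilton path. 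Crucially, consecutive interface endpoints in different blocks must be joined by a single star transposition of the symbol at position $\ihat$ with position $1$; here the definitions~\eqref{eq:pq} of $p_\ell,q_\ell$ are exactly engineered so that such a connecting transposition exists with the correct first symbols at both ends. \textbf{(4)} Concatenate the three block Hamilton paths along these two connecting edges; the result is a Hamilton path of $G(\ba)$ from $x$ to $y$.

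The main obstacle I expect is Step~(3): ensuring that, simultaneously, the interface endpoint in the $G(\alpha,\alpha)$-block lands in the \emph{correct} bipartition class (a parity condition, since that block is bipartite and only laceable, not Hamilton-connected), and the interface endpoints in the two $G(\alpha,\alpha-1,1)$-blocks satisfy the $\cL_1$ constraint (one endpoint has first symbol $1$, the other does not). These are global parity/colour constraints that must be met for all of $x,y,\pi,\ihat$ at once; a wrong choice of $\pi$ or of the canonical position $\ihat$ can make it impossible, which is why Step~(1)'s case reduction and the precise tabulation in Figure~\ref{fig:pq} matter. A secondary subtlety is the role of $\ell=4$: although $G(\alpha,\alpha,1)$ has no symbol $4$, the statement must still be phrased so that when this graph arises as a fixed-symbol subgraph of $G(\alpha,\alpha,1,1)$ the conclusion transports, so one should check that relabelling $3\mapsto 4$ is harmless throughout the argument — which it is, since $3$ plays no special arithmetic role, only the role of "the unique symbol of frequency $1$". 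Once the finitely many representative cases are set up correctly, verifying each is a routine check of endpoint parities and of the existence of the two connecting star transpositions.
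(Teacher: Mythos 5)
Your proposal is correct and takes essentially the same route as the paper: decompose $G(\alpha,\alpha,1)$ by fixing the symbol at position $\ihat$ into one $G(\alpha,\alpha)$ block (where $\cL$ is invoked) and two $G(\alpha,\alpha-1,1)$ blocks (where $\cL_1$ is invoked), reduce to $s=1$ via the $1\leftrightarrow 2$ symbol symmetry, pick a traversal order with $\pi_1=x_\ihat$ and $\pi_3=y_\ihat$, and glue the three block Hamilton paths with two connecting star transpositions. The paper's proof also distinguishes exactly the two cases $t\in\{1,3\}$ and $t=2$ (i.e.\ block orders $(3,2,1)$ and $(2,3,1)$) and verifies at each interface that the endpoints land in the classes required by $\cL$ or $\cL_1$, which is precisely the ``main obstacle'' you identify in Step~(3).
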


\begin{lemma}
\label{lem:T'}
Consider the integer partition $\ba=(2,1^{k-1})$ for $k\geq 5$, which satisfies $\Delta(\ba)\geq 2$.
Then we have $G(\ba)\in\cH$.
\end{lemma}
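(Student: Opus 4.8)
The plan is to prove directly that $G(2,1^{k-1})$ is Hamilton-connected for every $k\geq 5$, by induction on~$k$, mimicking the structure of Tchuente's proof of Theorem~\ref{thm:T} (the $G(1^k)$ case) but tracking the extra symbol~$1$ that occurs twice. Throughout, write $\ba=(2,1^{k-1})$, so that $n=k+1$ and $\Delta(\ba)=k-2\geq 3$, and recall from Lemma~\ref{lem:kpart} that every partition class is strictly smaller than all others combined, so there is no parity obstruction to Hamilton-connectedness. For the base case $k=5$ (the partition $(2,1,1,1,1)$, $n=6$, $6!/2=360$ vertices) I would appeal to the computer verification: the excerpt already states that $G(4,1,1,1,1)$ and $G(2,1,1)$ etc.\ were checked, and $G(2,1,1,1,1)$ is of comparable size, so Hamilton-connectedness of $G(2,1^4)$ is checked by computer. (If a self-contained base case is preferred, $k=5$ can alternatively be deduced from the induction step applied with $k=5$ using only Hamilton-connectedness of the smaller graphs $G(2,1^3)=G(2,1,1,1)$ and $G(1^5)$, both of which fall under already-established results: $G(1^5)\in\cH$ by Theorem~\ref{thm:ehrlich}/\ref{thm:T}-type arguments, and $G(2,1,1,1)$ has $\Delta=3\geq 2$ and is handled by Lemma~\ref{lem:H2}; but to avoid a circular dependence with Lemma~\ref{lem:H2} I prefer the computer base case.)

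For the induction step, fix $k\geq 6$ and assume $G(2,1^{k-1})$ and $G(1^{k})$ are Hamilton-connected (the latter by Theorems~\ref{thm:ehrlich} and~\ref{thm:T}; note for $k\geq 4$ we even have laceability, which is stronger than what we need for the blocks that are copies of $G(1^k)$). Given two distinct vertices $x,y\in\Pi(\ba)$, I want to build a Hamilton path from~$x$ to~$y$ by choosing a position $\ihat\in\{2,\ldots,n\}$ and an ordering $\pi=(\pi_1,\ldots,\pi_k)$ of the symbols $\{1,\ldots,k\}$ (with the understanding that, since symbol~$1$ occurs twice overall, the ``blocks'' obtained by fixing position~$\ihat$ to symbol~$c$ are copies of $G(\bb)$ with $\bb=\varphi(\ba')$: if $c=1$ the block is a copy of $G(1^{k})$, and if $c\neq 1$ the block is a copy of $G(2,1^{k-2})$ — here I use $\bb\precdot\ba$ and Lemma~\ref{lem:delta-prec}). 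Concretely: pick $\ihat$ with $x_{\ihat}\neq y_{\ihat}$ when possible, arrange $\pi$ so that $\pi_1=x_{\ihat}$ and $\pi_k=y_{\ihat}$, and choose ``interface'' vertices $u^1=x$, $v^k=y$, and $u^j,v^j$ for $1\leq j\leq k$ so that $u^{j+1}$ is obtained from $v^{j}$ by a single star transposition swapping $\pi_{j+1}$ (at position~$1$) with $\pi_{j}$ (at position~$\ihat$), exactly as in the proof of Theorem~\ref{thm:T}. Inside each block (the set $\Pi(\ba)^{\ihat,\pi_j}$), I invoke the induction hypothesis — $G(2,1^{k-2})\in\cH$ when $\pi_j\neq 1$, and $G(1^{k})\in\cH$ (in fact $\in\cL$) when $\pi_j=1$ — to get a Hamilton path of the block from $u^j$ to $v^j$, and concatenate the $k$ block-paths along the interface edges $v^j\to u^{j+1}$ to obtain the desired Hamilton path of $G(\ba)$.

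The main obstacle — and the place where this proof genuinely differs from Tchuente's — is the bookkeeping needed to guarantee that suitable interface vertices $u^j,v^j$ exist and that the connecting endpoints actually lie in the correct partition classes of their blocks. For the $\pi_j=1$ blocks, which are copies of the \emph{bipartite} graph $G(1^{k})$, a Hamilton path between $u^j$ and $v^j$ exists only if these two vertices lie in opposite classes (opposite inversion parity after deleting position~$\ihat$), so I must choose $u^j,v^j$ with the right parity — this is the analogue of the parity constraints in the proof of Theorem~\ref{thm:T} and is arranged by the same kind of argument (the constraints are ``very mild'' and leave freedom). For the $\pi_j\neq 1$ blocks, which are copies of $G(2,1^{k-2})\in\cH$, any pair of distinct endpoints works, so these impose no constraint beyond $u^j\neq v^j$. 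The one case requiring care is when $x$ and $y$ agree in \emph{every} position $2,\ldots,n$ — impossible if $x\neq y$, since then they would also agree in position~$1$ — so such $\ihat$ always exists; and the degenerate sub-case where the forced parities cannot be met simultaneously for all $1$-blocks is avoided because there are at least two symbols $\neq 1$ among $\pi_2,\ldots,\pi_{k-1}$ (as $k\geq 6$), giving enough ``free'' blocks to absorb any parity correction by rerouting one star transposition through a symbol~$\neq1$. Once existence of the $u^j,v^j$ is secured, the concatenation and the verification that the result visits every vertex of $\Pi(\ba)$ exactly once are routine, since the blocks $\Pi(\ba)^{\ihat,c}$ for $c\in\{1,\ldots,k\}$ partition $\Pi(\ba)$.
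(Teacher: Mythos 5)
Your overall plan — induction on~$k$, decomposing $G(2,1^{k-1})$ into $k$ blocks by fixing position~$\ihat$ to each of the $k$ symbols, and concatenating block-Hamilton-paths at interface vertices chosen \`a la Tchuente — is exactly the paper's strategy. However, three concrete points need repair.

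First, the base case. The graph $G(2,1,1,1,1)$ has $n=6$ and $\Delta=2$, and Table~\ref{tab:base} only records partitions with $\Delta\in\{0,1\}$; there is no computer check of $G(2,1^4)$ in the paper. The paper instead takes $G(2,1,1,1)$ (which \emph{is} in Table~\ref{tab:base}, with $n=5$, $\Delta=1$) as the base for the induction, so that the induction step already proves $k=5$. Your ``alternative'' base-case route is essentially right, but note $G(1^5)\notin\cH$: by Lemma~\ref{lem:bipart} every $G(1^k)$ with $k\geq 2$ is bipartite, hence cannot be Hamilton-connected once it has more than two vertices; what you have for it is Hamilton-laceability (Theorem~\ref{thm:T}), which suffices.

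Second, and more importantly, the parity step is left as a hand-wave in a place where it genuinely requires an idea. There is exactly one block that is a copy of $G(1^k)$ (the one with $\pi_\jhat=1$), not several, so the discussion of ``forced parities that cannot be met simultaneously for all $1$-blocks'' and ``rerouting a star transposition through a symbol $\neq 1$'' does not describe a real obstacle nor a real fix. The paper's resolution is concrete: after fixing $i_1$ with $x_{i_1}\notin\{x_1,x_\ihat,y_\ihat\}$ and $i_k$ with $y_{i_k}\notin\{y_1,x_\ihat,y_\ihat,x_{i_1}\}$ and setting $\pi_2=x_{i_1}$, $\pi_{k-1}=y_{i_k}$, one imposes the additional condition that $u^\jhat$ and $v^\jhat$ agree everywhere except at positions~$1$ and a single extra position~$i_\jhat$ (with $i_\jhat=i_1$ if $\jhat=1$ and $i_\jhat=i_k$ if $\jhat=k$), so that $v^\jhat$ is obtained from $u^\jhat$ by a single transposition. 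This automatically forces opposite inversion parity, and Lemma~\ref{lem:bipart}/Theorem~\ref{thm:T} then supply the required Hamilton path in that block. You identify that ``the constraints are mild and leave freedom,'' but the proof has to show \emph{how} to spend that freedom, and the one-transposition device is the missing content. Without it your argument stops short of verifying that $u^\jhat$ and $v^\jhat$ indeed land in opposite classes — especially in the boundary cases $\jhat\in\{1,k\}$, where one of these two vertices is the externally prescribed $x$ or $y$, so the freedom must come from the interface side, which is exactly why $i_1$ and $i_k$ have to be chosen before~$\pi$.
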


\subsection{Base cases}
\label{sec:base}

\begin{table}
\caption{Base cases settled by computer.}
\label{tab:base}
\begin{tabular}{l|l|c|l|l|l|l}
$n$ & Partition~$\ba$ & $\Delta(\ba)$ & Hamiltonicity of~$G(\ba)$ & Vertices & Vertex pairs & Used in proof of \\ \hline
2 & (1,1)         & 0 & $\cH$                 & 2        & 1 & \\ \hline
3 & (1,1,1)       & 1 & $\cE$ but not $\cL$   & 6        & 2 & \\ \hline
4 & (2,2)         & 0 & $\cE$ but not $\cL$   & 6        & 2 & \\
  & (2,1,1)       & 0 & $\cL_1$ but not $\cH$ & 12       & 6 & Thm.~\ref{thm:M'}+\ref{thm:234} \\ \hline
5 & (2,2,1)       & 1 & $\cH$                 & 30       & 23 & Thm.~\ref{thm:P}+\ref{thm:234} \\
  & (2,1,1,1)     & 1 & $\cH$                 & 60       & 31 & Thm.~\ref{thm:P} \\ \hline
6 & (3,3)         & 0 & $\cL_1=\cL$ but not $\cH$ & 20       & 3 & Thm.~\ref{thm:M}+\ref{thm:234} \\
  & (3,2,1)       & 0 & $\cL_1$ but not $\cH$ & 60       & 26 & Thm.~\ref{thm:M'}+\ref{thm:234} \\
  & (3,1,1,1)     & 0 & $\cL_1$ but not $\cH$ & 120      & 16 & Thm.~\ref{thm:234} \\ \hline
7 & (3,3,1)       & 1 & $\cH$                 & 140      & 38 & Thm.~\ref{thm:234} \\
  & (3,2,2)       & 1 & $\cH$                 & 210      & 65 & \\
  & (3,2,1,1)     & 1 & $\cH$                 & 420      & 209 & \\
  & (3,1,1,1,1)   & 1 & $\cH$                 & 840      & 79 & Thm.~\ref{thm:P} \\ \hline
8 & (4,4)         & 0 & $\cL_1=\cL$ but not $\cH$ & 70       & 4  & Thm.~\ref{thm:M}+\ref{thm:234} \\
  & (4,3,1)       & 0 & $\cL_1$ but not $\cH$ & 280      & 40 & Thm.~\ref{thm:234} \\
  & (4,2,2)       & 0 & $\cL_1$ but not $\cH$ & 420      & 32 & Thm.~\ref{thm:234} \\
  & (4,2,1,1)     & 0 & $\cL_1$ but not $\cH$ & 840      & 100 & Thm.~\ref{thm:234} \\
  & (4,1,1,1,1)   & 0 & $\cL_1$ but not $\cH$ & 1680     & 36 & Thm.~\ref{thm:234} \\ \hline
9 & (4,4,1)       & 1 & $\cH$                 & 630      & 53  & Thm.~\ref{thm:234} \\
  & (4,3,2)       & 1 & $\cH$                 & 1260     & 219 & \\
  & (4,3,1,1)     & 1 & $\cH$                 & 2520     & 347 & \\
  & (4,2,2,1)     & 1 & $\cC$ but nothing more tested & 3780     & 565 & \\
  & (4,2,1,1,1)   & 1 & $\cC$ but nothing more tested & 7560     & 685 & \\
  & (4,1,1,1,1,1) & 1 & $\cC$ but nothing more tested & 15120    & 173 & \\
%10& (5,5)   & 0 & $\cL_1=\cL$ but not $\cH$ & 252 & 5 & \\
%  & (5,4,1) & 0 & $\cL_1$ but not $\cH$  & 1260 & 54 & \\ \hline
%11& (5,5,1) & 1 & $\cH$  & 2772 & 68 & \\ \hline
%12& (6,6)   & 0 & $\cL_1=\cL$ but not $\cH$ & 924 & 6 & \\
%  & (6,5,1) & 0 & $\cC$ but nothing more tested & 5544 & 68 & \\ \hline
%13& (6,6,1) & 1 & $\cC$ but nothing more tested & 12012 & 83 & \\
\end{tabular}
\end{table}

We performed computer experiments to settle the base cases for our inductive proofs, and also for collecting evidence for Conjecture~\ref{conj:0}.
The results of these experiments are summarized in Table~\ref{tab:base}.
The second-to-last column contains the number of non-isomorphic pairs of vertices of the graph that need to be checked when testing for~$\cL_1$ or~$\cH$.

We restrict our attention to integer partitions~$\ba$ with~$\Delta(\ba)=0$ and~$\Delta(\ba)=1$.
The results in Table~\ref{tab:base} confirm Conjecture~\ref{conj:0} for all integer partitions~$\ba\neq (2,2)$ with $\Delta(\ba)=0$ that satisfy $n\leq 8$.
Some of these results will be used in our proofs of Theorems~\ref{thm:P}--\ref{thm:234}, as indicated in the last column of the table.
The last three instances in Table~\ref{tab:base} were too large to test for Hamilton-connectedness, but they are Hamilton-connected by Theorem~\ref{thm:234}.

\subsection{Proof of Theorem~\ref{thm:P}}

With the help of Lemmas~\ref{lem:H2}--\ref{lem:T'}, proving Theorem~\ref{thm:P} is relatively straightforward.

\begin{proof}[Proof of Theorem~\ref{thm:P}]
This proof is illustrated in Figure~\ref{fig:ind}.
Conjecture~\ref{conj:0} asserts that for all integer partitions $\ba\neq (2,2)$ with $\Delta(\ba)=0$ we have $G(\ba)\in\cL_1$, and we now assume that this is indeed the case.
We show that under this assumption the following eleven statements hold.
Consider the following statements about integer partitions~$\ba$ with $\Delta(\ba)=1$:
\begin{enumerate}[label=(1\alph*),leftmargin=8mm, topsep=0mm, noitemsep]
\item For $\ba=(1,1,1)$ we have $G(\ba)\in\cE$;
\item For $\ba\in\{(2,1,1,1),(2,2,1),(3,1,1,1,1)\}$ we have $G(\ba)\in\cH$;
\item For any $\ba\in\{(\alpha,\alpha,1)\mid\alpha\geq 3\}$ we have $G(\ba)\in\cL_{12}$;
\item For any $\ba\in\{(\alpha,\alpha-1,2)\mid\alpha\geq 3\}$ we have $G(\ba)\in\cH$;
\item For any $\ba\in\{(\alpha,\alpha-1,1,1)\mid\alpha\geq 3\}$ we have $G(\ba)\in\cH$;
\item For any $\ba$ that satisfies $\ba\notin\{(1,1,1),(3,1,1,1,1)\}$ and $\ba\notin\{(\alpha,\alpha,1),(\alpha,\alpha-1,2),(\alpha,\alpha-1,1,1)\mid \alpha\geq 2\}$ we have $G(\ba)\in\cH$.
\end{enumerate}
Consider the following statements about integer partitions~$\ba$ with $\Delta(\ba)\geq 2$:
\begin{enumerate}[label=(2\alph*),leftmargin=8mm, topsep=0mm, noitemsep]
\item For any $\ba=1^k$, $k\geq 4$, we have $G(\ba)\in\cL$;
\item For any $\ba=(2,1^{k-1})$, $k\geq 5$, we have $G(\ba)\in\cH$;
\item For any $\ba\in\{(\alpha,\alpha,2)\mid\alpha\geq 3\}$ we have $G(\ba)\in\cH$;
\item For any $\ba\in\{(\alpha,\alpha,1,1)\mid\alpha\geq 3\}$ we have $G(\ba)\in\cH$;
\item For any $\ba$ that satisfies $\ba\neq (1,1,\ldots,1)$, $\ba\neq (2,1,1,\ldots,1)$ and $\ba\notin\{(\alpha,\alpha,2),(\alpha,\alpha,1,1)\mid\alpha\geq 3\}$ we have $G(\ba)\in\cH$.
\end{enumerate}
Note that the cases considered in these statements are mutually exclusive and cover all possibilities.

Recall that~$G(1,1,1)$ is a 6-cycle, so (1a) holds trivially and unconditionally.
Also, (1b) holds unconditionally by Table~\ref{tab:base}.
Moreover, (2a) and~(2b) hold unconditionally by Theorem~\ref{thm:T} and Lemma~\ref{lem:T'}, respectively.
Statement~(1c) holds by Lemma~\ref{lem:L12}, using that for $\ba=(\alpha,\alpha,1)$ with $\alpha\geq 3$, the partition $\bb=(\alpha,\alpha)\precdot\ba$ satisfies $G(\bb)\in\cL$ by Theorem~\ref{thm:M}, and the partition $\bb=(\alpha,\alpha-1,1)\precdot\ba$ satisfies $G(\bb)\in\cL_1$ by Conjecture~\ref{conj:0}.

We prove the remaining statements by induction on~$n=\sum_{i=1}^k a_i$.

Statement~(1d) holds by Lemma~\ref{lem:H1'}, using that for $\ba=(\alpha,\alpha-1,2)$ with $\alpha\geq 3$, the partitions $\bb=\varphi((\alpha,\alpha-2,2))\precdot\ba$ and $\bb=(\alpha,\alpha-1,1)\precdot\ba$ satisfy $G(\bb)\in\cL_1$ by Conjecture~\ref{conj:0}, the partition $\bc=(\alpha-1,\alpha-1,1)\prec\ba$ satisfies $G(\bc)\in\cL_{12}$ if $\alpha\geq 4$ by induction and~(1c) and $G(\bc)=G(2,2,1)\in\cH\seq\cL_{12}$ if $\alpha=3$ by Table~\ref{tab:base}, and the partition~$\bc=\varphi((\alpha-1,\alpha-2,2))\prec\ba$ satisfies $G(\bc)\in\cH$ if $\alpha\geq 4$ by induction and~(1d) and $G(\bc)=G(2,2,1)\in\cH$ if $\alpha=3$.

Statement~(1e) holds by Lemma~\ref{lem:H1''}, using that for $\ba=(\alpha,\alpha-1,1,1)$ with $\alpha\geq 3$, the partitions $\bb=(\alpha,\alpha-2,1,1)\precdot\ba$ and $\bb=(\alpha,\alpha-1,1)\precdot\ba$ satisfy $G(\bb)\in\cL_1$ by Conjecture~\ref{conj:0}, the partition $\bc=(\alpha-1,\alpha-1,1)\prec\ba$ satisfies $G(\bc)\in\cL_{12}$ if $\alpha\geq 4$ by induction and~(1c) and $G(\bc)=G(2,2,1)\in\cH$ if $\alpha=3$ by Table~\ref{tab:base}, and the partition~$\bc=(\alpha-1,\alpha-2,1,1)\prec\ba$ satisfies $G(\bc)\in\cH$ if $\alpha\geq 4$ by induction and~(1e) and $G(\bc)=G(2,1,1,1)\in\cH$ if $\alpha=3$.

Statement~(2c) holds by Lemma~\ref{lem:H2'}, using that for $\ba=(\alpha,\alpha,2)$ with $\alpha\geq 3$, the partition $\bb=(\alpha,\alpha,1)\precdot\ba$ satisfies $G(\bb)\in\cL_{12}$ by induction and~(1c), and the partition $\bb=(\alpha,\alpha-1,2)\precdot\ba$ satisfies $G(\bb)\in\cH$ by induction and~(1d).

Statement~(2d) holds by Lemma~\ref{lem:H2''}, using that for $\ba=(\alpha,\alpha,1,1)$ with $\alpha\geq 3$, the partition $\bb=(\alpha,\alpha,1)\precdot\ba$ satisfies $G(\bb)\in\cL_{12}$ by induction and~(1c), and the partition $\bb=(\alpha,\alpha-1,1,1)\precdot\ba$ satisfies $G(\bb)\in\cH$ by induction and~(1e).

To prove statement~(1f), consider a partition~$\ba=(a_1,\ldots,a_k)$ satisfying the conditions of~(1f).
Note that $n=\sum_{i=1}^k a_i\geq 9$, as the only partitions with $n\in\{2,\ldots,8\}$ and $\Delta(\ba)=1$ are $\ba\in\{(1,1,1),(2,2,1),(2,1,1,1),(3,3,1),(3,2,2),(3,2,1,1),(3,1,1,1,1)\}$, which are excluded by the conditions of~(1f).
From~\eqref{eq:Delta} and~$\Delta(\ba)=1$ we obtain $a_1=(n-1)/2\geq 4$ and $a_1+1=\sum_{i=2}^k a_i$.
The latter equation implies $a_2<a_1-1$, as otherwise $a_1+1\geq (a_1-1)+\sum_{i=3}^k a_i$, or equivalently, $\sum_{i=3}^k a_i\leq 2$, i.e., we would have $\ba\in\{(\alpha,\alpha,1),(\alpha,\alpha-1,2),(\alpha,\alpha-1,1,1)\}$ for $\alpha:=a_1\geq 4$, which is impossible by the conditions of~(1f).
We thus obtain $a_2\leq a_1-2$ and $\sum_{i=3}^k a_i\geq 3$, in particular $k\geq 3$.

We now check that the preconditions of Lemma~\ref{lem:H1} are met.
Consider any partition $\bb\precdot\ba$ that has the same first entry as~$\ba$.
By Lemma~\ref{lem:delta-prec} we have $\Delta(\bb)=0$.
From $n_\ba\geq 9$ we obtain that $n_\bb\geq 8$, implying that $\bb\neq (2,2)$.
Consequently, by Conjecture~\ref{conj:0} we have~$G(\bb)\in\cL_1$.

Now consider the partition $\bb=(a_1-1,a_2,\ldots,a_k)$, which satisfies $\Delta(\bb)=\Delta(\ba)+1=2$ by Lemma~\ref{lem:delta-prec}.
We show that all $\bc\precdot\bb$ satisfy $G(\bc)\in\cH$.
From $n_\ba\geq 9$ we obtain that $n_\bb\geq 8$ and $n_\bc\geq 7$.
Clearly, any $\bc\precdot\bb$ satisfies $\Delta(\bc)\in\{1,3\}$ by Lemma~\ref{lem:delta-prec}.
Specifically, the partition $\bc=(a_1-2,a_2,\ldots,a_k)$ satisfies $\Delta(\bc)=3$, but we have $\bc\neq 1^k$ for $k\geq 4$ because of $a_1-2\geq 2$, so by induction and~(2b)+(2e) we have $G(\bc)\in\cH$.
Moreover, any other partition $\bc\precdot\bb$ that has the same first entry as~$\bb$ satisfies~$\Delta(\bc)=1$, but we have $\bc\neq (1,1,1)$ and $\bc\neq (\alpha,\alpha,1)$ for $\alpha\geq 3$ because of $\sum_{i=3}^k a_i\geq 3$, so by induction and~(1b)+(1d)+(1e)+(1f) we have $G(\bc)\in\cH$.

We argued that the preconditions for applying Lemma~\ref{lem:H1} are met, and this lemma therefore yields~$G(\ba)\in\cH$, proving~(1f).

It remains to prove statement~(2e).
Consider a partition~$\ba=(a_1,\ldots,a_k)$ satisfying the conditions of~(2e).
Note that $n=\sum_{i=1}^k a_i\geq 6$, as there are no partitions with $n\in\{2,\ldots,5\}$ and $\Delta(\ba)=2$.
Note that $a_1\geq a_2\geq 2$ by the assumptions $\ba\neq(1,1,\ldots,1)$ and $\ba\neq (2,1,1,\ldots,1)$ in~(2e).
Moreover, if $\ba\notin\{(2,2,2),(2,2,1,1)\}$ then we have $a_1>a_2$ by the assumptions $\ba\notin\{(\alpha,\alpha,2),(\alpha,\alpha,1,1)\mid\alpha\geq 3\}$.
From the equation $a_1+2=\sum_{i=2}^k a_i$ and $a_1\geq a_2$ we obtain $\sum_{i=3}^k a_i\geq 2$, in particular $k\geq 3$.
Moreover, if $\ba\notin\{(2,2,2),(2,2,1,1)\}$ then we can use the strict inequality $a_1>a_2$ instead and obtain $\sum_{i=3}^k a_i\geq 3$.

We now verify that the preconditions of Lemma~\ref{lem:H2} are met, i.e., we consider all partitions~$\bb\precdot\ba$.
From $n_\ba\geq 5$ we obtain that $n_\bb\geq 4$.
If $a_1>a_2$ then consider the partition $\bb=(a_1-1,a_2,\ldots,a_k)$, which satisfies $\Delta(\bb)=\Delta(\ba)+1\geq 3$ by Lemma~\ref{lem:delta-prec}, but $\bb\neq 1^k$ for $k\geq 4$ because of $a_2\geq 2$.
Consequently, by induction and~(2b)+(2e) we have $G(\bb)\in\cH$.
Now suppose that $\ba\notin\{(2,2,2),(2,2,1,1)\}$ and consider any partition $\bb\precdot\ba$ that has the same first entry as~$\ba$, then by Lemma~\ref{lem:delta-prec} we have $\Delta(\bb)=\Delta(\ba)-1\geq 1$, but we have $\bb\neq (1,1,1)$ and $\bb\neq (\alpha,\alpha,1)$ for $\alpha\geq 3$ because of $\sum_{i=3}^k a_i-1\geq 2$, and $\bb\neq 1^k$ for $k\geq 4$ and $\bb\neq (2,1^{k-1})$ for $k\geq 5$ because of $a_2\geq 2$, so by induction and (1b)+(1d)+(1e)+(1f)+(2b)+(2c)+(2d)+(2e) we have $G(\bb)\in\cH$.
Lastly, if $\ba\in\{(2,2,2),(2,2,1,1)\}$ then there two partitions~$\bb\precdot\ba$, namely $\bb\in\{(2,2,1),(2,1,1,1)\}$, and both satisfy $G(\bb)\in\cH$ by~(1b).

As the preconditions of Lemma~\ref{lem:H2} are satisfied, the lemma yields~$G(\ba)\in\cH$, proving~(2e).

This completes the proof of Theorem~\ref{thm:P}.
\end{proof}

\subsection{Proof of Theorem~\ref{thm:234}}

\begin{proof}[Proof of Theorem~\ref{thm:234}]
From Table~\ref{tab:base} we see that Conjecture~\ref{conj:0} holds for all ten integer partitions~$\ba\neq (2,2)$ with $\Delta(\ba)=0$ whose first entry~$a_1$ is at most~4.
Specifically, these are the partitions~$\ba\in\{(1,1),(2,1,1),(3,3),(3,2,1),(3,1,1,1),(4,4),(4,3,1),(4,2,2),(4,2,1,1),(4,1,1,1,1)\}$.
For any integer partition~$\ba$ with $a_1\leq 4$, we have that all integer partitions~$\bb\prec\ba$ also satisfy~$b_1\leq 4$.
Consequently, the statements (1a)--(1f) and (2a)--(2e) in the proof of Theorem~\ref{thm:P} hold unconditionally for all such integer partitions.
As $a_1\in\{2,3,4\}$ we have $\ba\neq (1,1,1)$ and $\ba\neq 1^k$ for $k\geq 4$, and as $G(\ba)\in\cH$ for $\ba\in\{(3,3,1),(4,4,1)\}$ by Table~\ref{tab:base}, we conclude that~$G(\ba)\in\cH$.
\end{proof}

\subsection{Proofs of auxiliary lemmas}

To prove Lemmas~\ref{lem:H2}--\ref{lem:T'}, we follow the ideas outlined in Section~\ref{sec:ideasP}, namely to partition the graph~$G(\ba)$ into subgraphs that are isomorphic to~$G(\bb)$ for integer partitions~$\bb\prec\ba$, by fixing symbols at one or more of the positions~$2,\ldots,n$.
This allows us to construct a Hamilton path in the entire graph~$G(\ba)$ by gluing together the Hamilton paths obtained in the various subgraphs~$G(\bb)$ for $\bb\prec\ba$.
The main difficulty is to ensure that the paths in the subgraphs~$G(\bb)$ under various constraints `fit together', and that the constraints imposed on them at the gluing vertices are not too severe to rule out their existence.
Four of these somewhat repetitive proofs are deferred to the appendix.

\begin{proof}[Proof of Lemma~\ref{lem:H2}]
As $\ba\neq (1,1,\ldots,1)$ and $\ba\neq (2,1,1,\ldots,1)$, we know that $a_1\geq a_2\geq 2$.
Moreover, from \eqref{eq:Delta} we know that $a_1+\Delta(\ba)=\sum_{i=2}^k a_i$.
Using $\Delta(\ba)\geq 2$ and $a_1\geq a_2$ in this equation yields $\sum_{i=3}^k a_i\geq 2$, in particular $k\geq 3$.

Let $x,y$ be two distinct vertices in~$G(\ba)$.
As $x\neq y$ there is a position $\ihat>1$ such that $x_\ihat\neq y_\ihat$.
As $a_1\geq a_2\geq 2$ and $\sum_{i=3}^k a_i\geq 2$, there are two indices $i_1,i_2\in[n]\setminus\{1,\ihat\}$ such that $x_{i_1}\neq x_{i_2}$.
Similarly, there are two indices $i_3,i_4\in[n]\setminus\{1,\ihat\}$ such that $y_{i_3}\neq y_{i_4}$.

We fix any permutation~$\pi$ on~$[k]$ such that $\pi_1=x_\ihat$ and $\pi_k=y_\ihat$, and we choose a sequence of multiset permutations $u^j,v^j\in \Pi(\ba)$, $j=1,\ldots,k$, satisfying the following conditions; see Figure~\ref{fig:H2}:
\begin{itemize}[leftmargin=8mm, noitemsep]
\item[(i)] $u^1=x$, $v^1_{i_1}=x_{i_2}$, and $v^k=y$, $u^k_{i_3}=y_{i_4}$;
\item[(ii)] $u^j_1=v^{j-1}_\ihat=\pi_{j-1}$, $u^j_\ihat=v^{j-1}_1=\pi_j$, and $u^j_i=v^{j-1}_i$ for all $2\leq j\leq k$ and $i\in[n]\setminus\{1,\ihat\}$.
\end{itemize}
As $u^j_1=\pi_{j-1}$ and $v^j_1=\pi_{j+1}$ by~(ii), we have $u^j\neq v^j$ for all $1<j<k$.
Moreover, by~(i) and the choice of $i_1,i_2$ and $i_3,i_4$ we also have $u^1\neq v^1$ and $u^k\neq v^k$, respectively.

\begin{figure}
\includegraphics[page=1]{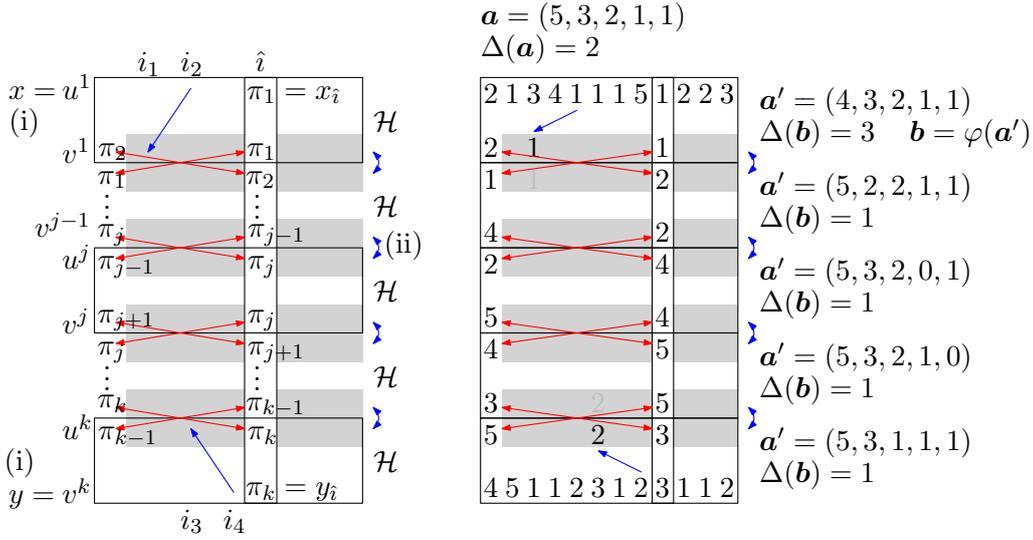}
\caption{Illustration of the proof of Lemma~\ref{lem:H2}.}
\label{fig:H2}
\end{figure}

For $j=1,\ldots,k$ we define $\ba':=(a_1,\ldots,a_{\pi_j}-1,\ldots,a_k)$ and $\bb:=\varphi(\ba')\precdot\ba$, and we consider a Hamilton path~$P_j$ in the graph~$G^{\ihat,\pi_j}(\ba)\simeq G(\bb)$ from~$u^j$ to~$v^j$, which exists by the assumption~$G(\bb)\in\cH$.
By~(ii), for all $2\leq j\leq k$, the permutation $u^j$ differs from~$v^{j-1}$ by a transposition of the entries at positions~1 and~$\ihat$, implying that the concatenation $P_1P_2\cdots P_k$ is a Hamilton path in~$G(\ba)$ from~$x$ to~$y$.
\end{proof}

The strategy for proving Lemma~\ref{lem:H2'} and~\ref{lem:H2''} is the same in the proof of Lemma~\ref{lem:H2}, and these proofs can be found in the appendix.
The crucial difference is that now one or two building blocks for partitions~$\bb\precdot\ba$ are only assumed to satisfy the weaker property $G(\bb)\in\cL_{12}$, and not $G(\bb)\in\cH$, so we have to work around this by imposing extra conditions on those building blocks.

The following lemma will be used in the proof of Lemma~\ref{lem:H1}.
It guarantees the existence of two multiset permutations~$\pi$ and~$\rho$ that are first and last vertices of Hamilton paths in subgraphs of~$G(\ba)$ obtained by fixing symbols, in such a way that~$\pi$ and~$\rho$ satisfy a number of extra conditions.
In the proof of Lemma~\ref{lem:H1} we will encounter six different kinds of conditions, which are captured by the six cases in the lemma.

\begin{lemma}
\label{lem:matching}
For any odd $k\geq 3$ and $\ell:=(k-1)/2$ and any $a,b,c\in[k]$ with $a\neq b$ and $c\neq 1$, there are permutations~$\pi,\rho$ on~$[k]$ satisfying any chosen of the following sets of conditions:
\begin{enumerate}
\item[(oa)] $\pi_1=1$, $\rho_1=a$, $\rho_k=b$, $\pi_{2j}\neq \rho_j$ and $\pi_{2j+1}\neq \rho_{j+1}$ for all $1\leq j\leq \ell$;
\item[(ob1)] $\pi_1=1$, $\pi_k=c$, $\rho_1=1$, $\pi_{2j}\neq \rho_j$ and $\pi_{2j+1}\neq \rho_{j+1}$ for all $1\leq j<\ell$, $\pi_{k-1}\neq \rho_k$;
\item[(ob2)] $\pi_1=1$, $\pi_2=c$, $\rho_1=1$, $\pi_3=\rho_k$, $\pi_{2j}\neq \rho_j$ and $\pi_{2j+1}\neq \rho_{j+1}$ for all $1\leq j\leq \ell$.
\end{enumerate}
For any even $k\geq 4$ and $\ell:=k/2$ and any $a,b,c\in[k]$ with $a\neq b$ and $c\neq 1$, there are permutations~$\pi,\rho$ on~$[k]$ satisfying any chosen of the following sets of conditions:
\begin{enumerate}
\item[(ea)] $\pi_1=1$, $\rho_1=a$, $\rho_k=b$, $\pi_3=\rho_{k-1}$, $\pi_{2j}\neq \rho_j$ and $\pi_{2j+1}\neq \rho_{j+1}$ for all $1\leq j<\ell$, $\pi_k\neq \rho_k$;
\item[(eb1)] $\pi_1=1$, $\pi_k=c$, $\rho_1=1$, $\pi_3=\rho_k$, $\pi_{2j}\neq \rho_j$ and $\pi_{2j+1}\neq \rho_{j+1}$ for all $1\leq j<\ell$;
\item[(eb2)] $\pi_1=1$, $\pi_k=c$, $\rho_1=1$, $\pi_{2j}\neq \rho_j$ and $\pi_{2j+1}\neq \rho_{j+1}$ for all $1\leq j<\ell$, $\pi_k\neq \rho_k$.
\end{enumerate}
\end{lemma}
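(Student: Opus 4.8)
The plan is to prove all six cases by an essentially uniform greedy/counting argument, treating $\pi$ and $\rho$ as being built in parallel. The key observation is that each set of conditions imposes (a) a constant number of \emph{positional} constraints at the endpoints of $\pi$ and $\rho$ (such as $\pi_1=1$, $\pi_k=c$, $\rho_1=a$, $\rho_k=b$, and one ``equality linking'' constraint like $\pi_3=\rho_k$), together with (b) a family of roughly $k$ \emph{inequality} constraints of the form $\pi_{2j}\neq\rho_j$ and $\pi_{2j+1}\neq\rho_{j+1}$. I would first fix the handful of forced entries demanded by (a); one checks in each case separately that these forced assignments are mutually consistent (e.g. in (oa) we need $a\neq b$, which is given; in (ea) the linking constraint $\pi_3=\rho_{k-1}$ together with the endpoint constraints on $\rho$ must not clash, which holds because $k\geq 4$ leaves enough free positions). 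This is pure bookkeeping and I would relegate it to a short case table rather than prose.

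Next, with the endpoints fixed, I would build the rest of $\rho$ first, completely freely — any completion to a permutation of $[k]$ works at this stage — and then build the remaining entries of $\pi$ one position at a time, in an order chosen so that whenever I place $\pi_{2j}$ or $\pi_{2j+1}$, the single ``forbidden value'' $\rho_j$ (resp.\ $\rho_{j+1}$) is known. At the moment of placing a given free entry of $\pi$, the number of values already used is at most $k-1$, and among the still-available values at most one is forbidden by the relevant inequality constraint; hence a valid choice exists \emph{unless} the last available value is exactly the forbidden one. The standard fix for that boundary situation is a local swap: if we are about to be forced into $\pi_m=\rho_{\lceil m/2\rceil}$ with no alternative, we instead swap this value with one already placed at some earlier free position $m'$ of $\pi$ whose own constraint is not violated by the exchanged value — such an $m'$ exists because the ``conflict graph'' on the free positions of $\pi$ (two positions adjacent if swapping their values would create a new violation) has maximum degree $O(1)$ while there are $\Theta(k)$ free positions, so for $k$ large we always find room. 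I expect this swap argument to go through verbatim for all six cases; only the tiny values of $k$ ($k=3,4$, and possibly $k=5,6$) need to be verified by hand, which is a finite check.

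The main obstacle, and the place where the six cases genuinely differ, is the \emph{linking} equality constraints (the ``$\pi_3=\rho_k$'' type conditions in (ob2), (ea), (eb1)), because these couple a free entry of $\pi$ to a free entry of $\rho$ and so must be respected \emph{before} I freeze $\rho$. My intended remedy is to reorder the construction in those cases: first choose the shared value $w:=\pi_3=\rho_k$ (or the analogous pair), subject only to $w$ avoiding the already-forced endpoint values and the one inequality constraint that touches those two positions; this is possible since only $O(1)$ values are excluded. After $w$ is pinned down, the two positions behave like additional forced entries and the argument proceeds exactly as above. The delicate point — the one I would spend the most care on — is checking that in the even cases the combination of a linking constraint \emph{and} a trailing inequality like $\pi_k\neq\rho_k$ (which in (ea) reads $\pi_k\neq\rho_k$ while also $\pi_3=\rho_{k-1}$) does not over-constrain the last two free slots of $\pi$; here one verifies that even after all forcing there remain at least two free values for the last two free positions, with at most one forbidden combination, so a feasible assignment survives. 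I would present this as a single lemma-internal claim (``at each step the number of available values strictly exceeds the number forbidden'') proved once and invoked in each of the six cases, with the case-by-case specifics confined to a compact table listing, for each case, the forced entries, the linking pair, and the trailing inequalities.
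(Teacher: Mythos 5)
Your proposal takes a genuinely different route from the paper's. The paper reduces each of the six cases to a single clean question: does a certain bipartite graph $H$ on two copies of $[k]$ have a perfect matching? The vertex parts represent positions of $\pi$ and positions of $\rho$; a matching edge means those two positions get the same value. Equality constraints (such as $\pi_1=\rho_1=1$ or $\pi_3=\rho_k$) force an edge and so delete a vertex from each side, and inequality constraints delete edges. Since only $O(1)$ vertices are deleted and every surviving vertex has degree at least $s-2$ (where $s$ is the part size), Hall's theorem gives the matching for $k\geq 6$ in one uniform stroke, with $k\leq 5$ checked by hand. You instead run a sequential greedy construction: fix the endpoint values, build $\rho$ outright, then fill $\pi$ one slot at a time and repair the last slot by a swap if its unique remaining value happens to be forbidden. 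That argument can be made to work (the conflict graph you invoke really does have degree at most~1 at the critical moment, because for the last slot $m$ the stuck value is the only unused one, so only a position $m'$ with $\lceil m'/2\rceil=\lceil m/2\rceil$ blocks the swap), but it is intrinsically less uniform: the forced/linking entries behave differently in each of the six cases, and the bookkeeping you propose to ``relegate to a short case table'' is in fact where most of the work lives. The paper's formulation absorbs all six cases into the same Hall's-condition computation, trading your constructive procedure for a shorter proof. Both proofs must separately verify small $k$, and in both the key structural fact is the same — each position of $\pi$ has at most one inequality constraint, and each $\rho$-value is forbidden at no more than two positions — so the approaches are dual rather than in conflict. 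To finish your version you would need to pin down the swap lemma precisely (in particular, that there are at least two candidate swap positions whenever the repair is needed, which fails for the smallest $k$) and to confirm, for the three linking cases, that choosing the shared value $w$ up front does not leave $\rho$ unrealizable when $a$, $b$, $c$, or $1$ coincide with each other.
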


\begin{figure}
\centerline{
\includegraphics{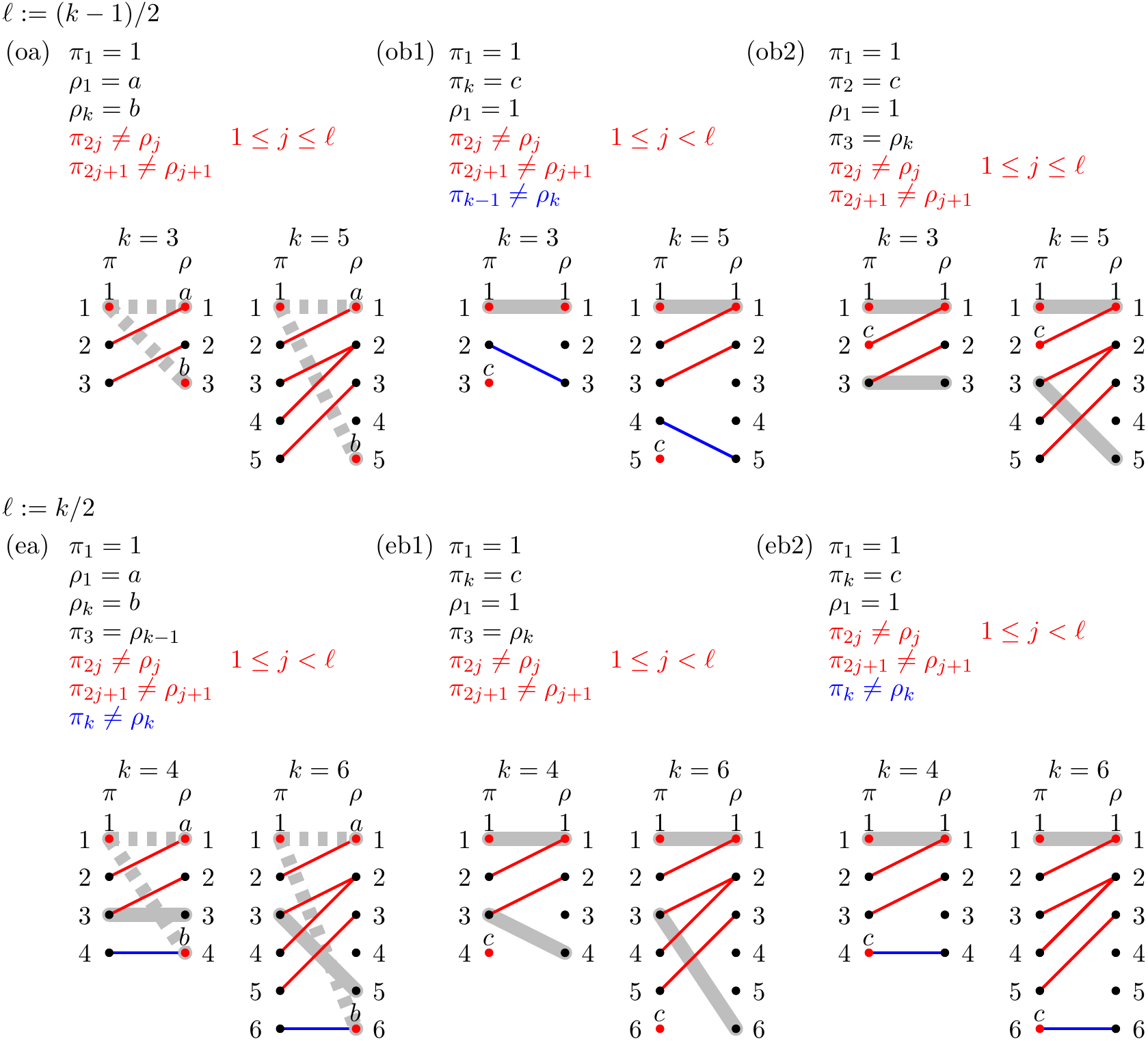}
}
\caption{Illustration of the graph~$H$ defined in the proof of Lemma~\ref{lem:matching}.
The pairs of vertices connected by thick solid lines are removed from the initial complete bipartite graph, and the thin edges are also removed (keeping the end vertices).
From the two pairs of vertices connected by thick dashed lines, one pair of vertices is removed if $a=1$ or $b=1$, and both edges are removed if~$a\neq 1$ and~$b\neq 1$ (keeping the end vertices).
}
\label{fig:matching}
\end{figure}

\begin{proof}
For any of the given sets of conditions, we can define a graph~$H$ that captures the conditions on~$\pi$ and~$\rho$ as follows:
We start with a complete bipartite graph with vertex partitions~$[k]$ and~$[k]$.
For any constraint of the form~$\pi_i=\rho_j$ we remove both end vertices~$i$ and~$j$ from the graph.
Similarly, for any pair of constraints of the form~$\pi_i=x$ and~$\rho_j=x$ we remove both end vertices~$i$ and~$j$ from the graph.
Moreover, for any constraint of the form~$\pi_i\neq \rho_j$ we remove the edge~$(i,j)$ from the graph (keeping the end vertices).
Lastly, for any pair of constraints of the form~$\pi_i=x$ and~$\rho_j=y$ for $x\neq y$ we remove the edge~$(i,j)$ from the graph.

Note that there are permutations~$\pi$ and~$\rho$ satisfying one of the given sets of conditions, if and only if the corresponding graph~$H$ admits a perfect matching.
Specifically, the matched pairs of vertices are the entries from~$\pi$ and~$\rho$ that will be assigned the same value.

The existence of a perfect matching in~$H$ defined before can be verified directly for~$k\leq 5$; see Figure~\ref{fig:matching}.
For $k\geq 6$ we argue via Hall's theorem:
We let~$s$ denote the sizes of the two partition classes of~$H$.
By the given equality constraints, at most two pairs of vertices are removed from the initial complete bipartite graph, so we have~$s\geq k-2\geq 4$.
Moreover, the given inequality constraints involve each~$\pi_i$ and~$\rho_j$ at most twice, so all vertices of~$H$ have degree at least~$s-2$.
Let $A$ be a subset of vertices of one partition class, and let~$A'$ be the set of all of its neighbors in the other partition class of~$H$.
As all vertices of~$H$ have degree at least~$s-2$, we know that~$|A'|\geq s-2$ whenever~$|A|>0$, so it suffices to check Hall's condition for $|A|\geq s-1$.
If $|A'|<|A|$, then there is a vertex on the same side of the partition as~$A'$ that is not connected to any vertex in~$A$, i.e., this vertex has degree at most~$s-|A|\leq s-(s-1)=1<s-2$, a contradiction as $s\geq 4$.
This proves that $|A'|\geq |A|$, so Hall's condition is satisfied and the graph~$H$ has a perfect matching.

This completes the proof of the lemma.
\end{proof}

We now outline our strategy for proving Lemma~\ref{lem:H1}.
In the preceding proof of Lemma~\ref{lem:H2}, we split~$G(\ba)$ into subgraphs~$G(\bb)$ with $\bb\precdot\ba$ by fixing one of the symbols.
If we follow the same approach for an integer partition~$\ba$ as in Lemma~\ref{lem:H1} with $\Delta(\ba)=1$, then by Lemma~\ref{lem:delta-prec} all partitions~$\bb\precdot\ba$ satisfy~$\Delta(\bb)=0$, except the unique partition~$\bb\precdot\ba$ obtained from~$\ba$ by decreasing the first entry, which satisfies~$\Delta(\bb)=2$ (by the assumptions of Lemma~\ref{lem:H1}, the first entry of~$\ba$ is the unique largest one).
All subgraphs~$G(\bb)$ with $\Delta(\bb)=0$ are not Hamilton-connected, but only 1-laceable, so any Hamilton path in them starts with a vertex with first symbol~1 and ends with a vertex with first symbol distinct from~1.
When constructing a Hamilton path in~$G(\ba)$, we process those subgraphs in pairs, so that the path through any pair starts and ends with a vertex with first symbol~1, and every such pair is surrounded by vertices with fixed symbol equal to~1.
As a consequence, we need to further partition the unique subgraph~$G(\bb)$ with~$\Delta(\bb)=2$ by fixing yet another symbol, and split it into subgraphs~$G(\bc)$ with $\bc\precdot\bb$, all of which satisfy~$\Delta(\bc)\geq 1$ and $G(\bc)\in\cH$ by the assumptions of the lemma, and those building blocks are inserted between the aforementioned pairs.
The precise order in which the building blocks for a Hamilton path in~$G(\ba)$ are arranged depends on the parity of the number~$k$ of symbols, and on the start and end vertices of the desired Hamilton path, which results in several cases.

\begin{proof}[Proof of Lemma~\ref{lem:H1}]
As $\ba\notin \{(1,1,1),(2,1,1,1),(2,2,1)\}$ we know that $a_1\geq 3$, and combining this with $a_1+\Delta(\ba)=\sum_{i=2}^k a_i$, $\Delta(\ba)=1$ and $a_1\geq a_2$ yields $\sum_{i=2}^k a_i\geq 4$ and $\sum_{i=3}^k a_i\geq 1$, in particular $k\geq 3$ (recall~\eqref{eq:Delta}).
\todo{In fact, we even have $a_1\geq 4$, but only here and not later anymore in the proof of Lemmas~\ref{lem:H1'} and~\ref{lem:H1''}.}
Moreover, we have $a_1>a_2$ by the assumption that $\ba\neq (\alpha,\alpha,1)$ for $\alpha:=(n-1)/2$.

Let $x,y$ be two distinct vertices in~$G(\ba)$.
If there is an index~$i>1$ with $x_i=1$ and $y_i\neq 1$, then we let $\icheck$ be this index.
Otherwise we fix an arbitrary index~$\icheck>1$ with $x_\icheck=y_\icheck=1$, which is possible as $a_1\geq 3$.

We first consider the case that $k\geq 3$ is odd, and we introduce the abbreviation $\ell:=(k-1)/2\geq 1$.
We now distinguish two cases, depending on the value of~$y_\icheck$.

\textbf{Case~(oa):} $y_\icheck=1$.
As $x\neq y$ there is a position $\ihat\in[n]\setminus\{1,\icheck\}$ such that $x_\ihat\neq y_\ihat$, and by our choice of $\icheck$, we can assume that $x_\ihat,y_\ihat>1$.
\footnote{The assumption $x_\ihat,y_\ihat>1$ will not be used in this proof, but in the proof of Lemma~\ref{lem:H1'}.}

As $k\geq 3$, $a_1\geq 3$, and $\sum_{i=2}^k a_i\geq 4$, there are two indices $i_1,i_2\in[n]\setminus\{1,\ihat,\icheck\}$ such that $x_{i_1}\neq x_{i_2}$.
Similarly, there are two indices $i_3,i_4\in[n]\setminus\{1,\ihat,\icheck\}$ such that $y_{i_3}\neq y_{i_4}$.

By Lemma~\ref{lem:matching}~(oa) there are two permutations $\pi,\rho$ on~$[k]$ satisfying the following conditions:
\begin{enumerate}[label=(\arabic*),leftmargin=8mm, topsep=0mm, noitemsep]
\item $\pi_1=x_\icheck=1$, $\rho_1=x_\ihat$, and $\rho_k=y_\ihat$;
\item $\pi_{2j}\neq \rho_j$ and $\pi_{2j+1}\neq \rho_{j+1}$ for all $1\leq j\leq \ell$.
\end{enumerate}

We choose multiset permutations $u^j,v^j\in \Pi(\ba)$, $j=1,\ldots,k$, and $\uhat^j,\vhat^j,\ucheck^j,\vcheck^j\in\Pi(\ba)$, $j=1,\ldots,\ell$, satisfying the following conditions; see Figure~\ref{fig:H1odd}~(oa):
\begin{itemize}[leftmargin=8mm, noitemsep]
\item[(i)] $u^1=x$, $v^1_{i_1}=x_{i_2}$, and $v^k=y$, $u^k_{i_3}=y_{i_4}$;
\item[(ii)] conditions~$(\phi^1_j)$, $(\phi^2_j)$, and $(\phi^3_j)$ for all $1\leq j\leq \ell$;
\item[(iii)] condition~$(\phi^4_j)$ for all $\ell<j<k$;
\item[(iv)] condition~$(\psi_{\ell+1})$,
\end{itemize}
where we use the following abbreviations:
\begin{itemize}[leftmargin=12mm, noitemsep]
\item[$(\phi^1_j):$] $\uhat^j_1=v^j_\icheck=1$, $\uhat^j_\icheck=v^j_1=\pi_{2j}$, $\uhat^j_\ihat=v^j_\ihat=\rho_j$, and $\uhat^j_i=v^j_i$ for all $i\in[n]\setminus\{1,\icheck,\ihat\}$;
\item[$(\phi^2_j):$] $\ucheck^j_1=\vhat^j_\icheck=\pi_{2j}$, $\ucheck^j_\icheck=\vhat^j_1=\pi_{2j+1}$, and $\ucheck^j_i=\vhat^j_i$ for all $i\in[n]\setminus\{1,\icheck\}$;
\item[$(\phi^3_j):$] $u^{j+1}_1=\vcheck^j_\icheck=\pi_{2j+1}$, $u^{j+1}_\icheck=\vcheck^j_1=1$, $u^{j+1}_\ihat=\vcheck^j_\ihat=\rho_{j+1}$, and $u^{j+1}_i=\vcheck^j_i$ for all $i\in[n]\setminus\{1,\icheck,\ihat\}$;
\item[$(\phi^4_j):$] $u^{j+1}_1=v^j_\ihat=\rho_j$, $u^{j+1}_\ihat=v^j_1=\rho_{j+1}$, $u^{j+1}_\icheck=v^j_\icheck=1$, and $u^{j+1}_i=v^j_i$ for all $i\in[n]\setminus\{1,\icheck,\ihat\}$;
\item[$(\psi_j):$] $u^j_\itilde\neq v^j_\itilde$ for some $\itilde\in[n]\setminus\{1,\icheck,\ihat\}$.
\end{itemize}

\begin{figure}
\centerline{
\includegraphics[page=8]{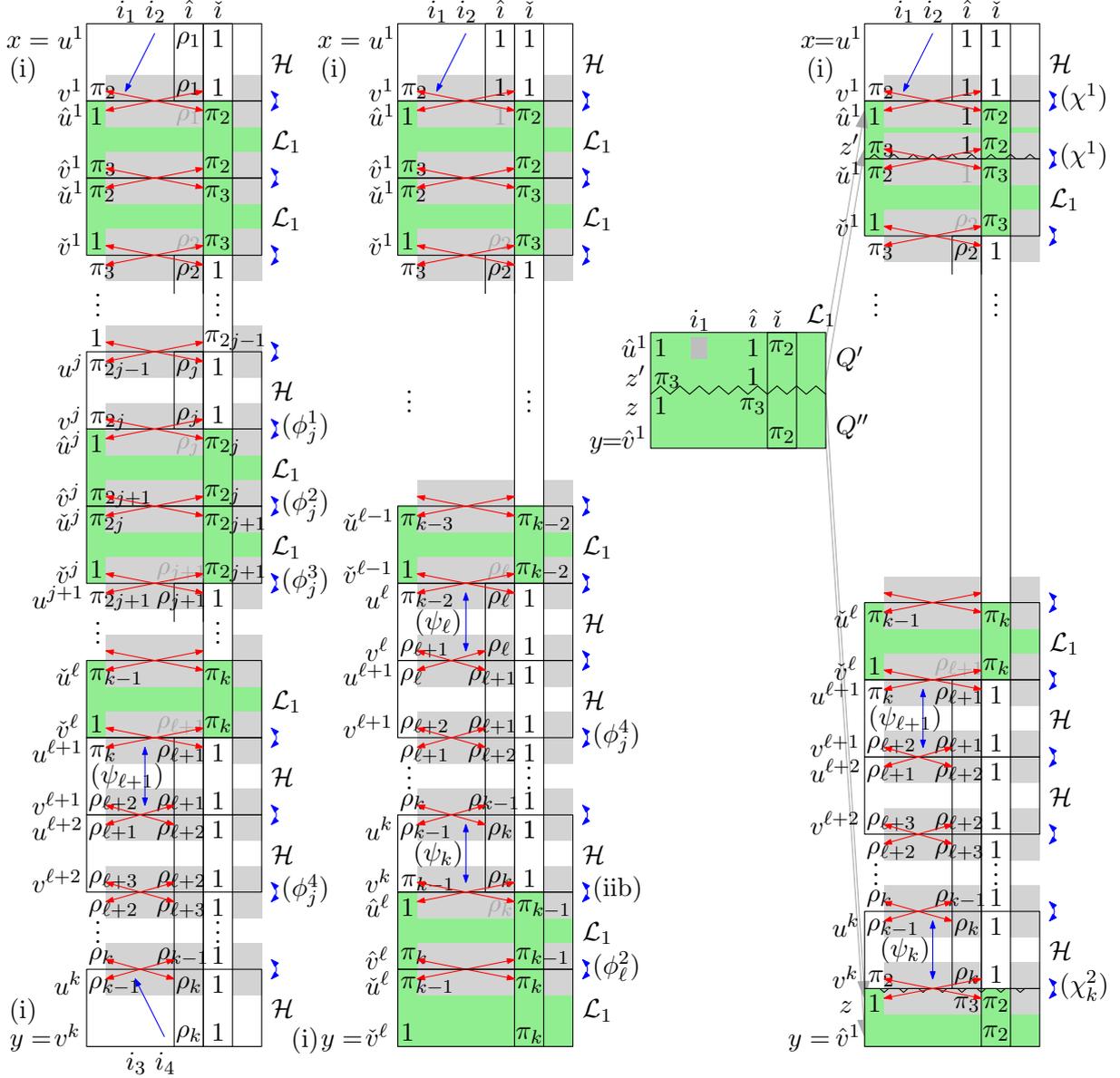}
}
\caption{Illustration of the proof of Lemma~\ref{lem:H1} in the case when $k$ is odd.}
\label{fig:H1odd}
\end{figure}

Observe that condition~$(\phi^1_j)$ requires that both~$v^j$ and~$\uhat^j$ contain the symbols~$\pi_{2j}$ and~$\rho_j$, and condition~$(\phi^3_j)$ requires that both~$\vcheck^j$ and~$u^{j+1}$ contain the symbols~$\pi_{2j+1}$ and~$\rho_{j+1}$, and this is possible for all $1\leq j\leq \ell$ as $\pi$ and~$\rho$ satisfy condition~(2) from before, which ensures that these two symbols are distinct.
Also note that $u^1\neq v^1$ and $u^k\neq v^k$ by~(i) and the choice of $i_1,i_2$ and $i_3,i_4$, respectively, and $u^j_1\neq v^j_1$ for all $1<j<k$ by~(ii)+(iii)+(iv).
Moreover, we have $\uhat^j_1\neq \vhat^j_1$ and $\ucheck^j_1\neq \vcheck^j_1$ for all $1\leq j\leq \ell$ by~(ii).

For $j=1,\ldots,k$ we define $\bb:=(a_1-1,a_2,\ldots,a_k)$, $\bc':=(a_1-1,\ldots,a_{\rho_j}-1,\ldots,a_k)$, and $\bc:=\varphi(\bc')$, which satisfy $\bc\precdot\bb\precdot\ba$ and $\Delta(\bb)=2$ and $\Delta(\bc)=1$, and we consider a Hamilton path~$P_j$ in~$G^{(\icheck,\ihat),(1,\rho_j)}(\ba)\simeq G(\bc)$ from~$u^j$ to~$v^j$, which exists by the assumption~$G(\bc)\in\cH$.

For $j=1,\ldots,\ell$ we define $\ba':=(a_1,\ldots,a_{\pi_{2j}}-1,\ldots,a_k)$ and $\bb:=\varphi(\ba')$, which satisfies $\bb\precdot\ba$ and $\Delta(\bb)=0$, and we consider a Hamilton path~$\Phat_j$ in the graph~$G^{\icheck,\pi_{2j}}(\ba)\simeq G(\bb)$ from~$\uhat^j$ to~$\vhat^j$, which exists by the assumption~$G(\bb)\in\cL_1$, using that~$\uhat^j\in\Pi(\ba)^{1,1}$ and~$\vhat^j\notin\Pi(\ba)^{1,1}$ (recall that $\uhat^j_1=1$ and $\vhat^j_1=\pi_{2j+1}\neq 1$ by~(ii)).

Similarly, for $j=1,\ldots,\ell$ we define $\ba':=(a_1,\ldots,a_{\pi_{2j+1}}-1,\ldots,a_k)$ and $\bb:=\varphi(\ba')$, and we consider a Hamilton path~$\Pcheck_j$ in the graph~$G^{\icheck,\pi_{2j+1}}(\ba)\simeq G(\bb)$ from~$\ucheck^j$ to~$\vcheck^j$, which exists by the assumption~$G(\bb)\in\cL_1$, using that~$\ucheck^j\notin\Pi(\ba)^{1,1}$ and~$\vcheck^j\in\Pi(\ba)^{1,1}$ (recall that $\ucheck^j_1=\pi_{2j}\neq 1$ and $\vcheck^j_1=1$ by~(ii)).

The concatenation $(P_1\Phat_1\Pcheck_1) (P_2\Phat_2\Pcheck_2)\cdots (P_\ell\Phat_\ell\Pcheck_\ell) P_{\ell+1}P_{\ell+2}\cdots P_k$ is a Hamilton path in~$G(\ba)$ from~$x$ to~$y$.

\textbf{Case~(ob):} $y_\icheck\neq 1$.
We choose an index~$\ihat\in[n]\setminus\{1,\icheck\}$ for which $x_\ihat=1$, and two indices $i_1,i_2\in[n]\setminus\{1,\ihat,\icheck\}$ such that $x_{i_1}\neq x_{i_2}$.
We consider two subcases depending on the value of~$y_1$.

\textbf{Case~(ob1):} $y_1=1$.
By Lemma~\ref{lem:matching}~(ob1) there are two permutations $\pi,\rho$ on~$[k]$ satisfying the following conditions:
\begin{enumerate}[label=(\arabic*),leftmargin=8mm, topsep=0mm, noitemsep]
\item $\pi_1=x_\icheck=1$, $\pi_k=y_\icheck$, $\rho_1=x_\ihat=1$;
\item $\pi_{2j}\neq \rho_j$ and $\pi_{2j+1}\neq \rho_{j+1}$ for all $1\leq j<\ell$, and $\pi_{k-1}\neq \rho_k$.
\end{enumerate}

We choose multiset permutations $u^j,v^j\in \Pi(\ba)$, $j=1,\ldots,k$, and $\uhat^j,\vhat^j,\ucheck^j,\vcheck^j\in\Pi(\ba)$, $j=1,\ldots,\ell$, satisfying the following conditions; see Figure~\ref{fig:H1odd}~(ob1):
\begin{itemize}[leftmargin=8mm, noitemsep]
\item[(i)] $u^1=x$, $v^1_{i_1}=x_{i_2}$, and $\vcheck^\ell=y$;
\item[(iia)] conditions~$(\phi^1_j)$, $(\phi^2_j)$, $(\phi^3_j)$ for all $1\leq j<\ell$, and condition~$(\phi^2_\ell)$;
\item[(iib)] $\uhat^\ell_1=v^k_\icheck=1$, $\uhat^\ell_\icheck=v^k_1=\pi_{k-1}$, $\uhat^\ell_\ihat=v^k_\ihat=\rho_k$, and $\uhat^\ell_i=v^k_i$ for all $i\in[n]\setminus\{1,\icheck,\ihat\}$;
\item[(iii)] condition~$(\phi^4_j)$ for all $\ell\leq j<k$;
\item[(iv)] condition~$(\psi_j)$ for $j\in\{\ell,k\}$.
\end{itemize}
The argument that all pairs $(u^j,v^j)$, $j=1,\ldots,k$, and $(\uhat^j,\vhat^j)$ and $(\ucheck^j,\vcheck^j)$, $j=1,\ldots,\ell$, are distinct, is straightforward.

As before in case~(oa), we consider Hamilton paths~$P_j$ from~$u^j$ to~$v^j$ for all $j=1,\ldots,k$, Hamilton paths~$\Phat_j$ and~$\Pcheck_j$ from~$\uhat^j$ to~$\vhat^j$, or from~$\ucheck^j$ to~$\vcheck^j$, respectively, for all $j=1,\ldots,\ell$.
The concatenation $(P_1\Phat_1\Pcheck_1) (P_2\Phat_2\Pcheck_2)\cdots (P_{\ell-1}\Phat_{\ell-1}\Pcheck_{\ell-1}) P_\ell P_{\ell+1}\cdots P_k (\Phat_\ell\Pcheck_\ell)$ is a Hamilton path in~$G(\ba)$ from~$x$ to~$y$.

\textbf{Case~(ob2):} $y_1\neq 1$.
By Lemma~\ref{lem:matching}~(ob2) there are two permutations $\pi,\rho$ on~$[k]$ satisfying the following conditions:
\begin{enumerate}[label=(\arabic*),leftmargin=8mm, topsep=0mm, noitemsep]
\item $\pi_1=x_\icheck=1$, $\pi_2=y_\icheck$, $\rho_1=x_\ihat=1$, and $\pi_3=\rho_k$;
\item $\pi_{2j}\neq \rho_j$ and $\pi_{2j+1}\neq \rho_{j+1}$ for all $1\leq j\leq \ell$.
\end{enumerate}

We choose two multiset permutations $\uhat^1,\vhat^1\in\Pi(\ba)$ such that $\uhat^1_1=1$, $\uhat^1_{i_1}=x_{i_2}$, $\uhat^1_\ihat=1$, $\uhat^1_\icheck=\pi_2$, and $\vhat^1=y$.
We define $\ba':=(a_1,\ldots,a_{\pi_2}-1,\ldots,a_k)$ and $\bb:=\varphi(\ba')$, which satisfies $\bb\precdot\ba$ and $\Delta(\bb)=0$, and we consider a Hamilton path~$Q$ in the graph~$G^{\icheck,\pi_2}(\ba)\simeq G(\bb)$ from~$\uhat^1$ to~$\vhat^1$, which exists by the assumption~$G(\bb)\in\cL_1$, using that~$\uhat^1\in\Pi(\ba)^{1,1}$ and~$\vhat^1\notin\Pi(\ba)^{1,1}$ (recall that $\uhat^1_1=1$ and $\vhat^1_1=y_1\neq 1$).
Let $z$ be the first vertex along this path from~$\uhat^1$ to~$\vhat^1$ for which $z_{\ihat}=\pi_3$, and let $z'$ be the predecessor of~$z$ on the path.
By the definition of~$z$ we have $z'_\ihat\neq\pi_3$, and consequently~$z'_1=\pi_3$.
By Lemma~\ref{lem:D0}, we thus obtain $z'_\ihat=z_1=1$.
Let $Q'$ be the subpath of~$Q$ from~$\uhat^1$ to~$z'$, and let $Q''$ be the subpath of~$Q$ from~$z$ to~$\vhat^1$.

We choose multiset permutations $u^j,v^j\in \Pi(\ba)$, $j=1,\ldots,k$, and $\uhat^j,\vhat^j\in\Pi(\ba)$, $j=2,\ldots,\ell$, and $\ucheck^j,\vcheck^j\in\Pi(\ba)$, $j=1,\ldots,\ell$, satisfying the following conditions; see Figure~\ref{fig:H1odd}~(ob2):
\begin{itemize}[leftmargin=8mm, noitemsep]
\item[(i)] $u^1=x$, and conditions~$(\chi^1)$ and~$(\chi^2_k)$;
\item[(ii)] conditions $(\phi^1_j)$ and~$(\phi^2_j)$ for all $1<j\leq \ell$, and condition~$(\phi^3_j)$ for all $1\leq j\leq \ell$;
\item[(iii)] condition~$(\phi^4_j)$ for all $\ell<j<k$;
\item[(iv)] condition~$(\psi_j)$ for $j\in\{\ell+1,k\}$,
\end{itemize}
where we introduce the abbreviations:
\begin{itemize}[leftmargin=12mm, noitemsep]
\item[$(\chi^1):$] $v^1_1=\uhat^1_\icheck=\pi_2$, $v^1_\icheck=\uhat^1_1=1$, $v^1_i=\uhat^1_i$, and $\ucheck^1_1=z'_\icheck=\pi_2$, $\ucheck^1_\icheck=z'_1=\pi_3$, $\ucheck^1_i=z'_i$ for all $i\in[n]\setminus\{1,\icheck\}$;
\item[$(\chi^2_j)$:] $v^j_1=z_\icheck=\pi_2$, $v^j_\icheck=z_1=1$, $v^j_i=z_i$ for all $i\in[n]\setminus\{1,\icheck\}$.
\end{itemize}

The argument that all pairs $(u^j,v^j)$, $j=1,\ldots,k$, and $(\uhat^j,\vhat^j)$, $j=2,\ldots,\ell$, and $(\ucheck^j,\vcheck^j)$, $j=1,\ldots,\ell$, are distinct, is straightforward.

As before in case~(oa), we consider Hamilton paths~$P_j$ from~$u^j$ to~$v^j$ for all $j=1,\ldots,k$, Hamilton paths~$\Phat_j$ from~$\uhat^j$ to~$\vhat^j$ for all $j=2,\ldots,\ell$, and Hamilton paths~$\Pcheck_j$ from~$\ucheck^j$ to~$\vcheck^j$ for all $j=1,\ldots,\ell$.

The concatenation $(P_1Q'\Pcheck_1)(P_2\Phat_2\Pcheck_2)(P_3\Phat_3\Pcheck_3)\cdots (P_\ell\Phat_\ell\Pcheck_\ell) P_{\ell+1}\cdots P_k Q''$ is a Hamilton path in~$G(\ba)$ from~$x$ to~$y$.

It remains to consider the case that $k\geq 4$ is even, and we introduce the abbreviation $\ell:=k/2\geq 2$.
We now distinguish two cases, depending on the value of~$y_\icheck$.

\textbf{Case~(eb):} $y_\icheck\neq 1$.
We choose an index~$\ihat\in[n]\setminus\{1,\icheck\}$ for which $x_\ihat=1$, and two indices $i_1,i_2\in[n]\setminus\{1,\ihat,\icheck\}$ such that $x_{i_1}\neq x_{i_2}$.
We consider two subcases depending on the value of~$y_1$.

\textbf{Case~(eb2):} $y_1\neq 1$.
By Lemma~\ref{lem:matching}~(eb2) there are two permutations $\pi,\rho$ on~$[k]$ satisfying the following conditions:
\begin{enumerate}[label=(\arabic*),leftmargin=8mm, topsep=0mm, noitemsep]
\item $\pi_1=x_\icheck=1$, $\pi_k=y_\icheck$, and $\rho_1=x_\ihat=1$;
\item $\pi_{2j}\neq \rho_j$ and $\pi_{2j+1}\neq \rho_{j+1}$ for all $1\leq j<\ell$, and $\pi_k\neq \rho_k$.
\end{enumerate}

We choose multiset permutations $u^j,v^j\in \Pi(\ba)$, $j=1,\ldots,k$, and $\uhat^j,\vhat^j\in\Pi(\ba)$, $j=1,\ldots,\ell$, and $\ucheck^j,\vcheck^j\in\Pi(\ba)$, $j=1,\ldots,\ell-1$, satisfying the following conditions; see Figure~\ref{fig:H1even}~(eb2):
\begin{itemize}[leftmargin=8mm, noitemsep]
\item[(i)] $u^1=x$, $v^1_{i_1}=x_{i_2}$, and $\vhat^\ell=y$;
\item[(iia)] conditions~$(\phi^1_j)$, $(\phi^2_j)$, and $(\phi^3_j)$ for all $1\leq j<\ell$;
\item[(iib)] $\uhat^\ell_1=v^k_\icheck=1$, $\uhat^\ell_\icheck=v^k_1=\pi_k$, $\uhat^\ell_\ihat=v^k_\ihat=\rho_k$, and $\uhat^\ell_i=v^k_i$ for all $i\in[n]\setminus\{1,\icheck,\ihat\}$;
\item[(iii)] condition~$(\phi^4_j)$ for all $\ell\leq j<k$;
\item[(iv)] condition~$(\psi_j)$ for $j\in\{\ell,k\}$.
\end{itemize}

\begin{figure}
\centerline{
\includegraphics[page=10]{block}
}
\caption{Illustration of the proof of Lemma~\ref{lem:H1} in the case when $k$ is even.}
\label{fig:H1even}
\end{figure}

The argument that all pairs $(u^j,v^j)$, $j=1,\ldots,k$, and $(\uhat^j,\vhat^j)$, $j=1,\ldots,\ell$, and $(\ucheck^j,\vcheck^j)$, $j=1,\ldots,\ell-1$, are distinct, is straightforward.

For $j=1,\ldots,k$ we define $\bb:=(a_1-1,a_2,\ldots,a_k)$, $\bc':=(a_1-1,\ldots,a_{\rho_j}-1,\ldots,a_k)$, and $\bc:=\varphi(\bc')$, which satisfy $\bc\precdot\bb\precdot\ba$ and $\Delta(\bb)=2$ and $\Delta(\bc)=1$, and we consider a Hamilton path~$P_j$ in~$G^{(\icheck,\ihat),(1,\rho_j)}(\ba)\simeq G(\bc)$ from~$u^j$ to~$v^j$, which exists by the assumption~$G(\bc)\in\cH$.

For $j=1,\ldots,\ell$ we define $\ba':=(a_1,\ldots,a_{\pi_{2j}}-1,\ldots,a_k)$ and $\bb:=\varphi(\ba')$, which satisfies $\bb\precdot\ba$ and $\Delta(\bb)=0$, and we consider a Hamilton path~$\Phat_j$ in the graph~$G^{\icheck,\pi_{2j}}(\ba)\simeq G(\bb)$ from~$\uhat^j$ to~$\vhat^j$, which exists by the assumption~$G(\bb)\in\cL_1$, using that~$\uhat^j\in\Pi(\ba)^{1,1}$ and~$\vhat^j\notin\Pi(\ba)^{1,1}$ (recall that $\uhat^j_1=1$ and $\vhat^j_1=\pi_{2j+1}\neq 1$ for $j=1,\ldots,\ell-1$ by~(ii) and $\uhat^\ell_1=1$ and $\vhat^\ell_1=y_1\neq 1$ by~(iib) and~(i), respectively).

Similarly, for $j=1,\ldots,\ell-1$ we define $\ba':=(a_1,\ldots,a_{\pi_{2j+1}}-1,\ldots,a_k)$ and $\bb:=\varphi(\ba')$, and we consider a Hamilton path~$\Pcheck_j$ in the graph~$G^{\icheck,\pi_{2j+1}}(\ba)\simeq G(\bb)$ from~$\ucheck^j$ to~$\vcheck^j$, which exists by the assumption~$G(\bb)\in\cL_1$, using that~$\ucheck^j\notin\Pi(\ba)^{1,1}$ and~$\vcheck^j\in\Pi(\ba)^{1,1}$ (recall that $\ucheck^j_1=\pi_{2j}\neq 1$ and $\vcheck^j_1=1$ by~(iia)).

The concatenation $(P_1\Phat_1\Pcheck_1) (P_2\Phat_2\Pcheck_2)\cdots (P_{\ell-1}\Phat_{\ell-1}\Pcheck_{\ell-1}) P_\ell P_{\ell+1}\cdots P_{k-1}P_k \Phat_\ell$ is a Hamilton path in~$G(\ba)$ from~$x$ to~$y$.

\textbf{Case~(eb1):} $y_1=1$.
By Lemma~\ref{lem:matching}~(eb1) there are two permutations $\pi,\rho$ on~$[k]$ satisfying the following conditions:
\begin{enumerate}[label=(\arabic*),leftmargin=8mm, topsep=0mm, noitemsep]
\item $\pi_1=x_\icheck=1$, $\pi_k=y_\icheck$, $\rho_1=x_\ihat=1$, and $\pi_3=\rho_k$;
\item $\pi_{2j}\neq \rho_j$ and $\pi_{2j+1}\neq \rho_{j+1}$ for all $1\leq j<\ell$.
\end{enumerate}

We choose two multiset permutations $\uhat^1,\vhat^1\in\Pi(\ba)$ such that $\uhat^1_1=1$, $\uhat^1_{i_1}=x_{i_2}$, $\uhat^1_\ihat=\rho_1=1$, $\uhat^1_\icheck=\pi_2$, $\vhat^1_1=\pi_k$, and $\vhat^1_\icheck=\pi_2$.
We define $\ba':=(a_1,\ldots,a_{\pi_2}-1,\ldots,a_k)$ and $\bb:=\varphi(\ba')$, which satisfies $\bb\precdot\ba$ and $\Delta(\bb)=0$, and we consider a Hamilton path~$Q$ in the graph~$G^{\icheck,\pi_2}(\ba)\simeq G(\bb)$ from~$\uhat^1$ to~$\vhat^1$, which exists by the assumption~$G(\bb)\in\cL_1$, using that~$\uhat^1\in\Pi(\ba)^{1,1}$ and~$\vhat^1\notin\Pi(\ba)^{1,1}$ (recall that $\uhat^1_1=1$ and $\vhat^1_1=\pi_k\neq 1$).
Let $z$ be the first vertex along this path from~$\uhat^1$ to~$\vhat^1$ for which $z_{\ihat}=\pi_3$, and let $z'$ be the predecessor of~$z$ on the path.
By the definition of~$z$ we have $z'_\ihat\neq\pi_3$, and consequently~$z'_1=\pi_3$.
By Lemma~\ref{lem:D0}, we thus obtain $z'_\ihat=z_1=1$.
Let $Q'$ be the subpath of~$Q$ from~$\uhat^1$ to~$z'$, and let $Q''$ be the subpath of~$Q$ from~$z$ to~$\vhat^1$.

We choose multiset permutations $u^j,v^j\in \Pi(\ba)$, $j=1,\ldots,k$, and $\uhat^j,\vhat^j\in\Pi(\ba)$, $j=2,\ldots,\ell-1$, and $\ucheck^j,\vcheck^j\in\Pi(\ba)$, $j=1,\ldots,\ell$, satisfying the following conditions; see Figure~\ref{fig:H1even}~(eb1):
\begin{itemize}[leftmargin=8mm, noitemsep]
\item[(i)] $u^1=x$, $\vcheck^\ell=y$, and conditions~$(\chi^1)$, $(\chi^2_k)$, and $(\chi^3)$;
\item[(ii)] conditions $(\phi^1_j)$ and~$(\phi^2_j)$ for all $1<j<\ell$, and condition~$(\phi^3_j)$ for all $1\leq j<\ell$;
\item[(iii)] condition~$(\phi^4_j)$ for all $\ell\leq j<k$;
\item[(iv)] condition~$(\psi_j)$ for $j\in\{\ell,k\}$,
\end{itemize}
where we introduce the abbreviation:
\begin{itemize}[leftmargin=12mm, noitemsep]
\item[$(\chi^3):$] $\ucheck^\ell_1=\vhat^1_\icheck=\pi_2$, $\ucheck^\ell_\icheck=\vhat^1_1=\pi_k$, $\ucheck^\ell_i=\vhat^1_i$ for all $i\in[n]\setminus\{1,\icheck\}$.
\end{itemize}
The argument that all pairs $(u^j,v^j)$, $j=1,\ldots,k$, and $(\uhat^j,\vhat^j)$, $j=2,\ldots,\ell-1$, and $(\ucheck^j,\vcheck^j)$, $j=1,\ldots,\ell$, are distinct, is straightforward.

As before in case~(eb2), we consider Hamilton paths~$P_j$ from~$u^j$ to~$v^j$ for all $j=1,\ldots,k$, Hamilton paths~$\Phat_j$ from~$\uhat^j$ to~$\vhat^j$ for all $j=2,\ldots,\ell-1$, and Hamilton paths~$\Pcheck_j$ from~$\ucheck^j$ to~$\vcheck^j$ for all $j=1,\ldots,\ell-1$.
In addition, we consider a Hamilton path~$\Pcheck_\ell$ from~$\ucheck^\ell$ to~$\vcheck^\ell$, which exists by the assumption~$G(\bb)\in\cL_1$, using that~$\ucheck^\ell\notin\Pi(\ba)^{1,1}$ and~$\vcheck^\ell\in\Pi(\ba)^{1,1}$ (recall that $\ucheck^\ell_1=\pi_2\neq 1$ and $\vcheck^\ell_1=y_1=1$ by~(i)).

The concatenation $(P_1Q'\Pcheck_1)(P_2\Phat_2\Pcheck_2)(P_3\Phat_3\Pcheck_3)\cdots (P_{\ell-1}\Phat_{\ell-1}\Pcheck_{\ell-1}) P_\ell P_{\ell+1}\cdots P_{k-1}P_k Q''\Pcheck_\ell$ is a Hamilton path in~$G(\ba)$ from~$x$ to~$y$.

\textbf{Case~(ea):} $y_\icheck=1$.
As $x\neq y$ there is a position $\ihat\in[n]\setminus\{1,\icheck\}$ such that $x_\ihat\neq y_\ihat$, and by our choice of $\icheck$, we can assume that $x_\ihat,y_\ihat>1$.
\footnote{The assumption $x_\ihat,y_\ihat>1$ will not be used in this proof, but in the proof of Lemma~\ref{lem:H1''}.}

We fix two indices $i_1,i_2\in[n]\setminus\{1,\ihat,\icheck\}$ such that $x_{i_1}\neq x_{i_2}$, and two indices $i_3,i_4\in[n]\setminus\{1,\ihat,\icheck\}$ such that $y_{i_3}\neq y_{i_4}$.

By Lemma~\ref{lem:matching}~(ea) there are two permutations $\pi,\rho$ on~$[k]$ satisfying the following conditions:
\begin{enumerate}[label=(\arabic*),leftmargin=8mm, topsep=0mm, noitemsep]
\item $\pi_1=x_\icheck=1$, $\rho_1=x_\ihat$, $\rho_k=y_\ihat$, and $\pi_3=\rho_{k-1}$;
\item $\pi_{2j}\neq \rho_j$ and $\pi_{2j+1}\neq \rho_{j+1}$ for all $1\leq j<\ell$, and $\pi_k\neq \rho_k$.
\end{enumerate}

We choose two multiset permutations $\uhat^1,\vhat^1\in\Pi(\ba)$ as before in case~(eb1), and we construct a Hamilton path~$Q$ from~$\uhat^1$ to~$\vhat^1$ and its subpaths~$Q'$ from~$\uhat^1$ to~$z'$ and~$Q''$ from~$z$ to~$\vhat^1$, as before.

We choose multiset permutations $u^j,v^j\in \Pi(\ba)$, $j=1,\ldots,k$, and $\uhat^j,\vhat^j\in\Pi(\ba)$, $j=2,\ldots,\ell-1$, and $\ucheck^j,\vcheck^j\in\Pi(\ba)$, $j=1,\ldots,\ell$, satisfying the following conditions; see Figure~\ref{fig:H1even}~(ea):
\begin{itemize}[leftmargin=8mm, noitemsep]
\item[(i)] $u^1=x$, $v^k=y$, $u^k_{i_3}=x_{i_4}$, and conditions~$(\chi^1)$, $(\chi^2_{k-1})$, and $(\chi^3)$;
\item[(iia)] conditions $(\phi^1_j)$ and~$(\phi^2_j)$ for all $1<j<\ell$, and condition~$(\phi^3_j)$ for all $1\leq j<\ell$;
\item[(iib)] $u^k_1=\vcheck^\ell_\icheck=\pi_k$, $u^k_\icheck=\vcheck^\ell_1=1$, $u^k_\ihat=\vcheck^\ell_\ihat=\rho_k$, $u^k_i=\vcheck^\ell_i$ for all $i\in[n]\setminus\{1,\icheck,\ihat\}$;
\item[(iii)] condition~$(\phi^4_j)$ for all $\ell\leq j<k-1$;
\item[(iv)] condition~$(\psi_j)$ for $j\in\{\ell,k-1\}$.
\end{itemize}
The argument that all pairs $(u^j,v^j)$, $j=1,\ldots,k$, and $(\uhat^j,\vhat^j)$, $j=2,\ldots,\ell-1$, and $(\ucheck^j,\vcheck^j)$, $j=1,\ldots,\ell$, are distinct, is straightforward.

As before in case~(eb1), we consider Hamilton paths~$P_j$ from~$u^j$ to~$v^j$ for all $j=1,\ldots,k$, Hamilton paths~$\Phat_j$ from~$\uhat^j$ to~$\vhat^j$ for all $j=2,\ldots,\ell-1$, and Hamilton paths~$\Pcheck_j$ from~$\ucheck^j$ to~$\vcheck^j$ for all $j=1,\ldots,\ell$.

The concatenation $(P_1Q'\Pcheck_1)(P_2\Phat_2\Pcheck_2)(P_3\Phat_3\Pcheck_3)\cdots (P_{\ell-1}\Phat_{\ell-1}\Pcheck_{\ell-1}) P_\ell P_{\ell+1}\cdots P_{k-2}P_{k-1} Q''\Pcheck_\ell P_k$ is a Hamilton path in~$G(\ba)$ from~$x$ to~$y$.

This completes the proof of Lemma~\ref{lem:H1}.
\end{proof}

The strategy for proving Lemma~\ref{lem:H1'} and~\ref{lem:H1''} is the same as for the proof of Lemma~\ref{lem:H1}, and these proofs can be found in the appendix.
The crucial difference is that now one or two building blocks $\bc\precdot\bb$ for the unique~$\bb\precdot\ba$ with $\Delta(\bb)=2$ are only assumed to satisfy the weaker property $G(\bc)\in\cL_{12}$, and not $G(\bc)\in\cH$, so we have to work around this by imposing extra conditions on those building blocks.

\begin{proof}[Proof of Lemma~\ref{lem:L12}]
As the symbols~1 and~2 both appear with the same frequency~$\alpha$, it suffices to prove that~$G(\ba)$ has property~$\cL_{12}$ for $s=1$.
Let $x,y$ be two distinct vertices in~$G(\ba)$ such that $x_1=1$ and $y_1=t$ for which there is a position~$\ihat>1$ with $(x_\ihat,y_\ihat)=(p_3(1,t),q_3(1,t))$.

We first consider the case~$t\in\{1,3\}$.
In this case we have $(x_\ihat,y_\ihat)=(p_3(1,t),q_3(1,t))=(3,1)$ by~\eqref{eq:pq}.
We choose multiset permutations $u^j,v^j\in \Pi(\ba)$, $j=1,2,3$, satisfying the following conditions; see Figure~\ref{fig:L1}~(a):
\begin{itemize}[leftmargin=8mm, noitemsep]
\item[(i)] $u^1=x$ and $v^3=y$;
\item[(ii)] $u^2_1=v^1_\ihat=3$, $u^2_\ihat=v^1_1=2$, $u^3_1=v^2_\ihat=2$, $u^3_\ihat=v^2_1=1$, and $u^j_i=v^{j-1}_i$ for $j=2,3$ and all $i\in[n]\setminus\{1,\ihat\}$.
\end{itemize}
Note that $u^1_1\neq v^1_1$, $u^2_1\neq v^2_1$, and $u^3_1\neq v^3_1$ by~(i)+(ii).
Consequently, we have $u^j\neq v^j$ for $j=1,2,3$.

\begin{figure}[b!]
\includegraphics[page=4]{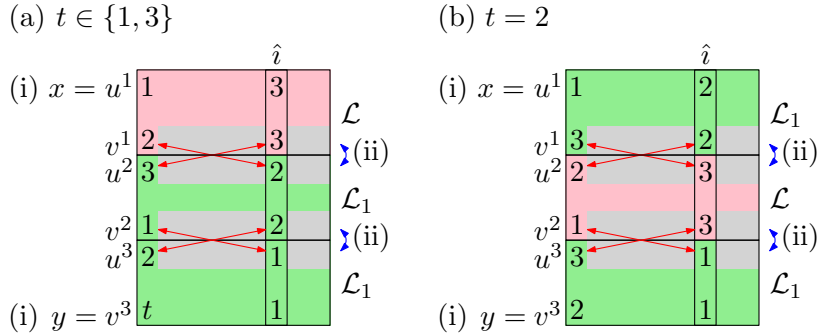}
\caption{Illustration of the proof of Lemma~\ref{lem:L12}.}
\label{fig:L1}
\end{figure}

We define $\bb:=(\alpha,\alpha)\precdot\ba$, and we consider a Hamilton path~$P_1$ in the graph~$G^{\ihat,3}(\ba)\simeq G(\bb)$ from~$u^1$ to~$v^1$, which exists by the assumption $G(\bb)\in\cL$, using that $u^1$ and~$v^1$ belong to the two distinct partition classes of the graph~$G^{\ihat,3}(\ba)$ by Lemma~\ref{lem:kpart} (recall that $(u^1_1,v^1_1)=(1,2)$).
We define $\bb:=(\alpha,\alpha-1,1)\precdot\ba$, and we consider a Hamilton path~$P_2$ in the graph~$G^{\ihat,2}(\ba)\simeq G(\bb)$ from~$u^2$ to~$v^2$, which exists by the assumption~$G(\bb)\in\cL_1$, using that $u^2\notin\Pi(\ba)^{1,1}$ and~$v^2\in\Pi(\ba)^{1,1}$ (recall that $(u^2_1,v^2_1)=(3,1)$).
We define $\ba':=(\alpha-1,\alpha,1)$ and $\bb:=\varphi(\ba')=(\alpha,\alpha-1,1)\precdot\ba$, and we consider a Hamilton path~$P_3$ in the graph~$G^{\ihat,1}(\ba)\simeq G(\bb)$ from~$u^3$ to~$v^3$, which exists by the assumption~$G(\bb)\in\cL_1$, using that $u^3\in\Pi(\ba)^{1,2}$ and~$v^3\notin\Pi(\ba)^{1,2}$ (recall that $(u^3_1,v^3_1)=(2,t)$ and $t\in\{1,3\}$).
Note here that in $G^{\ihat,1}(\ba)$, the symbol~2 is the most frequent one and takes the role of~1 in~$G(\bb)$.

The concatenation $P_1P_2P_3$ is a Hamilton path in~$G(\ba)$ from~$x$ to~$y$.

It remains to consider the case~$t=2$.
In this case we have $(x_\ihat,y_\ihat)=(p_3(1,t),q_3(1,t))=(2,1)$ by~\eqref{eq:pq}.
We choose multiset permutations $u^j,v^j\in \Pi(\ba)$, $j=1,2,3$, satisfying the following conditions; see Figure~\ref{fig:L1}~(b):
\begin{itemize}[leftmargin=8mm, noitemsep]
\item[(i)] $u^1=x$ and $v^3=y$;
\item[(ii)] $u^2_1=v^1_\ihat=2$, $u^2_\ihat=v^1_1=3$, $u^3_1=v^2_\ihat=3$, $u^3_\ihat=v^2_1=1$, and $u^j_i=v^{j-1}_i$ for $j=2,3$ and all $i\in[n]\setminus\{1,\ihat\}$.
\end{itemize}
Note that $u^1_1\neq v^1_1$, $u^2_1\neq v^2_1$, and $u^3_1\neq v^3_1$ by~(i)+(ii).
Consequently, we have $u^j\neq v^j$ for $j=1,2,3$.

We consider a Hamilton path~$P_1$ in~$G^{\ihat,2}(\ba)$ from~$u^1$ to~$v^1$, which exists by the assumption~$G(\alpha,\alpha-1,1)\in\cL_1$, using that~$u^1\in\Pi(\ba)^{1,1}$ and~$v^1\notin\Pi(\ba)^{1,1}$ (recall that $(u^1_1,v^1_1)=(1,3)$).
We also consider a Hamilton path~$P_2$ in~$G^{\ihat,3}(\ba)$ from~$u^2$ to~$v^2$, which exists by the assumption $G(\alpha,\alpha)\in\cL$, using that $u^2$ and~$v^2$ belong to the two distinct partition classes of the graph~$G^{\ihat,3}(\ba)$ by Lemma~\ref{lem:kpart} (recall that $(u^2_1,v^2_1)=(2,1)$).
Finally, we consider a Hamilton path~$P_3$ in~$G^{\ihat,1}(\ba)$ from~$u^3$ to~$v^3$, which exists by the assumption~$G(\alpha,\alpha-1,1)\in\cL_1$, using that~$u^3\notin\Pi(\ba)^{1,2}$ and~$v^3\in\Pi(\ba)^{1,2}$ (recall that $(u^3_1,v^3_1)=(3,2)$).

The concatenation $P_1P_2P_3$ is a Hamilton path in~$G(\ba)$ from~$x$ to~$y$.
\end{proof}

The proof of Lemma~\ref{lem:T'} follows the same strategy as the proof of Theorem~\ref{thm:T} presented in Section~\ref{sec:ideasP}.

\begin{proof}[Proof of Lemma~\ref{lem:T'}]
\begin{figure}
\includegraphics[page=2]{block}
\caption{Illustration of the proof of Lemma~\ref{lem:T'}.}
\label{fig:T'}
\end{figure}

We argue by induction on~$k\geq 5$, using as a base case that $G(2,1,1,1)\in\cH$ by Table~\ref{tab:base}.
Let $x,y$ be two distinct vertices in~$G(\ba)$.
As $x\neq y$ there is a position $\ihat>1$ such that $x_\ihat\neq y_\ihat$.
As $k\geq 5$, there is an index~$i_1\in[n]\setminus\{1,\ihat\}$ such that $x_{i_1}\notin\{x_1,x_\ihat,y_\ihat\}$.
Similarly, there is an index~$i_k\in[n]\setminus\{1,\ihat\}$ such that $y_{i_k}\notin\{y_1,x_\ihat,y_\ihat,x_{i_1}\}$.
We fix any permutation~$\pi$ on~$[k]$ such that $\pi_1=x_\ihat$, $\pi_2=x_{i_1}$, $\pi_{k-1}=y_{i_k}$, and $\pi_k=y_\ihat$.
For convenience, we also define $\pi_0:=x_1$ and $\pi_{k+1}:=y_1$.
We let $\jhat\in[k]$ be such that $\pi_\jhat=1$.
Then we choose a sequence of multiset permutations $u^j,v^j\in \Pi(\ba)$, $j=1,\ldots,k$, satisfying the following conditions; see Figure~\ref{fig:T'}:
\begin{itemize}[leftmargin=8mm, noitemsep]
\item[(i)] $u^1=x$ and $v^k=y$;
\item[(ii)] $u^j_1=v^{j-1}_\ihat=\pi_{j-1}$, $u^j_\ihat=v^{j-1}_1=\pi_j$, and $u^j_i=v^{j-1}_i$ for all $2\leq j\leq k$ and $i\in[n]\setminus\{1,\ihat\}$.
\item[(iii)] $u^\jhat_i=v^\jhat_i$ for all $i\in[n]\setminus\{1,\ihat,i_\jhat\}$, where $i_\jhat\in[n]\setminus\{1,\ihat\}$ is chosen such that $u^\jhat_{i_\jhat}=\pi_{\jhat+1}$ and $v^\jhat_{i_\jhat}=\pi_{\jhat-1}$ (in particular, if $\jhat=1$ then we have $i_\jhat=i_1$, and if $\jhat=k$ then we have $i_\jhat=i_k$).
\end{itemize}
Recall that $\pi_0\neq \pi_2$ by the choice of~$i_1$, and $\pi_{k+1}\neq \pi_{k-1}$ by the choice of~$i_k$.
Consequently, using that $u^j_1=\pi_{j-1}$ and $v^j_1=\pi_{j+1}$ by~(ii) we obtain that $u^j\neq v^j$ for all $1\leq j\leq k$.

For $j=1,\ldots,k$ we define $\ba':=(a_1,\ldots,a_{\pi_j}-1,\ldots,a_k)$ and $\bb:=\varphi(\ba')$.
Note that $\bb=(2,1^{k-2})$ for all $j\in[k]\setminus\{\jhat\}$, whereas $\bb=1^k$ for $j=\jhat$.
In the first case, there is a Hamilton path~$P_j$ in the graph~$G^{\ihat,\pi_j}(\ba)\simeq G(\bb)=G((2,1^{k-2}))$ from~$u^j$ to~$v^j$ by induction.
In the second case, there is a Hamilton path~$P_j$ in the graph~$G^{\ihat,1}(\ba)\simeq G(\bb)=G(1^k)$ from~$u^j$ to~$v^j$ by Theorem~\ref{thm:T}, using that $u^j$ and~$v^j$ differ only in one transposition by~(ii)+(iii), i.e., they belong to two distinct partition classes of the graph~$G^{\ihat,1}(\ba)$ by Lemma~\ref{lem:bipart}.
The concatenation $P_1P_2\cdots P_k$ is a Hamilton path in~$G(\ba)$ from~$x$ to~$y$.
\end{proof}

\section{Proof of Theorems~\ref{thm:M} and~\ref{thm:M'}}
\label{sec:proof-MM'}

In this section we prove Theorems~\ref{thm:M} and~\ref{thm:M'}.
For this, we focus entirely on integer partitions~$\ba=(a_1,\ldots,a_k)$ that satisfy~$\Delta(\ba)=0$.
By~\eqref{eq:Delta}, this is equivalent to $a_1=\sum_{i=2}^k a_i$, i.e., the first symbol appears equally often as all other symbols combined.
Based on this observation, we will first reparametrize the problem for convenience, changing the meaning of the variables~$n,k,\ba$.

\subsection{Reparametrizing the problem}

The following notations are illustrated in Figure~\ref{fig:para}.

\begin{wrapfigure}{r}{0.45\textwidth}
\includegraphics[page=3]{Ga}
\caption{The graph from Figure~\ref{fig:part} in the reparametrized notation.
}
\label{fig:para}
\end{wrapfigure}
For any $n\geq 1$ and $k\geq 1$, we consider multiset permutations with $n$ symbols~0, and with $n$ non-zero symbols from the set~$\{1,\ldots,k\}$.
Specifically, the number of each non-zero symbol~$i\in[k]$ is given by~$a_i\geq 1$, where $n=a_1+\cdots+a_k$ and $a_1\geq a_2\geq \ldots\geq a_k$.
We write $\Pi_n(\ba)$, $\ba:=(a_1,\ldots,a_k)$, to denote those multiset permutations.
For any $x\in\Pi_n(\ba)$, we write $x^-$ for the string obtained from~$x$ by omitting the first entry, and we define $\Pi_n(\ba)^-:=\{x^-\mid x\in\Pi_n(\ba)\}$.
Given any~$x\in\Pi_n(\ba)^-$, we can uniquely infer the omitted symbol, and we refer to it as the \emph{suppressed symbol}, denoted $s(x)\in\{0,\ldots,k\}$.
Clearly, $x,y\in\Pi_n(\ba)$ differ in a star transposition if and only if $x^-,y^-\in\Pi_n(\ba)^-$ differ in exactly one position.
We also write $\Pi_n(\ba)^{-0}$ and $\Pi_n(\ba)^{-1}$ for all multiset permutations from~$\Pi_n(\ba)^-$ with suppressed symbol~0 or suppressed symbol distinct from~0, respectively.
We write~$G_n(\ba)$ for the graph with vertex set~$\Pi_n(\ba)^-$, with an edge between any two multiset permutations that differ in exactly one position.
By Lemma~\ref{lem:D0}, any Hamilton cycle in~$G_n(\ba)$ will alternately visit the vertex sets~$\Pi_n(\ba)^{-0}$ and~$\Pi_n(\ba)^{-1}$.

\subsection{Proof of Theorem~\ref{thm:M}}

We now consider the graph~$G_n:=G_n(n)\simeq G(n,n)$ for $n\geq 3$.
The fact that it is Hamilton-laceable for~$n=3$ and~$n=4$ follows from Table~\ref{tab:base}.
For the rest of this section, we will therefore assume that~$n\geq 5$.
We follow the proof strategy outlined in Section~\ref{sec:ideas0}.

\subsubsection{Translation into the hypercube}

We introduce the abbreviations~$B_n:=\Pi_n(n)^{-1}$ and~$B_n':=\Pi_n(n)^{-0}$ for the sets of bitstrings of length~$2n-1$ with exactly $n-1$ or $n$ many 1s, respectively.
Note that the graph~$G_n$ is the subgraph of the $(2n-1)$-dimensional hypercube induced by the `middle levels' $B_n$ and~$B_n'$.
In the following we will show that~$G_n$, $n\geq 5$, is Hamilton-laceable.

We first reduce the number of different pairs of vertices $x\in B_n$, $y\in B_n'$ that we need to connect by a Hamilton path to only to $n$ cases, one for each possible Hamming distance $d:=d(x,y)$, where $1\leq d \leq 2n-1$ is odd.

\begin{lemma}
\label{lem:auto}
For any two pairs of vertices $(x_1,y_1),(x_2,y_2)\in B_n\times B_n'$ with $d(x_1,y_1)=d(x_2,y_2)$ there is an automorphism~$\varphi$ of $G_n$ such that $\varphi(x_1)=x_2$ and $\varphi(y_1)=y_2$.
\end{lemma}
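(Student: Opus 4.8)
The symmetry group of $G_n$ we have at our disposal is generated by two families of automorphisms: (i) permutations of the $2n-1$ coordinate positions, which clearly preserve each of the levels $B_n$ and $B_n'$ and hence $G_n$; and (ii) the single global complementation-type map is \emph{not} available here (it would swap the two levels in the wrong way since $|B_n|\ne|B_n'|$), so we must make do with the symmetric group $S_{2n-1}$ acting on positions. The plan is to show that this coordinate-permutation action is already rich enough: given $(x_1,y_1)$ and $(x_2,y_2)$ in $B_n\times B_n'$ with the same Hamming distance $d$, I want to produce $\sigma\in S_{2n-1}$ with $\sigma(x_1)=x_2$ and $\sigma(y_1)=y_2$, where $\sigma$ acts by permuting bit positions.

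First I would set up the right combinatorial bookkeeping. For a pair $(x,y)$ with $x\in B_n$ (i.e.\ $n-1$ ones) and $y\in B_n'$ (i.e.\ $n$ ones), partition the $2n-1$ positions into four blocks according to the pair of bits $(x_i,y_i)\in\{00,01,10,11\}$; write $n_{00},n_{01},n_{10},n_{11}$ for the block sizes. These four numbers are determined by three linear constraints: $n_{10}+n_{11}=n-1$ (ones of $x$), $n_{01}+n_{11}=n$ (ones of $y$), and $n_{00}+n_{01}+n_{10}+n_{11}=2n-1$. Subtracting the first from the second gives $n_{01}-n_{10}=1$, and the Hamming distance is $d=n_{01}+n_{10}=2n_{10}+1$. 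Hence $d$ determines $n_{10}=(d-1)/2$, then $n_{01}=(d+1)/2$, $n_{11}=n-1-n_{10}$, $n_{00}=2n-1-n_{01}-n_{10}-n_{11}$; all four block sizes are functions of $d$ alone. (This is exactly why reducing to one case per odd $d$ is legitimate.)

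Now the key step: since $d(x_1,y_1)=d(x_2,y_2)=d$, the two partitions of $\{1,\dots,2n-1\}$ into blocks $(00,01,10,11)$ have the \emph{same} block sizes. Therefore there is a bijection $\sigma$ of positions mapping the $00$-block of $(x_1,y_1)$ onto the $00$-block of $(x_2,y_2)$, and likewise for the $01$-, $10$-, and $11$-blocks (choose any bijection within each of the four matched pairs of equal-size sets, and take the union). By construction $\sigma$ carries the bit pattern of $x_1$ to that of $x_2$ and of $y_1$ to that of $y_2$ simultaneously, i.e.\ $\sigma(x_1)=x_2$ and $\sigma(y_1)=y_2$. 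Since permuting coordinates sends level $B_n$ to $B_n$, level $B_n'$ to $B_n'$, and preserves the Hamming-distance-one adjacency, $\sigma$ induces an automorphism $\varphi$ of $G_n$ with the required properties.

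I do not expect a serious obstacle here; the statement is essentially the transitivity of $S_{2n-1}$ on pairs-of-levels-pairs with fixed "type vector" $(n_{00},n_{01},n_{10},n_{11})$. The only thing to be careful about is confirming that all four block sizes are nonnegative for every odd $d$ in the range $1\le d\le 2n-1$ (so that the bijections actually exist), which follows from $n_{10}=(d-1)/2\le n-1$ and $n_{00}=2n-1-2n_{10}-1-(n-1-n_{10})+? $ — more cleanly, from $n_{00}+n_{11}=2n-1-d\ge 0$ together with the individual bounds $0\le n_{10}\le n-1$ and $1\le n_{01}\le n$, all of which hold precisely because $d$ is odd and $d\le 2n-1$. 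If one prefers to state the normalized case explicitly, one can record that the canonical representative for distance $d$ is $x=0^{n_{00}}\,0^{n_{01}}\,1^{n_{10}}\,1^{n_{11}}$, $y=0^{n_{00}}\,1^{n_{01}}\,0^{n_{10}}\,1^{n_{11}}$, and that every pair of the given distance is conjugate to it under the coordinate action.
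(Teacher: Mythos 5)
Your proof is correct and takes essentially the same route as the paper: both construct the automorphism as a coordinate permutation, using that the Hamming distance $d$ determines the four block sizes $(n_{00},n_{01},n_{10},n_{11})$ of a pair $(x,y)\in B_n\times B_n'$ (the paper factors the permutation as one map sending $x_1\mapsto x_2$ followed by one fixing $x_2$ and moving $\varphi_1(y_1)$ to $y_2$, but that is just your block bijection presented in two steps). One incidental slip in a side remark: in fact $|B_n|=\binom{2n-1}{n-1}=\binom{2n-1}{n}=|B_n'|$, so these two levels have equal size; the real reason complementation is not usable here is simply that it interchanges $B_n$ and $B_n'$, not that their cardinalities differ.
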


\begin{proof}
Take an arbitrary permutation~$\varphi_1$ of coordinates such that $\varphi_1(x_1)=x_2$ and define $z:=\varphi_1(y_1)$.
Since $d(x_2,y_2)=d(x_2,z)=:d$, both $y_2$ and~$z$ share with~$x_2$ exactly $(2n-1-d)/2$ coordinates set to~1.
Moreover, both $y_2$ and~$z$ have exactly $(d+1)/2$ coordinates set to~1 where $x_2$ has a~0.
Clearly, there is a permutation~$\varphi_2$ that maps these two sets of coordinates of~$z$ to the corresponding sets of~$y_2$ and preserves the other coordinates.
As $\varphi_2(z)=y_2$ and $\varphi_2(x_2)=x_2$, the desired automorphism is the composition of~$\varphi_1$ and~$\varphi_2$.
\end{proof}

\subsubsection{Cycle factor construction}
\label{sec:factor}

We now recall the construction of a cycle factor~$\cC_n$ in the graph~$G_n$ described in~\cite{DBLP:conf/soda/MerinoMM21}.
For any bitstring~$x$ and any integer~$\ell$, we write $\sigma^\ell(x)$ for the bitstring obtained from~$x$ by cyclic left rotation by~$\ell$ steps.
If $\ell$ is negative, then this is a right rotation by~$|\ell|$ steps.
We write~$D_n$ for the set of all \emph{Dyck words}, i.e., binary strings of length~$2n-2$ with $n-1$ many 0s and 1s, such that in every prefix, there are at least as many 1s as~0s.
We also define~$D:=\bigcup_{n\geq 1} D_n$.
Any Dyck word~$x$ can be decomposed uniquely as~$x=1\,u\,0\,v$ for $u,v\in D$, and as~$x=u\,1\,v\,0$ for $u,v\in D$.
Moreover, any Dyck word~$x=1\,u\,0\,v\in D_n$ can be interpreted as an (ordered) rooted tree~$T$ with $n$ vertices as follows; see Figure~\ref{fig:trees}~(a):
The leftmost child of the root of~$T$ leads to the subtree defined by~$u$, and the remaining children of the root of~$T$ form the subtree defined by~$v$.
A \emph{tree rotation} operation transforms the tree~$x=1\,u\,0\,v$ into $\rho(x):=u\,1\,v\,0$; see Figure~\ref{fig:trees}~(a).
For any rooted tree~$x\in D$ we write $[x]:=\{\rho^i(x)\mid i\geq 0\}$ for the set of all trees obtained from~$x$ by tree rotation.
The set~$[x]$ can be interpreted as the underlying plane tree obtained from~$x$ by `forgetting' the root.
We write $T_n$ for the set of all plane trees with $n$ vertices.

\begin{figure}
\includegraphics[page=1]{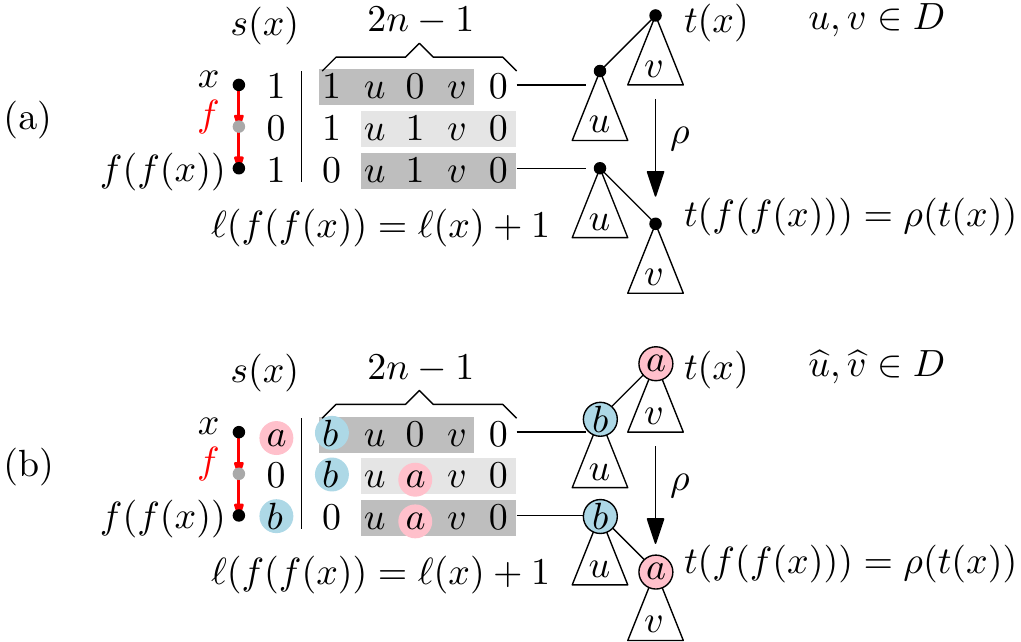}
\caption{(a) Correspondence between cycles in~$\cC_n$ and tree rotation.
The gray boxes highlight the Dyck substrings.
Part~(b) of the figure is explained in Section~\ref{sec:factor-labeled}.
}
\label{fig:trees}
\end{figure}

For any bitstring~$x\in B_n$ there is a unique integer $\ell:=\ell(x)$, $0\leq \ell\leq 2n-2$, such that the first $2n-2$ bits of~$\sigma^\ell(x)$ are a Dyck word.
Similarly, for any bitstring~$y\in B_n'$ there is a unique integer $\ell:=\ell(y)$, $0\leq \ell\leq 2n-2$, such that the last $2n-2$ bits of~$\sigma^\ell(y)$ are a Dyck word.
We refer to~$\ell(x)$ as the \emph{shift} of~$x$ or~$y$, respectively.
Moreover, we write $t(x)\in D_n$ for the first $2n-2$ bits of~$\sigma^\ell(x)$, and $t(y)$ for the last $2n-2$ bits of~$\sigma^\ell(y)$, and we interpret them as a rooted trees with $n$ vertices.

For any vertex $x\in B_n$ we define vertices $f(x),g(x)\in B_n'$ by considering the unique decomposition~$\sigma^\ell(x)=t(x)\,0=1\,u\,0\,v\,0$, where $0\leq \ell\leq 2n-2$ and $u,v\in D$, and by setting
\begin{subequations}
\label{eq:fg}
\begin{equation}
\label{eq:fgx}
f(x):=\sigma^{-\ell}(1\,u\,1\,v\,0)\in B_n',\quad g(x):=\sigma^{-\ell}(1\,u\,0\,v\,1)\in B_n'.
\end{equation}
Clearly, $f(x)$ and~$g(x)$ are two distinct neighbors of~$x$ in the graph~$G_n$.
For any vertex $y\in B_n'$ we define vertices $f(y),g(y)\in B_n$ by considering the unique decomposition~$\sigma^\ell(y)=1\,t(y)=1\,u\,1\,v\,0$, where $0\leq \ell\leq 2n-2$ and $u,v\in D$, and by setting
\begin{equation}
\label{eq:fgy}
f(y):=\sigma^{-\ell}(0\,u\,1\,v\,0)\in B_n,\quad g(y):=\sigma^{-\ell}(1\,u\,0\,v\,0)\in B_n.
\end{equation}
\end{subequations}
Clearly, $f(y)$ and~$g(y)$ are two distinct neighbors of~$x$ in the graph~$G_n$.
The definition of these mappings is illustrated in Figures~\ref{fig:trees}~(a) and~\ref{fig:cycles}~(a).
For any $x\in B_n\cup B_n'$ we define~$C(x):=(x,f(x),f^2(x),\ldots)$.

The following lemma was shown in~\cite{DBLP:conf/soda/MerinoMM21}.

\begin{lemma}[\cite{DBLP:conf/soda/MerinoMM21}]
\label{lem:factor}
Let $n\geq 1$.
\begin{enumerate}[label=(\roman*),leftmargin=8mm, topsep=0mm, noitemsep]
\item For any $x\in B_n\cup B_n'$ we have $g(f(x))=x$ and~$f(g(x))=x$, i.e., $f$ and~$g$ are inverse mappings.
Consequently, $M:=\{(x,f(x))\mid x\in B_n\}$ and $N:=\{(g(x),x)\mid x\in B_n\}$ are two disjoint perfect matchings in the graph~$G_n$ and $\cC_n:=M\cup N=\{C(x)\mid x\in B_n\cup B_n'\}$ is a cycle factor of~$G_n$.
\item For any $x\in B_n$ we have $t(f(f(x)))=\rho(t(x))$ and $\ell(f(f(x)))=\ell(x)+1 \bmod (2n-1)$.
In words, the next vertex from~$B_n$ that follows after~$x$ on the same cycle of~$\cC_n$ is obtained by rotating the tree~$t(x)$ and incrementing the shift.
\item For any $x\in B_n$ we have $C(x)\cap B_n=\{\sigma^i(x' 0) \mid 0\leq i \leq 2n-2\wedge x'\in [t(x)]\}$.
Consequently, the cycles of~$\cC_n$ are in bijection to the set~$T_n$ of plane trees with $n$ vertices.
\end{enumerate}
\end{lemma}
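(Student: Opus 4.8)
The plan for part~(i) is to establish first that $f$ and~$g$ are mutually inverse bijections of~$B_n\cup B_n'$; granting this, the remaining assertions of~(i) follow formally. Indeed, $f$ and~$g$ then restrict to bijections $B_n\to B_n'$, so $M=\{(x,f(x))\mid x\in B_n\}$ and $N=\{(g(x),x)\mid x\in B_n\}$ are perfect matchings of~$G_n$; they are disjoint because for $x\in B_n$ the bitstrings $f(x)$ and~$g(x)$ flip different bits of~$x$; and in~$M\cup N$ every vertex of~$B_n\cup B_n'$ is incident with exactly one edge from~$M$ and one from~$N$, so $M\cup N$ is a $2$-regular spanning subgraph whose cycles, traversed by alternately using $M$- and $N$-edges, are exactly the orbits $C(x)=(x,f(x),f^2(x),\dots)$ of the permutation~$f$. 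To prove that $f$ and~$g$ are mutually inverse I would verify the four identities $g(f(x))=x$, $f(g(x))=x$ for $x\in B_n$ and $g(f(y))=y$, $f(g(y))=y$ for $y\in B_n'$ by direct manipulation of Dyck words, keeping track of the shift. For instance, if $x\in B_n$ with $\sigma^{\ell}(x)=t(x)\,0=1\,u\,0\,v\,0$ and $\ell=\ell(x)$, then $f(x)=\sigma^{-\ell}(1\,u\,1\,v\,0)$; since $u\,1\,v\,0$ is again a Dyck word one gets $\ell(f(x))=\ell$ and $t(f(x))=u\,1\,v\,0$, which already has the form $u\,1\,v\,0$ with $u,v\in D$, so applying~$g$ returns $\sigma^{-\ell}(1\,u\,0\,v\,0)=x$. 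The remaining three identities are analogous; the only subtlety is that applying~$g$ to a vertex of~$B_n$, or~$f$ to a vertex of~$B_n'$, shifts~$\ell$ by~$\pm1$ modulo~$2n-1$, and that the degenerate cases where $u$ or~$v$ is empty need a separate quick check. This bookkeeping is the most error-prone part, though none of it is hard.

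Part~(ii) then comes out of the same computation iterated once: we saw $\ell(f(x))=\ell$ and $t(f(x))=u\,1\,v\,0$ with $u,v\in D$, so applying~$f$ to $f(x)\in B_n'$ gives $f^2(x)=\sigma^{-\ell}(0\,u\,1\,v\,0)$, and rotating this string so that its first $2n-2$ bits form a Dyck word yields $\ell(f^2(x))=\ell(x)+1\pmod{2n-1}$ and $t(f^2(x))=u\,1\,v\,0=\rho(t(x))$, which is the claim.

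For part~(iii), iterating~(ii) gives $f^{2j}(x)=\sigma^{-(\ell(x)+j)}\big(\rho^j(t(x))\,0\big)$ for $j\geq 0$. The plan is then to study the map $\Psi\colon B_n\to D_n\times\mathbb{Z}_{2n-1}$, $z\mapsto(t(z),\ell(z))$: it is injective since $z=\sigma^{-\ell(z)}(t(z)\,0)$, and surjective because $|B_n|=\binom{2n-1}{n-1}=(2n-1)\,C_{n-1}=(2n-1)\,|D_n|$ (with $C_{n-1}$ the $(n-1)$st Catalan number), hence a bijection. By~(ii), moving along a cycle of~$\cC_n$ corresponds under~$\Psi$ to the map $\Phi(t,\ell)=(\rho(t),\ell+1)$, so the cycles of~$\cC_n$ are precisely the $\Phi$-orbits; moreover $\Phi$ preserves each block $[y]\times\mathbb{Z}_{2n-1}$ for $[y]\in T_n$. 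The $\Phi$-orbits inside such a block all have size $\operatorname{lcm}(|[y]|,2n-1)$, so their number is $\gcd(|[y]|,2n-1)$, and this equals~$1$ because $|[y]|$ divides $2(n-1)$ (the orbit $[y]$ of rooted plane trees has size $2(n-1)/|\mathrm{Aut}([y])|$) while $\gcd(2(n-1),2n-1)=1$. Thus each plane tree in~$T_n$ gives exactly one cycle of~$\cC_n$, and $\Psi$ maps $C(x)\cap B_n$ bijectively onto $[t(x)]\times\mathbb{Z}_{2n-1}$, which is the displayed description of $C(x)\cap B_n$. I expect the divisibility $|[y]|\mid 2(n-1)$ (a standard property of tree rotation) and the case analysis in part~(i) to be the only points needing real care.
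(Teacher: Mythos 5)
This lemma is not proved in the present paper: it is imported verbatim from \cite{DBLP:conf/soda/MerinoMM21}, so there is no in-paper proof to compare against. That said, your blind reconstruction is correct and, as far as I can tell, follows what has to be the standard route. Parts (i)–(ii) are indeed direct Dyck-word bookkeeping; your one-step computation $\ell(f(x))=\ell(x)$, $t(f(x))=u1v0$ for $x\in B_n$ is right, and the two compositions that change the shift ($g$ on $B_n$ sends $\ell\mapsto\ell-1$, $f$ on $B_n'$ sends $\ell\mapsto\ell+1$) do cancel to recover the original string, with no genuine degeneracy problem even when $u$ or $v$ is empty (the identity $\sigma(01u0v)=1u0v0$ closes each case uniformly). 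The disjointness of $M$ and $N$ via "$f(x)$ and $g(x)$ flip different bits" is also fine for $n\geq 2$, since the flipped positions are $|u|+2$ and $2n-1$ and $|u|\leq 2n-4$.

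For (iii), your reduction to the product action $\Phi=(\rho,+1)$ on $[y]\times\mathbb{Z}_{2n-1}$ and the orbit count $\gcd(|[y]|,2n-1)$ is exactly the right mechanism, and you correctly flag the one nontrivial input: $|[y]|$ must divide $2(n-1)$. Your justification via $|[y]|=2(n-1)/|\mathrm{Aut}([y])|$ is correct but is itself a small lemma (free action of the orientation-preserving automorphism group on the $2(n-1)$ corners, plus the identification of rootings-mod-$\mathrm{Aut}$ with the $\rho$-orbit). A slightly more self-contained way to get the divisibility is to observe directly that $\rho$ advances the root corner by one step along the boundary walk of length $2(n-1)$, hence $\rho^{2(n-1)}=\mathrm{id}$ on $D_n$; this gives $|[y]|\mid 2(n-1)$ without invoking the automorphism count. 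Either way the coprimality $\gcd(2(n-1),2n-1)=1$ finishes the argument, and the identification $\Psi^{-1}([t(x)]\times\mathbb{Z}_{2n-1})=\{\sigma^i(x'0):0\le i\le 2n-2,\ x'\in[t(x)]\}$ matches the displayed formula. So there is no gap; the only thing I would tighten in a written-out version is the divisibility fact, which you correctly single out as the point needing real care.
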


Properties~(ii) and~(iii) are illustrated in Figures~\ref{fig:trees}~(a) and~\ref{fig:cycles}~(a).
One can check that the perfect matchings~$M$ and~$N$ defined in part~(i) of the lemma are in fact the $(n-2)$-lexical and $(n-1)$-lexical matchings defined in~\cite{MR962224} and used in~\cite{gregor-muetze-nummenpalo:18}.
We will think of the cycles of~$\cC_n$ as being oriented in the direction of applying~$f$, oppositely to~$g=f^{-1}$.

The following property of the cycle factor~$\cC_n$ was established in~\cite{gregor-muetze-nummenpalo:18}.

\begin{lemma}[\cite{gregor-muetze-nummenpalo:18}]
\label{lem:auto-Cn}
The mapping $h:B_n\cup B_n'\rightarrow B_n\cup B_n'$ defined by $(x_1,\ldots,x_{2n-1})\mapsto (\ol{x_{2n-2}},\ol{x_{2n-3}},\ldots,\ol{x_1},\ol{x_{2n-1}})$, where overline denotes complementation, is an automorphism of~$G_n$ that maps~$\cC_n$ onto itself.
\end{lemma}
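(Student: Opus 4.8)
The plan is to establish the two assertions of the lemma in turn: that $h$ is an automorphism of $G_n$, and that this automorphism maps the cycle factor $\cC_n$ onto itself. For the first, I would note that $h$ is the composition of global complementation $\mathrm{comp}$ (flipping all $2n-1$ bits) with the coordinate permutation $\pi$ that reverses the first $2n-2$ positions and fixes position $2n-1$. Both are automorphisms of the $(2n-1)$-dimensional hypercube; $\pi$ preserves Hamming weight, while $\mathrm{comp}$ sends weight $w$ to $2n-1-w$ and hence interchanges the levels $n-1$ and $n$. Thus $h$ interchanges $B_n=\Pi_n(n)^{-1}$ and $B_n'=\Pi_n(n)^{-0}$ and preserves adjacency, so it restricts to an automorphism of $G_n$, and it is clearly an involution.

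For the second assertion I would use the Dyck-word description of $\cC_n$ from Section~\ref{sec:factor}. Writing $\sigma$ for the cyclic left shift and $\mathrm{rev}$ for reversal of all $2n-1$ coordinates, a short index check gives $h=\mathrm{comp}\circ\sigma\circ\mathrm{rev}$ together with $\mathrm{rev}\circ\sigma=\sigma^{-1}\circ\mathrm{rev}$, whence $h\circ\sigma^{-\ell}=\sigma^{\ell+1}\circ(\mathrm{comp}\circ\mathrm{rev})$ for every integer $\ell$. Now fix $x\in B_n$, set $\ell:=\ell(x)$, and write $\sigma^\ell(x)=t(x)\,0=1\,u\,0\,v\,0$ with $u,v\in D$, as in the definitions of $f$ and $g$. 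Taking reverse-complements (reverse the string and flip every bit) of $1\,u\,0\,v\,0$ and of the two strings obtained from it by the bit-flips defining $f(x)$ and $g(x)$, the above identity yields
\begin{equation*}
h(x)=\sigma^{\ell+1}(1\,v'\,1\,u'\,0),\quad h(f(x))=\sigma^{\ell+1}(1\,v'\,0\,u'\,0),\quad h(g(x))=\sigma^{\ell+1}(0\,v'\,1\,u'\,0),
\end{equation*}
where $u'$ and $v'$ denote the reverse-complements of $u$ and $v$; these are again Dyck words, because a length-$k$ prefix of the reverse-complement of a Dyck word is the complement of a length-$k$ suffix of it, and every suffix of a Dyck word has at least as many $0$s as $1$s.

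Since $v'\,1\,u'\,0$ is a Dyck word, the first of these equations identifies $h(x)$ as the element of $B_n'$ with shift $\ell(h(x))=-(\ell+1)\bmod(2n-1)$ and tree $t(h(x))=v'\,1\,u'\,0$; substituting this into the defining formulas~\eqref{eq:fgy} for $f$ and $g$ on $B_n'$ gives $g(h(x))=\sigma^{\ell+1}(1\,v'\,0\,u'\,0)=h(f(x))$ and $f(h(x))=\sigma^{\ell+1}(0\,v'\,1\,u'\,0)=h(g(x))$. Using $f\circ g=g\circ f=\mathrm{id}$ (Lemma~\ref{lem:factor}), this says that $h$ carries the matching edge $(x,f(x))\in M$ to the matching edge $(g(h(x)),h(x))\in M$, and the matching edge $(g(x),x)\in N$ to the matching edge $(h(x),f(h(x)))\in N$. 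As every edge of $\cC_n=M\cup N$ has one of these two forms and $h$ is a bijection of the finite vertex set, we conclude $h(M)=M$, $h(N)=N$, and hence $h(\cC_n)=\cC_n$; in particular $h$ permutes the cycles of $\cC_n$.

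The main obstacle I anticipate is the bookkeeping of the shift parameters: one must be careful that $h$ sends the shift $\ell$ to $-(\ell+1)$ rather than to $\ell$, and that the two ``flip positions'' inside $1\,u\,0\,v\,0$ are matched with the correct positions inside its reverse-complement $1\,v'\,1\,u'\,0$ — a sign slip here makes $g(h(x))$ and $h(f(x))$ appear unequal. Once the indexing conventions for $\sigma$ are pinned down, the remaining ingredients (the identity $h=\mathrm{comp}\circ\sigma\circ\mathrm{rev}$, closure of Dyck words under reverse-complement, and reading off the shift and tree of $h(x)$) are routine.
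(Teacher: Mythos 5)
Your proof is correct. The paper itself does not prove this lemma but merely cites it from \cite{gregor-muetze-nummenpalo:18}, so there is no in-paper argument to compare against; your write-up is a valid self-contained verification. All the key steps check out: the identity $h=\mathrm{comp}\circ\sigma\circ\mathrm{rev}$, the commutation $\mathrm{rev}\circ\sigma^{-\ell}=\sigma^{\ell}\circ\mathrm{rev}$, the fact that reverse-complement preserves the Dyck property (because a prefix of $\overline{\mathrm{rev}(w)}$ is the complement of a suffix of $w$, and every suffix of a Dyck word has at least as many $0$s as $1$s), the resulting formulas $h(x)=\sigma^{\ell+1}(1\,v'\,1\,u'\,0)$, $h(f(x))=\sigma^{\ell+1}(1\,v'\,0\,u'\,0)$, $h(g(x))=\sigma^{\ell+1}(0\,v'\,1\,u'\,0)$, and the reading-off of $\ell(h(x))$ and $t(h(x))=v'\,1\,u'\,0$ so that the unique decomposition $t(h(x))=\tilde u\,1\,\tilde v\,0$ has $\tilde u=v'$, $\tilde v=u'$ and~\eqref{eq:fgy} yields $g(h(x))=h(f(x))$ and $f(h(x))=h(g(x))$. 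The conclusion $h(M)=M$, $h(N)=N$, hence $h(\cC_n)=\cC_n$, follows as you say.
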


\subsubsection{Gluing cycles}
\label{sec:gluing}

We now describe how to join the cycles of the factor~$\cC_n$ to a single Hamilton cycle, by taking the symmetric difference with suitably chosen 6-cycles.

We consider two Dyck words $x,y\in D_n$ of the form
\begin{equation}
\label{eq:gluing-01}
x=1\,1\,0\,u\,0\,v, \quad y=1\,0\,1\,u\,0\,v, \quad \text{ with } u,v\in D.
\end{equation}

We refer to such a pair~$(x,y)$ as a \emph{gluing pair}, and we write $\cG_n$ for the set of all gluing pairs.
The rooted trees corresponding to~$x$ and~$y$ are shown in Figure~\ref{fig:pull}.
Note that the tree~$y$ is obtained from~$x$ by removing the leftmost edge that leads from the leftmost child of the root of~$x$ to a leaf, and reattaching this edge as the leftmost child of the root.

For any gluing pair~$(x,y)\in\cG_n$ we define $x^i:=f^i(x\,0)$ and $y^i:=f^i(y\,0)$ for $i\geq 0$.
Using the definition~\eqref{eq:fg}, a straightforward calculation yields the vertices shown in Figure~\ref{fig:xyi}.

\begin{figure}
\begin{minipage}{0.46\textwidth}
\centering
\includegraphics{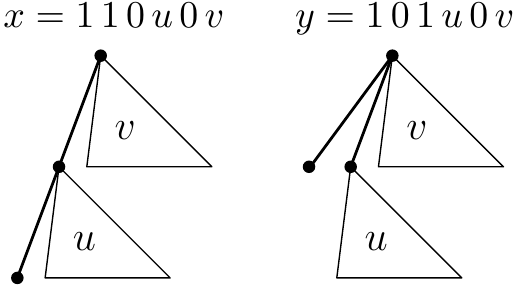}
\captionof{figure}{A gluing pair~$(x,y)$ with corresponding rooted trees.}
\label{fig:pull}
\end{minipage}%
\begin{minipage}{0.54\textwidth}
\centering
\includegraphics{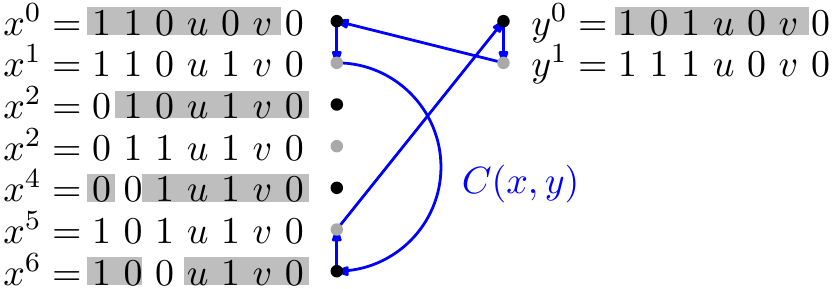}
\captionof{figure}{The vertices $x^i$, $y^i$ and the cycle $C(x,y)$.}
\label{fig:xyi}
\end{minipage}
\end{figure}

Note that
\begin{equation}
\label{eq:C6}
C(x,y):=(x^0,x^1,x^6,x^5,y^0,y^1)
\end{equation}
is a 6-cycle in the graph~$G_n$; see Figure~\ref{fig:xyi}.
This 6-cycle has the edges $(x^0,x^1)$ and $(x^6,x^5)$ in common with the cycle~$C(x^0)$, and the edge~$(y^0,y^1)$ in common with the cycle~$C(y^0)$.
One can check that the symmetric difference of the edge sets~$(C(x^0)\cup C(y^0))\bigtriangleup C(x,y)$ is a single cycle on the vertex set of~$C(x^0)\cup C(y^0)$.
In other words, the cycle $C(x,y)$ glues together two cycles from the cycle factor to a single cycle.
For any set of gluing pairs~$\cF\seq \cG_n$ we write $C(\cF):=\{C(x,y)\mid (x,y)\in\cF\}$.

The following results about the cycle factor~$\cC_n$ and the set of gluing pairs~$\cG_n$ were proved in~\cite{gregor-muetze-nummenpalo:18}; see this paper for illustrations.

\begin{lemma}[\cite{gregor-muetze-nummenpalo:18}]
\label{lem:6cycles}
For any two gluing pairs~$(x,y),(x',y')\in\cG_n$, the 6-cyles $C(x,y)$ and~$C(x',y')$ are edge-disjoint.
For any two gluing pairs~$(x,y),(x,y')\in\cG_n$, the two pairs of edges that the two 6-cycles $C(x,y)$ and~$C(x,y')$ have in common with the cycle~$C(x^0)$ are not interleaved.
\end{lemma}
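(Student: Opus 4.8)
The plan is to argue entirely at the level of the explicit bitstrings. Recall that $x,y\in D_n$ are Dyck words of the form $x=1\,1\,0\,u\,0\,v$ and $y=1\,0\,1\,u\,0\,v$ with $u,v\in D$, so that $\ell(x\,0)=\ell(y\,0)=0$, $t(x\,0)=x$, and $t(y\,0)=y$; iterating the definition~\eqref{eq:fg} starting from $x^0=x\,0$ and $y^0=y\,0$ yields the explicit descriptions of $x^0,\dots,x^6,y^0,y^1$ shown in Figure~\ref{fig:xyi}. The key structural observation is that each of these eight strings is assembled from the fixed letters $0,1$ and the two Dyck words $u,v$ in a prescribed pattern, and that from any one of them the pair $(u,v)$ --- hence the whole gluing pair $(x,y)$ --- can be recovered: after the prescribed prefix, $u$ is the unique maximal Dyck factor preceding the prescribed `$0\,v\,0$' or `$1\,v\,0$' suffix, and the recovery of $v$ is then immediate.

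For the first statement I would split the six edges of the $6$-cycle $C(x,y)=(x^0,x^1,x^6,x^5,y^0,y^1)$ from~\eqref{eq:C6} into the three \emph{factor edges} $(x^0,x^1)$ and $(x^5,x^6)$, lying on $C(x^0)$, and $(y^0,y^1)$, lying on $C(y^0)$, and the three remaining \emph{transition edges} $(x^1,x^6)$, $(x^5,y^0)$, $(y^1,x^0)$. Each of these six edges is an edge of $G_n$ whose two endpoint strings display $u$ and $v$ explicitly, and the fixed blocks surrounding $u,v$ differ from one edge type to another (for instance a factor edge of `$x^0$-type' has both endpoints beginning with $1\,1\,0$, whereas an edge of `$y^0$-type' has endpoints beginning with $1\,0\,1$ or $1\,1\,1$); hence no two of the six edges of $C(x,y)$ can coincide with two of the six edges of $C(x',y')$ unless $(u,v)=(u',v')$, i.e.\ unless the two gluing pairs are equal. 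Alternatively, one can observe via Lemma~\ref{lem:factor}(i),(iii) that a transition edge is never an edge of $\cC_n$ for $n\ge 5$, since the cycles of $\cC_n$ then contain at least $2n-1\ge 9$ vertices of $B_n$, so a transition edge of one $6$-cycle can never be a factor edge of another; the remaining comparisons are between edges of the same kind and are settled by recovering $(u,v)$.

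For the second statement, I would first identify, for a gluing pair whose $6$-cycle shares two edges with the cycle $C(x^0)$, the cyclic positions of these two edges along $C(x^0)=(x^0,x^1,x^2,\dots)$: they are the first edge $(x^0,x^1)$ and the sixth edge $(x^5,x^6)$, measured from the vertex $x^0$. Combining this with Lemma~\ref{lem:factor}(ii)--(iii), which describes $C(x^0)$ in terms of rotations --- via $\rho$ and $\sigma$ --- of the plane tree $[x]$, one can express these two positions purely in terms of the plane tree and the distinguished leaf that the gluing move transfers, namely the leftmost child of the leftmost child of the root in the rooting corresponding to $x$, which in $[x]$ is a leaf. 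For two gluing pairs contributing edges to the same factor cycle $C(x^0)$, the assertion that the two pairs of edges are \emph{not interleaved} (i.e.\ do not occur in the cyclic order $e_1,e_1',e_2,e_2'$ around $C(x^0)$) then reduces to a finite case analysis on the relative cyclic location of the two transferred leaves on the common plane tree.

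I expect this last step to be the crux. Converting `first edge and sixth edge from $x^0$' into a statement about positions on the plane tree involves the interplay of the two rotation maps $\sigma$ (cyclic rotation of the bitstring) and $\rho$ (tree rotation) from Lemma~\ref{lem:factor}, and getting the offsets and orientations exactly right is where errors are most likely to creep in; once the positions are correctly described, the non-interleaving itself is a short combinatorial check. The first statement, by contrast, is a routine though slightly laborious exercise in comparing the eight explicit bitstrings.
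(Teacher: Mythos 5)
Note first that the paper does not prove this lemma; it is cited from~\cite{gregor-muetze-nummenpalo:18}, so there is no internal proof to compare against, and I assess your argument on its own merits.

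Your treatment of the first statement is structurally sound: the split into factor edges (lying on~$\cC_n$) and transition edges (not lying on~$\cC_n$ for $n\geq 5$, since every cycle of~$\cC_n$ then has at least $2(2n-1)\geq 18$ vertices) is the right first cut, and recovering $(u,v)$ from the endpoint strings settles same-type coincidences. Your sketch does not explicitly rule out cross-comparisons among the three kinds of factor edges, for instance an $(x^0,x^1)$-type edge coinciding with an $((x')^5,(x')^6)$-type edge; these are handled most cleanly via the shift function~$\ell$: applying~$f$ keeps the shift fixed going $B_n\to B_n'$ and increments it going $B_n'\to B_n$, so $\ell(x^0)=\ell(x^1)=0$ while $\ell(x^5)=2$, $\ell(x^6)=3$, and the third bit separates $x$-type (prefix $110$) from $y$-type (prefix $101$) factor edges. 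With these observations the first statement is, as you say, a routine finite check.

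The genuine gap is the second statement, and you say so yourself: you correctly locate the difficulty but leave the load-bearing step undone, and the plane-tree case analysis you anticipate is not needed. For any gluing pair $(x',y')$ with $C((x')^0)=C(x^0)$, the starting vertex $(x')^0=x'\,0$ satisfies $\ell((x')^0)=0$ because $x'\in D_n$ is a Dyck word, while Lemma~\ref{lem:factor}(ii) gives $\ell(x^{2m})=m\bmod(2n-1)$ along $C(x^0)=(x^0,x^1,x^2,\dots)$. So every gluing-pair starting vertex on~$C(x^0)$ is of the form $x^{2m}$ with $m\equiv 0\pmod{2n-1}$, and any two of them are separated by a multiple of $2(2n-1)\geq 18$ edge positions around the cycle. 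Since the two edges that $(x',y')$ contributes to~$C(x^0)$ occupy edge positions $2m$ and $2m+5$ (with position~$i$ denoting the edge $(x^i,x^{i+1})$), interleaving of two such pairs forces one starting position to lie at cyclic distance~$2$ or~$4$ from the other, which no nonzero multiple of $2(2n-1)$ does once $n\geq 3$. Non-interleaving is thus automatic, with no reference to the shape of the rooted tree or the location of the transferred leaf.
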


\begin{lemma}[\cite{gregor-muetze-nummenpalo:18}]
\label{lem:sptree}
For any $n\geq 4$, there is a set~$\cT_n\seq\cG_n$ of gluing pairs of cardinality $|\cT_n|=|\cC_n|-1=|T_n|-1$, such that $\{([x],[y])\mid (x,y)\in\cT_n\}$ is a spanning tree on the set of plane trees~$T_n$, implying that the symmetric difference~$\cC_n\bigtriangleup C(\cT_n)$ is a Hamilton cycle in~$G_n$.
\end{lemma}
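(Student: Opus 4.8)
The plan is to reduce Lemma~\ref{lem:sptree} to a connectivity statement about plane trees, and then to establish that connectivity by a potential argument built on the ``pull'' move underlying a gluing pair.

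\emph{Step 1 (a gluing lemma).} For a set $\cF\seq\cG_n$ of gluing pairs, let $\mathcal{A}(\cF)$ be the multigraph on the cycle set of $\cC_n$ with one edge $\{C(x^0),C(y^0)\}$ for each $(x,y)\in\cF$. I would first prove the general fact that if $\mathcal{A}(\cF)$ is a spanning tree, then $\cC_n\bigtriangleup C(\cF)$ is a single Hamilton cycle of $G_n$. Each $6$-cycle $C(x,y)$ is $2$-regular and, by the first part of Lemma~\ref{lem:6cycles}, the $6$-cycles in $C(\cF)$ are pairwise edge-disjoint, so all degrees of $\cC_n\bigtriangleup C(\cF)$ are even; using the explicit coordinates in Figure~\ref{fig:xyi} one checks that no vertex has degree $0$ or $4$, so $\cC_n\bigtriangleup C(\cF)$ is a disjoint union of cycles on the full vertex set. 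Connectedness then follows by induction on $|\cF|$: remove a leaf $\{C(x^0),C(y^0)\}$ of the spanning tree $\mathcal{A}(\cF)$ and observe, using the non-interleaving statement in the second part of Lemma~\ref{lem:6cycles}, that undoing this last gluing merely splits the current cycle back into two pieces. Granting this, it remains to produce a set $\cT_n\seq\cG_n$ with $\mathcal{A}(\cT_n)$ a spanning tree; since, under the bijection between cycles of $\cC_n$ and plane trees, the edge set of $\mathcal{A}(\cG_n)$ is the edge set of the graph $\mathcal{H}_n$ on vertex set $T_n$ whose edges are $\{[x][y]:(x,y)\in\cG_n\}$, it suffices to show that $\mathcal{H}_n$ is connected and then to pick one gluing pair for each edge of a spanning tree of $\mathcal{H}_n$.

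\emph{Step 2 (connectedness of $\mathcal{H}_n$).} On plane trees, the edge of $\mathcal{H}_n$ coming from a gluing pair $(x,y)$ is a ``pull'': in a rooting of $[x]$ whose Dyck word begins with $110$ -- equivalently, in which the leftmost child of the root has a leaf as its own leftmost child -- that leaf, which sits at depth~$2$, is detached and reattached as the new leftmost child of the root, where it sits at depth~$1$ (compare~\eqref{eq:gluing-01} and Figure~\ref{fig:pull}). I would use the potential $\Phi(z):=\sum_v \mathrm{depth}(v)$ on rooted plane trees and set $\psi([x]):=\min_{z\in[x]}\Phi(z)$; since $\Phi(z)\ge n-1$ with equality exactly when $z$ is the $n$-vertex star $S_n$ rooted at its centre, $\psi$ attains its minimum on $T_n$ only at $[S_n]$. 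The heart of the matter is the claim that every $[x]\in T_n\setminus\{[S_n]\}$ admits a gluing pair $(z,y)$ with $[z]=[x]$ and $\psi([y])<\psi([x])$: a pull always lowers $\Phi$ by exactly~$1$, so if the pulled rooting $z$ can be chosen with $\Phi(z)=\psi([x])$ then $\psi([y])\le\Phi(y)=\psi([x])-1$. Iterating this claim connects every plane tree to $[S_n]$ in $\mathcal{H}_n$, giving connectedness for all $n\ge 4$; for $n\le 3$ the path and the star coincide, $|T_n|=1$, and the statement is trivial, which is why the hypothesis is needed.

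\emph{Main obstacle.} The delicate step is the claim in Step~2, namely exhibiting, for a non-star plane tree, a $\Phi$-minimal rooting whose Dyck word begins with $110$. This forces one to work concretely with the rotation action $\rho$ (which is not a cyclic shift and does not preserve $\Phi$) and to translate the prefix condition $110$ into the tree statement ``the leftmost child of the root has a leaf as leftmost child''. A workable route is to take a centroid $c$ of $[x]$ -- so that rooting at $c$ realizes $\psi([x])$ -- to note that a non-star $[x]$ has a non-leaf child of $c$, and then to argue that among the rootings at $c$ lying in the orbit $[x]$ (which differ by cyclically shifting the children of $c$, so that any child of $c$ may be placed leftmost) one can place a non-leaf child $w$ leftmost with $w$'s leftmost child a leaf; the mild subtlety is that the planar embedding fixes $w$'s own first child, so one may have to use the second centroid or pass through one intermediate pull before a $\Phi$-minimal $110$-rooting is reached. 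The remaining work -- the degree and connectedness bookkeeping for the gluing lemma in Step~1 -- is routine given Lemma~\ref{lem:6cycles} and the explicit vertices of Figure~\ref{fig:xyi}.
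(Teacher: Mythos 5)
This lemma is imported from \cite{gregor-muetze-nummenpalo:18} and is not reproved in the paper, so there is no proof in this document to compare against literally; the most one can say is that the adjacent Lemma~\ref{lem:sptree-pot} (also cited, with the explicit potential function $\varphi$) confirms that the reference's argument is indeed a potential-decrease argument on plane trees, so your two-step framework -- a general gluing lemma plus a connectivity/potential argument on the pull graph -- is the right shape.

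Step~1 is essentially sound. The degree bookkeeping needs a touch more care than ``edge-disjoint implies even degree, then check not $0$ or $4$'': one should observe from Figure~\ref{fig:xyi} that every vertex of a $6$-cycle $C(x,y)$ has exactly one of its two incident $C(x,y)$-edges on $\cC_n$, so two distinct $6$-cycles through a vertex must use both $\cC_n$-edges there, and a third is impossible; hence every vertex has degree exactly $2$ in $\cC_n\bigtriangleup C(\cF)$ and the symmetric difference is a $2$-factor. The induction via peeling a leaf of $\mathcal{A}(\cF)$, with Lemma~\ref{lem:6cycles} to rule out interleaving, is a reasonable route to connectedness.

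Step~2 has a genuine gap. Your argument hinges on finding a $\Phi$-minimal rooting $z$ (i.e., root at a centroid) whose Dyck word begins $110$, so that $\psi([y])\le\Phi(y)=\Phi(z)-1=\psi([x])-1$. But such a rooting need not exist: take the spider with three legs of length~$3$ ($n=10$). Its unique centroid is the centre $c$, and in any rooting at $c$ the leftmost child of the root is a degree-$2$ vertex whose unique child is again a non-leaf, so every Dyck word rooted at $c$ begins $111$. There is no ``second centroid'', and ``one intermediate pull'' is vague and breaks the monovariance: once you pull from a non-$\Phi$-minimal rooting you only get $\psi([y])\le\Phi(z)-1$, which can be $\ge\psi([x])$, and indeed for the broom $(a\!-\!b\!-\!c, c\!-\!d, c\!-\!e)$ the $110$-rooting at $b$ gives a pull with $\varphi([y])=\varphi([x])$, so not every $110$-pull makes progress. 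The statement is still true for the spider, but for a different reason than your sketch provides.

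The missing idea is to choose the rooting so that the centroid lies on the root's side of the pulled edge, rather than to root at the centroid. Concretely: take a centroid $c$, a vertex $w$ at maximum distance from $c$ among non-leaves that have a leaf neighbour (equivalently, the parent of a deepest leaf), and let $r$ be $w$'s neighbour towards $c$. Rooting at $r$ with $w$ leftmost, the next vertex after $r$ in the cyclic order around $w$ is a child of $w$ in the rooting at $c$, hence at maximal distance from $c$, hence a leaf; so the Dyck word begins $110$. The pull moves that leaf from $w$ to $r$, and since $c$ is on the $r$-side of the edge $(r,w)$, the distance sum at $c$ drops by exactly $1$, giving $\varphi([y])\le\sigma_y(c)=\sigma_x(c)-1=\varphi([x])-1$. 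This deepest-leaf choice, not a centroid rooting, is what actually makes the potential argument go through, and it is the heart of the proof that your proposal leaves unresolved.
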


\subsubsection{Alternating path}

In the following, we describe how to join two vertices $x\in B_n$, $y\in B_n'$ with Hamming distance $d(x,y)=d$ in the graph~$G_n$ via some cycles from~$\cC_n$ to a short path between~$x$ and~$y$.

\begin{figure}
\includegraphics[page=2]{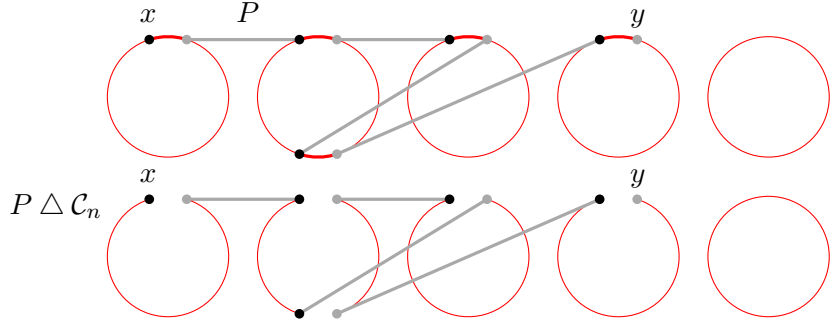}
\caption{Example of a $\cC_n$-alternating path between~$x$ and~$y$.}
\label{fig:alternating}
\end{figure}

We say that a path~$P$ between two vertices~$x$ and~$y$ of~$G_n$ is \emph{$\cC_n$-alternating} if it alternately uses edges and non-edges of~$\cC_n$, starting and ending with an edge of~$\cC_n$, and the symmetric difference $P\bigtriangleup\cC_n$ yields a path between~$x$ and~$y$ that contains all vertices of the intersected cycles; see Figure~\ref{fig:alternating}.
Note that the resulting path will then also contain all vertices of~$P$.
In other words, the path~$P$ glues together all intersected cycles to a single path between~$x$ and~$y$, while the non-intersected cycles are unchanged.
Note that $P$ is allowed to intersect a single cycle of $\cC_n$ multiple times.
The $\cC_n$-alternating paths we use in our proof are obtained as subpaths of one fixed $\cC_n$-alternating path between two complementary vertices with maximum possible Hamming distance~$2n-1$.

\begin{lemma}
\label{lem:alt-path}
For $n\geq 5$, let $P$ be the path defined in Figure~\ref{fig:alt-path} between the two vertices~$x_1\in B_n$ and $y_n\in B_n'$.
For every $i=1,\ldots,n$, the subpath of~$P$ between~$x_1\in B_n$ and $y_i\in B_n'$ is a $\cC_n$-alternating path and $d(x_1,y_i)=2i-1$.
\end{lemma}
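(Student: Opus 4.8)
The plan is to read off the path~$P$ from its explicit description in Figure~\ref{fig:alt-path} and verify the two assertions by a direct computation in the parameter~$n$. Concretely, $P$ is presented there as a concatenation of an initial edge from~$x_1$ to~$y_1$ followed by $n-1$ short \emph{blocks}, where the $i$-th block ($i=2,\ldots,n$) is a length-$2$ path from~$y_{i-1}$ to~$y_i$ consisting of a non-$\cC_n$-edge followed by a $\cC_n$-edge; all vertices involved are given as explicit bitstrings of length~$2n-1$, normalized by a common cyclic rotation~$\sigma^\ell$ so that the relevant Dyck-word decompositions from~\eqref{eq:fg} are visible. The subpath from~$x_1$ to~$y_i$ is then precisely the initial segment of~$P$ made up of the first edge and the first $i-1$ blocks.

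First I would check that $P$ is a walk in~$G_n$ with the claimed alternation pattern: each consecutive pair of vertices listed in Figure~\ref{fig:alt-path} differs in exactly one coordinate, hence is an edge of the middle two levels; and each such edge is or is not a $\cC_n$-edge according to whether the pair has the form $(w,f(w))$, which one tests against the formulas~\eqref{eq:fg}. The crucial features to confirm are that the first edge $(x_1,y_1)$ is a $\cC_n$-edge, that $P$ genuinely alternates $\cC_n$-edge, non-edge, $\cC_n$-edge, \ldots, terminating at~$y_n$ with a $\cC_n$-edge, and---most importantly for the lemma---that every milestone~$y_i$ is entered along a $\cC_n$-edge. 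This last point is exactly what guarantees that the subpath from~$x_1$ to~$y_i$ starts and ends with a $\cC_n$-edge, so that it is eligible to be $\cC_n$-alternating.

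Next I would prove that the symmetric difference of the subpath~$P_i$ from~$x_1$ to~$y_i$ with~$\cC_n$ is a single path containing all vertices of the cycles of~$\cC_n$ that~$P_i$ intersects (cf.\ the picture in Figure~\ref{fig:alternating}). Using Lemma~\ref{lem:factor}~(iii), which identifies the vertex set of each cycle of~$\cC_n$ with a plane tree, one determines from Figure~\ref{fig:alt-path} which cycles the first edge and each block lie on. In the construction presented there the first edge and the $i-1$ blocks meet $i$ distinct cycles, each contributing at most two $\cC_n$-edges to~$P_i$, and the non-$\cC_n$-edges of~$P_i$ link these cycles together in a linear order; hence deleting the $\cC_n$-edges used by~$P_i$ splits each of these cycles into one or two subpaths, and inserting the non-$\cC_n$-edges reassembles them into a single path from~$x_1$ to~$y_i$ that covers all their vertices. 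When a cycle is met by two $\cC_n$-edges of~$P_i$, connectedness of $P_i\bigtriangleup\cC_n$ follows from the non-interleaving property in Lemma~\ref{lem:6cycles}. Carrying this out for all~$i$ in particular reproves that $P=P_n$ is itself $\cC_n$-alternating.

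For the Hamming-distance claim I would induct on~$i$: the base case $d(x_1,y_1)=1=2\cdot 1-1$ holds because $(x_1,y_1)$ is a $\cC_n$-edge, and in the inductive step the two coordinates flipped inside the $i$-th block are, as one reads off from Figure~\ref{fig:alt-path}, distinct from all coordinates flipped in earlier blocks and from the coordinates on which $x_1$ and~$y_{i-1}$ already agree, whence $d(x_1,y_i)=d(x_1,y_{i-1})+2=2i-1$; at $i=n$ this says $y_n=\ol{x_1}$. Equivalently, one may simply write $x_1$ and~$y_i$ in normalized form and observe that they agree on a block of $2n-2i$ coordinates and disagree on the remaining $2i-1$. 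The step I expect to be the main obstacle is the connectedness verification: the cyclic rotations~$\sigma^\ell$ concealed in the definitions of~$f$ and~$g$ make it delicate to pin down exactly which cycles of~$\cC_n$ are hit by~$P_i$ and with what multiplicity, and one must check the non-interleaving condition carefully so that $P_i\bigtriangleup\cC_n$ is a single path rather than a disjoint union of paths; by comparison the edge checks, the alternation pattern, and the distance count are routine bookkeeping once the explicit bitstrings of Figure~\ref{fig:alt-path} are in hand.
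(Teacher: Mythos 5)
Your outline gets the high-level shape of the argument right (check which edges of $P$ lie on $\cC_n$, then verify that the symmetric difference with $\cC_n$ produces a single path, then count Hamming distance), but the concrete plan has two gaps that would derail the execution. First, the path~$P$ in Figure~\ref{fig:alt-path} is \emph{not} an initial edge followed by $n-1$ blocks of length~$2$ from $y_{i-1}$ to $y_i$: there is a detour through two extra vertices $x',y'$ between $y_{n-2}$ and $x_{n-1}$, with $d(x_1,x')=2n-4$ and $d(x_1,y')=2n-5$, so the Hamming distance is not monotone along~$P$ and the segment from $y_{n-2}$ to $y_{n-1}$ has length~$4$, not~$2$. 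As a result $P$ contains $n+1$ $\cC_n$-edges, namely $(x_i,y_i)$ for $i=1,\dots,n$ plus $(x',y')$, not just $n$ as your block picture implies. Second, a cycle of $\cC_n$ can be met three times: the vertices $x_1$, $x_{n-3}$, and $x_n$ all have the same underlying plane tree and hence all lie on the cycle $C(x_1)$, so your bound of ``at most two $\cC_n$-edges'' per cycle fails, and the count ``$i$ distinct cycles'' is also off once the repeated cycles enter the picture.

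The non-interleaving step is where this matters most, and your proposed tool is the wrong one. Lemma~\ref{lem:6cycles} is a statement about two gluing $6$-cycles sharing the same cycle of the factor; it says nothing about the alternating path~$P$ and cannot be used to certify that $P\bigtriangleup\cC_n$ is a single path. What the paper actually does is compute the rooted trees $t(x_i)$ explicitly from~\eqref{eq:txi}, identify exactly which cycles are hit (one cycle three times, one cycle twice, all others once), and then verify by hand, using the ordering of shifts given by Lemma~\ref{lem:factor}~(ii) and drawn out in Figure~\ref{fig:intersection}, that the three $\cC_n$-edges of $P$ on the triply-met cycle are non-interleaved in the correct sense and that the two on the doubly-met cycle are a forward/backward pair. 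The final subpath claim for each $y_i$ then needs a separate check precisely in the cases $i\in\{n-3,n-2,n-1\}$, again from Figure~\ref{fig:intersection}. Your distance computation and the observation that every $y_i$ is entered along a $\cC_n$-edge are fine, but the connectedness step cannot be reduced to a general lemma the way you propose; it requires the explicit bookkeeping that you had hoped to avoid.
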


\begin{figure}[h!]
\includegraphics[page=3]{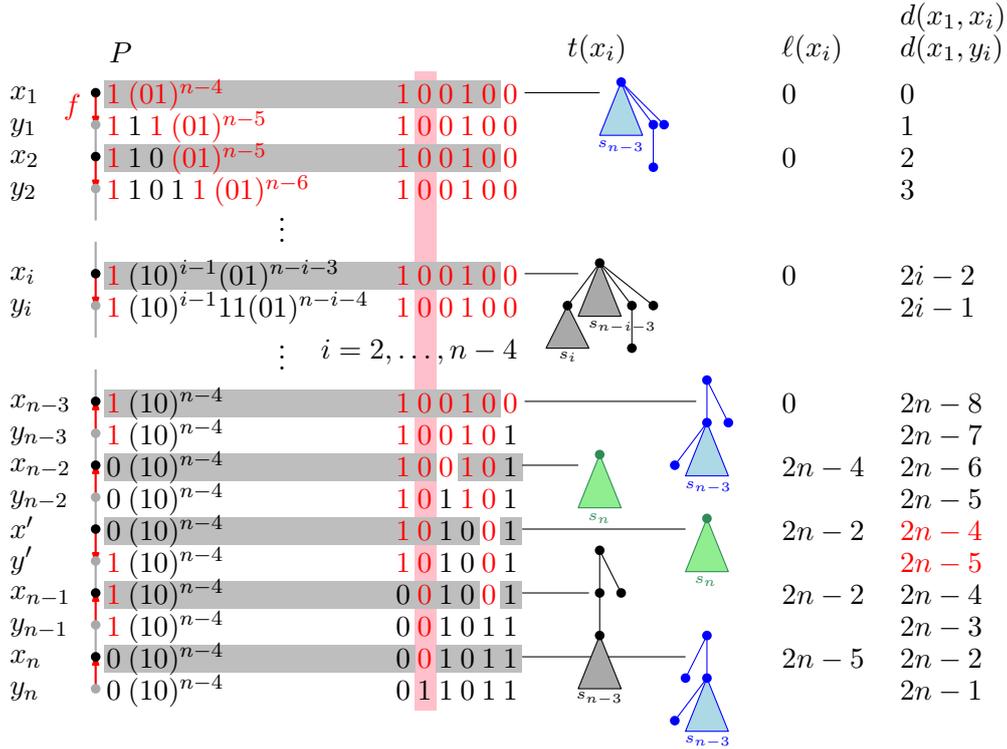}
\caption{Definition of the $\cC_n$-alternating path~$P$ in Lemma~\ref{lem:alt-path}.
The gray boxes highlight the Dyck substrings of the path vertices from~$B_n$.
The corresponding rooted trees are shown to the right, as well as the shifts and the Hamming distance from the first vertex~$x_1$.
The subtrees~$s_i$ are stars with $i$ vertices rooted at the center vertex.
}
\label{fig:alt-path}
\end{figure}

\begin{proof}
By counting the positions in which~$x_i$ and~$y_i$ differ from~$x_1$, it is straightforward to check that $d(x_1,x_i)=2i-2$ and $d(x_1,y_i)=2i-1$ for all $i=1,\ldots,n$.
Note that the Hamming distance from~$x_1$ increases monotonically along~$P$, with the only exception being the step being from~$x'$ to~$y'$, which satisfy $d(x_1,x')=2n-4$ and $d(x_1,y')=2n-5$.
Next, we observe that every edge $(x_i,y_i)$ for $i=1,\ldots,n$ and the edge~$(x',y')$  belong to a cycle from~$\cC_n$.
Specifically, by~\eqref{eq:fgx} we have $y_i=f(x_i)$ for all $i=1,\ldots,n-4$ and $y_i=g(x_i)=f^{-1}(x_i)$ for $i=n-3,\ldots,n$, and $y'=f(x')$.
Furthermore, all other edges of~$P$ are not on a cycle from~$\cC_n$, which can be verified by~\eqref{eq:fgx}.

\begin{figure}[b!]
\includegraphics[page=4]{laceable}
\caption{The intersection of the path~$P$ and subpaths~$Q\subseteq P$ with the cycles~$C_2$ and~$C_3$.}
\label{fig:intersection}
\end{figure}

We now determine which cycles of~$\cC_n$ the path~$P$ intersects.
For this we consider the rooted trees~$t(x_i)$ for $i=1,\ldots,n$ and~$t(x')$; see Figure~\ref{fig:alt-path}.
For any $i\geq 1$ we define the rooted tree $s_i:=(10)^{i-1}\in D_i$, which is the star with $i$~vertices rooted at the center vertex.
With this abbreviation we obtain
\begin{equation}
\label{eq:txi}
\begin{alignedat}{3}
t(x_1)&=s_{n-3}\,110010,
\quad && t(x_{n-3})=110\,s_{n-3}\,010,
\quad && t(x_n)=10110\,s_{n-3}\,0, \\
t(x_i)&=1\,s_i\,0\,s_{n-i-3}\,110010 \quad && \text{for } i=2,\ldots,n-4, && t(x_{n-1})=1100\,s_{n-3}\,10, \\
t(x_{n-2})&=t(x')=s_n. && &&
\end{alignedat}
\end{equation}
Note that $[t(x_1)]=[t(x_{n-3})]=[t(x_n)]$ is the same underlying plane tree, so the vertices~$x_1$, $x_{n-3}$ and $x_n$ lie on the same cycle of~$\cC_n$ by Lemma~\ref{lem:factor}~(iii).
Similarly, $t(x_{n-2})=t(x')=s_n$, so the vertices~$x_{n-2}$ and~$x'$ lie on the same cycle of~$\cC_n$.
From~\eqref{eq:txi} we see that the tree~$t(x_1)$ has diameter~3 and~$t(x_{n-2})$ has diameter~2, so these two cycles are distinct.
We claim that the remaining vertices~$x_i$ for $i=2,\ldots,n-4$ and~$x_{n-1}$ all lie on their own cycle of~$\cC_n$ that is also distinct from the previous two cycles.
To see this, we use again Lemma~\ref{lem:factor}~(iii), noting that $t(x_i)$ and~$t(x_{n-1})$ have diameter~4, $[t(x_i)]\neq [t(x_j)]$ for any two distinct indices $i,j\in\{2,\ldots,n-4\}$, and moreover $t(x_{n-1})$ has two adjacent vertices of degree~2 unlike~$t(x_i)$ for $i=2,\ldots,n-4$.

Consequently, the entire path~$P$ intersects the cycle $C_3:=C(x_1)=C(x_{n-3})=C(x_n)$ three times, the cycle~$C_2:=C(x_{n-2})=C(x')$ twice, and every other (intersected) cycle only once.

We now check that the symmetric differences~$P\bigtriangleup C_2$, $P\bigtriangleup C_3$, and $P\bigtriangleup(C_2\cup C_3)$ yield a single path containing all vertices of~$C_2$, $C_3$, or $C_2\cup C_3$, respectively.
For $P\bigtriangleup C_2$ this can be seen easily, by noting that the first edge~$(x_{n-2},y_{n-2})$ of~$P$ intersecting~$C_2$ is a backward edge of~$C_2$ (i.e., $y_{n-2}=g(x_{n-2})=f^{-1}(x_{n-2})$), whereas the second edge~$(x',y')$ of~$P$ intersecting~$C_2$ is a forward edge of~$C_2$ (i.e., $y'=f(x')$).
The intersection pattern between~$P$ and~$C_3$ is shown in Figure~\ref{fig:intersection}~(a).
This figure shows the cycle~$C_3$ with all relevant vertices~$x\in B_n$ that lie on this cycle, the corresponding rooted trees~$t(x)$ and the corresponding shifts~$\ell(x)$, as given by Lemma~\ref{lem:factor}~(ii).
The shifts~$\ell(x_1)=0$, $\ell(x_{n-3})=0$ and~$\ell(x_n)=2n-5$ are given by the definition of~$P$, and result in the shown location of the three edges of~$P$ on~$C_3$.
One can check that $P\bigtriangleup C_3$ is indeed a single path that contains all vertices of~$C_3$.
Moreover, the two intersections of~$P$ with~$C_2$ (in the edges $(x_{n-2},y_{n-2})$ and~$(x',y')$) are not separated by intersections with~$C_3$, implying that~$P\bigtriangleup(C_2\cup C_3)$ is also a single path containing all vertices of~$C_2\cup C_3$; see Figure~\ref{fig:intersection}~(b).

So far we have shown that the path~$P$ is $\cC_n$-alternating.
However, the statement of the lemma is stronger, and asserts that every subpath~$Q\seq P$ between~$x_1$ and~$y_i$ for $i=1,\ldots,n$ is also~$\cC_n$-alternating.
The only non-trivial cases to consider are when~$Q$ intersects~$C_2$ or~$C_3$ twice, which happens precisely when~$Q$ ends at~$y_{n-1}$, $y_{n-2}$ or~$y_{n-3}$, and those can be checked to work in Figure~\ref{fig:intersection}~(c).

This completes the proof of the lemma.
\end{proof}

\subsubsection{Proof of Theorem~\ref{thm:M}}

We are now in position to present the proof of Theorem~\ref{thm:M}.

\begin{proof}[Proof of Theorem~\ref{thm:M}]
We need to show that~$G_n=G_n(n)\simeq G(n,n)$, $n\geq 3$, is Hamilton-laceable.
The cases $n=3$ and $n=4$ are covered by Table~\ref{tab:base}, so we will assume that $n\geq 5$.
Let $d$ be an odd integer with $1\leq d\leq 2n-2$.
Moreover, let $\cC_n$ be the cycle factor in~$G_n$ defined in Lemma~\ref{lem:factor}~(i), and let $Q$ be the path given by Lemma~\ref{lem:alt-path} between vertices~$x\in B_n$ and~$y\in B_n'$ with $d(x,y)=d$.
Now consider the cyclically shifted path~$Q':=\sigma^{-4}(Q)$, connecting $x':=\sigma^{-4}(x)$ and $y':=\sigma^{-4}(y)$, which also satisfy $d(x',y')=d$.
By Lemma~\ref{lem:auto}, the theorem is proved by exhibiting a Hamilton path from~$x'$ to~$y'$, which we will do in the following.
All except possibly the last vertex of~$Q$ have a 0-bit at position~$2n-5$; see the highlighted column in Figure~\ref{fig:alt-path}.
It follows that all except possibly the last vertex of~$Q'$ have a 0-bit at their last position.
As $Q$ is~$\cC_n$-alternating by Lemma~\ref{lem:alt-path} and the definition of~$f$ in~\eqref{eq:fg} is invariant under cyclic rotation of bitstrings, the path~$Q'$ is also~$\cC_n$-alternating.
Let $r$ denote the number of cycles of~$\cC_n$ intersected by the path~$Q'$.
Then the symmetric difference $Q'\bigtriangleup\cC_n$ is a path between~$x'$ and~$y'$ plus a set of~$|\cC_n|-r$ cycles, and together they visit all vertices of~$G_n$.

Now consider a set~$\cT_n\seq\cG_n$ of gluing pairs with the properties guaranteed by Lemma~\ref{lem:sptree} and the corresponding 6-cycles~$\cS:=C(\cT_n)=\{C(x,y)\mid (x,y)\in\cT_n\}$.
We consider the image~$\cS':=h(\cS)$ of those 6-cycles under the automorphism~$h$ stated in Lemma~\ref{lem:auto-Cn}.
As this automorphism maps~$\cC_n$ onto itself, we have that~$\cC_n\bigtriangleup~\cS'$ is a Hamilton cycle in~$G_n$.
From Figure~\ref{fig:xyi} we see that for every 6-cycle $C(x,y)\in\cS$, the last bit of all vertices $x^0,\ldots,x^6,y^0,y^1$ is~0, and by the definition of~$h$, the last bit of the corresponding vertices in~$h(\cS)=\cS'$ is~1.
Consequently, all edges between these vertices are disjoint from the path~$Q'$.
As $r$ cycles of~$\cC_n$ are already joined by~$Q'$, we can discard $r-1$ of the 6-cycles from the set~$\cS'$ to obtain a set~$\cS''$ of 6-cycles of cardinality~$|\cS''|=|\cS_n'|-(r-1)=|\cC_n|-r$ such that~$(Q'\bigtriangleup\cC_n)\bigtriangleup\cS''$ is a Hamilton path in~$G_n$ between~$x'$ and~$y'$.
This completes the proof of the theorem.
\end{proof}

\subsection{Proof of Theorem~\ref{thm:M'}}
\label{sec:proof-M'}

The basic strategy for proving Theorem~\ref{thm:M'} is very similar to the proof of Theorem~\ref{thm:M}.
We first construct a cycle factor in the graph, and we then join its cycles to a single Hamilton cycle.
The construction of the cycle factor and of the gluing cycles is achieved by carefully generalizing the constructions described in Sections~\ref{sec:factor} and~\ref{sec:gluing}.
These two steps work for any graph~$G_n(\ba)$, and only the last step of constructing a Hamilton cycle is done specifically for the case~$\ba=(n-1,1)$ required for Theorem~\ref{thm:M'}.

\subsubsection{A cycle factor in~$G_n(\ba)$}
\label{sec:factor-labeled}

For any $n\geq 1$ and any integer partition~$\ba$ of~$n$, we now describe how to construct a cycle factor~$\cC_n(\ba)$ in~$G_n(\ba)$.

Given any string~$x$ over the alphabet~$\{0,1,\ldots,k\}$, we write~$\wh{x}$ for the bitstring obtained from~$x$ by replacing all non-zero symbols by~1.
The mappings~$\ell$, $t$ and~$f$ defined in Section~\ref{sec:factor} on bitstrings can be generalized to operate on multiset permutations in the natural way.
Specifically, for any $x\in\Pi_n(\ba)^-$, we define~$\ell(x):=\ell(\wh{x})$.
Moreover, $t(x)$ is the substring of~$x$ of length~$2n-2$ starting at position~$\ell(x)+1$ (modulo the length~$2n-1$).
Similarly, $f(x)\in\Pi_n(\ba)^-$ is obtained by considering the position~$i$ in which~$\wh{x}$ and~$f(\wh{x})$ differ, and by replacing the symbol at position~$i$ in~$x$ by the suppressed symbol~$s(x)$.

For example, for $x=00230\in\Pi_3(1,1,1)^-$ we have $s(x)=1$, $\wh{x}=00110$, $\ell(\wh{x})=2$ and $t(\wh{x})=1100$.
Consequently, we obtain $\ell(x)=2$ and $t(x)=2300$.
Moreover, $f(\wh{x})=10110$ differs in the first bit from~$\wh{x}$, and therefore $f(x)=s(x)0230=10230$.

For any~$x\in\Pi_n(\ba)^-$ we define the cycle~$C(x):=(x,f(x),f^2(x),\ldots)$, and we define the cycle factor~$\cC_n(\ba):=\{C(x)\mid x\in\Pi_n(\ba)^-\}$.

We also define $D_n(\ba):=\{x\mid x\,0\in\Pi_n(\ba)^{-1}\wedge\wh{x}\in D_n\}$.
We can interpret any~$x\in D_n(\ba)$ as a vertex-labeled rooted tree with $n$ vertices, which has precisely $a_i$~vertices labeled~$i$ for all~$i=1,\ldots,k$ (recall that $n=\sum_{i=1}^k a_i$); see Figures~\ref{fig:trees} and~\ref{fig:cycles}.
For this recall the interpretation of the Dyck word~$\wh{x}\in D_n$ as a rooted tree with $n$ vertices, which can be expressed iteratively as follows:
We start by adding a root vertex.
We then read~$\wh{x}$ from left to right, and for every 1-bit encountered we add a new rightmost child below the current vertex and we move to this vertex, and for every 0-bit encountered we move from the current vertex to its parent, without adding any new vertices.
Now the tree corresponding to~$x$ is obtained as follows:
We start by adding a root vertex with label~$s(x\,0)$.
We then read the string~$x$ from left to right, and for every non-zero symbol~$i>0$ encountered we add a new rightmost child with label~$i$ below the current vertex and we move to this vertex, and for every symbol~0 encountered we move from the current vertex to its parent, without adding any new vertices.

Tree rotations also generalize straightforwardly to the labeled setting.
Specifically, for $x=b\,u\,0\,v\in D_n(\ba)$ with $\wh{u},\wh{v}\in D$ and $a,b\in[k]$ such that $a=s(x\,0)$ we define $\rho(x):=u\,a\,v\,0$, which corresponds to rotating the tree together with its vertex labels (note that $s(\rho(x)\,0)=b$).

\begin{figure}
\centerline{
\includegraphics[page=2]{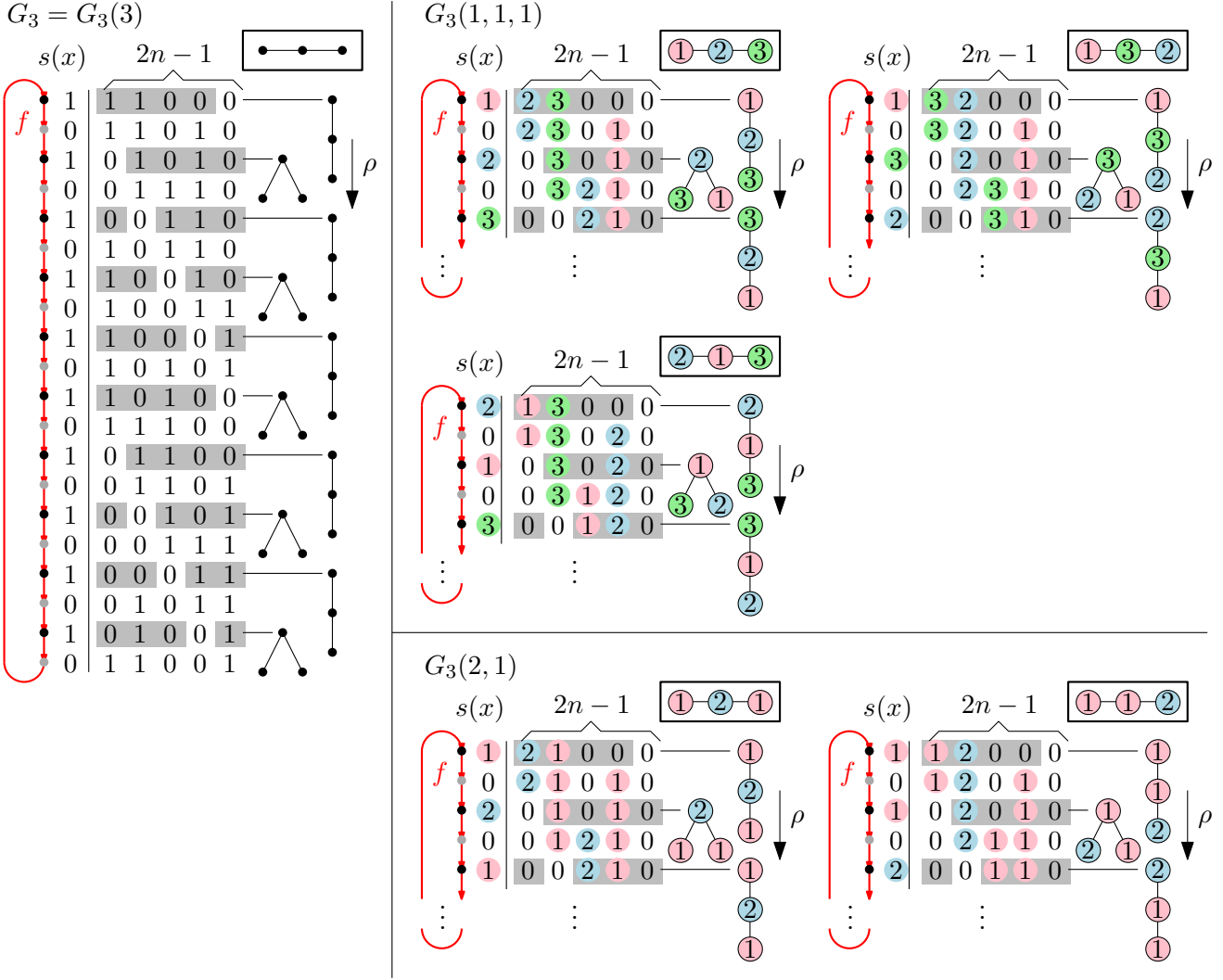}
}
\caption{Construction of the cycle factor~$\cC_n(\ba)$ in the graph~$G_n(\ba)$ (right hand side) from the cycle factor~$\cC_n$ in the graph~$G_n$ (left hand side).
}
\label{fig:cycles}
\end{figure}

With these definitions the properties asserted in Lemma~\ref{lem:factor} about the factor~$\cC_n$ generalize straightforwardly to the labeled version~$\cC_n(\ba)$.
Specifically, if $x=\Pi_n(\ba)^{-1}$, then the tree~$t(f(f(x)))$ is obtained from~$t(x)$ by a labeled tree rotation~$\rho$; see Figure~\ref{fig:trees}~(b) (and the shift is incremented, as before).
Consequently, the cycles of~$\cC_n(\ba)$ are in bijective correspondence with vertex-labeled plane trees with $n$ vertices, exactly $a_i$ of which have the label~$i$ for all $i=1,\ldots,k$.
For example, the cycle factor~$\cC_3(1,1,1)$ shown in Figure~\ref{fig:cycles} has three cycles which correspond to all plane trees with three vertices with vertex labels~1,2,3, namely the path~$P_3$ on three vertices with either 1,2, or 3 as the center vertex.
The cycle factor~$\cC_3(2,1)$ on the other hand, has only two cycles, corresponding to $P_3$ with the label~2 either at the center vertex or at a leaf.
For comparison, the cycle factor~$\cC_3(3)$ has only a single cycle corresponding to~$P_3$ with all vertices labeled~1.

\subsubsection{Labeled gluing cycles}
\label{sec:gluing-labeled}

\begin{figure}[b!]
\centerline{
\includegraphics[page=3]{trees}
}
\caption{Definition of the cycle~$C(x_1,x_2,y_1,x_3,x_4,y_2)$.}
\label{fig:gluing}
\end{figure}

We generalize the construction of gluing cycles described in Section~\ref{sec:gluing} to the labeled case.
Specifically, we consider six Dyck words $x_1,x_2,x_3,x_4,y_1,y_2\in D_n(\ba)$ of the form
\begin{equation}
\label{eq:gluing}
\begin{alignedat}{2}
x_1&=b\,c\,0\,u\,0\,v, \quad y_1&&=b\,0\,c\,u\,0\,v, \\
x_2&=c\,b\,0\,u\,0\,v, \quad y_2&&=b\,0\,a\,u\,0\,v, \\
x_3&=b\,a\,0\,u\,0\,v, \quad && \\
x_4&=a\,b\,0\,u\,0\,v, \quad && \text{ with } \wh{u},\wh{v}\in D \text{ and } a,b,c\in [k].
\end{alignedat}
\end{equation}
Note that $s(x_1)=s(x_2)=s(y_1)=a$ and $s(x_3)=s(x_4)=s(y_2)=c$.
We refer to such a 6-tuple $(x_1,x_2,x_3,x_4,y_1,y_2)$ as a \emph{gluing tuple}, and we write~$\cG_n(\ba)$ for the set of all gluing tuples.
Note that by this definition we have $s(x_1)=s(x_2)=s(y_1)=a$ and $s(x_3)=s(x_4)=s(y_2)=c$. 
Also note that $\wh{x_1}=\wh{x_2}=\wh{x_3}=\wh{x_4}=:x$ and $\wh{y_1}=\wh{y_2}=:y$, and that $(x,y)$ is a gluing pair as defined in~\eqref{eq:gluing-01}, i.e., the rooted trees corresponding to $x_1,x_2,x_3,x_4,y_1,y_2$, which are shown in Figure~\ref{fig:gluing}, are vertex-labeled versions of the two trees shown in Figure~\ref{fig:pull}.
Observe also that some of the values~$a,b,c$ may coincide, which leads to different possible coincidences between the~$x_j$ or~$y_j$; see Figure~\ref{fig:gluing}.

For a gluing tuple~$(x_1,x_2,x_3,x_4,y_1,y_2)\in\cG_n(\ba)$ we define $x_j^i:=f^i(x_j\,0)$ for $j=1,2,3,4$ and $i\geq 0$, and $y_j^i:=f^i(y_j\,0)$ for $j=1,2$ and $i\geq 0$.
It can be verified from~Figure~\ref{fig:gluing} that the sequence of vertices
\begin{equation}
C(x_1,x_2,x_3,x_4,y_1,y_2):=(x_1^0,x_1^1,x_2^6,x_2^5,y_1^0,y_1^1,x_3^0,x_3^1,x_4^6,x_4^5,y_2^0,y_2^1)
\end{equation}
is cyclic, and any two consecutive vertices differ in one position.
This sequence is obtained by applying the flip sequence of the 6-cycle in~\eqref{eq:C6} two times.
For any set of gluing tuples~$\cF\seq\cG_n(\ba)$ we write $C(\cF):=\{C(x_1,x_2,x_3,x_4,y_1,y_2)\mid (x_1,x_2,x_3,x_4,y_1,y_2)\in\cF\}$.
Note that $C(x_1,x_2,x_3,x_4,y_1,y_2)$ is a 12-cycle if $a\neq c$ and a 6-cycle if $a=c$.
Moreover, this cycle can be used to join either 2, 3, 5 or 6 different cycles from~$\cC_n(\ba)$, depending on the coincidences between the values~$a,b,c$, as shown in Figure~\ref{fig:gluing}.

By Lemma~\ref{lem:6cycles}, any two distinct cycles from~$C(\cG_n(\ba))$ are edge-disjoint, and if two of them intersect one cycle of~$\cC_n(\ba)$ twice each, then the two pairs of edges on this cycle are not interleaved.
Note that the cycles $C(x_1,x_2,x_3,x_4,y_1,y_2)$ and $C(x_3,x_4,x_1,x_2,y_2,y_1)$ have the exact same set of vertices and edges, so they are the same cycle.

\subsubsection{Proof of Theorem~\ref{thm:M'}}

The following lemma, proved in~\cite{DBLP:conf/soda/MerinoMM21}, strengthens Lemma~\ref{lem:sptree} from before.
To state the lemma, we need a few more definitions.
As before, we write $s_n:=(10)^{n-1}\in D_n$ for the star with $n$ vertices, rooted at the center vertex.
Given a (rooted or plane) tree~$x$, the \emph{potential} of a vertex of~$x$ is the sum of distances from that vertex to all other vertices of~$x$.
Moreover, the \emph{potential} of~$x$, denoted by $\varphi(x)$, is the minimum potential of all vertices of~$x$.
For example, $\varphi(s_n)=n-1$.

\begin{figure}
\centerline{
\includegraphics[page=1]{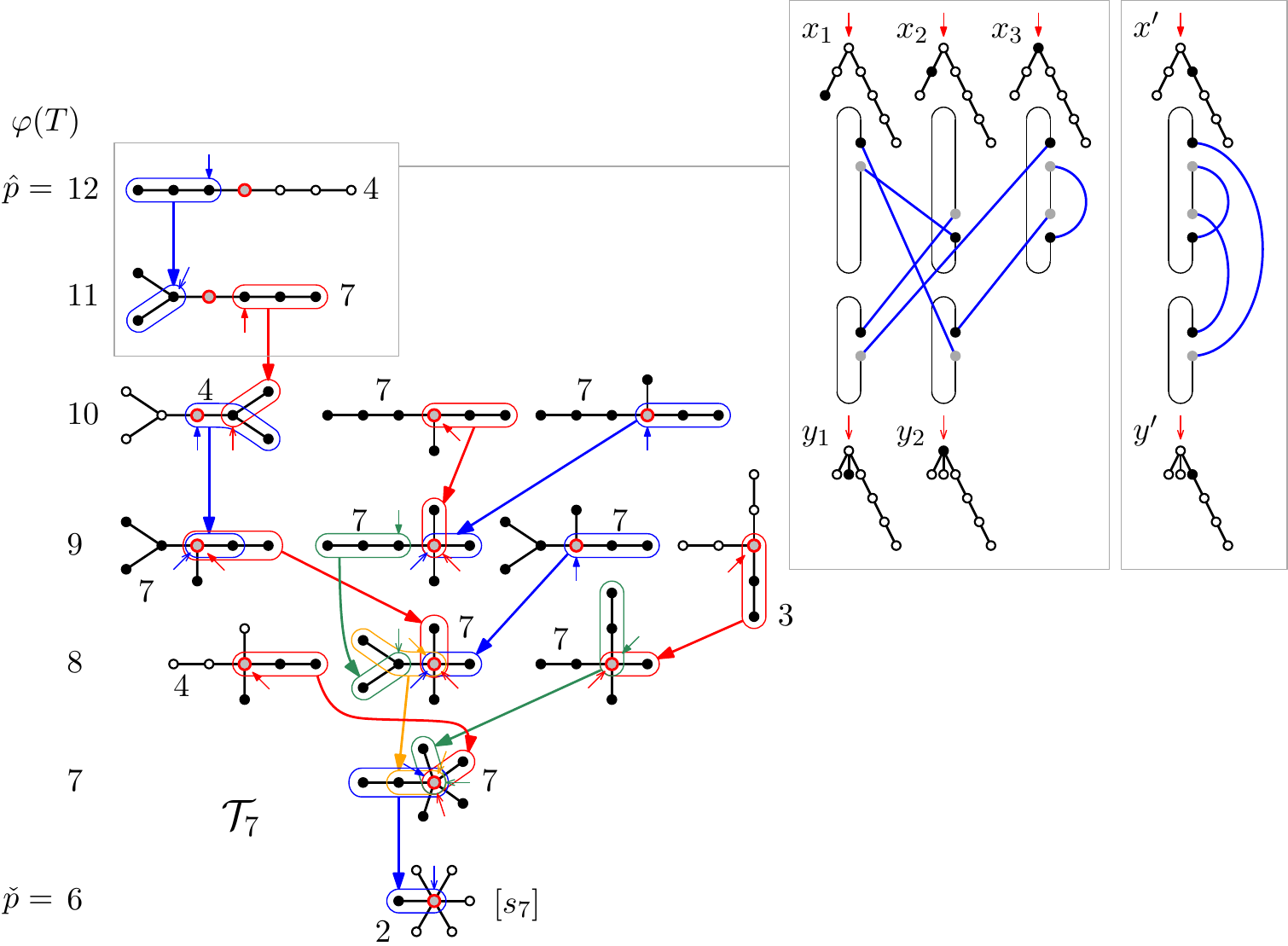}
}
\caption{Illustration of Lemma~\ref{lem:sptree-pot} and the proof of Theorem~\ref{thm:M'}.
The figure shows all plane trees with 7~vertices, arranged in levels according to their potential.
The vertices with minimum potential are highlighted.
The number next to each tree specifies the number of distinct plane trees obtained by marking a single vertex (in each orbit of symmetric vertices, one is drawn black and the others white).
Each gluing pair~$(x,y)\in\cT_n\seq \cG_n$ is visualized by indicating the placement of roots at the plane trees~$[x]$ and~$[y]$ by a small filled or non-filled arrow, respectively.
The bubbles indicate gluing tuples from~$\cT_n'\seq\cG_n(n-1,1)$ that join the 5 marked plane trees with the mark placed at one vertex within the bubbles in each tree.
An example of two gluing tuples that are used for the four marked paths with potential~12 is depicted in the box. 
}
\label{fig:sptree}
\end{figure}

\begin{lemma}[\cite{DBLP:conf/soda/MerinoMM21}]
\label{lem:sptree-pot}
For any $n\geq 4$, there is a set~$\cT_n\seq\cG_n$ of gluing pairs of cardinality $|\cT_n|=|\cC_n|-1=|T_n|-1$, such that $\{([x],[y])\mid (x,y)\in\cT_n\}$ is a spanning tree on the set of plane trees~$T_n$, implying that the symmetric difference~$\cC_n\bigtriangleup C(\cT_n)$ is a Hamilton cycle in~$G_n$.
Moreover, for every gluing pair~$(x,y)\in\cT_n$ we have $\varphi(y)=\varphi(x)-1$, and for every plane tree~$T\in T_n\setminus\{[s_n]\}$ there is exactly one gluing pair~$(x,y)\in\cT_n$ with $T=[x]$ and $\varphi(y)=\varphi(x)-1$.
\end{lemma}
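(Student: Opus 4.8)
The plan is to re-prove Lemma~\ref{lem:sptree} with a \emph{canonical} choice of the spanning tree, so that the two additional ``moreover'' clauses hold automatically. Since distances in a tree do not depend on the root, $\varphi(x)=\varphi([x])$ for every $x$, so the potential claims are really statements about the plane trees $[x]$ and $[y]$. The first step is to analyse how the potential changes under the plane-tree operation encoded by a gluing pair. By~\eqref{eq:gluing-01} and Figure~\ref{fig:pull}, passing from $T:=[x]$ to $T':=[y]$ detaches a leaf $B$ from its neighbour $A$ and reattaches it to the neighbour $\rho$ of $A$ next to $B$ in the cyclic order of edges around $A$ (so $\rho$ is \emph{not} free once $B$ and the plane embedding are fixed). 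For any vertex $z\neq B$ the distances among the remaining vertices are unchanged, while $d_{T'}(z,B)=d_T(z,\rho)+1$ and $d_T(z,B)=d_T(z,A)+1$; hence the potential of $z$ changes by $d_T(z,\rho)-d_T(z,A)$, which is $+1$ if $z$ lies on the $A$-side of the edge $A\rho$ and $-1$ if it lies on the $\rho$-side, and a short estimate shows $B$ is never the new minimiser for $n\geq 3$. Therefore $\varphi(T')\geq\varphi(T)-1$ always, with equality if and only if some vertex attaining $\varphi(T)$ lies on the $\rho$-side of $A\rho$; in particular $\varphi(T')=\varphi(T)-1$ whenever $\rho$ is strictly closer than $A$ to a centroid of $T$.

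The heart of the argument is then to show that every plane tree $T\in T_n\setminus\{[s_n]\}$ admits a gluing pair $(x,y)$ with $[x]=T$ and $\varphi([y])=\varphi([x])-1$, despite the lack of freedom in $\rho$. Fix a centroid $c$ of $T$; as $T$ is not a star, its eccentricity from $c$ is some $D\geq 2$. Pick a vertex $p_1$ with $d_T(p_1,c)=D-1$ that lies on a path of length $D$ from $c$ to a leaf, and let $p_2$ be the neighbour of $p_1$ with $d_T(p_2,c)=D-2$. By maximality of $D$, every neighbour of $p_1$ other than $p_2$ is a leaf (otherwise the subtree hanging off $p_1$ through such a neighbour contains a vertex at distance $>D$ from $c$). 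Hence the cyclic order around $p_1$ consists of $p_2$ together with a block of leaves, and one of the (at most two) leaves flanking $p_2$ in this order is the one forced to have partner $\rho=p_2$; let $B$ be that leaf. The resulting gluing pair has $[x]=T$, and since $c$ lies on the $p_2$-side of the edge $p_1p_2$, the first step yields $\varphi([y])=\varphi([x])-1$.

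Now define $\cT_n$ by selecting, for each $T\in T_n\setminus\{[s_n]\}$, one such downhill gluing pair. Distinct source trees give distinct pairs, so $|\cT_n|=|T_n|-1=|\cC_n|-1$, and both ``moreover'' clauses hold by construction. The graph $\{([x],[y])\mid(x,y)\in\cT_n\}$ on the vertex set $T_n$ has $|T_n|-1$ edges and is connected, because following the downhill pointers strictly decreases $\varphi$ and hence terminates at the unique plane tree of minimum potential, namely $[s_n]$; so it is a spanning tree. Finally, as in the proof of Lemma~\ref{lem:sptree}, $\cC_n\bigtriangleup C(\cT_n)$ is $2$-regular -- each $6$-cycle $C(x,y)$ shares exactly one edge with $\cC_n$ at each of its vertices, and by Lemma~\ref{lem:6cycles} the $6$-cycles are pairwise edge-disjoint -- and it is connected since the gluing graph is a tree; so it is a Hamilton cycle.

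I expect the main obstacle to be the connectedness of $\cC_n\bigtriangleup C(\cT_n)$. A single $6$-cycle $C(x,y)$ merges the factor-cycles $C([x])$ and $C([y])$ into one, but when several $6$-cycles attach to a common factor-cycle $C(T)$ -- one ``outgoing'' pair (with $[x]=T$, using two edges of $C(T)$) and possibly several ``incoming'' pairs (with $[y]=T$, each using one edge of $C(T)$) -- one must rule out that their edges interleave so as to split off a spurious short cycle. Lemma~\ref{lem:6cycles} handles pairs sharing the first coordinate, so the remaining configurations must be excluded using the potential orientation of $\cT_n$ (outgoing versus incoming); this bookkeeping, carried out along the lines of the proof of Lemma~\ref{lem:sptree}, is the real technical content. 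A secondary difficulty is the rotational-order argument above -- guaranteeing that a leaf $B$ with the forced partner $\rho=p_2$ exists -- which is resolved by the observation that all neighbours of $p_1$ other than $p_2$ are leaves.
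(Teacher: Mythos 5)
This lemma is imported from the reference~\cite{DBLP:conf/soda/MerinoMM21}, and the paper itself gives no proof, so there is no in-paper argument to compare your attempt against. Evaluated on its own merits, your reconstruction of the ``moreover'' clauses is sound: the computation of how $\varphi$ changes under a gluing move (detach a leaf $B$ from $A$, re-attach it to the forced adjacent neighbour $\rho$; each vertex's total distance changes by $\pm 1$ according to its side of the edge $A\rho$; and $B$ cannot become the new minimiser since $\varphi_{T'}(B)=(n-2)+\varphi_{T'}(\rho)>\varphi_{T'}(\rho)$ for $n\geq 3$) is correct, and so is the conclusion that $\varphi([y])\geq\varphi([x])-1$, with equality iff a minimiser of $\varphi_T$ lies on the $\rho$-side. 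Your centroid argument producing a downhill gluing pair from every non-star plane tree is also correct: since every neighbour of $p_1$ other than $p_2$ is a leaf (by maximality of eccentricity), the leaf immediately adjacent to $p_2$ in the cyclic order around $p_1$ on the correct side has forced partner $\rho=p_2$, and the centroid lies on the $p_2$-side of $p_1p_2$. Choosing one such pair per tree $T\neq[s_n]$ gives $|T_n|-1$ pairs with exactly one outgoing pair per $T$; since following downhill pointers strictly decreases $\varphi$ and $[s_n]$ is the unique minimum (every vertex has potential $\geq n-1$, with equality only for the star's centre), the gluing graph is a spanning tree.

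The genuine gap is the final step, which you flag yourself: you assert that the 2-regular graph $\cC_n\bigtriangleup C(\cT_n)$ is connected ``since the gluing graph is a tree,'' but that does not follow from 2-regularity and connectivity of the gluing graph alone. One must rule out that gluing-cycle edges on a common factor cycle interleave in a way that splits off a short spurious cycle. Lemma~\ref{lem:6cycles} only controls interleaving for two pairs sharing the same first coordinate $x$, which in your $\cT_n$ never happens (each plane tree has exactly one outgoing pair); the open case is the interaction of the unique outgoing pair at $C(T)$ (removing $(x^0,x^1)$, $(x^5,x^6)$ and adding the chord $(x^1,x^6)$) with the possibly many incoming pairs (each removing one edge of $C(T)$). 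Closing this requires tracking the positions of the removed edges on $C(T)$ and verifying that the resulting arcs chain into a single cycle; this is precisely the technical content of the referenced proof of Lemma~\ref{lem:sptree}. Alternatively, since Lemma~\ref{lem:sptree} is phrased as ``is a spanning tree \ldots, implying that the symmetric difference \ldots is a Hamilton cycle,'' the cited work apparently establishes the general implication for any spanning tree of gluing pairs, and you could invoke that directly to complete your argument.
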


The spanning tree described in this lemma is illustrated in Figure~\ref{fig:sptree} for $n=7$.

We are now ready to present the proof of Theorem~\ref{thm:M'}.

\begin{proof}[Proof of Theorem~\ref{thm:M'}]
The graphs~$G(2,1,1)$ and~$G(3,2,1)$ have a Hamilton cycle by Table~\ref{tab:base}.
In the remainder of the proof we construct a Hamilton cycle in~$G_n(n-1,1)\simeq G(n,n-1,1)$ for $n\geq 4$.
We define $\ba:=(n-1,1)$.

We start with the cycle factor~$\cC_n(\ba)$ constructed in Section~\ref{sec:factor-labeled}.
Its cycles are in bijective correspondence to plane trees that have a single vertex labeled~2, and all other vertices labeled~1.
We say that this label-2 vertex is \emph{marked}, and we refer to these trees as \emph{marked plane trees}.
We construct a set of gluing tuples~$\cT_n'\seq\cG_n(\ba)$ such that~$\cC_n(\ba)\bigtriangleup C(\cT_n')$ is a Hamilton cycle in~$G_n(\ba)$.

We let $\cT_n\seq\cG_n$ be the set of gluing pairs given by Lemma~\ref{lem:sptree-pot}.
The set~$\cT_n'\seq\cG_n(\ba)$ is constructed from~$\cT_n$ by considering the potential of the trees of these gluing pairs.
Note that the path on $n$~vertices has the highest potential~$\hat{p}:=\lfloor n^2/4\rfloor$, whereas the star~$[s_n]$ has the lowest potential~$\check{p}:=n-1$.
Roughly speaking, we consider all plane trees with decreasing potential $p=\hat{p},\hat{p}-1,\ldots,\check{p}$, and we add gluing tuples from~$\cG_n(\ba)$ derived from gluing pairs of~$\cT_n$ step by step, such that all cycles of~$\cC_n(\ba)$ corresponding to marked plane trees with potential at least~$p$ are joined together.

Formally, for $p=\hat{p},\hat{p}-1,\ldots,\check{p}$, we inductively construct a set~$\cT_{n,p}'\seq\cG_n(\ba)$ of gluing tuples such that~$\cC_n(\ba)\bigtriangleup C(\cT_{n,p}')$ is a cycle factor in~$G_n(\ba)$ with the property that every vertex is contained in a cycle together with all vertices~$\{\tilde{x}\,0\mid \tilde{x}\in[x]\}$ for some $x\in D_n(\ba)$ with $\varphi(x)\leq p$.
Note that there are precisely two marked plane trees with minimal potential~$p=\check{p}=n-1$, namely the star~$[s_n]$ with either the center vertex or a leaf marked, i.e., these are $[s_n']$ and~$[s_n'']$ where $s_n':=(10)^{n-1}\in D_n(\ba)$ and $s_n'':=(20)(10)^{n-2}\in D_n(\ba)$.
We will show that $s_n'\,0,s_n''\,0\in\Pi_n(\ba)^{-1}$ lie on the same cycle, and these conditions imply that~$\cC_n(\ba)\bigtriangleup C(\cT_{n,\check{p}}')$ is a Hamilton cycle in~$G_n(\ba)$.

The induction basis is~$\cT_{n,\hat{p}}':=\emptyset$.
For the induction step~$p\rightarrow p-1$, suppose that $\cT_{n,p}'$, $\hat{p}\geq p>\check{p}$, with the properties as stated before is given.
Then~$\cT_{n,p-1}'$ is obtained by adding gluing tuples from~$\cG_n(\ba)$ to the set~$\cT_{n,p}'$ as follows.

We consider each plane tree~$T$ with $n$ vertices and potential~$p$, and we consider the unique gluing pair~$(x,y)\in\cT_n$ with $T=[x]$ and $\varphi(y)=\varphi(x)-1$.
We then consider all rooted trees obtained from~$x$ by marking one vertex, i.e., the set $X':=\{x'\in D_n(\ba)\mid \wh{x'}=x\}$.
Two marked rooted trees $x',x''\in X'$ are equivalent, if $x'\,0$ and $x''\,0$ lie on the same cycle in the factor~$\cC_n(\ba)\bigtriangleup C(\cT_{n,p}')$, and we partition the set~$X'$ into equivalence classes $X'=X_1'\cup \cdots\cup X_r'$ accordingly.
Note that $x',x''$ can be equivalent either because these trees have the same underlying marked plane tree, i.e., $[x']=[x'']$, or because the cycles~$C(x'\,0)$ and~$C(x''\,0)$ have been joined previously.

\begin{figure}[b!]
\centerline{
\includegraphics[page=2]{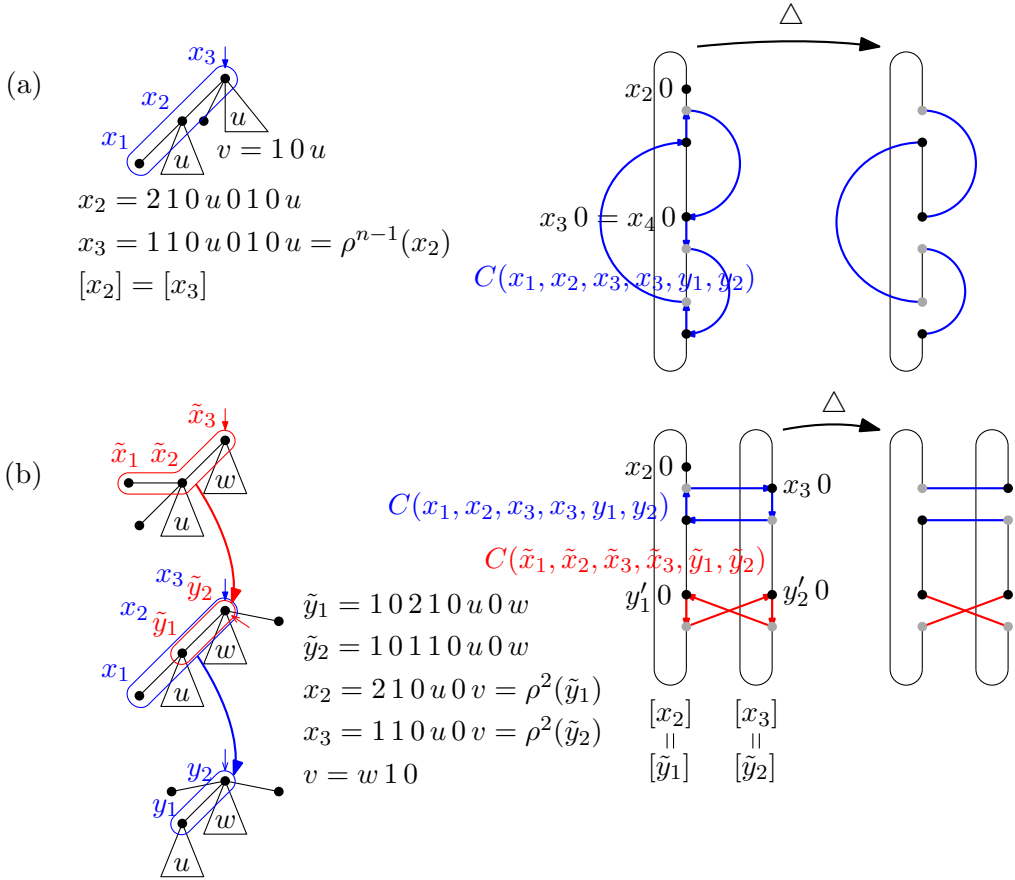}
}
\caption{Verification of multiple intersections in the proof of Theorem~\ref{thm:M'}.
}
\label{fig:check}
\end{figure}

By~\eqref{eq:gluing-01} we have $x=1\,1\,0\,u\,0\,v$ and $y=1\,0\,1\,u\,0\,v$ for some~$u,v\in D$.
We consider the three trees~$x_3,x_2,x_1\in D_n(\ba)$ obtained from~$x$ by marking the root of~$x$, or the leftmost descendants in distance~1 or~2 from the root, respectively, and the two trees~$y_2,y_1\in D_n(\ba)$ obtained from~$y$ by marking the root of~$y$, or its second child from left to right, respectively (these are the five trees marked by~(*) in Figure~\ref{fig:gluing}).
By these definitions we have
\begin{equation*}
\begin{alignedat}{2}
x_1&=1\,2\,0\,u\,0\,v, \quad y_1&&=1\,0\,2\,u\,0\,v, \\
x_2&=2\,1\,0\,u\,0\,v, \quad y_2&&=1\,0\,1\,u\,0\,v, \\
x_3&=1\,1\,0\,u\,0\,v.
\end{alignedat}
\end{equation*}
We then add the gluing tuple~$(x_1,x_2,x_3,x_3,y_1,y_2)$ to the set~$\cT_{n,p-1}'$.
The corresponding 12-cycle~$C(x_1,x_2,x_3,x_3,y_1,y_2)$ intersects five cycles from the factor~$\cC_n(\ba)$, corresponding to the marked plane trees~$[x_1]$, $[x_2]$, $[x_3]$, $[y_1]$, and~$[y_2]$; see Figure~\ref{fig:sptree}.
There is one special case, namely if $v=1\,0\,u$, in which case~$[x_2]=[x_3]$, and then the 12-cycle intersects only four cycles from the factor~$\cC_n(\ba)$, but one can check that the symmetric difference is still a single cycle; see Figure~\ref{fig:check}~(a).

In addition to the aforementioned gluing tuple, which is added in every case, we add further gluing tuples to the set~$\cT_{n,p-1}'$, depending on~$X'$.
Specifically, for every equivalence class~$X_i'$, $i\in[r]$, that contains none of~$x_1,x_2,x_3$, we select one tree~$x'\in X_i'$, and we let $y'\in D_n(\ba)$ be the string obtained from~$x'$ by swapping the second and third entry, i.e., $x'=110u0v$, $y'=101u0v$ with $\wh{u},\wh{v}\in D$ such that the concatenation~$uv$ contains a single occurrence of~2.
We then add the gluing tuple~$(x',x',x',x',y',y')$ to the set~$\cT_{n,p}'$.
The corresponding 6-cycle~$C(x',x',x',x',y',y')$ intersects two cycles from the factor~$\cC_n(\ba)$, corresponding to the marked plane trees~$[x']$ and~$[y']$.

By the construction of~$\cT_{n,p-1}'$, each equivalence class~$X_i'$, $i\in[r]$, contains at most two of the~$x_i$, and if this happens then $X_i'\cap\{x_1,x_2,x_3\}=\{x_2,x_3\}$, and $[x_2]=[\tilde{y}_1]$ and~$[x_3]=[\tilde{y}_2]$ for some gluing pair~$(\tilde{x}_1,\tilde{x}_2,\tilde{x}_3,\tilde{x}_3,\tilde{y}_1,\tilde{y}_2)\in\cT_{n,p-1}'$; see Figure~\ref{fig:check}~(b).
One can check in the figure that the double intersection between the two gluing cycles and the two cycles from the factor still yields a single cycle in the symmetric difference.

By the above steps and the induction hypothesis for $p$, we have achieved that in~$\cC_n(\ba)\bigtriangleup C(\cT_{n,p-1}')$, every vertex is contained in a cycle together with all vertices~$\{\tilde{x}\,0\mid \tilde{x}\in[x]\}$ for some $x\in D_n(\ba)$ with $\varphi(x)\leq p-1$, i.e., the induction hypothesis holds for $p-1$.
In the last step of the construction for $p=\check{p}+1$ we added a gluing tuple $(x_1,x_2,x_3,x_3,y_1,y_2)$ where $y_1=s_n''$, $y_2=s_n'$, and thus we glued both cycles $[s_n'']$ and $[s_n']$ into one.

This completes the proof of the theorem.
\end{proof}

\section{Open questions}
\label{sec:open}

We conclude this paper with the following open questions.

\begin{itemize}[leftmargin=8mm, labelsep=4mm,noitemsep, topsep=3pt plus 3pt]
\item
We believe that the techniques used in Section~\ref{sec:proof-M'} to prove that~$G(\alpha,\alpha-1,1)$ has a Hamilton cycle are in principle suitable to prove that~$G(\ba)$ has a Hamilton cycle for all $\ba$ with~$\Delta(\ba)=0$, which would be an important first step towards a proof of Conjecture~\ref{conj:0}.
In particular, the construction of the cycle factor~$\cC_n(\ba)$ and gluing tuples~$\cG_n(\ba)$ described in Sections~\ref{sec:factor-labeled} and~\ref{sec:gluing-labeled}, respectively, are fully general.
The main difficulty in combining these ingredients lies in the fact that some gluing cycles join more than two cycles from the factor (namely 3, 5, or 6 cycles) to a single cycle, and in this case the resulting interactions between different gluing cycles seem to be hard to control (recall Figure~\ref{fig:check}).

\item
We conjecture that~$G(\alpha,\alpha,1)$ for $\alpha\geq 2$ is Hamilton-connected, just as all other graphs~$G(\ba)$ with~$\Delta(\ba)>0$ covered by Theorem~\ref{thm:P}.
However, we are unable to prove this based on Conjecture~\ref{conj:0}; recall the discussion in Section~\ref{sec:notions}.
We confirmed this conjecture with computer help for $\alpha=2,3,4$; see Table~\ref{tab:base}.
Proving the conjecture would streamline our proof of Theorem~\ref{thm:P} considerably, as it would make Lemmas~\ref{lem:H2'}--\ref{lem:H1''} redundant, which build on top of Lemma~\ref{lem:L12} in the induction proof.
To prove that~$G(\alpha,\alpha,1)$ is Hamilton-connected, it may help to establish a Hamiltonicity property for graphs~$G(\ba)$ with~$\Delta(\ba)=0$ and $k\geq 3$ that is stronger than~$\cL_1$.
Specifically, in addition to a Hamilton path between any vertex in~$\Pi(\ba)^{1,1}$ and any vertex not in~$\Pi(\ba)^{1,1}$, we may also ask for a Hamilton path between any two distinct vertices in~$\Pi(\ba)^{1,1}$.
We checked by computer whether~$G(\ba)$ has this stronger property for $\ba\in\{(2,1,1),(3,2,1),(3,1,1,1),(4,3,1),(4,2,2),(4,2,1,1),(4,1,1,1,1)\}$, and it was satisfied in all cases except for~$\ba=(2,1,1)$.

\item
While the proofs presented in this paper are constructive, they are far from yielding efficient algorithms for computing the corresponding Gray codes.
Ideally, one would like algorithms whose running time is polynomial in~$n$ per generated multiset permutation of length~$n$.
Such algorithms are known for the Hamilton cycles mentioned in Theorems~\ref{thm:ehrlich} and~\ref{thm:mlc}, see \cite[Section~7.2.1.2]{MR3444818} and~\cite{MR4075363}, respectively.
An interesting direction could be to explore greedy algorithms for generating multiset permutations by star transpositions, which may yield much simpler constructions to start with, cf.~\cite{DBLP:conf/wads/Williams13,MR3513761,cameron-sawada-williams:21}.

\item
Knuth raised the question whether there are star transposition Gray codes for $(\alpha,\alpha)$-combinations whose flip sequence can be partitioned into $2\alpha-1$ blocks, such that each block is obtained from the previous one by adding~$+1$ modulo~$2\alpha-1$.
This problem is a strengthening of the middle levels conjecture, and it was answered affirmatively in~\cite{DBLP:conf/soda/MerinoMM21}.
The Gray code for $(4,4)$-combinations shown in Figure~\ref{fig:mperm}~(a) has such a 7-fold cyclic symmetry.
We can ask more generally: Are there star transposition Gray codes for multiset permutations whose flip sequence can be partitioned into~$n-1$ blocks, such that each block is obtained from the previous one by adding~$+1$ modulo~$n-1$?
Figure~\ref{fig:222_symm} shows an ad-hoc solution for $(2,2,2)$-multiset permutations with 5-fold cyclic symmetry.

\item
A more general version of the problem considered in this paper is the following:
We consider an alphabet $\{1,\ldots,k\}$ of size $k\geq 2$, and frequencies $a_1,\ldots,a_k\geq 1$ that specify that symbol~$i$ appears exactly $a_i$ times for all $i=1,\ldots,k$.
Moreover, there is an additional integer parameter~$s$ with $1\leq s\leq k-1$ that has the following significance.
The objects to be generated are all pairs~$(S,x)$, where $S$ is a set or string of $s$ distinct symbols, and $x$ is a string of the remaining $n-s$ symbols, where $n:=a_1+\cdots+a_k$.
A star transposition swaps one symbol from~$S$ with one symbol from~$x$ that is currently not in~$S$, and the question is whether there is a star transposition Gray code for all those objects.

Note that multiset permutations considered in this paper are the special case when~$s=1$.
Andrea Sportiello suggested this problem with $a_1=\cdots=a_k=\alpha$ (uniform frequency) and $S$ being a set as a generalization of the middle levels conjecture ($s=1$, $k=2$, $a_1=a_2=\alpha$).
Moreover, Ajit A. Diwan suggested this problem with $a_1=\cdots=a_k=\alpha$ (uniform frequency) and a set~$S$ of size $s=k-1$.
Note that the uniform frequency case is particularly interesting, as the underlying flip graph for this problem is vertex-transitive if and only if $a_1=\cdots=a_k$ (recall Lov{\'a}sz' conjecture~\cite{MR0263646}).
\end{itemize}

\begin{figure}
\includegraphics{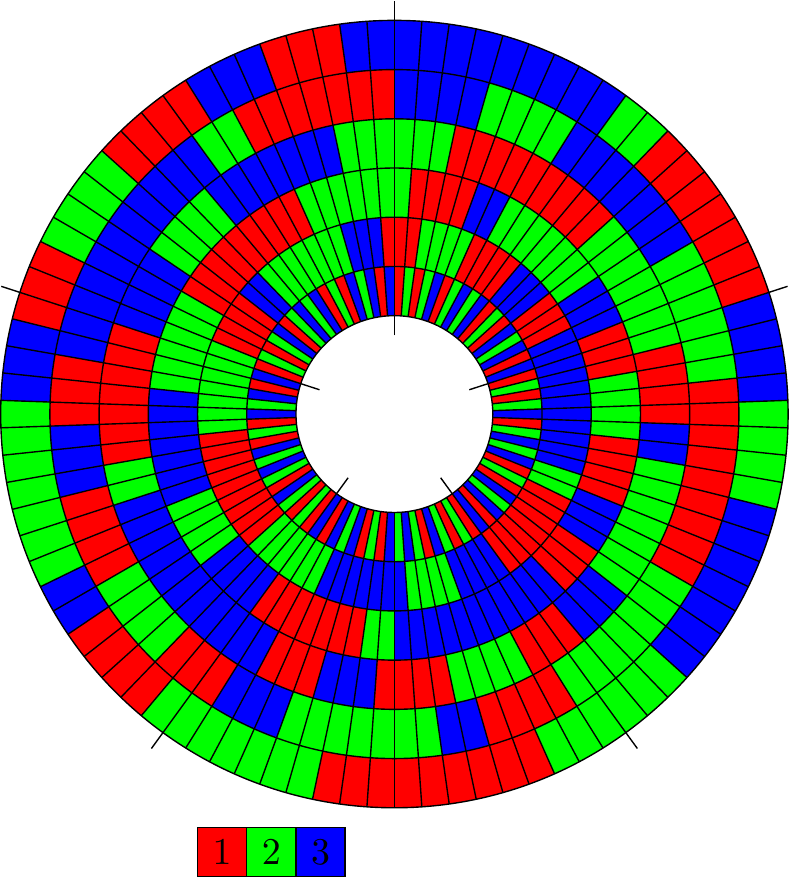}
\caption{Star transposition Gray code for $(2,2,2)$-multiset permutations with 5-fold cyclic symmetry.}
\label{fig:222_symm}
\end{figure}

\section*{Acknowledgements}

We thank Ji\v{r}\'{\i} Fink for inspiring brain-storming sessions in the early phases of this research project, and we thank Aaron Williams for interesting discussions about star transposition Gray codes.

\bibliographystyle{alpha}
\bibliography{../refs}

\appendix

\section{Proofs of Lemmas~\ref{lem:H2'}--\ref{lem:H1''}}

\begin{proof}[Proof of Lemma~\ref{lem:H2'}]
Let $x,y$ be two distinct vertices in~$G(\ba)$.
As $x\neq y$ there is a position $\ihat>1$ such that $x_\ihat\neq y_\ihat$.
As the symbols~1 and~2 both appear with the same frequency~$\alpha$, we may assume w.l.o.g.\ that either $(x_\ihat,y_\ihat)=(1,2)$ or $(x_\ihat,y_\ihat)=(1,3)$.

We first consider the case $(x_\ihat,y_\ihat)=(1,2)$.
As $\alpha\geq 3$, there are two indices $i_1,i_2\in[n]\setminus\{1,\ihat\}$ such that $x_{i_1}\neq 2$ and $x_{i_2}=2$, in particular $x_{i_1}\neq x_{i_2}$.
Similarly, there are two indices $i_3,i_4\in[n]\setminus\{1,\ihat\}$ such that $y_{i_3}\neq 1$ and $y_{i_4}=1$, in particular $y_{i_3}\neq y_{i_4}$.
We choose multiset permutations $u^j,v^j\in \Pi(\ba)$, $j=1,2,3$, satisfying the following conditions; see Figure~\ref{fig:H2'}~(a):
\begin{itemize}[leftmargin=8mm, noitemsep]
\item[(i)] $u^1=x$, $v^1_{i_1}=x_{i_2}=2$, and $v^3=y$, $u^3_{i_3}=y_{i_4}=1$;
\item[(ii)] $u^2_1=v^1_\ihat=1$, $u^2_\ihat=v^1_1=3$, $u^3_1=v^2_\ihat=3$, $u^3_\ihat=v^2_1=2$, and $u^j_i=v^{j-1}_i$ for $j=2,3$ and all $i\in[n]\setminus\{1,\ihat\}$;
\item[(iii)] $(u^2_{i_1},v^2_{i_1})=(2,1)$.
\end{itemize}
Note that $u^1\neq v^1$ and $u^3\neq v^3$ holds by~(i) and the choice of $i_1,i_2$ and $i_3,i_4$, respectively, and $u^2_1\neq v^2_1$ by~(ii).
Consequently, we have $u^j\neq v^j$ for $j=1,2,3$.

We consider a Hamilton path~$P_1$ in~$G^{\ihat,1}(\ba)$ from~$u^1$ to~$v^1$, and a Hamilton path~$P_3$ in~$G^{\ihat,2}(\ba)$ from~$u^3$ to~$v^3$, which exist by the assumption~$G(\alpha,\alpha-1,2)\in\cH$.
We also consider a Hamilton path~$P_2$ in~$G^{\ihat,3}(\ba)$ from~$u^2$ to~$v^2$, which exists by the assumption~$G(\alpha,\alpha,1)\in\cL_{12}$, using that $(u^2_1,v^2_1)=(1,2)$ and $(u^2_{i_1},v^2_{i_1})=(2,1)=(p_3(1,2),q_3(1,2))$ by~(ii)+(iii) and~\eqref{eq:pq}.
The concatenation $P_1P_2P_3$ is a Hamilton path in~$G(\ba)$ from~$x$ to~$y$.

\begin{figure}
\includegraphics[page=5]{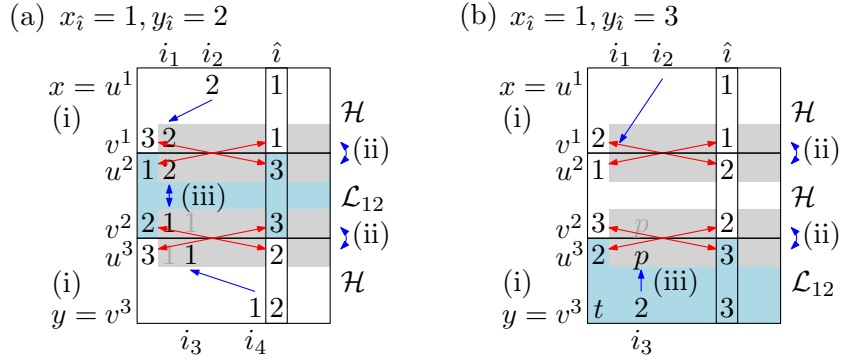}
\caption{Illustration of the proof of Lemma~\ref{lem:H2'}.}
\label{fig:H2'}
\end{figure}

It remains to consider the case~$(x_\ihat,y_\ihat)=(1,3)$.
We define $t:=y_1\in\{1,2,3\}$ and $p:=p_3(2,t)\in\{1,3\}$.
As $\alpha\geq 3$, there are two indices $i_1,i_2\in[n]\setminus\{1,\ihat\}$ such that $x_{i_1}\neq x_{i_2}$.
Similarly, there is an index $i_3\in[n]\setminus\{1,\ihat\}$ such that $y_{i_3}=2$.
We choose multiset permutations $u^j,v^j\in \Pi(\ba)$, $j=1,2,3$, satisfying the following conditions; see Figure~\ref{fig:H2'}~(b):
\begin{itemize}[leftmargin=8mm, noitemsep]
\item[(i)] $u^1=x$, $v^1_{i_1}=x_{i_2}$, and $v^3=y$;
\item[(ii)] $u^2_1=v^1_\ihat=1$, $u^2_\ihat=v^1_1=2$, $u^3_1=v^2_\ihat=2$, $u^3_\ihat=v^2_1=3$, and $u^j_i=v^{j-1}_i$ for $j=2,3$ and all $i\in[n]\setminus\{1,\ihat\}$;
\item[(iii)] $u^3_{i_3}=p$.
\end{itemize}
Note that $u^1\neq v^1$ by~(i) and the choice of $i_1,i_2$, $u^2_1\neq v^2_1$ by~(ii), and $u^3\neq v^3$ by~(iii), the choice of~$i_3$ and the fact that $p\neq 2$.
Consequently, we have $u^j\neq v^j$ for $j=1,2,3$.

We consider a Hamilton path~$P_1$ in~$G^{\ihat,1}(a)$ from~$u^1$ to~$v^1$, and a Hamilton path~$P_2$ in~$G^{\ihat,2}(\ba)$ from~$u^2$ to~$v^2$, which exist by the assumption~$G(\alpha,\alpha-1,2)\in\cH$.
We also consider a Hamilton path~$P_3$ in~$G^{\ihat,3}(\ba)$ from~$u^3$ to~$v^3$, which exists by the assumption~$G(\alpha,\alpha,1)\in\cL_{12}$, using that $(u^3_1,v^3_1)=(2,t)$ and $(u^3_{i_3},v^3_{i_3})=(p,2)=(p_3(2,t),q_3(2,t))$ by~(i)--(iii) and~\eqref{eq:pq}.
The concatenation $P_1P_2P_3$ is a Hamilton path in~$G(\ba)$ from~$x$ to~$y$.
\end{proof}

\begin{proof}[Proof of Lemma~\ref{lem:H2''}]
Let $x,y$ be two distinct vertices in~$G(\ba)$.
As $x\neq y$ there is a position $\ihat>1$ such that $x_\ihat\neq y_\ihat$.
As the symbols~1 and~2 both appear with the same frequency~$\alpha$, and the symbols~3 and~4 both appear with the same frequency~1, we may assume w.l.o.g.\ that either $(x_\ihat,y_\ihat)=(1,2)$, $(x_\ihat,y_\ihat)=(1,3)$, or $(x_\ihat,y_\ihat)=(3,4)$.

We first consider the case $(x_\ihat,y_\ihat)=(1,2)$.
Let $i_2$ be the unique index such that $x_{i_2}=4$, and let $i_4$ be the unique index such that $y_{i_4}=3$.
As $\alpha\geq 3$ and hence $n\geq 2\alpha+2\geq 8$, there is an index~$i_1\in[n]\setminus\{1,\ihat,i_2\}$ and an index~$i_3\in[n]\setminus\{1,\ihat,i_4,i_1\}$, and they will satisfy $x_{i_1}\neq 4$ and $y_{i_3}\neq 3$.
We choose multiset permutations $u^j,v^j\in \Pi(\ba)$, $j=1,2,3,4$, satisfying the following conditions; see Figure~\ref{fig:H2''}~(a):
\begin{itemize}[leftmargin=8mm, noitemsep]
\item[(i)] $u^1=x$, $v^1_{i_1}=x_{i_2}=4$, and $v^4=y$, $u^4_{i_3}=y_{i_4}=3$;
\item[(ii)] $u^2_1=v^1_\ihat=1$, $u^2_\ihat=v^1_1=3$, $u^3_1=v^2_\ihat=3$, $u^3_\ihat=v^2_1=4$, $u^4_1=v^3_\ihat=4$, $u^4_\ihat=v^3_1=2$, and $u^j_i=v^{j-1}_i$ for $j=2,3,4$ and all $i\in[n]\setminus\{1,\ihat\}$;
\item[(iii)] $(u^2_{i_1},v^2_{i_1})=(4,1)$, $(u^3_{i_3},v^3_{i_3})=(2,3)$.
\end{itemize}
Note that $u^1\neq v^1$ and $u^4\neq v^4$ by~(i) and the choice of $i_1,i_2$ and $i_3,i_4$, respectively, and $u^j_1\neq v^j_1$ for $j=2,3$ by~(ii).
Consequently, we have $u^j\neq v^j$ for $j=1,2,3,4$.

We consider a Hamilton path~$P_1$ in~$G^{\ihat,1}(\ba)$ from~$u^1$ to~$v^1$, and a Hamilton path~$P_4$ in~$G^{\ihat,2}(\ba)$ from~$u^4$ to~$v^4$, which exist by the assumption~$G(\alpha,\alpha-1,1,1)\in\cH$.
We also consider a Hamilton path~$P_2$ in~$G^{\ihat,3}(\ba)$ from~$u^2$ to~$v^2$, which exists by the assumption~$G(\alpha,\alpha,1)\in\cL_{12}$, using that $(u^2_1,v^2_1)=(1,4)$ and $(u^2_{i_1},v^2_{i_1})=(4,1)=(p_4(1,4),q_4(1,4))$ by~(ii)+(iii) and~\eqref{eq:pq}.
Note here that in~$G^{\ihat,3}(\ba)$, the symbol~4 takes the role of~3 in~$G(\alpha,\alpha,1)$.
Finally, we consider a Hamilton path~$P_3$ in~$G^{\ihat,4}(\ba)$ from~$u^3$ to~$v^3$, which exists by the assumption~$G(\alpha,\alpha,1)\in\cL_{12}$, using that $(u^3_1,v^3_1)=(3,2)$ and $(u^3_{i_3},v^3_{i_3})=(2,3)=(q_3(2,3),p_3(2,3))$ by~(ii)+(iii) and~\eqref{eq:pq}.
The concatenation $P_1P_2P_3P_4$ is a Hamilton path in~$G(\ba)$ from~$x$ to~$y$.

\begin{figure}
\includegraphics[page=6]{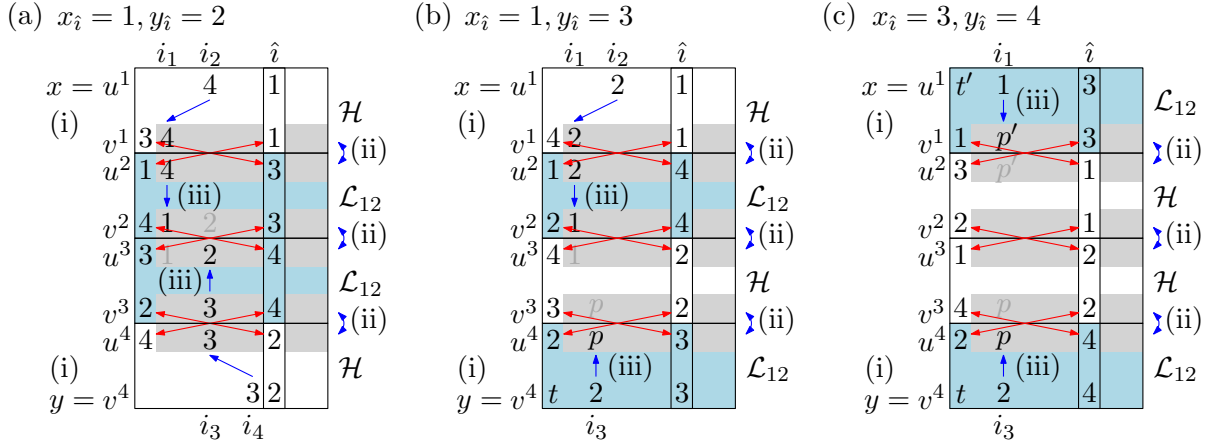}
\caption{Illustration of the proof of Lemma~\ref{lem:H2''}.}
\label{fig:H2''}
\end{figure}

We now consider the case~$(x_\ihat,y_\ihat)=(1,3)$.
We define $t:=y_1\in\{1,2,4\}$ and $p:=p_4(2,t)\in\{1,4\}$.
Let $i_1,i_2\in[n]\setminus\{1,\ihat\}$ be such that $x_{i_1}\neq 2$ and $x_{i_2}=2$.
Similarly, let $i_3\in[n]\setminus\{1,\ihat\}$ be such that $y_{i_3}=2$.
We choose multiset permutations $u^j,v^j\in \Pi(\ba)$, $j=1,2,3,4$, satisfying the following conditions; see Figure~\ref{fig:H2''}~(b):
\begin{itemize}[leftmargin=8mm, noitemsep]
\item[(i)] $u^1=x$, $v^1_{i_1}=x_{i_2}=2$, and $v^4=y$;
\item[(ii)] $u^2_1=v^1_\ihat=1$, $u^2_\ihat=v^1_1=4$, $u^3_1=v^2_\ihat=4$, $u^3_\ihat=v^2_1=2$, $u^4_1=v^3_\ihat=2$, $u^4_\ihat=v^3_1=3$ and $u^j_i=v^{j-1}_i$ for $j=2,3,4$ and all $i\in[n]\setminus\{1,\ihat\}$;
\item[(iii)] $(u^2_{i_1},v^2_{i_1})=(2,1)$ and $u^4_{i_3}=p$.
\end{itemize}
Note that $u^1\neq v^1$ by~(i) and the choice of~$i_1,i_2$, $u^j_1\neq v^j_1$ for $j=2,3$ by~(ii), and $u^4\neq v^4$ by~(iii), the choice of~$i_3$ and the fact that $p\neq 2$.
Consequently, we have $u^j\neq v^j$ for $j=1,2,3,4$.

We consider a Hamilton path~$P_1$ in~$G^{\ihat,1}(\ba)$ from~$u^1$ to~$v^1$, and a Hamilton path~$P_3$ in~$G^{\ihat,2}(\ba)$ from~$u^3$ to~$v^3$, which exist by the assumption~$G(\alpha,\alpha-1,1,1)\in\cH$.
We also consider a Hamilton path~$P_2$ in~$G^{\ihat,4}(\ba)$ from~$u^2$ to~$v^2$, which exists by the assumption~$G(\alpha,\alpha,1)\in\cL_{12}$, using that $(u^2_1,v^2_1)=(1,2)$ and $(u^2_{i_1},v^2_{i_1})=(2,1)=(p_3(1,2),q_3(1,2))$ by~(ii)+(iii) and~\eqref{eq:pq}.
Finally, we consider a Hamilton path~$P_3$ in~$G^{\ihat,3}(\ba)$ from~$u^4$ to~$v^4$, which exists by the assumption~$G(\alpha,\alpha,1)\in\cL_{12}$, using that $(u^4_1,v^4_1)=(2,t)$ and $(u^4_{i_3},v^4_{i_3})=(p,2)=(p_4(2,t),q_4(2,t))$ by~(i)--(iii) and~\eqref{eq:pq}.
The concatenation $P_1P_2P_3P_4$ is a Hamilton path in~$G(\ba)$ from~$x$ to~$y$.

It remains to consider the case~$(x_\ihat,y_\ihat)=(3,4)$.
We define $t:=y_1\in\{1,2,3\}$ and $p:=p_3(2,t)\in\{1,3\}$.
Moreover, we define $t':=x_1\in\{1,2,4\}$ and $p':=p_4(1,t')\in\{2,4\}$.
Let $i_1\in[n]\setminus\{1,\ihat\}$ be such that $x_{i_1}=1$, and let $i_3\in[n]\setminus\{1,\ihat\}$ be such that $y_{i_3}=2$.
We choose multiset permutations $u^j,v^j\in \Pi(\ba)$, $j=1,2,3,4$, satisfying the following conditions; see Figure~\ref{fig:H2''}~(c):
\begin{itemize}[leftmargin=8mm, noitemsep]
\item[(i)] $u^1=x$ and $v^4=y$;
\item[(ii)] $u^2_1=v^1_\ihat=3$, $u^2_\ihat=v^1_1=1$, $u^3_1=v^2_\ihat=1$, $u^3_\ihat=v^2_1=2$, $u^4_1=v^3_\ihat=2$, $u^4_\ihat=v^3_1=4$ and $u^j_i=v^{j-1}_i$ for $j=2,3,4$ and all $i\in[n]\setminus\{1,\ihat\}$;
\item[(iii)] $v^1_{i_1}=p'$ and $u^4_{i_3}=p$.
\end{itemize}
Note that $u^j_1\neq v^j_1$ for $j=2,3$ by~(ii), and $u^1\neq v^1$ and $u^4\neq v^4$ by~(iii), the choice of~$i_1,i_3$ and the fact that $p'\neq 1$ and $p\neq 2$, respectively.
Consequently, we have $u^j\neq v^j$ for $j=1,2,3,4$.

We consider a Hamilton path~$P_2$ in~$G^{\ihat,1}(\ba)$ from~$u^2$ to~$v^2$, and a Hamilton path~$P_3$ in~$G^{\ihat,2}(\ba)$ from~$u^3$ to~$v^3$, which exist by the assumption~$G(\alpha,\alpha-1,1,1)\in\cH$.
We also consider a Hamilton path~$P_1$ in~$G^{\ihat,3}(\ba)$ from~$u^1$ to~$v^1$, which exists by the assumption~$G(\alpha,\alpha,1)\in\cL_{12}$, using that $(u^1_1,v^1_1)=(t',1)$ and $(u^1_{i_1},v^1_{i_1})=(1,p')=(q_4(1,t'),p_4(1,t'))$ by~(i)--(iii) and~\eqref{eq:pq}.
Finally, we consider a Hamilton path~$P_4$ in~$G^{\ihat,4}(\ba)$ from~$u^4$ to~$v^4$, which exists by the assumption~$G(\alpha,\alpha,1)\in\cL_{12}$, using that $(u^4_1,v^4_1)=(2,t)$ and $(u^4_{i_3},v^4_{i_3})=(p,2)=(p_3(2,t),q_3(2,t))$ by~(i)--(iii) and~\eqref{eq:pq}.
The concatenation $P_1P_2P_3P_4$ is a Hamilton path in~$G(\ba)$ from~$x$ to~$y$.
\end{proof}

\begin{proof}[Proof of Lemma~\ref{lem:H1'}]
We proceed exactly as in the proof of Lemma~\ref{lem:H1} for $k=3$, choosing indices $\icheck,\ihat$ and distinguishing cases~(oa), (ob1) and~(ob2) depending on the values of~$y_\icheck$ and~$y_1$, as before.

\textbf{Case~(oa):} $y_\icheck=1$.
By the assumption that $x_\ihat,y_\ihat>1$ we can now assume w.l.o.g.\ that $x_\ihat=2$ and $y_\ihat=3$.
We define the permutations~$\pi:=(1,2,3)$ and~$\rho:=(2,1,3)$.
We set $t:=y_1\in\{1,2,3\}$ and $p:=p_3(1,t)\in\{2,3\}$, and we choose $u^j,v^j\in\Pi(\ba)$, $j=1,2,3$, and $\uhat^1,\vhat^1,\ucheck^1,\vcheck^1\in\Pi(\ba)$, as in case~(oa) in the proof of Lemma~\ref{lem:H1} for $k=3$, subject to the conditions~(ii)--(vi) as before and the following condition~(i') instead of~(i); see Figure~\ref{fig:H1'}~(oa):
\begin{itemize}[leftmargin=8mm, noitemsep]
\item[(i')] $u^1=x$, $v^1_{i_1}=x_{i_2}$, and $v^3=y$, $u^3_{i_3}=p$, where $i_3$ is chosen such that $y_{i_3}=1$.
\end{itemize}

\begin{figure}[b!]
\centerline{
\includegraphics[page=7]{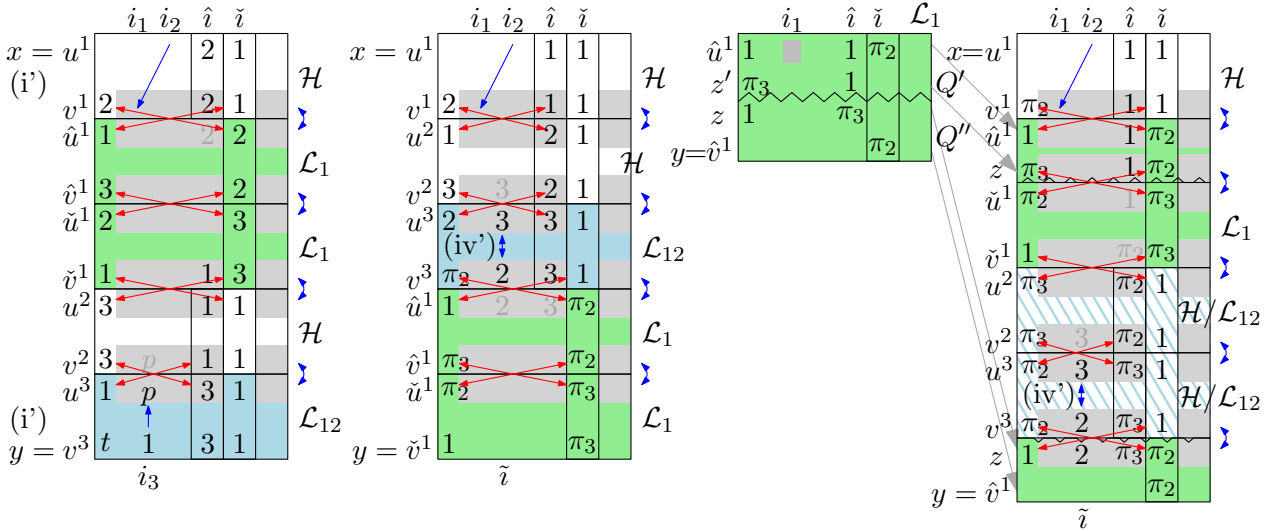}
}
\caption{Illustration of the proof of Lemma~\ref{lem:H1'}.}
\label{fig:H1'}
\end{figure}

For $j=1,2,3$ we consider a Hamilton path~$P_j$ in~$G^{(\icheck,\ihat),(1,\rho_j)}(\ba)$ from~$u^j$ to~$v^j$.
For $j=1,2$ such a path exists by the assumption $G(\varphi(\alpha-1,\alpha-2,2))\in\cH$.
For $j=3$ such a path exists by the assumption $G(\alpha-1,\alpha-1,1)\in\cL_{12}$, using that $(u^3_1,v^3_1)=(1,t)$ and $(u^3_{i_3},v^3_{i_3})=(p,1)=(p_3(1,t),q_3(1,t))$ by~(i') and~\eqref{eq:pq}.

We also consider a Hamilton path~$\Phat_1$ in~$G^{\icheck,\pi_2}(\ba)=G^{\icheck,2}(\ba)$ from $\uhat^1$ to~$\vhat^1$, which exists by the assumption $G(\varphi(\alpha,\alpha-2,2))\in\cL_1$, and a Hamilton path~$\Pcheck_1$ in~$G^{\icheck,\pi_3}(\ba)=G^{\icheck,3}(\ba)$ from $\ucheck^1$ to~$\vcheck^1$, which exists by the assumption $G(\alpha,\alpha-1,1)\in\cL_1$.

The concatenation $P_1\Phat_1\Pcheck_1 P_2P_3$ is a Hamilton path in~$G(\ba)$ from~$x$ to~$y$.

\textbf{Case~(ob1):} $y_\icheck\neq 1$ and $y_1=1$.
We let $\pi=(\pi_1,\pi_2,\pi_3)$ be the permutation defined by $\pi_1=x_\icheck=1$ and $\pi_3=y_\icheck$, and we define $\rho:=(1,2,3)$.

We choose $u^j,v^j\in\Pi(\ba)$, $j=1,2,3$, and $\uhat^1,\vhat^1,\ucheck^1,\vcheck^1\in\Pi(\ba)$, as in case~(ob1) in the proof of Lemma~\ref{lem:H1} for $k=3$, subject to the conditions~(i)--(iii) as before and the following condition~(iv') instead of~(iv); see Figure~\ref{fig:H1'}~(ob1):
\begin{itemize}[leftmargin=8mm, noitemsep]
\item[(iv')] $u^3_\itilde=3$ and $v^3_\itilde=2$ for some $\itilde\in[n]\setminus\{1,\icheck,\ihat\}$.
\end{itemize}

We consider Hamilton paths~$P_1,P_2,P_3$ and $\Phat_1$, $\Pcheck_1$ as before.
The path~$P_3$ from~$u^3$ to~$v^3$ exists by the assumption $G(\alpha-1,\alpha-1,1)\in\cL_{12}$, using that $(u^3_1,v^3_1)=(2,\pi_2)$ and $(u^3_\itilde,v^3_\itilde)=(3,2)=(p_3(2,\pi_2),q_3(2,\pi_2))$ by~(iv') and~\eqref{eq:pq}.

The concatenation $P_1P_2P_3\Phat_1\Pcheck_1$ is a Hamilton path in~$G(\ba)$ from~$x$ to~$y$.

\textbf{Case~(ob2):} $y_\icheck\neq 1$ and $y_1\neq 1$.
We let $\pi=(\pi_1,\pi_2,\pi_3)$ be the permutation defined by $\pi_1=x_\icheck=1$ and $\pi_2=y_\icheck$, and we define $\rho:=\pi$.
We choose $\uhat^1,\vhat^1\in\Pi(\ba)$ as in case~(ob2) in the proof of Lemma~\ref{lem:H1} for $k=3$, and we construct the Hamilton path~$Q$ and its subpaths~$Q'$ and~$Q''$ exactly as before.
Also, we choose $u^j,v^j\in\Pi(\ba)$, $j=1,2,3$, and $\ucheck^1,\vcheck^1\in\Pi(\ba)$, as in the previous proof, subject to conditions~(i)--(iii) as before and the following condition~(iv') instead of~(iv); see Figure~\ref{fig:H1'}~(ob2):
\begin{itemize}[leftmargin=8mm, noitemsep]
\item[(iv')] If $\pi_2=3$, we require that $u^2_\itilde=3$ and $v^2_\itilde=2$ for some $\itilde\in[n]\setminus\{1,\icheck,\ihat\}$ and condition~$(\psi_3)$.
If $\pi_3=3$, on the other hand, we require that $u^3_\itilde=3$ and $v^3_\itilde=2$ for some $\itilde\in[n]\setminus\{1,\icheck,\ihat\}$ with $z_\itilde=2$ and condition~$(\psi_2)$.
\end{itemize}

We consider Hamilton paths~$P_1,P_2,P_3$ and $\Pcheck_1$ as before.
If $j\in\{2,3\}$ is such that~$\pi_j=3$, then the path~$P_j$ from~$u^j$ to~$v^j$ exists by the assumption $G(\alpha-1,\alpha-1,1)\in\cL_{12}$, using that $(u^j_1,v^j_1)=(2,2)$ (clearly, $\pi_3=2$ if $\pi_2=3$ and $\pi_2=2$ if $\pi_3=3$) and $(u^j_\itilde,v^j_\itilde)=(3,2)=(p_3(2,2),q_3(2,2))$ by~(iv') and~\eqref{eq:pq}.

The concatenation $P_1Q'\Pcheck_1 P_2P_3Q''$ is a Hamilton path in~$G(\ba)$ from~$x$ to~$y$.
\end{proof}

\begin{proof}[Proof of Lemma~\ref{lem:H1''}]
We proceed exactly as in the proof of Lemma~\ref{lem:H1} for $k=4$, choosing indices $\icheck,\ihat$ and distinguishing cases~(eb2), (eb1) and~(ea) depending on the values of~$y_\icheck$ and~$y_1$, as before.

\textbf{Case~(eb2):} $y_\icheck\neq 1$ and $y_1\neq 1$.
If $y_\icheck=2$ we define the permutations $\pi:=(1,3,4,y_\icheck)=(1,3,4,2)$ and~$\rho:=(1,2,3,4)$, whereas if $y_\icheck\in\{3,4\}$ we define $\pi:=(1,7-y_\icheck,2,y_\icheck)$ and $\rho:=(1,3,4,2)$.
We choose $u^j,v^j\in\Pi(\ba)$, $j=1,2,3,4$, and $\uhat^j,\vhat^j\in\Pi(\ba)$, $j=1,2$, and $\ucheck^1,\vcheck^1\in\Pi(\ba)$, as in case~(eb2) in the proof of Lemma~\ref{lem:H1} for $k=4$, subject to the conditions~(i)--(iii) as before and the following condition~(iv') instead of~(iv); see Figure~\ref{fig:H1''}~(eb2):
\begin{itemize}[leftmargin=8mm, noitemsep]
\item[(iv')] For $j\in\{2,3\}$ such that $\rho_j=3$ we require that $u^j_\itilde=4$, $v^j_\itilde=2$, $u^{j+1}_\itilde=2$, and $v^{j+1}_\itilde=3$ for some $\itilde\in[n]\setminus\{1,\icheck,\ihat\}$, and condition~$(\psi_4)$ if $j=2$.
\end{itemize}

\begin{figure}
\centerline{
\includegraphics[page=9]{block}
}
\caption{Illustration of the proof of Lemma~\ref{lem:H1''}.}
\label{fig:H1''}
\end{figure}

For $j=1,2,3,4$ we consider a Hamilton path~$P_j$ in~$G^{(\icheck,\ihat),(1,\rho_j)}(\ba)$ from~$u^j$ to~$v^j$.
For the two values of~$j$ with $\rho_j\in\{1,2\}$ such a path exists by the assumption $G(\alpha-1,\alpha-2,1,1)\in\cH$.
For the two values of~$j$ with $\rho_j\in\{3,4\}$ such a path exists by the assumption $G(\alpha-1,\alpha-1,1)\in\cL_{12}$, using that $(u^j_1,v^j_1)=(2,4)$ and $(u^j_\itilde,v^j_\itilde)=(4,2)=(p_4(2,4),q_4(2,4))$ if $\rho_j=3$ and $(u^j_1,v^j_1)=(3,2)$ and $(u^j_\itilde,v^j_\itilde)=(2,3)=(q_3(2,3),p_3(2,3))$ if $\rho_j=4$, by~(iv') and~\eqref{eq:pq}.

For $j=1,2$ we consider a Hamilton path~$\Phat_j$ in~$G^{\icheck,\pi_{2j}}(\ba)$ from $\uhat^j$ to~$\vhat^j$ and a Hamilton path~$\Pcheck_1$ in~$G^{\icheck,\pi_3}(\ba)$ from $\ucheck^1$ to~$\vcheck^1$, which exist by the assumptions $G(\alpha,\alpha-2,1,1)\in\cL_1$ and $G(\alpha,\alpha-1,1)\in\cL_1$.

The concatenation $P_1\Phat_1\Pcheck_1 P_2P_3P_4\Phat_2$ is a Hamilton path in~$G(\ba)$ from~$x$ to~$y$.

\textbf{Case~(eb1):} $y_\icheck\neq 1$ and $y_1=1$.
If $y_\icheck=2$ we define the permutations $\pi:=(1,3,4,y_\icheck)=(1,3,4,2)$ and~$\rho:=(1,3,2,4)$, whereas if $y_\icheck\in\{3,4\}$ we define $\pi:=(1,2,7-y_\icheck,y_\icheck)$ and $\rho:=(1,y_\icheck,2,7-y_\icheck)$.
We choose $\uhat^1,\vhat^1\in\Pi(\ba)$ as in case~(eb1) in the proof of Lemma~\ref{lem:H1} for $k=4$, and we construct the Hamilton path~$Q$ and its subpaths~$Q'$ and~$Q''$ exactly as before.
Also, we choose $u^j,v^j\in\Pi(\ba)$, $j=1,2,3,4$, and $\ucheck^j,\vcheck^j\in\Pi(\ba)$, $j=1,2$, as in the previous proof, subject to the conditions~(i)--(iii) as before and the following condition~(iv') instead of~(iv); see Figure~\ref{fig:H1''}~(eb1):
\begin{itemize}[leftmargin=8mm, noitemsep]
\item[(iv')] We require that $u^2_\itilde=2$, $v^2_\itilde=\pi_3$, $u^4_\itilde=\pi_2$, and $v^4_\itilde=\pi_4$ for some $\itilde\in[n]\setminus\{1,\icheck,\ihat\}$ with $z_\itilde=\pi_4$.
\end{itemize}

We consider Hamilton paths~$P_1,P_2,P_3,P_4$ and $\Pcheck_1,\Pcheck_2$ as before.
The paths~$P_j$, $j\in\{2,4\}$, from~$u^j$ to~$v^j$ exist by the assumption $G(\alpha-1,\alpha-1,1)\in\cL_{12}$, using that $(u^2_1,v^2_1)=(\pi_3,2)$ and $(u^2_\itilde,v^2_\itilde)=(2,\pi_3)=(q_{\pi_3}(2,\pi_3),p_{\pi_3}(2,\pi_3))$, and that $(u^4_1,v^4_1)=(2,\pi_2)$ and $(u^4_\itilde,v^4_\itilde)=(\pi_2,\pi_4)$, which equals $(p_3(2,\pi_2),q_3(2,\pi_2))$ if~$\pi_2=3$ and $(q_{\pi_4}(\pi_2,2),p_{\pi_4}(\pi_2,2))$ if~$\pi_2=2$ by~(iv') and~\eqref{eq:pq}.

The concatenation $P_1Q'\Pcheck_1 P_2P_3P_4Q''\Pcheck_2$ is a Hamilton path in~$G(\ba)$ from~$x$ to~$y$.

\textbf{Case~(ea):} $y_\icheck=1$.
By the assumption that $x_\ihat,y_\ihat>1$ and the fact that the symbols~3 and~4 both occur exactly once, we can now assume w.l.o.g.\ that $(x_\ihat,y_\ihat)\in\{(2,3),(3,4)\}$, and we treat these two cases separately.

\textbf{Case~(ea1):} $(x_\ihat,y_\ihat)=(2,3)$.
We define the permutations~$\pi:=(1,3,4,2)$ and~$\rho:=(2,1,4,3)$, and we also define $t:=y_1\in\{1,2,4\}$ and $p:=p_4(2,t)\in\{1,4\}$.
We fix indices $i_1,i_2,i_3\in[n]\setminus\{1,\icheck,\ihat\}$ with $x_{i_1}\neq x_{i_2}$ and $y_{i_3}=2$.

We choose $\uhat^1,\vhat^1\in\Pi(\ba)$ as in case~(ea) in the proof of Lemma~\ref{lem:H1} for $k=4$, and we construct the Hamilton path~$Q$ and its subpaths~$Q'$ and~$Q''$ exactly as before.
Also, we choose $u^j,v^j\in\Pi(\ba)$, $j=1,2,3,4$, and $\ucheck^j,\vcheck^j\in\Pi(\ba)$, $j=1,2$, as in the previous proof, subject to the conditions~(ii)--(iii) as before and the following condition~(i') instead of~(i) and (iv') instead of~(iv); see Figure~\ref{fig:H1''}~(ea1):
\begin{itemize}[leftmargin=8mm, noitemsep]
\item[(i')] $u^1=x$, $v^4=y$, $u^4_{i_3}=p$, and conditions~$(\chi^1)$, $(\chi^2_3)$, and $(\chi^3)$;
\item[(iv')] We require that $u^3_\itilde=3$ and $v^3_\itilde=1$ for some $\itilde\in[n]\setminus\{1,\icheck,\ihat\}$ with $z_\itilde=1$.
\end{itemize}
We consider Hamilton paths~$P_1,P_2,P_3,P_4$ and $\Pcheck_1,\Pcheck_2$ as before.
The paths~$P_j$, $j\in\{3,4\}$, from~$u^j$ to~$v^j$ exist by the assumption $G(\alpha-1,\alpha-1,1)\in\cL_{12}$, using that $(u^3_1,v^3_1)=(1,3)$ and $(u^3_\itilde,v^3_\itilde)=(3,1)=(p_3(1,3),q_3(1,3))$ and $(u^4_1,v^4_1)=(2,t)$ and $(u^4_{i_3},v^4_{i_3})=(p,2)=(p_4(2,t),q_4(2,t))$ by~(i')+(iv') and~\eqref{eq:pq}.

\textbf{Case~(ea2):} $(x_\ihat,y_\ihat)=(3,4)$.
This case is treated very differently than case~(ea2) in the proof of Lemma~\ref{lem:H1}.
We define the permutations~$\pi:=(1,2,3,4)$ and~$\rho:=(3,1,2,4)$, and we also define $t:=y_1\in\{1,2,3\}$ and $p:=p_3(2,t)\in\{1,3\}$, and $t':=x_1\in\{1,2,4\}$ and $p':=p_4(2,t')\in\{1,4\}$.
We fix indices $i_1,i_3\in[n]\setminus\{1,\icheck,\ihat\}$ with $x_{i_1}=2$ and $y_{i_3}=2$.

We choose two multiset permutations $\ucheck^1,\vcheck^1\in\Pi(\ba)$ such that $\ucheck^1_1=1$, $\ucheck^1_\ihat=\rho_2=1$, $\ucheck^1_\icheck=\pi_3=3$, $\vcheck^1_1=\pi_4=4$, and $\vcheck^1_\icheck=\pi_3=3$.
We consider a Hamilton path~$Q$ in the graph~$G^{\icheck,\pi_3}(\ba)=G^{\icheck,3}(\ba)$ from~$\ucheck^1$ to~$\vcheck^1$, which exists by the assumption~$G(\alpha,\alpha-1,1)\in\cL_1$, using that~$\ucheck^1\in\Pi(\ba)^{1,1}$ and~$\vcheck^1\notin\Pi(\ba)^{1,1}$ (recall that $\ucheck^1_1=1$ and $\vcheck^1_1=4\neq 1$).
Let $z$ be the first vertex along this path from~$\ucheck^1$ to~$\vcheck^1$ for which $z_{\ihat}=\pi_2=2$, and let $z'$ be the predecessor of~$z$ on the path.
By the definition of~$z$ we have $z'_\ihat\neq\pi_2=2$, and consequently~$z'_1=\pi_2=2$.
By Lemma~\ref{lem:D0}, we thus obtain $z'_\ihat=z_1=1$.
Let $Q'$ be the \emph{backward} subpath of~$Q$ from~$z'$ to~$\ucheck^1$, and let $Q''$ be the \emph{backward} subpath of~$Q$ from~$\vcheck^1$ to~$z$.

We choose multiset permutations $u^j,v^j\in \Pi(\ba)$, $j=1,2,3,4$, and $\uhat^j,\vhat^j\in\Pi(\ba)$, $j=1,2$, satisfying the following conditions; see Figure~\ref{fig:H1''}~(ea2):
\begin{itemize}[leftmargin=8mm, noitemsep]
\item[(i)] $u^1=x$, $v^1_{i_1}=p'$, and $v^4=y$, $u^4_{i_3}=p$;
\item[(ii)] $\vhat^1_1=z'_\icheck=\pi_3=3$, $\vhat^1_\icheck=z'_1=\pi_2=2$, $\vhat^1_i=z'_i$,
$u^2_1=\ucheck^1_\icheck=\pi_3=3$, $u^2_\icheck=\ucheck^1_1=1$, $u^2_i=\ucheck^1_i$,
$\vhat^2_1=\vcheck^1_\icheck=\pi_3=3$, $\vhat^2_\icheck=\vcheck^1_1=\pi_4=4$, $\vhat^2_i=\vcheck^1_i$, and
$u^3_1=z_\icheck=\pi_3=3$, $u^3_\icheck=z_1=1$, $u^3_i=z_i$ for all $i\in[n]\setminus\{1,\icheck\}$;
\item[(iii)] conditions $(\phi^1_1)$, $(\phi^1_2)$ and~$(\phi^4_3)$.
\end{itemize}
The argument that all pairs $(u^j,v^j)$, $j=1,2,3,4$, and $(\uhat^j,\vhat^j)$, $j=1,2$, are distinct, is straightforward.

For $j=1,2,3,4$ we define $\bb:=(a_1-1,a_2,a_3,a_4)=(\alpha-1,\alpha-1,1,1)$, $\bc':=(a_1-1,\ldots,a_{\rho_j}-1,\ldots,a_4)$, and $\bc:=\varphi(\bc')\in\{(\alpha-1,\alpha-1,1)\,(\alpha-1,\alpha-2,1,1)\}$, which satisfy $\bc\precdot\bb\precdot\ba$ and $\Delta(\bb)=2$ and $\Delta(\bc)=1$, and we consider a Hamilton path~$P_j$ in~$G^{(\icheck,\ihat),(1,\rho_j)}(\ba)\simeq G(\bc)$ from~$u^j$ to~$v^j$.
For $j\in\{2,3\}$ these paths exist by the assumption~$G(\alpha-1,\alpha-2,1,1)\in\cH$.
For $j\in\{1,4\}$ these paths exist by the assumption~$G(\alpha-1,\alpha-1,1)\in\cL_{12}$, using that $(u^1_1,v^1_1)=(t',2)$ and $(u^1_{i_1},v^1_{i_1})=(2,p')=(q_4(2,t'),p_4(2,t'))$ and $(u^4_1,v^4_1)=(2,t)$ and $(u^4_{i_3},v^4_{i_3})=(p,2)=(p_3(2,t),q_3(2,t))$ by~(i) and~\eqref{eq:pq}.

For $j=1,2$ we define $\ba':=(a_1,\ldots,a_{\pi_{2j}}-1,\ldots,a_4)$ and $\bb:=\varphi(\ba')\in\{(\alpha,\alpha-2,1,1),(\alpha,\alpha-1,1)\}$, which satisfies $\bb\precdot\ba$ and $\Delta(\bb)=0$, and we consider a Hamilton path~$\Phat_j$ in the graph~$G^{\icheck,\pi_{2j}}(\ba)\simeq G(\bb)$ from~$\uhat^j$ to~$\vhat^j$, which exists by the assumption~$G(\bb)\in\cL_1$, using that~$\uhat^j\in\Pi(\ba)^{1,1}$ and~$\vhat^j\notin\Pi(a)^{1,1}$ (recall that $\uhat^j_1=1$ and $\vhat^j_1=3\neq 1$ by~(ii)+(iii)).

The concatenation $P_1\Phat_1 Q' P_2\Phat_2 Q'' P_3P_4$ is a Hamilton path in~$G(\ba)$ from~$x$ to~$y$.
\end{proof}

\end{document}